\numberwithin{equation}{section}
\theoremstyle{theorem}
\newtheorem{theorem}{Theorem}[section]
\newtheorem{lemma}[theorem]{Lemma}
\newtheorem{corollary}[theorem]{Corollary}
\theoremstyle{definition}
\newtheorem{definition}[theorem]{Definition}
\newtheorem{example}[theorem]{Example}
\newtheorem{remark}[theorem]{Remark}
\newcommand{\arxiv}[1]{\href{http://arxiv.org/abs/#1}{\tt arXiv:\nolinkurl{#1}}}
\newcommand{\googlebooks}[1]{(preview at \href{http://books.google.com/books?id=#1}{google books})}
\def\<{\langle}
\def\>{\rangle}
\begin{document}

\def\hpic #1 #2 {\mbox{$\begin{array}[c]{l} \epsfig{file=#1,height=#2}
\end{array}$}}
 
\def\vpic #1 #2 {\mbox{$\begin{array}[c]{l} \epsfig{file=#1,width=#2}
\end{array}$}}

\title{The Asaeda-Haagerup fusion categories}
\author{Pinhas~Grossman
\and
Masaki Izumi
\and
Noah Snyder
}
%
%

\maketitle

\begin{abstract}
The classification of subfactors of small index revealed several new subfactors.  The first subfactor above index $4$, the Haagerup subfactor, is increasingly well understood and appears to lie in a (discrete) infinite family of subfactors where the $\mathbb{Z}/3\mathbb{Z}$ symmetry is replaced by other finite abelian groups.  The goal of this paper is to give a similarly good description of the Asaeda--Haagerup subfactor which emerged from our study of its Brauer--Picard groupoid.  More specifically, we construct a new subfactor $\mathcal{S}$ which is a $\mathbb{Z}/4\mathbb{Z} \times \mathbb{Z}/2\mathbb{Z}$ analogue of the Haagerup subfactor and we show that the even parts of the Asaeda--Haagerup subfactor are higher Morita equivalent to an orbifold quotient of $\mathcal{S}$.  This gives a new construction of the Asaeda--Haagerup subfactor which is much more symmetric and easier to work with than the original construction.  As a consequence, we can settle many open questions about the Asaeda--Haagerup subfactor: calculating its Drinfel'd center, classifying all extensions of the Asaeda--Haagerup fusion categories, finding the full higher Morita equivalence class of the Asaeda--Haagerup fusion categories, and finding intermediate subfactor lattices for subfactors coming from the Asaeda--Haagerup categories.  The details of the applications will be given in subsequent papers.
\end{abstract}

\section{Introduction}

A major motivation for classifying mathematical objects is to produce interesting new examples which would not have been found without an extensive search.  For example, it was the classification of simple Lie algebras which uncovered $E_8$ and the classification of finite simple groups which revealed the Monster group.  Similarly, a major motivation in the classification of small index subfactors is to find new interesting examples.

To any finite index finite depth subfactor $N \subset M$ one can assign a rich algebraic invariant called its standard invariant.  One way to describe the standard invariant is that it consists of a pair of a unitary fusion category $\mathcal{C}$ (consisting of certain $N$-$N$ bimodules and called the principal even part ) and an algebra object $A \in \mathcal{C}$ (which is the bimodule $M$).  Furthermore, for finite index finite depth subfactors of the hyperfinite $II_1$ factor, the standard invariant is a complete invariant \cite{ MR1055708}.  Almost all known unitary fusion categories can be constructed from finite groups or from quantum groups at roots of unity.  Indeed, all subfactors of index less than $4$ have standard invariants coming from quantum $SU(2)$ via conformal inclusions, and all finite depth subfactors of index equal to $4$ have standard invariants which come from finite subgroups of $SU(2)$.   It is natural to wonder whether the subfactors of index between $4$ and $5$ come from similar constructions.  

There are 5 pairs of finite index finite depth subfactor standard invariants of index between $4$ and $5$: 
\begin{itemize}
\item the Haagerup subfactor (and its dual),
\item  the Asaeda--Haagerup subfactor (and its dual),
\item the extended Haagerup subfactor (and its dual), 
\item  the Goodman-de la Harpe-Jones 3311 subfactor (and its dual),
\item and the self-dual Izumi 2221 subfactor (and its complex conjugate).
\end{itemize}  
\noindent The last two of these pairs come from quantum groups in an appropriate sense.  
The remaining three seem to not be related to finite groups or quantum groups via well-understood constructions.  Thus they are very important examples for further study since they could be ``exotic'' or ``exceptional.''

Of these three examples, the Haagerup subfactor is by far the best understood.  Three different constructions of this subfactor standard invariant have been given, each of a different flavor \cite{MR1686551,MR1832764,0902.1294}.  One of these constructions allows for practical calculations, for example of the $S$ and $T$ matrices of the Drinfel'd center of the even part \cite{MR1832764}.  Furthermore, the principal even part of the Haagerup subfactor has a collection of invertible objects forming the group $\mathbb{Z}/3\mathbb{Z}$, and all non-invertible objects in this even part are in a single orbit of the action of $\mathbb{Z}/3\mathbb{Z}$.  On the one hand, this suggests that there may be a sense in which the Haagerup subfactor can be constructed from the group $\mathbb{Z}/3\mathbb{Z}$ (or a closely related group like $S_3$, see \cite{MR2837122}), and on the other hand it suggests that the Haagerup subfactor could be generalized by replacing $\mathbb{Z}/3\mathbb{Z}$ by other finite abelian groups.   We call such a subfactor a $3^A$ subfactor, since its principal graph is a star graph with $\#A$ arms where each arm has $3$ edges. The second-named author determined a numerical invariant for such subfactors given by solutions to certain polynomial equations, and by solving these equations he constructed a $3^{\mathbb{Z}/5\mathbb{Z}}$ subfactor \cite{MR1832764}. Evans--Gannon constructed $3^G$ subfactors for several other cyclic groups of odd order by solving the polynomial equations using symmetries of the associated modular data \cite{MR2837122}.  Although there are still only finitely many examples known, it appears that that the Haagerup subfactor may not be exceptional after all.  

The main goal of this paper is to establish a similar story for the previously mysterious Asaeda--Haagerup subfactor.  The basic philosophy of our approach is that one should never study a single subfactor at a time, but instead should study all the subfactors which come from inclusions of algebra objects $A \subset B$ in a fixed unitary fusion category $\mathcal{C}$.  In other words, one fixes a finite collection of bimodules over a fixed factor $R$ and looks at all subfactors $N \subset M$ where each of $N$ and $M$ can be built as direct sums of these particular bimodules.  All of these subfactors fit together into a richer structure known as the maximal atlas or the Brauer--Picard groupoid \cite{MR2677836}.  This approach has two advantages.  First, the combinatorics of this whole collection is richer and more restrictive than the combinatorics of the individual subfactors.  For example, just by looking at fusion rules one can show that certain subfactors must exist which would be very difficult to construct directly, as illustrated in \cite{GSbp}.  Second, it may be that some of the other subfactors in the maximal atlas are simpler than others.  In this paper we we emphasize the second point of view, and we relate the Asaeda--Haagerup subfactor to a simpler subfactor also appearing in its maximal atlas.  That is, the complicated small index Asaeda--Haagerup subfactor is best thought of as a consequence of a simpler subfactor with larger index.  This gives a new construction of the the Asaeda--Haagerup subfactor, which still requires a difficult computation, but which is much more illuminating in terms of understanding this subfactor.

Here is a quick overview of our new construction of the Asaeda--Haagerup subfactor.  We first construct a specific $3^{\mathbb{Z}/4\mathbb{Z} \times \mathbb{Z}/2\mathbb{Z}}$ subfactor $\mathcal{S}$ by finding a solution to the appropriate polynomial equations, which were generalized to finite Abelian groups of possibly even order in \cite{IzumiNote}.  
The subfactor $\mathcal{S}$ has index $5+\sqrt{17}$.  Then we take an orbifold (or de-equivariantization) quotient of $\mathcal{S}$ by adding an extra isomorphism between the objects corresponding to the trivial and non-trivial elements of $\mathbb{Z}/2\mathbb{Z}$.  This new subfactor $\mathcal{S}'$ has index $5+\sqrt{17}$, and we call its even part $\mathcal{AH}_4$.  We then explicitly calculate the fusion rules for the dual even part of this subfactor (i.e. the bimodules over the larger factor) and see that it contains an object $X$ with dimension $\frac{3+\sqrt{17}}{2}$ such that $X \otimes X \cong 1 \oplus X \oplus Y$ for some non-invertible simple object $Y$.  By a simple skein theory argument \cite[Thm 3.4]{GSbp}, the object $1\oplus X$ has a unique algebra structure and this algebra gives a new subfactor with index $\frac{5+\sqrt{17}}{2}$.  This subfactor must be the Asaeda--Haagerup subfactor $\mathcal{AH}$, since it is easy to see that there's at most one finite depth subfactor of index $\frac{5+\sqrt{17}}{2}$.  Altogether, the Asaeda--Haagerup subfactor comes as a natural consequence of the more symmetric and easier to understand subfactor $\mathcal{S}$.

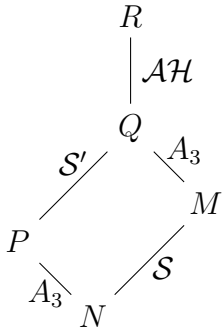
\begin{figure}
$$\begin{tikzpicture}
\node (N) at (0,0) {$N$};
\node (M) at (1.5,1.5) {$M$};
\node (P) at (-1,1) {$P$};
\node (Q) at (.5,2.5) {$Q$};
\node (R) at (.5,4) {$R$};
\path (N) edge node[below,pos=.7] {$\mathcal{S}$} (M);
\path (N) edge node[below,pos=.8] {$A_3$} (P);
\path (P) edge node[above, pos=.5] {$\mathcal{S}'$} (Q);
\path (M) edge node[above=.1cm, pos=0] {$A_3$}(Q);
\path (Q) edge node[right,pos=.5]{$\mathcal{AH}$} (R);
\end{tikzpicture}
$$
\caption{Our construction of the Asaeda--Haagerup subfactor can be summarized by the above inclusions of factors, where $\mathcal{S}$ denotes our new index $5+\sqrt{17}$ subfactor, $\mathcal{S}'$ is its orbifold quotient, $\mathcal{AH}$ is the Asaeda--Haagerup subfactor of index $\frac{5+\sqrt{17}}{2}$, and $A_3$ denotes index $2$ inclusions.}
\label{fig:outline}
\end{figure}

In particular, we see that $\mathcal{AH}_4$ lies in the same higher Morita equivalence class as the even parts of the Asaeda--Haagerup subfactor $\mathcal{AH}_1$ and $\mathcal{AH}_2$.  Many invariants of fusion categories, most notably the Drinfel'd center, are invariant under higher Morita equivalence.  Thus instead of doing a calculation in $\mathcal{AH}_1$ or $\mathcal{AH}_2$, one can instead do the calculation in $\mathcal{AH}_4$.  In particular, in a follow-up paper we will give an explicit description of the Drinfel'd center of the Asaeda--Haagerup fusion categories (a question which had remained open for 15 years) by following the method in \cite{MR1832764}.  Similarly, since Etingof-Nikshych-Ostrik's classification of $G$-extensions of fusion categories is Morita invariant, constructing extensions of $\mathcal{AH}_4$ yields extensions of the other fusion categories in its Morita class.  More specifically, in another follow-up paper we can determine the homotopy type of the Brauer--Picard $3$-groupoid of the Asaeda--Haagerup category, which splits as a product of Eilenberg-Maclane spaces $K(\mathbb{Z}/2\mathbb{Z} \times \mathbb{Z}/2\mathbb{Z}, 1) \times K(\mathbb{C}^\times, 3)$.  As a consequence, we get a plethora of interesting $\mathbb{Z}/2\mathbb{Z} \times \mathbb{Z}/2\mathbb{Z}$ extensions of Asaeda--Haagerup fusion categories generalizing the constructions in \cite{GJSext}, and we can use this to give a complete classification of all extensions of these fusion categories by any group.

It is natural to wonder why the Asaeda--Haagerup subfactor, which has only a $2$-fold symmetry, should be related to an $8$-fold symmetric $3^{\mathbb{Z}/4\mathbb{Z} \times \mathbb{Z}/2\mathbb{Z}}$ subfactor.  In fact, we discovered this connection by working in the other direction: starting with the Asaeda--Haagerup subfactor and trying to describe its entire higher Morita equivalence class.  An extensive combinatorial calculation suggested that $\mathcal{AH}_4$ was one of the only possible new tensor categories compatible with all the rich structure of the Asaeda--Haagerup subfactor.  If $\mathcal{AH}_4$ does exist, one can then determine that it must have an equivariantization which would give a $3^{\mathbb{Z}/4\mathbb{Z} \times \mathbb{Z}/2\mathbb{Z}}$ subfactor.  Please note that logically our construction does not require the extensive combinatorial calculations from \cite{GSbp}, but the motivation to consider $\mathcal{S}$ in the first place did come out of those calculations.   

In fact, constructing $\mathcal{AH}_4$ has allowed us to complete our description of the higher Morita equivalence class of the Asaeda--Haagerup subfactors.  Namely, there are exactly $6$ fusion categories, and between any two of them there are exactly $4$ Morita equivalences between them.  As shown in \cite{GSbp}, the group of Morita autoequivalences is the Klein $4$-group.  In fact, for $\mathcal{AH}_4$ all these autoequivalences are realized by outer automorphisms.  As a consequence of this complete description of the higher Morita equivalence class, we are able to answer many other questions about the subfactors related to the Asaeda--Haagerup subfactors.  For example, in a followup paper we will use this classification to find all lattices of intermediate subfactors related to Asaeda--Haagerup.

The outline of this paper is as follows.  We begin in Section \ref{sec:Prelim} with some background on fusion categories, algebra objects, the Brauer--Picard groupoid, and the Asaeda--Haagerup subfactor.  This section includes some expository material summarizing the key ideas of our project of understanding subfactors via the Brauer--Picard groupid.

In Section \ref{sec:BP} we further describe the Brauer--Picard groupoid of the Asaeda--Haagerup fusion categories following \cite{GSbp}.  In \cite{GSbp} we saw that there were at least three fusion categories in this Morita equivalence class, that there were exactly four bimodules between any two such fusion categories, and that the BP group was the Klein $4$-group.  Here we narrow things down further, showing that if $\mathcal{AH}_4$ exists then there are exactly six fusion categories in the equivalence class.  These calculations are combinatorial in nature and are quite similar to those in \cite{GSbp}.  However, we would like to remind the reader that our new construction of $AH$ is {\em motivated} by the results of Section \ref{sec:BP}, but strictly speaking does not {\em depend} on the results of Section \ref{sec:BP}.

Section \ref{sec:main} is the main heart of the paper.  We give a direct construction of $\mathcal{S}$ as endomorphisms of an algebra using results of \cite{IzumiNote}. We then show that it has a de-equivariantization $\mathcal{AH}_4$, and show that the Asaeda--Haagerup subfactor comes from an inclusion of algebras in $\mathcal{AH}_4$. 

In Section \ref{sec:apps}, we quickly sketch several applications of our main results.  The full details of these applications will appear in later papers.


\textbf{Acknowledgements:}
The authors would like to thank Scott Morrison, David Penneys, and Emily Peters for helpful conversations. Pinhas Grossman was partially supported by ARC grant DP140100732.  Masaki Izumi was supported in part by the Grant-in-Aid for Scientific Research (B) 22340032, JSPS. Noah Snyder was supported by DOD-DARPA grant HR0011-12-1-0009. The authors would also like to thank the American Institute of Mathematics for its hospitality during the workshop \textit{Classifying Fusion Categories}.

\section{Preliminaries}\label{sec:Prelim}

We begin with some background on fusion categories, subfactors, the Brauer--Picard groupoid, combinatorics of fusion rules, and the Asaeda--Haagerup subfactor.  We assume that the reader is either familiar with the theory of fusion categories or the theory of subfactors.  Thus the first subsection is aimed at readers who are familiar with subfactors but not with fusion categories, and the second subsection is aimed at readers familiar with fusion categories but not subfactors.

\subsection{Fusion categories, module categories, and bimodule categories}

\begin{definition} \cite{MR2183279}
A fusion category over an algebraically closed field $k $ is a semisimple $k$-linear rigid monoidal 
category with finitely many simple objects and finite-dimensional morphism spaces such that the identity object is simple. 
\end{definition}

In this paper $k$ will always be the field of complex numbers. An equivalence of fusion categories is a linear monoidal equivalence.

\begin{example}
Fix a finite collection of bifinite bimodules $M_i$ over a II${}_1$ factor $A$ which are closed under tensor product in the sense that $M_i \otimes_A M_j \cong \bigoplus_k M_k^{\oplus n_k}$.  Then there is a fusion category whose objects are the direct sums of the $M_i$, the morphisms are bimodule maps, and the tensor product is $\otimes_A$.
\end{example}

A fusion category can be thought of as a higher analogue of an algebra, where instead of multiplying elements you tensor objects.  Just as modules and bimodules play a crucial role in understanding algebras, module categories and bimodule categories play a similar role in understanding fusion categories.

\begin{definition}
A (left) module category ${}_{\mathcal{C}}{\mathcal{M}} $ over a fusion category $\mathcal{C} $ is a
 category $\mathcal{M} $ along with a biexact bifunctor from $\mathcal{C} \times \mathcal{M}$ to $ \mathcal{M}$, along with a collection of unit and associativity isomorphisms satisfying certain coherence relations; see \cite{MR1976459} for details. In this paper we will further assume that module categories over fusion categories are semisimple.
\end{definition}

\begin{example}
If $\mathcal{C}$ comes from a collection of bimodules over $A$ as in the previous example, then one can build a left module category in the same way by finding a finite collection of simple $A$-$B$ bimodules for a factor $B$ which are closed under left multiplication by the $A$-$A$ bimodules.
\end{example}

Similarly, one may define right module categories over fusion categories and bimodules categories over pairs of fusion categories. There is a natural notion of equivalence for module or bimodule categories. 
\begin{definition}
A bimodule category over a pair of fusion categories   ${}_{\mathcal{C}}{\mathcal{M}} {}_{\mathcal{D}}$ is invertible if ${}_{\mathcal{C}}{\mathcal{M}} {}_{\mathcal{D}}   \boxtimes_{\mathcal{D}}  {}_{\mathcal{D}}{\mathcal{M}}^{op} {}_{\mathcal{C}} \cong {}_{\mathcal{C} } {\mathcal{C} } {}_{\mathcal{C} }$ and ${}_{\mathcal{D}}{\mathcal{M}^{op}} {}_{\mathcal{C}}   \boxtimes_{\mathcal{C}}  {}_{\mathcal{C}}{\mathcal{M}} {}_{\mathcal{D}} \cong {}_{\mathcal{D} } {\mathcal{D} } {}_{\mathcal{D} }$, where $\mathcal{M}^{op} $ is the opposite bimodule category and $\boxtimes_{\mathcal{C}} $ and $ \boxtimes_{\mathcal{D}}$ are the relative tensor products of bimodule categories; see \cite{MR2677836} for details.

Two fusion categories are Morita equivalent if there is an invertible bimodule category between them; an invertible bimodule category is called a Morita equivalence.
\end{definition}

\begin{example}
If $A \subset B$ is a finite depth subfactor then the principal even part (certain $A$-$A$ bimodules) and the dual even part (certain $B$-$B$ bimodules) are Morita equivalent with the Morita equivalence given by a collection of $A$-$B$ bimodules.
\end{example}

\begin{example} \label{ex:standardmorita}
If $G$ is a finite group, let $\mathrm{Vec}(G)$ be the category of $G$-graded vector spaces and $\mathrm{Rep}(G)$ be the category of finite dimensional representations of $G$.  Then $\mathrm{Vec}$ gives a Morita equivalence between $\mathrm{Vec}(G)$ and $\mathrm{Rep}(G)$ where the individual actions are given by forgetting and tensoring, but the associator for simple objects is $V = (1_g \otimes 1) \otimes V \rightarrow 1_g \otimes (1 \otimes V)  = V$, with the map given by the action of $g$ on V.  

This Morita equivalence can be understood from the subfactor point of view by looking at the crossed product subfactor $N \subset M = N \rtimes G$ and thinking of $\mathrm{Vec}(G)$ as $N$-$N$ bimodules, $\mathrm{Vec}$ as $N$-$M$ bimodules, and $\mathrm{Rep}(G)$ as $M$-$M$ bimodules.
\end{example}

Any simple module category over a fusion category ${}_{\mathcal{C}} \mathcal{M} $ can be given the structure of an invertible
bimodule category ${}_{\mathcal{C}} \mathcal{M} {}_{\mathcal{D}} $, where $\mathcal{D} =({}_{\mathcal{C}} \mathcal{M} )^*$ is the dual category of module endofunctors of ${}_{\mathcal{C}} \mathcal{M} $. Conversely, for any invertible bimodule category ${}_{\mathcal{C}} \mathcal{M} {}_{\mathcal{D}} $ we have $\mathcal{D}  \cong ({}_{\mathcal{C}} \mathcal{M} )^*$.


Module categories can also be characterized in terms of algebras.  This can be thought of as an algebraic substitute for subfactor theory where the role of factors is played by algebra objects.

\begin{definition}
An algebra in a fusion category is an object $A$ together with a unit map $I \rightarrow A $
and a multiplication map $A \otimes A \rightarrow A$  satisfying the usual relations; see \cite{MR1976459}. 
\end{definition}

\begin{example}
If $N \subset M$ is a finite index finite depth subfactor, and $\mathcal{C}$ is the category of $N$-$N$ bimodules generated by $M$, then $M$ itself is an algebra object in $\mathcal{C}$.  In this case, saying that $M$ is an algebra object means both that it is an algebra and that the multiplication map $M \otimes M \rightarrow M$ is a map of $N$-$N$ bimodules.
\end{example}

In a similar way one can define modules over algebras in fusion categories. If $A$ is an  algebra
in $\mathcal{C} $, then the category $A $-mod of left  $A$-modules in $ \mathcal{C}$ is a right module category over $\mathcal{C} $ (although not necessarily semisimple), and similarly the category of right $A$-modules is a left module category. 

An algebra $A$ is called simple if its module category $A$-mod is semisimple and indecomposable, and is called a division algebra if in addition $A$ is simple as an $A$-module \cite{GSbp}.   If $A$ is a simple algebra in a fusion category, then the category $A$-mod-$A$ of $A-A$ bimodules is the dual category of $A$-mod.  In fact every indecomposable module category arises this way. One can define the internal hom bifunctor $\underline{\text{Hom}}(M_1,M_2) $ from ${}_{\mathcal{C}} \mathcal{M}  \times {}_{\mathcal{C}} \mathcal{M}  $ to $\mathcal{C} $. The internal hom satsfies $\text{Hom}(\underline{\text{Hom}} (M_1,M_2),X)\cong \text{Hom}(X \otimes M_1,M_2)$ for all $M_1,M_2  \in  {}_{\mathcal{C}} \mathcal{M}$ and
$X \in   {}_{\mathcal{C}}$, and the internal end $\underline{\text{End}} (M)=\underline{\text{Hom}}(M,M) $ is an algebra for every $M \in {}_{\mathcal{C}} \mathcal{M} $.

\begin{theorem}\cite{MR1976459}
Let $M \in {}_{\mathcal{C}} \mathcal{M}  $ be a simple object is a simple module category over a fusion category. Then $ {}_{\mathcal{C}} \mathcal{M} $ is equivalent to the category of modules
over $ \underline{\text{End}}(M)$ in $\mathcal{C} $.
\end{theorem}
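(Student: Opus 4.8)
The plan is to construct the equivalence directly from the internal hom. Set $A=\underline{\mathrm{End}}(M)$ and define $F\colon{}_{\mathcal C}\mathcal M\to\mathcal C$ by $F(N)=\underline{\mathrm{Hom}}(M,N)$; composition of internal homs gives a map $\underline{\mathrm{Hom}}(M,N)\otimes A\to\underline{\mathrm{Hom}}(M,N)$ making each $F(N)$ a right $A$-module, and functoriality of $\underline{\mathrm{Hom}}(M,-)$ in the second variable makes $F$ a functor into the category $\mathrm{mod}\text{-}A$ of right $A$-modules in $\mathcal C$. First I would record the basic computations: $F(M)\cong A$ as a right $A$-module, and the projection formula $F(X\otimes N)\cong X\otimes F(N)$ (from rigidity and the defining adjunction $\mathrm{Hom}_{\mathcal C}(X,\underline{\mathrm{Hom}}(M_1,M_2))\cong\mathrm{Hom}_{\mathcal M}(X\otimes M_1,M_2)$), which specializes to $F(X\otimes M)\cong X\otimes A$, the \emph{free} right $A$-module on $X$. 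I would also note that $F$ is additive, hence exact since $\mathcal M$ is semisimple, and that it sends nonzero objects to nonzero ones: because ${}_{\mathcal C}\mathcal M$ is a \emph{simple} module category, the simple object $M$ is a generator — every $N\in\mathcal M$ is a direct summand of some $X\otimes M$ — so $F(N)$ is always a nonzero summand of a free module.

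The crux is full faithfulness. For $N_1,N_2\in\mathcal M$ there is a natural comparison map $\mathrm{Hom}_{\mathcal M}(N_1,N_2)\to\mathrm{Hom}_A\big(F(N_1),F(N_2)\big)$ sending $f$ to $\underline{\mathrm{Hom}}(M,f)$, whose $A$-linearity is just naturality of composition of internal homs. To prove it bijective, reduce by semisimplicity of $\mathcal M$ to the case where $N_1$ is a summand of some $X\otimes M$, and then, by additivity of both sides, to $N_1=X\otimes M$. There the source is $\mathrm{Hom}_{\mathcal M}(X\otimes M,N_2)\cong\mathrm{Hom}_{\mathcal C}(X,F(N_2))$ by the internal-hom adjunction, while the target is $\mathrm{Hom}_A(X\otimes A,F(N_2))\cong\mathrm{Hom}_{\mathcal C}(X,F(N_2))$ because $X\otimes A$ is the free $A$-module on $X$; the substantive point is to check that the comparison map is carried to the identity under these two identifications. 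This is a diagram chase through the units and counits of the two adjunctions, the associativity and module constraints, and the definition of composition of internal homs, and it is the step I expect to be the main obstacle — everywhere else one works up to isomorphism, but here the coherence isomorphisms must be pinned down.

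For essential surjectivity the cleanest route is to produce a quasi-inverse. Define $G\colon\mathrm{mod}\text{-}A\to{}_{\mathcal C}\mathcal M$ by $G(P)=P\otimes_A M$, the relative tensor product over $A$ (a coequalizer of $P\otimes A\otimes M\rightrightarrows P\otimes M$, which exists because $\mathcal M$ is abelian). The internal-hom adjunction shows $G$ is left adjoint to $F$ ($\mathrm{Hom}_{\mathcal M}(P\otimes_A M,N)\cong\mathrm{Hom}_A(P,F(N))$), and it remains to check that the unit $P\to F(G(P))$ and counit $G(F(N))\to N$ are isomorphisms; since both functors respect cokernels and $F(M)\cong A$, this reduces via the projection formula and the presentation of an $A$-module as a cokernel of a map of free modules to the free case $N=X\otimes M$, $P=X\otimes A$, where one reads off $G(F(X\otimes M))\cong X\otimes M$ and $F(G(X\otimes A))\cong X\otimes A$ from $A\otimes_A M\cong M$ and the projection formula. (Alternatively, using full faithfulness one may split the idempotent carving $P$ out of a free module $X\otimes A=F(X\otimes M)$ inside $\mathrm{End}_A(F(X\otimes M))\cong\mathrm{End}_{\mathcal M}(X\otimes M)$; this variant has the side benefit of showing that $\mathrm{mod}\text{-}A$ is semisimple, which is the one place the semisimplicity of $\mathcal M$ is genuinely used.) In either case $F$ is an equivalence ${}_{\mathcal C}\mathcal M\simeq\mathrm{mod}\text{-}A$, which is the assertion of the theorem.
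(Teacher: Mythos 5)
The paper does not prove this statement; it is quoted as background and cited to \cite{MR1976459}, whose standard argument is exactly the one you give: $N\mapsto\underline{\mathrm{Hom}}(M,N)$ as a right $\underline{\mathrm{End}}(M)$-module functor, generation of $\mathcal{M}$ by $M$ via simplicity, and the adjunction with $P\mapsto P\otimes_A M$. Your proposal is correct and follows that same route, so there is nothing to add beyond noting that the paper's conventions (right $A$-modules forming a left $\mathcal{C}$-module category) match yours.
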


\begin{example}
If $M$ comes from an $A$-$B$ bimodule for two factors, then $\underline{\text{End}} (M)= M \otimes_B \bar{M}$ where $\bar{M}$ is the contragradient $B$-$A$ bimodule.
\end{example}

When $M$ is a simple object, the algebra $ \underline{\text{End}}(M)$ is a division algebra \cite{GSbp}. 

\begin{theorem}\cite{GSbp}
The algebras $\underline{\text{End}}(X)$ and $\underline{\text{End}}(Y)$ for two objects $X$ and $Y$ in ${}_\mathcal{C} \mathcal{M}$ are isomorphic if and only if $X \cong Y g$ for some invertible object $g$ in the dual category $({}_{\mathcal{C}} \mathcal{M} )^*$.
\end{theorem}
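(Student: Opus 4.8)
The plan is to deduce both implications from two facts already available: the theorem of Ostrik cited above, which for a simple object $M$ identifies $\mathcal{M}$ with the category of right $\underline{\text{End}}(M)$-modules in $\mathcal{C}$ via $\underline{\text{Hom}}(M,-)$ (sending $M$ to the regular module), and the definition of $\mathcal{D}:=({}_{\mathcal{C}}\mathcal{M})^{*}$ as the monoidal category of left $\mathcal{C}$-module endofunctors of $\mathcal{M}$, acting on $\mathcal{M}$ on the right by evaluation. The one auxiliary statement I would prove at the outset is a naturality property of the internal hom: if $G\colon\mathcal{M}\to\mathcal{N}$ is an equivalence of left $\mathcal{C}$-module categories, then for all $M,N\in\mathcal{M}$ there is a canonical isomorphism $\underline{\text{Hom}}_{\mathcal{M}}(M,N)\cong\underline{\text{Hom}}_{\mathcal{N}}(GM,GN)$ in $\mathcal{C}$, produced by applying Yoneda to the chain of natural isomorphisms $\text{Hom}(Z,\underline{\text{Hom}}_{\mathcal{M}}(M,N))\cong\text{Hom}(Z\otimes M,N)\cong\text{Hom}(G(Z\otimes M),GN)\cong\text{Hom}(Z\otimes GM,GN)\cong\text{Hom}(Z,\underline{\text{Hom}}_{\mathcal{N}}(GM,GN))$, where the middle step uses the module constraint of $G$. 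A coherence chase then shows this isomorphism intertwines the composition maps, so that for $M=N$ it is an isomorphism of \emph{algebras} $\underline{\text{End}}_{\mathcal{M}}(M)\cong\underline{\text{End}}_{\mathcal{N}}(GM)$.

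Granting this, the ``if'' direction is immediate. If $g\in\mathcal{D}$ is invertible, right tensoring $R_{g}\colon M\mapsto Mg$ is an equivalence of left $\mathcal{C}$-module categories $\mathcal{M}\to\mathcal{M}$: it is a left $\mathcal{C}$-module functor by the mixed associativity constraint of the $\mathcal{C}$-$\mathcal{D}$-bimodule category $\mathcal{M}$, and $R_{g^{-1}}$ is a quasi-inverse. Applying the auxiliary statement to $G=R_{g}$ and $M=Y$ gives an algebra isomorphism $\underline{\text{End}}(Yg)\cong\underline{\text{End}}(Y)$; since $X\cong Yg$, we get $\underline{\text{End}}(X)\cong\underline{\text{End}}(Y)$.

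For the ``only if'' direction I would first reduce to the case that $X$ and $Y$ are simple, which suffices for our purposes: if $X$ is simple then $\underline{\text{End}}(X)$ is a division algebra, hence so is the isomorphic algebra $\underline{\text{End}}(Y)$. Put $A=\underline{\text{End}}(Y)$, $B=\underline{\text{End}}(X)$, and fix an algebra isomorphism $\phi\colon B\xrightarrow{\sim}A$. By Ostrik's theorem, $\underline{\text{Hom}}(Y,-)$ and $\underline{\text{Hom}}(X,-)$ are equivalences of left $\mathcal{C}$-module categories $\mathcal{M}\to\text{mod-}A$ and $\mathcal{M}\to\text{mod-}B$ carrying $Y$, respectively $X$, to the regular module; and restriction of scalars along $\phi^{-1}$ is an equivalence $\text{mod-}B\to\text{mod-}A$ of left $\mathcal{C}$-module categories that carries the regular module to the regular module. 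Composing these three equivalences yields a left $\mathcal{C}$-module auto-equivalence $F$ of $\mathcal{M}$ with $F(X)\cong Y$, hence $F^{-1}(Y)\cong X$. But $F^{-1}$, being a left $\mathcal{C}$-module auto-equivalence of $\mathcal{M}$, is by definition an invertible object $g$ of $\mathcal{D}$, and the right $\mathcal{D}$-action on $\mathcal{M}$ sends $Y$ to $F^{-1}(Y)$; therefore $X\cong Yg$ with $g$ invertible, as desired. (Equivalently and more concretely, one checks that $\underline{\text{Hom}}(Y,X)$ is an invertible $B$-$A$-bimodule, transports it across $\phi$ to an invertible object $g$ of $\mathcal{D}$, and verifies $Yg\cong X$.)

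The step I expect to be the main obstacle is the one in the first paragraph: showing that the naturality isomorphism of the internal hom respects composition, i.e.\ upgrading $\underline{\text{End}}_{\mathcal{M}}(M)\cong\underline{\text{End}}_{\mathcal{N}}(GM)$ from an isomorphism of objects of $\mathcal{C}$ to an isomorphism of algebras; this is a coherence computation using the associativity axiom for the module constraint of $G$. Two smaller points to spell out are that restriction of scalars along an algebra isomorphism is genuinely an equivalence of $\mathcal{C}$-module categories fixing the regular module, and that Ostrik's equivalence $\underline{\text{Hom}}(M,-)$ does send $M$ to the regular $\underline{\text{End}}(M)$-module; both are built into the standard constructions. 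The remainder is formal.
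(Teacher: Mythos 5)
Your argument is correct, but note that this paper never proves the statement at all: it is quoted as background from \cite{GSbp}, so there is no in-paper proof to compare against. What you write is essentially the standard argument (and the one underlying \cite{GSbp}): the ``if'' direction follows because an invertible object of $({}_{\mathcal{C}}\mathcal{M})^*$ acts as a $\mathcal{C}$-module autoequivalence, and module equivalences carry $\underline{\text{Hom}}(M,N)$ to $\underline{\text{Hom}}(GM,GN)$ compatibly with composition; the ``only if'' direction composes the two Ostrik equivalences $\underline{\text{Hom}}(X,-)$, $\underline{\text{Hom}}(Y,-)$ with restriction of scalars along the algebra isomorphism to produce a $\mathcal{C}$-module autoequivalence of $\mathcal{M}$ taking $X$ to $Y$, i.e.\ an invertible object of the dual category doing the job. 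One small point: your ``reduction to simple objects'' is not needed and as phrased does not cover the general statement (the theorem is asserted for arbitrary objects $X,Y$). It is harmless, though, because in a simple (indecomposable semisimple) module category over a fusion category every nonzero object is a generator, so $\underline{\text{Hom}}(Y,-)\colon\mathcal{M}\to\text{mod-}\underline{\text{End}}(Y)$ is an equivalence sending $Y$ to the regular module for any nonzero $Y$, and your composite-equivalence argument goes through verbatim without any simplicity hypothesis; you should either say this or drop the reduction. The coherence check that the Yoneda isomorphism respects the composition (hence gives an algebra isomorphism of internal endomorphisms) is indeed the only fiddly step, and it is routine.
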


\subsection{Subfactors}
The theory of standard invariants of finite-index subfactors has been developed both for Type II$_1$ and for properly infinite factors. The classes of standard invariants which arise in the two settings are the same, so the choice of setting is in some sense a matter of taste; however certain types of calculations or constructions may be easier in one setting or the other. In this paper we use infinite factors in order to utilize a construction of subfactors from endomorphisms of Cuntz algebras introduced by the second-named author in \cite{MR1228532}, which uses Type III factors.

There are two standard ways of building tensor categories related to an algebra $A$.  The first is familiar algebraically, namely one considers a collection of bimodules over $A$ which is closed under tensor product $\otimes_A$.  When $A$ is a von Neumann algebra, one needs to be a bit careful about whether one is considering Hilbert bimodules with the Connes fusion product or algebraic bimodules with the algebraic tensor product.  Happily, in the finite index setting for II$_1$ factors, either choice leads to the same tensor category \cite{MR2501843}.

The second construction is called the category of sectors and is less familiar to algebraists.  Nonetheless it has a nice categorical description which we give here.  If $\mathcal{C}$ is a linear category then there is a monoidal category whose objects are the tensor automorphisms of $\mathcal{C}$ and whose morphisms are tensor natural transformations.  The simplest kind of category is one with only one object $M$.  In this setting the monoidal category of functors and natural transformations is called the category of sectors and written $\mathcal{C}(M) $.  

If the underlying object is an algebra $M$,  then $\mathcal{C}(M) $ can be described explicitly as follows.  The objects are the endomorphisms of $M$ and the morphisms are given by 
$$\text{Hom}(\rho, \sigma)=\{ v \in M : v \rho(x) = \sigma(x) v, \ \forall x \in M \}  $$
for $\rho, \sigma \in \text{End}(M) $.  The composition of morphisms is given by multiplication in $M$, and the tensor product of objects is given by composition of endomorphisms.  Finally, the tensor product of morphisms is given by the asymmetrical formula determined by composition of natural transformations, if $v \in (\rho, \sigma)$ and $u \in (\alpha, \beta)$ then $u \otimes v \in (\alpha \circ \rho, \beta \circ \sigma)$ is given by $\beta(v) u$.


If $M$ is a Type III factor with separable predual, then any nice bimodule comes from an endomorphism, and so the category of sectors for $M$ is especially appropriate.
We use some standard definitions.  We consider the category $\mathcal{C}_0(M) $ of finite-index unital normal $*$-endomorphisms of $M$.
A sector in $\mathcal{C}_0(M) $ is an isomorphism class of objects; the sector associated
to an endomorphism $\rho \in \text{End}_0(M) $ is denoted by $[\rho] $. We often simply write $ (\rho, \sigma)$ for $\text{Hom}(\rho, \sigma)$.  
For a self-dual endomorphism $\rho \in \text{End}_0(M) $, let  $\mathcal{C}_{\rho} $
be the tensor category tensor generated by $\rho $; i.e. it is the full subcategory of 
$\mathcal{C}_0(M) $ whose objects are exactly those which are contained in some tensor power
of $\rho $. We say that $\rho $ has finite depth if $\mathcal{C}_{\rho}$ has finitely many simple objects up to isomorphism; in this case $\mathcal{C}_{\rho}$ is a fusion category.


We will often use sector notation for arbitrary tensor categories and bimodule categories, so objects are denoted by lowercase Greek letters, tensor product symbols are suppressed and $(\kappa, \lambda) := \text{dim}(\text{Hom}( \kappa,\lambda))$.

\begin{definition}\cite{MR1257245}
A Q-system is an algebra $\gamma$ in $\mathcal{C}_0(M) $ such that the unit map 
$R \in (id,\gamma) $ is an isometry and the multiplication map $T \in (\gamma^2,\gamma) $ is a co-isometry.
\end{definition}

If $\gamma $ is a Q-system in $\mathcal{C}_0(M) $, then there is another properly infinite factor $N$ and a pair of finite-index unital $*$-homomorphisms $\iota: N \rightarrow M $ and $\bar{\iota}:
M \rightarrow N $ such that $\iota \bar{\iota} = \gamma $. Then $\sigma =\bar{\iota} \iota $ is a Q-system
in  $\mathcal{C}_0(N) $. The homomorphisms $ \rho^n \iota, n \in \mathbb{N} $ generate a subcategory $\mathcal{M}_{\iota} $ of the category $\text{Hom}_0(N,M)$ of finite-index homomorphisms from $N$ to $M$ (where morphisms are defined exactly as for $\text{End}_0 $). Then $\mathcal{M}_{\iota} $ is an invertible $\mathcal{C}_{\gamma}-\mathcal{C}_{\sigma}$ bimodule category where the tensor product is given by composition. If $\gamma $ is a finite-depth endomorphism, then $\mathcal{C}_{\gamma}$ is a fusion category, $\mathcal{M}_{\iota} $ is equivalent to the
category of $\gamma=\underline{\text{End}} (\iota)$ modules in $\mathcal{C}_{\gamma} $, and  $\mathcal{C}_{\sigma}$ is then equivalent to the category of $\gamma-\gamma $ bimodules in $\mathcal{C}_{\gamma} $.

Conversely, if $N \subseteq M $ is a finite-index subfactor with inclusion map $\iota: N \rightarrow  M$, then there is a dual homomorphism $\bar{\iota}: M \rightarrow N$ such that $\sigma=\bar{\iota}\iota $ is a Q-system in $\mathcal{C}_0(N)$, called the canonical endomorphism, and $ \gamma= \iota \bar{\iota} $ is a Q-system in $\mathcal{C}_0(M)$, called the dual canonical endomorphism. The subcategories of  $\mathcal{C}_0(N)$ and  $\mathcal{C}_0(M)$ tensor generated by $\sigma $ and $\gamma $ are called the principal and dual even parts of the subfactor, respectively.

Finally, we recall the following calculation from \cite{ MR2418197}. Suppose $\gamma \cong id \oplus \rho $ is a finite-index endomorphism with $\rho $ a self-conjugate irreducible endomorphism not isomorphic to the identity, and let $R \in (id,\rho^2 )$ be an isometry. Then Q-systems for $\gamma $ correspond to isometries $S \in (\rho,\rho^2 )$ satisfying
$$\rho(S)R=SR, \quad  \sqrt{d}R+(d-1)S^2=\sqrt{d}\rho(R)+(d-1)\rho(S)S,$$
where $d=d(\rho)=[M:\rho(M)]^{\frac{1}{2}} $ is the statistical dimension of $\rho $. Two such Q-systems corresponding to $S$ and $S'$ are equivalent iff $S=\pm S' $.

\subsection{The Brauer--Picard groupoid and related higher structures}

There are several closely related higher categorical constructions important to the study of subfactors, which we describe in this section.  All of them are motivated by trying to study not just a single division algebra in a fusion category $\mathcal{C}$, but all division algebras at once.  Ocneanu called this the ``maximal atlas'' and it can be formalized in several ways.

The simplest construction is to look at all division algebras in $\mathcal{C}$ (or, more generally, the Frobenius algebra objects in $\mathcal{C}$) and all bimodule objects between them.  These bimodules form a $2$-category as follows: the objects are division algebras in $\mathcal{C} $, the $1$-morphisms are bimodules between division algebras, and the $2$-morphisms are bimodule maps. The composition of $1$-morphisms in this $2$-category is given by the relative tensor product over the common algebra.  Furthermore, this $2$-category is rigid in the sense that every bimodule has a dual bimodule \cite{MR2075605}.  This rigidity is often called Frobenius reciprocity.  This construction has a strong subfactor flavor.  Indeed one can think of the objects of this $2$-category as a certain finite collection of von Neumann algebras, the $1$-morphisms as a finite collection of finite index bimodules between them which are closed under tensor product, and the $2$-morphisms as bimodule maps.

The second construction looks beyond the original fusion category $\mathcal{C}$ and thus is a little more difficult to understand from a pure subfactor point of view.  Any division algebra in $\mathcal{C}$ gives a Morita equivalence mod-$A$ between the fusion category $\mathcal{C}$ and the fusion category of $A$-$A$ bimodules.  Thus, it is natural to study all bimodule categories between all fusion categories in the Morita equivalence class of $\mathcal{C}$.  On the other hand, different algebras can give rise to the same module category, so we can consider a structure that is not defined in terms of specific algebras.

\begin{definition}\cite{MR2677836}
The Brauer-Picard $3$-groupoid of a fusion category $\mathcal{C} $ is the $3$-groupoid whose objects are fusion categories Morita equivalent to $ \mathcal{C}$, whose $1$-morphisms are invertible bimodule categories between such fusion categories, whose $2$-morphisms are equivalences between such bimodule categories, and whose  $3$-morphisms are isomorphisms of such equivalences.

This $3$-groupoid can be truncated to an ordinary groupoid whose points are the fusion categories which are Morita equivalence to $\mathcal{C}$ and whose arrows are equivalence classes of Morita equivalences.  The group of Morita autoequivalences of a fusion category, considered modulo equivalence of bimodule categories, forms a group, called the Brauer-Picard group. The Brauer-Picard group is an invariant of the Morita equivalence class.
\end{definition}

Typically groupoids are considered up to equivalence. For example the trivial $1$-point groupoid is equivalent to the groupoid with $n$ points and exactly one arrow between any pair of them even though they have a different number of points.  We often consider a finer invariant of the Brauer--Picard groupoid.  Namely, consider the groupoid whose points are fusion categories up to ordinary equivalence (not Morita equivalence!) and whose arrows are equivalence classes of Morita equivalences.  By Ocneanu rigidity \cite{MR2183279}, this gives a groupoid with a finite number of points.  When we say something like the Brauer--Picard groupoid has exactly $6$ points and exactly $4$ arrows between each of them, we are referring to this refined version.

An autoequivalence of a fusion category $\mathcal{C} $ is called inner
if it is equivalent as a monoidal functor to conjugation by an invertible object in $\mathcal{C} $. The group $\text{Out}(\mathcal{C}) $ of autoequivalences of a fusion category $\mathcal{C} $ modulo inner autoequivalences is a subgroup of the Brauer-Picard group, via the map sending $\alpha$ to the bimodule ${}_{\mathcal{C}} \mathcal{C} {}_{\alpha(\mathcal{C})}$ where the right action is twisted by $\alpha$.

\begin{example}
The Brauer--Picard groupoid of $\mathrm{Vec}(\mathbb{Z}/p\mathbb{Z})$ has exactly one point.  The group of outer automorphisms is the group of ordinary outer automorphisms of $\mathbb{Z}/p\mathbb{Z}$ acting in the obvious way.
The Brauer--Picard group is the dihedral group $(\mathbb{Z}/p\mathbb{Z})^\times \rtimes \mathbb{Z}/2\mathbb{Z}$ where the subgroup is the group of outer automorphisms and the other coset comes from realizing $\mathrm{Vec}$ as a bimodule category over $\mathbb{Z}/p\mathbb{Z}$.  Such bimodules can be realized as follows: start with the Morita equivalence between $\mathrm{Vec}(\mathbb{Z}/p\mathbb{Z})$ and $\mathrm{Rep}(\mathbb{Z}/p\mathbb{Z})$ from Example \ref{ex:standardmorita} and then pick an equivalence $\mathrm{Vec}(\mathbb{Z}/p\mathbb{Z}) \cong \mathrm{Rep}(\mathbb{Z}/p\mathbb{Z})$.
\end{example}

\begin{example}
If $G$ is a non-abelian group, then the Brauer--Picard groupoid of $\mathrm{Vec}(G)$ has at least two distinct points since $\mathrm{Vec}(G)$ and $\mathrm{Rep}(G)$ are Morita equivalent via the bimodule from Example \ref{ex:standardmorita}.  Specifically the Brauer--Picard groupoid of $\mathrm{Vec}(S_3)$ has exactly two points, and exactly two Morita equivalences between each points.  The nontrivial Morita autoequivalence of $\mathrm{Rep}(S_3)$ is given by $\mathrm{Rep}(S_2)$ (this corresponds to the $A_5$ subfactor with index $3$).
\end{example}

In general there may be more than one equivalence ${}_{\mathcal{C}}{\mathcal{M}} {}_{\mathcal{D}}   \boxtimes_{\mathcal{D}}  {}_{\mathcal{D}}{\mathcal{N}} {}_{\mathcal{E}} \cong {}_{\mathcal{C} } {\mathcal{L} } {}_{\mathcal{E} }$.  In fact, such choices of equivalence are a torsor for $\pi_2$ of the Brauer--Picard groupoid.  In particular, for the Asaeda--Haagerup subfactor there is no such ambiguity, since $\pi_2 $ is trivial \cite{  GJSext}.  Similarly, if $\pi_2$ were nontrivial then one has to be more careful about the associativity of these multiplication maps.   

There is a third  construction which unifies the above two points of view and elucidates the associativity of composition of bimodules.  Although we will not use this third construction in this paper it may clarify the above constructions conceptually.  One can consider a $3$-category whose objects are pairs $A \in \mathcal{C}$ of a Frobenius algebra object in a spherical fusion category, whose $1$-morphisms are pairs ${}_A M_B \in {}_\mathcal{C}\mathcal{M}_{\mathcal{D}}$ of a bimodule object in a bimodule category, whose $2$-morphisms are pairs of a bimodule functor $\mathcal{F}: \mathcal{M} \rightarrow \mathcal{N}$ and a bimodule map $f: \mathcal{F}(m) \rightarrow n$, and whose $3$-morphisms are pairs of bimodule natural transformations satisfying a compatibility condition.

\subsection{Combinatorics of fusion and module categories}

A fusion category contains a lot of information. At the first level there is the combinatorial information of how the objects tensor, but then at the second level there is the linear algebraic information of all the morphisms and their compositions and tensor product.  The combinatorial data is typically a rather weak invariant of the fusion category.  However, once one considers the whole Brauer--Picard groupoid, the combinatorics for tensoring all the objects in all the tensor categories and all the bimodules is a lot richer.

The fusion ring of a fusion category $\mathcal{C} $ is the based ring with basis indexed by the equivalence classes of  simple objects of $\mathcal{C} $, with addition given by direct sum and multiplication given by tensor product. There is also an involution, defined on basis elements by duality in $\mathcal{C} $.  There is a unique homomorphism from the fusion ring to $\mathbb{R} $ which takes basis elements to positive numbers, called the Frobenius-Perron dimension, and denoted by $d$.  

In a similar way, module categories and bimodule categories determine based modules and bimodules over the fusion rings of the corresponding fusion categories, which we call fusion modules and fusion bimodules.  These modules are assigned dimension functions, also denoted by $d$, as follows: for each basis element $m $, set $d(m)=\sqrt{d(\underline{\text{End}}(M))} $, where $M$ is the corresponding simple object in the module category. Then $d$ is multiplicative for the module multiplication as well, and does not depend on whether one takes the left or right internal end in invertible bimodule categories. 

If $A$ is an algebra in $\mathcal{C}$ then the principal graph of $A$ is the induction-restriction graph between $\mathcal{C}$ and $A$-mod, while the dual principal graph is the induction-restriction graph between $A$-mod and $A$-mod-$A$.  The principal graphs record only part of the fusion bimodule structure, namely the fusion rules for tensoring with the basic bimodules ${}_{1}A_A$ and ${}_{A}A_1$. The principal and dual graphs of a finite-depth subfactor $N \subset M $ are the graphs of the algebras $\bar{\iota} \iota $ and $ \iota \bar{\iota} $ in the principal and dual even parts, where $\iota: N \rightarrow M $ is the inclusion map.

We refer to \cite{GSbp} for the definitions of fusion modules and bimodules, and multiplicative compatibility for triples of fusion modules and bimodules; however we briefly summarize the basic idea.

Let $ {}_{\mathcal{C}}{\mathcal{M}} {}_{\mathcal{D}}$ and ${}_{\mathcal{D}}{\mathcal{N}} {}_{\mathcal{E}}$ be invertible bimodule categories over fusion categories, and let ${}_{\mathcal{C}}{\mathcal{M}} {}_{\mathcal{D}}   \boxtimes_{\mathcal{D}}  {}_{\mathcal{D}}{\mathcal{N}} {}_{\mathcal{E}} \cong {}_{\mathcal{C} } {\mathcal{L} } {}_{\mathcal{E} }$. Let ${}_C M {}_D$, $ {}_D N {}_E$, and $ {}_C  L {}_E $ be the 
corrseponding  fusion bimodules over fusion rings. Then the equivalence of bimodule categories induces a bimodule homomorphism from ${}_C M {}_D \otimes_D {}_D N {}_E  $ to $ {}_C  L {}_E  $ such that if $m $ and $n$ are basis elements in $ {}_C M {}_D$ and 
${}_D N {}_E  $ respectively, then the image of $m \otimes n $ is a non-negative combination
of basis elements of  $ {}_C  L {}_E  $; moreover, this homomorphism preserves Frobenius-Perron dimension and multiplication by duals in the fusion bimodules. The existence of such a homomorphism places strong combinatorial constraints on the triple $({}_C M {}_D, {}_D N {}_E, {}_C  L {}_E )$, which can be checked tediously by hand or quickly with a computer.

\begin{remark}
As mentioned above, in general one needs to be a bit careful about the associativity of compositions of bimodule categories, and this is true even at the level of fusion bimodules.  For AH, the vanishing of $\pi_2$ guarantees that the composition of bimodules is well-defined, and also that these compositions are associative at the fusion bimodule level.  (Whether one can give compatible associators is a more subtle question which involves $\pi_3$ as well.)  Nonetheless we will never need to use this uniqueness or associativity.
\end{remark}


\subsection{The Asaeda-Haagerup fusion categories}
The Asaeda-Haagerup subfactor, constructed in \cite{MR1686551}, is a finite depth subfactor with index $\frac{5+\sqrt{17}}{2} $
and principal and dual graph pair
$$\vpic{AHpg_unlabeled} {1.4in}  , \vpic{AHdualpg} {1.4in}  .$$
A subfactor with index $\frac{7+\sqrt{17}}{2} $  and principal and dual graph pair $$ \vpic{AHp1} {1.4in}  , \vpic{AHp1dual} {1.4in}  $$
was constructed in \cite{MR2812458}, and a third subfactor with index $\frac{9+\sqrt{17}}{2} $  and principal and dual graph pair
$$ \vpic{AHp2dual } {1.4in},  \vpic{AHp2dual } {1.4in}  $$ 
was constructed in \cite{GSbp}.

Let $\mathcal{AH}_2$, $\mathcal{AH}_3$, and $\mathcal{AH}_1  $ be the principal even parts of these three subfactors, respectively ($\mathcal{AH}_1 $ is also the dual even part of all three subfactors).    The fusion ring of $\mathcal{AH}_i$ is denoted $AH_i$ and the structure contants for these fusion rings are given in the Appendix.  In \cite{GSbp} all possible fusion modules and fusion bimodules over these $AH_i$ were computed.  Each of these fusion modules can be described by a small list of small matrices with small integer entries (here small means single digit) where each matrix describes the combinatorics of tensoring with one of the simple objects.  


Our main result from that paper was a description of all Morita equivalences between any two of the $\mathcal{AH}_i$.

\begin{theorem}
There are exactly four invertible bimodule categories over each not-necessarily-distinct pair $\mathcal{AH}_i-\mathcal{AH}_j$, up to equivalence. 


Furthermore, the Brauer-Picard group of each $\mathcal{AH}_i$ is $\mathbb{Z}/2\mathbb{Z} \times \mathbb{Z}/2\mathbb{Z} $. 
\end{theorem}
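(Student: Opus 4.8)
The plan is to reduce the classification of invertible bimodule categories to a finite combinatorial problem, to solve that problem by computer, and then to show that each combinatorial solution categorifies in an essentially unique way. First I would observe that the truncated groupoid on $\{\mathcal{AH}_1,\mathcal{AH}_2,\mathcal{AH}_3\}$ is connected: the Asaeda--Haagerup subfactor of index $\frac{5+\sqrt{17}}{2}$ supplies an invertible $\mathcal{AH}_2$--$\mathcal{AH}_1$ bimodule category (as its principal and dual even parts), and the index $\frac{7+\sqrt{17}}{2}$ subfactor supplies an invertible $\mathcal{AH}_3$--$\mathcal{AH}_1$ bimodule category. Hence every Hom-set of the truncated groupoid has the same cardinality, equal to the order of the Brauer--Picard group $\mathrm{BP}(\mathcal{AH}_1)$, and it suffices to prove that $\mathrm{BP}(\mathcal{AH}_1)$ has order $4$ and is isomorphic to $\mathbb{Z}/2\mathbb{Z}\times\mathbb{Z}/2\mathbb{Z}$.

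For the upper bound, any invertible bimodule category ${}_{\mathcal{AH}_i}\mathcal{M}_{\mathcal{AH}_j}$ determines a fusion bimodule over the based rings $AH_i$ and $AH_j$, whose structure constants are recorded in the Appendix, and this fusion bimodule is combinatorially invertible in the sense that its composition with its opposite is the trivial fusion bimodule ${}_{\mathcal{C}}\mathcal{C}_{\mathcal{C}}$. The plan is to enumerate all such fusion bimodules for each pair $(i,j)$, imposing the rigidity and dimension constraints: the dimension function $d(m)=\sqrt{d(\underline{\mathrm{End}}(M))}$ must be multiplicative and must agree on the two sides, duality must act compatibly, and the multiplicative compatibility of triples must hold for the composition with the opposite. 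These are finitely many conditions on small-integer matrices and can be verified by computer; the expected output is that at most four fusion bimodules survive in each case, in particular at most four of type ${}_{AH_1}M_{AH_1}$.

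For the lower bound and the group structure, I would exhibit four pairwise inequivalent invertible $\mathcal{AH}_1$--$\mathcal{AH}_1$ bimodule categories: the trivial one; a nontrivial outer autoequivalence of $\mathcal{AH}_1$, arising from an order-$2$ symmetry of the fusion ring $AH_1$ via $\alpha\mapsto{}_{\mathcal{AH}_1}(\mathcal{AH}_1)_{\alpha(\mathcal{AH}_1)}$; the self-Morita-equivalence given by the $N$--$M$ bimodules of the index $\frac{9+\sqrt{17}}{2}$ subfactor, whose principal and dual even parts are both $\mathcal{AH}_1$; and the composite of the previous two. That these four are distinct is already visible in the associated fusion bimodules, which by the enumeration above are four distinct objects; matching with the upper bound yields exactly four. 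Finally, computing the square of each of the three nontrivial fusion bimodules shows that each squares to the trivial one, so $\mathrm{BP}(\mathcal{AH}_1)$ is a group of order $4$ in which every element has order at most $2$, hence $\mathrm{BP}(\mathcal{AH}_1)\cong\mathbb{Z}/2\mathbb{Z}\times\mathbb{Z}/2\mathbb{Z}$, which is exactly the Klein $4$-group.

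The step I expect to be the main obstacle is passing from the combinatorial classification of fusion bimodules to the categorical one, namely showing that each surviving fusion bimodule lifts to \emph{at most one} invertible bimodule category up to equivalence. This amounts to controlling the associator data on each bimodule category: one wants the relevant deformation and obstruction spaces to vanish (an instance of Ocneanu rigidity for module categories), or, failing a clean cohomological argument, to solve the pentagon-type associativity equations directly over $\mathbb{Q}(\sqrt{17})$, where the arithmetic is delicate. A secondary difficulty is organizing the computer enumeration of fusion bimodules so that completeness, rather than mere practical exhaustiveness, is guaranteed.
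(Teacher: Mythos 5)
This statement is quoted in the paper from \cite{GSbp}; no proof is reproduced here, so the comparison is with the cited argument, whose broad shape (computer enumeration of fusion modules/bimodules over the based rings $AH_i$ plus explicit realizations) your plan does resemble. However, two of your specific steps fail. The proposed nontrivial outer autoequivalence of $\mathcal{AH}_1$ does not exist as described: the six simple objects of $\mathcal{AH}_1$ have pairwise distinct Frobenius--Perron dimensions ($1$, $\tfrac{d-1}{2}$, $\tfrac{d+1}{2}$, $\tfrac{d+3}{2}$, $d$, $\tfrac{3d-1}{2}$ with $d=4+\sqrt{17}$), so the based ring $AH_1$ admits no nontrivial automorphism, let alone an order-$2$ one. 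In fact the nontrivial Morita autoequivalences of $\mathcal{AH}_{1-3}$ are not of automorphism type at all; for instance the paper records a nontrivial autoequivalence of $\mathcal{AH}_2$ whose underlying bimodule category has simple objects of dimensions $\tfrac{d+1}{2},\tfrac{d+1}{2},\tfrac{d+1}{2},\tfrac{d+1}{2},d-1,d-1,d+1$. So your lower bound exhibits only two of the four classes (the trivial one and the odd part of the index $\tfrac{9+\sqrt{17}}{2}$ subfactor); the remaining autoequivalences have to be produced, as in \cite{GSbp}, from algebra objects and compositions of the known Morita equivalences, not from fusion-ring symmetries.

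Second, the step you yourself flag as the main obstacle is where the real content lies, and your proposed repair does not close it: Ocneanu rigidity gives finiteness of module categories over a fixed fusion category, not that a given fusion bimodule admits at most one categorification, and no abstract uniqueness principle of that kind is available. In \cite{GSbp} this is handled concretely: invertible bimodule categories are encoded by division algebras (internal Ends of simple objects), whose underlying objects and dimensions are read off the fusion module data, and uniqueness/realization is settled by operator-algebraic and skein-theoretic results --- uniqueness of Q-systems up to sign, the unique algebra structure on $1\oplus X$ for suitably supertransitive $X$, and uniqueness of the subfactor (connection) with the given principal graph pair --- rather than by lifting each combinatorial solution abstractly. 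Relatedly, your computation of the group structure by ``squaring fusion bimodules'' needs justification: multiplicative compatibility of a triple is a constraint, not a well-defined composition, so to conclude each nontrivial element has order two you must check that only the trivial fusion bimodule is compatible with each such square; also be aware that the combinatorial enumeration alone need not return exactly four candidates per pair, and part of the count of four in \cite{GSbp} comes from the realization and obstruction arguments, not from the enumeration by itself.
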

The fusion rules for all $36 $ bimodule categories between the various $\mathcal{AH}_i-\mathcal{AH}_j $ were given in an online appendix to \cite{GSbp}. In addition, the combinatorial possibilities for any other fusion categories in the same Morita equivalence class were severely constrained.

\begin{theorem} \cite[Theorem 6.10]{GSbp} \label{cases}
 
 Let $\mathcal{C}^{(k)} $ be a fusion category which is Morita equivalent to the $\mathcal{AH}_i$ (for $i =1,2,3$), but not equivalent to any of them. Then there is a triple of fusion bimodules
 $(K_{AH_1}^{(k)} , L_{AH_2}^{(k)}, M_{AH_3}^{(k)} ) $ such that every $\mathcal{C}-\mathcal{AH}_1 $ Morita equivalence realizes $K_{AH_1}^{(k)} $, every $\mathcal{C}-\mathcal{AH}_2 $ Morita equivalence realizes $L_{AH_2}^{(k)}$, and every $\mathcal{C}-\mathcal{AH}_3 $ Morita equivalence realizes $L_{AH_3}^{(k)}$.
 
 Moreover, there are exactly four possibilities for the triple $(K_{AH_1}^{(k)} , L_{AH_2}^{(k)}, M_{AH_3}^{(k)}  )$. \end{theorem}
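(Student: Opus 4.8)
The plan is to turn the statement into a finite combinatorial search, carried out with the multiplicative compatibility constraints for triples of fusion bimodules and organized around the data already established in \cite{GSbp}: the complete list of fusion modules and fusion bimodules over the $AH_i$, the fact that there are exactly four Morita equivalences between each pair $\mathcal{AH}_i$--$\mathcal{AH}_j$, and the fact that the Brauer--Picard group of each $\mathcal{AH}_i$ is the Klein four-group. Since $\pi_2$ vanishes for the Asaeda--Haagerup categories, composition of invertible bimodule categories --- and hence of the underlying fusion bimodules --- is well defined and associative at the combinatorial level, so all of the composites below make sense.

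First I would note that a fusion category $\mathcal{C}$ Morita equivalent to $\mathcal{AH}_1$ is the dual category of an indecomposable semisimple module category over $\mathcal{AH}_1$, so its Frobenius--Perron dimension equals that of $\mathcal{AH}_1$; in particular its rank and structure constants are bounded, leaving only finitely many candidate fusion rings for $\mathcal{C}$, and for each such ring only finitely many candidate fusion bimodules ${}_{\mathcal{C}}K_{AH_1}$ whose right restriction lies on the \cite{GSbp} list of fusion modules over $AH_1$. I would then impose consistency around the triangle $\mathcal{C}, \mathcal{AH}_1, \mathcal{AH}_2, \mathcal{AH}_3$: for each of the four known $\mathcal{AH}_1$--$\mathcal{AH}_2$ fusion bimodules $N$, the composite $K \otimes_{AH_1} N$ must be multiplicatively compatible and its resulting $\mathcal{C}$--$\mathcal{AH}_2$ fusion bimodule must again have right restriction on the list of fusion modules over $AH_2$, and likewise with $AH_3$. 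Discarding those candidates whose $\mathcal{C}$ is actually one of $\mathcal{AH}_1, \mathcal{AH}_2, \mathcal{AH}_3$, this three-way consistency is what collapses the candidate set.

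Next I would show that every $\mathcal{C}$--$\mathcal{AH}_1$ Morita equivalence realizes the \emph{same} fusion bimodule $K^{(k)}_{AH_1}$. Given two invertible $\mathcal{C}$--$\mathcal{AH}_1$ bimodule categories $\mathcal{M}_1, \mathcal{M}_2$, the category $\mathcal{M}_1^{\mathrm{op}} \boxtimes_{\mathcal{C}} \mathcal{M}_2$ is an invertible $\mathcal{AH}_1$--$\mathcal{AH}_1$ bimodule category $g$, and $\mathcal{M}_2 \cong \mathcal{M}_1 \boxtimes_{\mathcal{AH}_1} g$; since the Brauer--Picard group of $\mathcal{AH}_1$ is the Klein four-group, it therefore suffices to check on the finite list that $K \otimes_{AH_1} [g] \cong K$ as fusion bimodules for each of the four fusion bimodules $[g]$ representing that group --- a direct computation with the small integer matrices. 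The identical check over $\mathcal{AH}_2$ and $\mathcal{AH}_3$ makes $L^{(k)}_{AH_2}$ and $M^{(k)}_{AH_3}$ well defined, and the compatibility of the composites $K \otimes_{AH_1} N$ with the known $\mathcal{AH}_i$ data forces the three entries of a surviving triple to be linked, so the triple is determined by any one of its members.

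Finally, to conclude that there are exactly four triples, I would list the candidates surviving all of the above filters, exhibit them explicitly as $(K^{(k)}_{AH_1}, L^{(k)}_{AH_2}, M^{(k)}_{AH_3})$ for $k = 1, \dots, 4$, check that no two coincide, and check that each is mutually multiplicatively compatible with all $36$ of the known bimodules among the $\mathcal{AH}_i$. I expect the main obstacle to be the size and bookkeeping of this search: the compatibility conditions are the ``tedious by hand, quick by computer'' constraints described in the preliminaries, and one must organize them so the enumeration provably terminates while relying essentially on the completeness of the \cite{GSbp} classification of fusion modules and bimodules over the $AH_i$. A secondary subtlety is that the fusion ring of $\mathcal{C}$ is not known a priori, so the search must range over candidate fusion rings as well; it is the requirement of simultaneous consistency against $\mathcal{AH}_1$, $\mathcal{AH}_2$, and $\mathcal{AH}_3$ --- rather than against any single one --- that makes this finite search actually pin down the four possibilities.
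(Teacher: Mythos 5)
First, a point of orientation: this paper does not prove the statement at all --- it is imported verbatim as \cite[Theorem 6.10]{GSbp}, with only the four resulting triples reproduced in the Appendix --- so your proposal can only be compared with the approach of \cite{GSbp} as it is described here (complete enumeration of fusion modules and bimodules over the known rings $AH_1$, $AH_2$, $AH_3$, filtered by multiplicative compatibility of triples). Your overall philosophy matches that: a finite combinatorial search organized around the three-way consistency against $\mathcal{AH}_1$, $\mathcal{AH}_2$, $\mathcal{AH}_3$ simultaneously, and your reduction of the ``every Morita equivalence realizes the same module'' claim to composing with the four Brauer--Picard group elements is the right move.

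Where you genuinely diverge, and where the divergence matters, is the first step: you propose to enumerate candidate fusion rings for the unknown category $\mathcal{C}$ (bounded by its Frobenius--Perron dimension) together with candidate $\mathcal{C}$--$AH_1$ bimodules. Note that the classified data $(K_{AH_1}^{(k)}, L_{AH_2}^{(k)}, M_{AH_3}^{(k)})$ are recorded in the Appendix purely as fusion modules over the \emph{known} rings --- the $\mathcal{C}$-side is not part of the data --- and the workable argument never ranges over hypothetical rings for $\mathcal{C}$: one classifies the (small, explicitly listable) fusion modules over each $AH_i$, links them by compatibility with the $36$ known $AH_i$--$AH_j$ fusion bimodules, and only afterwards constrains the dual ring via Frobenius reciprocity, exactly as Lemma \ref{duallem} and Section \ref{sec:BP} do for the surviving cases. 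Your search over fusion rings of $\mathcal{C}$ is finite only in a vacuous sense: the global dimension bound allows rank and structure constants in the hundreds, so the enumeration you describe could not actually be carried out, whereas the module-level search is the ``small matrices with small integer entries'' computation the paper alludes to. Two smaller cautions: your appeal to the vanishing of $\pi_2$ (from \cite{GJSext}) to make composition of fusion bimodules well defined and associative is something this paper explicitly avoids relying on (``we will never need to use this uniqueness or associativity''), and the compatibility-of-triples formulation (mere existence of a dimension- and duality-preserving homomorphism) suffices and sidesteps any dependence on later results; and the invariance $K \otimes_{AH_1}[g] \cong K$ under the Klein four-group is a case-specific computational fact --- the paper's remark after the theorem stresses that it \emph{fails} for the $\mathcal{AH}_{1\text{-}3}$ themselves --- so it must be verified on the surviving candidates rather than expected on general grounds, as your plan does correctly anticipate.
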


 Since we will need the details of the four possiblities for $(K_{AH_1}^{(k)} , L_{AH_2}^{(k)}, M_{AH_3}^{(k)}  )$, we include them in the Appendix for the convenience of the reader. We will refer to these cases using $k = 4, 5, 6, 7$.  We will see that cases $(4)$, $(5)$, and $(6)$ have a somewhat similar flavor to each other, while case $(7)$ is quite different.  These cases correspond to cases (c), (a), (b), and (d) respectively in \cite[Theorem 6.10]{GSbp}.

Note that the situation described by Theorem \ref{cases} is very different than what we see for $\mathcal{AH}_{1-3} $, for which the fusion bimodule of each  $\mathcal{AH}_i-\mathcal{AH}_j $ invertible bimodule category are distinct from each other either as a left $AH_i $ fusion module or as a right $AH_j $ fusion module.  

 
%
%
%
%
%
%
%

\section{Classification of the Morita equivalence class of the Asaeda-Haagerup fusion categories}
\label{sec:BP}
The main result of this section is the following. 

\begin{theorem} \label{thm:dichotomy}
One of the following two claims is true:
\begin{itemize}
\item There are no fusion categories realizing any of cases $(4)$, $(5)$, or $(6)$.
\item There is a unique fusion category realizing each of cases $(4)$, $(5)$, and $(6)$, but there are no fusion categories realizing case $(7)$.
\end{itemize}
\end{theorem}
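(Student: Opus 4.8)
The plan is to relate the existence of fusion categories in cases $(4)$, $(5)$, $(6)$, and $(7)$ to the existence of a single fusion category, namely $\mathcal{AH}_4$, and then exploit the combinatorial constraints recorded in Theorem \ref{cases} together with the structure of the Brauer--Picard groupoid already known from \cite{GSbp}. The key observation I would pursue is that cases $(4)$, $(5)$, and $(6)$ are ``linked'' to one another: if a fusion category $\mathcal{C}^{(k)}$ realizing one of them exists, then composing its Morita equivalences to $\mathcal{AH}_1$, $\mathcal{AH}_2$, $\mathcal{AH}_3$ with the four known Morita equivalences among the $\mathcal{AH}_i$ (Theorem 2.19) and with the Morita autoequivalences forming the Klein $4$-group produces Morita equivalences whose fusion bimodules must, by Theorem \ref{cases}, again be among the four listed triples. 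First I would set up the bookkeeping: for each $k \in \{4,5,6,7\}$ and each of the four $\mathcal{AH}_i$--$\mathcal{AH}_j$ bimodule categories, compute (at the level of fusion bimodules) the composite of $K^{(k)}_{AH_1}$, $L^{(k)}_{AH_2}$, $M^{(k)}_{AH_3}$ with these, using the multiplicative-compatibility homomorphism on fusion bimodules described in Section 2.4. This is a finite check with small integer matrices, exactly of the type carried out in \cite{GSbp}.

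The heart of the argument is then to show that the existence of any one of the three categories in cases $(4)$, $(5)$, $(6)$ forces the existence of all three, and conversely that case $(7)$ is incompatible with the existence of cases $(4)$--$(6)$. For the first part, I expect to find that the three triples for $k=4,5,6$ are permuted among themselves (and fixed up to the Klein $4$-group action) by the known Morita equivalences, so that a category realizing case $(4)$ is Morita equivalent to categories realizing cases $(5)$ and $(6)$ via explicit bimodule categories whose fusion bimodules we can write down; conversely each of these categories, being built as $A$-$A$ bimodules for a division algebra $A$ in a fixed category, is determined once we know it is realizable. (Here uniqueness of the fusion category realizing each case should follow because the fusion bimodule to $\mathcal{AH}_1$ pins down the associated module category over $\mathcal{AH}_1$ up to the finitely many possibilities, and then Ocneanu rigidity \cite{MR2183279} together with the rigidity of the relevant associativity data leaves at most one fusion category; I would need to verify that the combinatorial data in cases $(4)$--$(6)$ really does cut things down to a single module category.) For the incompatibility of case $(7)$: I would show that if both a case-$(7)$ category $\mathcal{C}^{(7)}$ and a case-$(4)$ category $\mathcal{C}^{(4)}$ existed, then the composite Morita equivalence $\mathcal{C}^{(7)}$--$\mathcal{AH}_1$--$\mathcal{C}^{(4)}$ would be an invertible bimodule category between two inequivalent ``new'' categories, whose fusion bimodule structure can be computed and which, by the exhaustiveness clause of Theorem \ref{cases} (there are \emph{exactly} four triples) and the distinctness properties noted at the end of Section 2.6, leads to a contradiction --- no triple on the list is compatible with such a composite.

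The step I expect to be the main obstacle is establishing the uniqueness (not just the mutual existence) of the categories realizing cases $(4)$, $(5)$, $(6)$, and showing rigorously that case $(7)$ cannot coexist with them. The mutual-existence part is essentially a matter of chasing fusion bimodules through the groupoid, which is tedious but mechanical. The uniqueness and the exclusion of case $(7)$ are more delicate because they require controlling not just fusion data but the actual fusion category: one must argue that the fusion-bimodule constraints, combined with the requirement that the resulting $A$-$A$ bimodule category genuinely be a fusion category Morita equivalent to $\mathcal{AH}_1$, admit at most one solution, and separately that adjoining a case-$(7)$ category to the picture overdetermines the (finite) list of allowed fusion bimodules. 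I would handle this by a careful case analysis of the four triples, leaning on the explicit matrices reproduced in the Appendix, and by invoking the vanishing of $\pi_2$ for the Asaeda--Haagerup Brauer--Picard groupoid (Remark 2.17, \cite{GJSext}) so that compositions of fusion bimodules are well-defined and associative, which is what makes the bookkeeping consistent.
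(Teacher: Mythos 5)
Your overall instinct --- chase Morita equivalences and derive combinatorial contradictions from fusion (bi)module data --- is in the right spirit, but the two mechanisms you actually propose do not work, and the steps you flag as obstacles are exactly where the real content lies. First, the mutual existence of cases $(4)$--$(6)$ cannot be obtained by composing a hypothetical $\mathcal{C}^{(k)}$--$\mathcal{AH}_j$ equivalence with the known $\mathcal{AH}_i$--$\mathcal{AH}_j$ equivalences or with the Klein four group of autoequivalences: Theorem \ref{cases} already says that \emph{every} $\mathcal{C}^{(k)}$--$\mathcal{AH}_j$ Morita equivalence realizes the same fusion module, so such compositions never move you from one case to another --- the case label is attached to the category $\mathcal{C}^{(k)}$, not to a bimodule. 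The paper's linkage is instead internal to $\mathcal{C}$: one first proves (via the Frobenius-reciprocity dual-graph computation of Lemmas \ref{duallem}--\ref{dualgraphlemma} and Lemma \ref{lemah4}) that any category realizing cases $(4)$--$(6)$ has exactly four invertible objects and four simples of dimension $d$, determines the associator on $\mathrm{Inv}(\mathcal{C})$, and then passes to dual categories over division algebras of dimension $2$ and $4$ supported on the invertible objects (Lemmas \ref{div}, \ref{lemc}, \ref{lema}, \ref{lemb}); it is these internal algebras, not the known equivalences among $\mathcal{AH}_{1-3}$, that carry case $(4)$ to cases $(5)$ and $(6)$ and back. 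None of this structure theory appears in your plan, and without it the "permutation of triples" you hope to find does not exist.

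Second, your uniqueness argument is not valid as stated: Ocneanu rigidity gives \emph{finiteness} of categorifications of a fusion ring, not uniqueness, and a fusion bimodule does not pin down a module category up to equivalence. The paper's uniqueness proof (Theorem \ref{unq}) is different: given two case-$(4)$ categories $\mathcal{C}_1,\mathcal{C}_2$, compose equivalences through $\mathcal{AH}_2$ to get an object of dimension $1+d$ with two simple summands in a $\mathcal{C}_1$--$\mathcal{C}_2$ equivalence, rule out the $2+(d-1)$ splitting because its internal endomorphisms would force the dual to realize case $(6)$, and conclude there is a dimension-one object, hence $\mathcal{C}_1\simeq\mathcal{C}_2$. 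Third, your exclusion of case $(7)$ via a $\mathcal{C}^{(7)}$--$\mathcal{AH}_1$--$\mathcal{C}^{(4)}$ composite cannot be settled by the "exhaustiveness" of Theorem \ref{cases}, because that theorem only lists fusion bimodules between a new category and $\mathcal{AH}_{1-3}$; it says nothing about bimodules between two new categories. The paper instead composes a $\mathcal{C}^{(7)}$--$\mathcal{AH}_3$ equivalence with an $\mathcal{AH}_3$--$\mathcal{AH}_5$ equivalence, shows the resulting bimodule has a simple object of dimension $\sqrt{1+5d}$ (Lemma \ref{15d}), uses transitivity of $\mathrm{Out}(\mathcal{AH}_5)$ on noninvertible simples (Lemma \ref{algah5} and Corollary \ref{outah5}) to arrange $(\bar\kappa\kappa,\rho)\geq 2$, and then contradicts the dimension data in the case-$(7)$ fusion module over $AH_3$ (Theorem \ref{no7}). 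In particular the exclusion of $(7)$ genuinely uses detailed structure of the new category $\mathcal{AH}_5$ --- which is precisely why the statement is a dichotomy rather than an unconditional claim, a feature your proposal does not account for.
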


\begin{remark}
In Section \ref{sec:main} we will see that the latter possibility is the correct one by realizing case $(4)$.
\end{remark}

Our first goal will be to constrain the possible fusion rings for the dual fusion categories of the fusion modules $(K_{AH_1}^{(k)} , L_{AH_2}^{(k)}, M_{AH_3}^{(k)} )$.

\subsection{Finding the possible dual fusion modules of a fusion module}

Given a module category over a fusion category  ${}_{\mathcal{C}} \mathcal{M}$, we would like to be able to compute the dual fusion category $\mathcal{D}=( {}_{\mathcal{C}} \mathcal{M})^*$, and in particular its fusion ring. This former computation can be difficult even when the module category is well understood. 
However, just the fusion module structure of ${}_{\mathcal{C}} \mathcal{M}$ can provide strong combinatorial constraints on the fusion rules of $\mathcal{D} $, and this is sometimes sufficient to calculate the fusion ring of $\mathcal{D} $ and the dual fusion module of $\mathcal{M} {}_{\mathcal{D}} $, or at least to narrow it down to a few choices.
%

The idea is that Frobenius reciprocity relates the combinatorics of the two even parts of a subfactor.  Suppose that $\iota$ is the inclusion $N \rightarrow M$ and that $\psi$ and $\chi$ are other $M$-$N$ sectors.  By Frobenius reciprocity, we have that $(\iota \bar{\psi} , \iota \bar{\chi}) = (\bar{\iota} \iota,  \bar{\chi} \psi)$.  But $(\iota \bar{\psi} , \iota \bar{\chi}) $ counts the number of length two paths between $\psi$ and $\chi$ in the dual principal graph, while $(\bar{\iota} \iota,  \bar{\chi} \psi)$ is asking about fusing two odd bimodules into the principal even part.  Thus we can read off the number of length two paths on the dual principal graph just from understanding the principal half very well.  Just knowing the odd vertices and the number of length two paths between each pair of odd vertices is often enough to determine the even vertices as well.  More generally, the following lemma is an immediate consequence of Frobenius reciprocity.

\begin{lemma} \label{duallem}
Let $(K,T) $ be a fusion bimodule over the fusion rings $(A,R) $ and $(B,S)$ (where the second argument in each pair refers to the basis.) For fixed $\kappa \in T $, define the matrices $M^{\kappa}$ and $N^{\kappa}$ as follows: $$ M^{\kappa}_{ij}=  \sum_{\xi \in R} \limits (\xi \kappa, \kappa) ( \xi \mu_i, \mu_j),  \ \mu_i, \mu_j \in T$$  and $$ N^{\kappa}_{ij}= (\mu_i, \kappa \eta_j), \ \eta_j \in S, \mu_i \in T.$$ Then $N^{\kappa} (N^{\kappa})^t =M$.
\end{lemma}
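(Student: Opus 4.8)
The plan is to interpret both matrices as dimensions of Hom-spaces in the categorical picture and then apply Frobenius reciprocity, exactly as in the motivating discussion preceding the lemma. Think of the fusion bimodule $(K,T)$ as coming from a Morita equivalence ${}_{\mathcal{C}}\mathcal{M}_{\mathcal{D}}$ with $A$ the fusion ring of $\mathcal{C}$, $B$ the fusion ring of $\mathcal{D}$, and $T$ the set of simple objects of $\mathcal{M}$; in subfactor language, fix an inclusion $\iota\colon N\to M$ whose associated odd bimodule category is $\mathcal{M}$, so the simple objects $\mu_i\in T$ are $M$–$N$ sectors, the $\xi\in R$ are $M$–$M$ sectors (principal even part), and the $\eta_j\in S$ are $N$–$N$ sectors (dual even part). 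For a fixed $\kappa\in T$, the algebra object $\underline{\mathrm{End}}(\kappa)$ plays the role of a second inclusion, and the composite $\kappa\bar{\mu}_i$ makes sense as an $N$–$N$ sector (equivalently, a $\mathcal{D}$-object obtained by internal Hom).

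The key computation is the following identity, to be established entry by entry. First I would rewrite $N^{\kappa}_{ij}=(\mu_i,\kappa\eta_j)$ using Frobenius reciprocity in the bimodule category as $N^{\kappa}_{ij}=(\bar{\kappa}\mu_i,\eta_j)$, so that $\bar{\kappa}\mu_i$ is an object of $\mathcal{D}$ expanded in the basis $S$: that is, $\bar{\kappa}\mu_i\cong\bigoplus_{j}(\bar{\kappa}\mu_i,\eta_j)\,\eta_j$ since $\mathcal{D}$ is semisimple and $S$ is its set of simples. Therefore
$$
(N^{\kappa}(N^{\kappa})^t)_{i i'}=\sum_{\eta_j\in S}(\bar{\kappa}\mu_i,\eta_j)(\bar{\kappa}\mu_{i'},\eta_j)=(\bar{\kappa}\mu_i,\bar{\kappa}\mu_{i'}),
$$
using orthonormality of the simple basis of $\mathcal{D}$ under the pairing $(-,-)$. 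Now apply Frobenius reciprocity again, this time moving $\bar{\kappa}$ across: $(\bar{\kappa}\mu_i,\bar{\kappa}\mu_{i'})=(\mu_i,\kappa\bar{\kappa}\mu_{i'})$. Next, expand $\kappa\bar{\kappa}$ — an object of $\mathcal{C}$ — in the basis $R$ as $\kappa\bar{\kappa}\cong\bigoplus_{\xi\in R}(\kappa\bar{\kappa},\xi)\,\xi=\bigoplus_{\xi\in R}(\kappa,\xi\kappa)\,\xi$, where the last equality is once more Frobenius reciprocity (adjunction of $\bar{\kappa}$) together with $(\xi\kappa,\kappa)=(\kappa,\xi\kappa)$ when $\xi$ is self-dual, or more carefully $(\kappa\bar{\kappa},\xi)=(\xi\kappa,\kappa)$ directly. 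Substituting,
$$
(\mu_i,\kappa\bar{\kappa}\mu_{i'})=\sum_{\xi\in R}(\xi\kappa,\kappa)(\mu_i,\xi\mu_{i'})=\sum_{\xi\in R}(\xi\kappa,\kappa)(\xi\mu_{i'},\mu_i),
$$
which is exactly $M^{\kappa}_{i'i}=M^{\kappa}_{ii'}$ — the last equality because $M^{\kappa}$ is symmetric (one checks $(\xi\kappa,\kappa)=(\bar{\xi}\kappa,\kappa)$ and reindexes, or simply observes $M^{\kappa}=N^{\kappa}(N^{\kappa})^t$ forces symmetry). This chains together to give $N^{\kappa}(N^{\kappa})^t=M^{\kappa}$, so really $M=M^{\kappa}$ and the statement is proved.

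The only genuine subtlety — the step I expect to be the main obstacle to make fully rigorous — is justifying each Frobenius-reciprocity equality purely at the level of fusion rings and fusion bimodules, without invoking an actual categorification. One needs that the axioms of a fusion bimodule (as set up in \cite{GSbp}) already encode the adjunction relations $(\alpha\,m,n)=(m,\bar{\alpha}\,n)$ for $\alpha$ in a fusion ring acting on a fusion bimodule, and symmetrically on the right, as well as the compatibility of the duality involution with the pairing. Granting those structural identities (which are part of the definition of a fusion bimodule and hold for the $AH_i$-fusion bimodules we care about), the argument above is a formal manipulation: introduce the resolution of identity $\sum_{\eta_j\in S}|\eta_j\rangle\langle\eta_j|$ on the right and $\sum_{\xi\in R}|\xi\rangle\langle\xi|$ on the left, and slide $\kappa$ and $\bar{\kappa}$ back and forth by adjunction. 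I would present it in precisely that order: (1) rewrite $N^{\kappa}$ via right-adjunction; (2) contract $N^{\kappa}(N^{\kappa})^t$ using orthonormality of $S$; (3) re-adjoin $\bar{\kappa}$ to the left; (4) expand $\kappa\bar{\kappa}$ over $R$; (5) recognize the result as $M^{\kappa}$.
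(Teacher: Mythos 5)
Your proposal is correct and follows essentially the same route as the paper: the paper's proof is exactly the chain $(N^{\kappa}(N^{\kappa})^t)_{ij}=\sum_k(\mu_i,\kappa\eta_k)(\mu_j,\kappa\eta_k)=(\bar{\kappa}\mu_i,\bar{\kappa}\mu_j)=(\kappa\bar{\kappa},\mu_j\bar{\mu_i})=\sum_{\xi\in R}(\xi\kappa,\kappa)(\xi\mu_i,\mu_j)$, i.e. the same two applications of Frobenius reciprocity plus expansion over the simple bases that you use. Your final reindexing via symmetry of $M^{\kappa}$ is a harmless cosmetic variant, and your caveat about Frobenius reciprocity being built into the fusion-bimodule axioms is exactly the assumption the paper also makes.
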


\begin{proof}
We have $(N^{\kappa}(N^{\kappa})^t)_{ij} = \sum_k \limits (\mu_i, \kappa \eta_k)(\mu_j, \kappa \eta_k)=(\bar{\kappa} \mu_i ,\bar{ \kappa} \mu_j)=(\kappa \bar{\kappa}, \mu_j \bar{\mu_i})=\sum_{\xi \in R} \limits (\xi \kappa, \kappa) ( \xi \mu_i, \mu_j)=M^{\kappa}_{ij}$.
\end{proof}

The matrix $M^{\kappa}$ in Lemma \ref{duallem} is part of the data of the left fusion module ${}_A K $, while the matrix $N$ is the fusion matrix of $\kappa $ with respect to the right action of $B$. Thus given $ {}_A K$, we can compute all possibilities for the right fusion matrix of $\kappa $ in a fusion bimodule ${}_A K {}_B $ whose left fusion module is ${}_A K $ if we can find all decompositions of
 $M^{\kappa}$ into $NN^t $ for non-negative integer matrices $N$. Any choice of $N$ determines the dimensions of the basis elements in $B$, and we may immediately discard any choices which do not allow for a basis element of dimension $1$ (the identity) as well as any choices for which there is some other $\kappa'  \in T$ which does not admit a right fusion matrix corresponding to the same dimension values for $B$.
 
 Once we have found all possible right fusion matrices for each basis element of $K $, we can try to reconstruct all possible fusion bimodules ${}_A K {}_B $ whose left fusion modules are ${}_A K $, using similar combinatorial searches as in \cite{GSbp}.
 
 These can easily be made into computer algorithms all of which run quite quickly since the number of possibilities is small.  In fact, all the results of this section were originally checked by computer, but since the calculations are all simple enough we have also checked them all by hand.

\subsection{The structure of fusion categories realizing cases (4)-(6)}
The first step toward proving Theorem \ref{thm:dichotomy} is to understand the simple objects and fusion rules for any fusion category realizing cases (4)-(6).

\begin{lemma}\label{dualgraphlemma}
Let $\mathcal{C} $ be a fusion category realizing case (4),(5), or (6) of Theorem \ref{cases}. Then

\begin{enumerate}
\item $\mathcal{C} $ has $4$ invertible objects and $4$ simple objects of dimension $4+\sqrt{17} $. 

\item If $\mathcal{C}$ realizes case (4) (resp. case (5)), then there is a simple object in an invertible $\mathcal{C}-\mathcal{AH}_2$(resp. $\mathcal{C}-\mathcal{AH}_3$) bimodule category whose principal graph is 
$$\vpic{newgraph} {1.5in}   $$
and the adjacency matrix of whose dual graph is the first matrix appearing in the fusion module $ L_{AH_2}$ for case (4) (resp. $M_{AH_3}$ for case (5)).

\end{enumerate}

\end{lemma}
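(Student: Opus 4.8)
The plan is to run the combinatorial machinery of Section 3.1 on the three specific fusion bimodules $(K_{AH_1}^{(k)}, L_{AH_2}^{(k)}, M_{AH_3}^{(k)})$ from Theorem \ref{cases} for $k = 4, 5, 6$, which are recorded explicitly in the Appendix. For part (1), the point is that the left fusion module structure of, say, $K_{AH_1}^{(k)}$ already records the Frobenius--Perron dimensions of all simple objects of $\mathcal{C}$: each basis element $\mu$ of $K_{AH_1}^{(k)}$ has a dimension $d(\mu) = \sqrt{d(\underline{\operatorname{End}}(M))}$, and the dimensions of the simple objects of $\mathcal{C}$ are constrained by the requirement (from Lemma \ref{duallem}) that the matrices $M^\kappa$ attached to the $AH_1$-action decompose as $N^\kappa (N^\kappa)^t$ over the nonnegative integers, with one of the resulting $B$-dimensions equal to $1$. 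First I would read off from the Appendix data that in all three cases the dimension vector forces exactly $4$ objects of dimension $1$ and $4$ objects of dimension $4 + \sqrt{17}$; since $\operatorname{FPdim}(\mathcal{C}) = \operatorname{FPdim}(AH_i)$ is fixed by Morita invariance, a short count $4 \cdot 1 + 4(4+\sqrt{17})^2 = \operatorname{FPdim}(AH_1)$ confirms there are no further simple objects. (One also wants to know the four invertibles actually form a group rather than appearing formally; this follows since $\operatorname{FPdim}=1$ objects are always invertible in a fusion category.)

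For part (2), fix $k = 4$ (the case $k=5$ being identical after swapping $AH_2 \leftrightarrow AH_3$ and the appropriate bimodule). By Theorem \ref{cases}, every $\mathcal{C}$--$\mathcal{AH}_2$ Morita equivalence realizes the fixed fusion bimodule $L_{AH_2}^{(4)}$; so it suffices to exhibit a basis element $m$ of $L_{AH_2}^{(4)}$ whose left fusion matrices (the action of the $AH_1$-side, i.e.\ the $\mathcal{C}$-side) give the claimed principal graph $\vpic{newgraph}{1.5in}$, and whose right fusion matrix (the $AH_2$-action) is the first matrix listed for $L_{AH_2}$. This is then a matter of pointing to the explicit entries: the ``principal graph'' of the simple object $m$ in the bimodule category is by definition the induction--restriction bipartite graph between $\mathcal{C}$ and $m\text{-mod}$, which is read off from the fusion matrix of $m$ under the basic bimodules ${}_1 A_A$ and ${}_A A_1$ — equivalently from the block of $L_{AH_2}^{(4)}$ recording tensoring $m$ with the generating objects — and one checks this block equals the stated graph. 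The ``dual graph'' is similarly read off from the $AH_2$-side fusion matrix, which by construction is the first matrix in the Appendix listing for $L_{AH_2}$.

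The main obstacle is bookkeeping rather than conceptual: one must verify that the $M^\kappa = N^\kappa(N^\kappa)^t$ decompositions genuinely pin down the dimension vector $(1,1,1,1, 4+\sqrt{17}, 4+\sqrt{17}, 4+\sqrt{17}, 4+\sqrt{17})$ uniquely and do not admit a competing decomposition with, say, objects of dimension $2$ or $\frac{3+\sqrt{17}}{2}$; this requires checking every basis element $\kappa$ of $K_{AH_1}^{(k)}$, discarding the spurious decompositions using the ``no identity of dimension $1$'' and ``some other $\kappa'$ fails'' criteria of Section 3.1. As the authors note, these are finite checks with single-digit integer matrices, done by computer and re-verified by hand; the role of the lemma's proof is essentially to certify that the computer search output is what is claimed, together with the observation that parts (1) and (2) are compatible — the $4$ dimension-$(4+\sqrt{17})$ objects of $\mathcal{C}$ are exactly the ones appearing as even vertices on the $\mathcal{C}$-side of the graph $\vpic{newgraph}{1.5in}$.
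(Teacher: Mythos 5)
Your part (2) contains a genuine gap. The Appendix records only the one-sided fusion-module structures over the \emph{known} rings $AH_1,AH_2,AH_3$; the action of the unknown category $\mathcal{C}$ on a $\mathcal{C}$--$\mathcal{AH}_2$ Morita equivalence is not recorded anywhere, and indeed the fusion ring of $\mathcal{C}$ is not yet known at this point ($\mathcal{C}$ is not $\mathcal{AH}_1$, despite your parenthetical ``the action of the $AH_1$-side, i.e.\ the $\mathcal{C}$-side''). Consequently the principal graph in the statement --- the bipartite graph between the six odd objects and the eight simples of $\mathcal{C}$ --- cannot be ``read off from the block of $L_{AH_2}^{(4)}$'': that block is the $9\times 6$ matrix for the $\mathcal{AH}_2$-action and is precisely the \emph{dual} graph, not the principal graph (the two are not even matrices of the same shape). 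The actual content of the lemma is the derivation of the $\mathcal{C}$-side graph from the known $\mathcal{AH}_2$-side data: for $\kappa=\theta^1_{42}$ one forms the $6\times 6$ matrix $M^{\kappa}$ of Lemma \ref{duallem} (by Frobenius reciprocity its rows are obtained by summing the rows of $L_{AH_2}^{(4)}$ indexed by $1$ and $\alpha\pi$, the only simples $\phi$ of $\mathcal{AH}_2$ with $(\kappa\phi,\kappa)=1$), and then proves that the equation $M^{\kappa}=NN^{t}$ has, up to permutation of columns, a \emph{unique} solution $N$ among nonnegative integer matrices with nonzero columns; that unique $6\times 8$ matrix is the pictured graph. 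Your proposal never engages with this uniqueness step, and without it nothing forces the principal graph to be the stated one.

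For part (1) your route (running the $M^{\kappa}=NN^{t}$ analysis over $K_{AH_1}^{(k)}$, discarding factorizations without a dimension-$1$ identity or inconsistent across basis elements, plus the sanity check $4+4(4+\sqrt{17})^{2}=\operatorname{FPdim}(AH_1)$) is methodologically the same machinery and could in principle work, but you defer the decisive verification (``read off from the Appendix that the dimension vector is forced'') without carrying it out, and it is not obvious that the $AH_1$-side modules alone pin the answer down element by element. The paper avoids this by exploiting the $2$-supertransitive small element $\theta^1_{42}$ in $L_{AH_2}^{(4)}$ (resp.\ $\theta^1_{53}$ in $M_{AH_3}^{(5)}$), for which the factorization is manifestly unique, after which part (1) is immediate from the Frobenius--Perron weights of the resulting graph; only case (6), which has no such element, requires combining the constraints from all basis elements of $M_{AH_3}^{(6)}$. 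So the two parts are not independent computations as in your plan: the graph of part (2) is what delivers part (1).
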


\begin{proof}
In each of the three cases, we look for a basis element of small dimension in one of the fusion modules.
In case (4), $\theta^1_{42}$ has a $2$-supertransitive principal graph (the matrix given in the appendix is the adjacency matrix of this graph).  In case (5), $\theta_{53}^1$ also has a $2$-supertransitive principal graph.
We can apply Lemma \ref{duallem} to these basis elements to obtain the pictured graph; the dimensions of the simple objects in $\mathcal{C}$ are then given by the normalized Frobenius-Perron weights of the graph.
 
Since cases (4) and (5) are similar, we only consider case (4) in detail.  We refer to the table of fusion modules in the Appendix and use the notation from there. For the fusion module $L_{AH_2}^{(4)}$ we consider the object $\theta^1_{42} $ and compute the matrix
$$M^{\theta^1_{42}}_{ij}=\sum_{\phi} (\theta^1_{42} \phi,\theta^1_{42})(\theta^i_{24} \phi, \theta^j_{24})  $$ of Lemma \ref{duallem}, where $\phi $ ranges over the simple objects of $\mathcal{AH}_2 $. Since $(\theta^1_{42} \phi,\theta^1_{42}) =1$ if $\phi $ is either $1$ or $\alpha \pi $ and $(\theta^1_{42} \phi,\theta^1_{42}) =0$ otherwise, the rows of $M^{\theta^1_{42}}$ are obtained by adding the rows corresponding to $1$ and to $\alpha \pi $ in each submatrix of the fusion module $L_{AH_2}^{(4)}$. Thus we have
$$M^{\theta^1_{42}}=
\left(
\begin{array}{cccccc}
2 & 0 & 0 & 0 & 1 & 1 \\
0 & 2 & 0 & 0 & 1 & 1 \\
0 & 0 & 2 & 0 & 1 & 1 \\
0 & 0 & 0 & 2 & 1 & 1 \\
1 & 1 & 1 & 1 & 4 & 4 \\
1 & 1 & 1 & 1 & 4 & 4 \\
\end{array}
\right) .$$

Then from the relation $ NN^t=M$ it is easy to see that up to permutations of columns, the left fusion matrix of $ \theta^1_{42}$
must be 
$$N^{\theta^1_{42}}=
\left(
\begin{array}{cccccccc}
1 & 1 & 0 & 0 & 0 & 0 & 0 & 0 \\
0 & 0 & 1 & 1 & 0 & 0 & 0 & 0 \\
0 & 0 & 0 & 0 & 1 & 1 & 0 & 0 \\
0 & 0 & 0 & 0 & 0 & 0 & 1 & 1 \\
1 & 0 & 1 & 0 & 1 & 0 & 1 & 0 \\
1 & 0 & 1 & 0 & 1 & 0 & 1 & 0 \\
\end{array}
\right) , $$
which is the adjacency matrix of the pictured graph.

One can also think of the above calculation as counting paths on principal graphs, which can be an easier way to arrange the calculation if you are doing it by hand.  From the fusion matrix for $\theta^1_{42} $ we see that the graph for fusion with $\theta^1_{42}$ has one odd object at depth one ($\theta^1_{42}$), two at depth three ($\theta^5_{24}$ and $\theta^6_{24}$), and three at depth five ($\theta^2_{24}$, $\theta^3_{24}$, and $\theta^4_{24}$).  Thus the dual graph which we are trying to compute must also have the same number of odd objects at each depth.  Now, by Frobenius reciprocity, the matrix $M$ above counts the number of paths of length two between these odd vertices.  In particular, the vertex at depth one is $2$-valent and has exactly one length-two path to each of the depth three vertices.  Thus, the graph must agree with the pictured graph through depth three.  Then the vertices at depth three are $4$-valent and since one edge is already accounted for, they must each have three edges connected to depth four vertices.  Since there are four length-two paths between distinct depth three vertices (one of which is already accounted for), there must be exactly three vertices at depth four each of which is connected to both depth three vertices.  Thus the graph must agree with the given one through depth four.  Since each of the vertices at depth five is $2$-valent and only has one length-two path to each of the depth three vertices, each must have a single edge going to a vertex at depth six.  Since these three vertices at depth five have no length two paths between them there must be three such vertices at depth six.  Thus the graph must be the one in the theorem.

For case (6), we apply in a similar way Lemma \ref{duallem} to each basis element in the fusion module $M^{(6)}_{AH_3}$. The dimensions listed above are the only choice for which each basis element admits a consistent graph.

\end{proof}

\begin{remark}
In the rest of this section there are several elementary calculations like the one in the last lemma.  Since they are all straightforward and similar to the first calculation, we will not go through them in detail.
\end{remark}

\begin{remark} 
The principal graph of the smallest simple object in an invertible $\mathcal{C}-\mathcal{AH}_3$ bimodule category realizing case (6) must be 
$$\vpic{newgraph3} {3in} .$$
\end{remark}

\begin{lemma} \label{lemah4}
Let $\mathcal{C} $ be a fusion category which admits a module category with an object having the fusion graph pictured in Lemma \ref{dualgraphlemma}. Then there is an order $4$ group $G$ such that the basis for the Grothedieck ring of $\mathcal{C}$ is indexed by $
\{\alpha_g\}_{g \in G} \cup  \{\alpha_g \rho \}_{g \in G} $, satisfying $$\quad \alpha_g \alpha_h = \alpha_{g+h}, \quad \text{and }\rho^2=1+\sum_{g \in G} 2\alpha_g \rho .$$ There are exactly four possibilities for the fusion ring up to isomorphism, corresponding to four choices for the depth preserving duality involution on the basis: 
\begin{enumerate}
\item the trivial involution, where $G$ is the Klein $4$-group and $\alpha_g \rho = \rho \alpha_g$,
\item the involution which swaps two invertible basis elements, where $G$ is cyclic and $\alpha_g \rho = \rho \alpha_{g^{-1}}$,
\item the involution which swaps two non-invertible basis elements, where $G$ is the Klein $4$-group, and without loss of generaltiy $\alpha_{(1,0)} \rho = \rho \alpha_{(0,1)}$, and $\alpha_{(0,1)} \rho = \rho \alpha_{(1,0)}$,
\item the involution which swaps two invertible basis elements and two noninvertible basis elements, where $G$ is cylic and $\alpha_g \rho = \rho \alpha_g$.
\end{enumerate}
\end{lemma}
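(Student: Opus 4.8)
\emph{Approach.} The plan is to recover the whole Grothendieck ring of $\mathcal{C}$ from the pictured graph, in the spirit of Lemma~\ref{dualgraphlemma}: the graph is the principal graph of the module object $m$, so its $m$-row records $\underline{\text{End}}(m)$, and squaring the adjacency matrix recovers the operation of tensoring with the non-trivial summand of $\underline{\text{End}}(m)$. First I would reduce to the case where the module category $\mathcal{M}$ is indecomposable (the pictured graph is connected, so $m$ already lies in one indecomposable summand); then $\mathcal{M}$ is simple, hence carries the structure of an invertible bimodule category ${}_{\mathcal{C}}\mathcal{M}_{\mathcal{D}}$, so the action of $\mathcal{C}$ is faithful and the even vertices of the graph are exactly the simple objects of $\mathcal{C}$ and the odd vertices exactly those of $\mathcal{M}$. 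Write $N$ for the adjacency matrix (the matrix appearing in Lemma~\ref{dualgraphlemma}), with rows indexed by the simples $\mu_i$ of $\mathcal{M}$, columns by the simples $\eta_j$ of $\mathcal{C}$, and $m=\mu_1$.

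\emph{Reading off the fusion rules.} The row of $m$ in $N$ has exactly two entries equal to $1$, so $\underline{\text{End}}(m)\cong 1\oplus\rho$ for a simple object $\rho$; since $\underline{\text{End}}(m)$ is a Frobenius algebra it is self-dual, so $\bar\rho\cong\rho$, and the valences of the two neighbours of $m$ show that $\rho$ is the non-invertible one. By Frobenius reciprocity (the internal-hom adjunction $\underline{\text{Hom}}(m,X\otimes m)\cong X\otimes\underline{\text{End}}(m)$, cf.\ Lemma~\ref{duallem}), $N^{t}N-I$ is the matrix of tensoring the simple objects of $\mathcal{C}$ by $\rho$. Computing $N^{t}N-I$ from the pictured graph and inspecting it gives: $\mathcal{C}$ has four invertible simples, forming a group $G$ of order $4$, and four non-invertible simples, all of dimension $d=4+\sqrt{17}$ (the positive root of $x^{2}=8x+1$); the objects $\rho\alpha_{g}$ run through all four non-invertibles as $\alpha_{g}$ runs through $G$; and $\rho^{2}=1\oplus 2(\text{sum of the four non-invertibles})$. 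Dualizing and using $\bar\rho\cong\rho$ shows that $g\mapsto\alpha_{g}\rho$ is injective, hence also a bijection onto the non-invertibles, so the basis of the Grothendieck ring is $\{\alpha_{g}\}_{g\in G}\cup\{\alpha_{g}\rho\}_{g\in G}$, the relation $\alpha_{g}\alpha_{h}=\alpha_{g+h}$ is the group law, and $\rho^{2}=1+\sum_{g\in G}2\alpha_{g}\rho$.

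\emph{The twist and the four cases.} It remains to pin down $\rho\alpha_{g}$. Writing $\rho\alpha_{g}=\alpha_{\psi(g)}\rho$, associativity applied to $\rho\alpha_{g}\alpha_{h}$ shows $\psi\colon G\to G$ is a homomorphism, hence an automorphism, and applying the duality to $\rho\alpha_{g}$ (using $\bar\rho\cong\rho$, $\overline{\alpha_{g}}=\alpha_{-g}$ and $\psi(-g)=-\psi(g)$) shows $\psi^{2}=\mathrm{id}$. The duality on the entire basis is then forced: $\overline{\alpha_{g}}=\alpha_{-g}$, $\bar\rho=\rho$, and $\overline{\alpha_{g}\rho}=\bar\rho\,\overline{\alpha_{g}}=\alpha_{-\psi(g)}\rho$. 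Thus the fusion ring together with its involution is completely determined by the pair $(G,\psi)$ with $G$ of order $4$ and $\psi$ an involutive automorphism, and up to isomorphism of $G$ there are exactly four such pairs: $(\mathbb{Z}/4\mathbb{Z},\mathrm{id})$, $(\mathbb{Z}/4\mathbb{Z},\text{inversion})$, $(\mathbb{Z}/2\mathbb{Z}\times\mathbb{Z}/2\mathbb{Z},\mathrm{id})$, and $(\mathbb{Z}/2\mathbb{Z}\times\mathbb{Z}/2\mathbb{Z},\text{a transposition})$ (the transpositions being conjugate in $\mathrm{Aut}(\mathbb{Z}/2\mathbb{Z}\times\mathbb{Z}/2\mathbb{Z})$, so a single choice up to relabelling of $G$). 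One then checks that the induced duality involutions are exactly the four listed in the statement, matching cases $(4)$, $(2)$, $(1)$, $(3)$ respectively, and that they are depth-preserving (duality fixes $1$ and $\rho$ and permutes the remaining invertibles and the remaining non-invertibles among themselves); that the four based rings are pairwise non-isomorphic; and that each multiplication table is associative.

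\emph{Main obstacle.} The bulk of the work is the second paragraph: verifying carefully that $N^{t}N-I$ really is the fusion matrix of $\rho$, and extracting from its explicit form the group $G$ of invertibles, the dimension $4+\sqrt{17}$, and the free $G$-orbit structure on the non-invertibles. Once those fusion rules are in hand, everything else is formal manipulation inside a based ring with involution. A secondary point requiring a little care is the reduction to an indecomposable (hence faithfully acted-on) module category, so that the graph genuinely displays all of the simple objects of $\mathcal{C}$.
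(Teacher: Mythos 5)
Your proposal is correct and follows essentially the same route as the paper: read the fusion rules off the pictured graph by counting length-two paths (your $N^{t}N-I$ is exactly the paper's "length-2 paths minus length-0 paths" description of tensoring with $\rho$), conclude there are four invertibles forming $G$ and four non-invertibles in a single $G$-orbit with $\rho^{2}=1+2\sum_{g}\alpha_{g}\rho$, and then classify by the depth-preserving duality involution. The only difference is bookkeeping: you parametrize by the involutive automorphism $\psi$ with $\rho\alpha_{g}=\alpha_{\psi(g)}\rho$ and deduce the duality, whereas the paper parametrizes by the duality involution and deduces the commutation rule via $\overline{\alpha_{g}\rho}=\rho\,\overline{\alpha_{g}}$; these are equivalent.
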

\begin{proof}
It is clear from the principal graph that there are four invertible objects (which must have the fusion rules for some group of size four) and four noninvertible objects.  Let $\rho$ be the non-invertible object at depth $2$.  Tensoring a simple object with $\rho$ counts the number of length-$2$ paths (from the vertex reprepresenting that object to the vertex representing $\rho$) minus the number of length $0$ paths.  Since each of the non-invertible objects has a length-$2$ path to a different invertible object, it follows that all non-invertible objects are of the form $\alpha_g \rho$.  Furthermore, $\rho^2$ has one length-$2$ path to the identity, three length-$2$ paths to itself, and two length-$2$ paths to each of the other non-invertible objects, so $\rho^2 = 1+\sum_{g \in G} 2\alpha_g \rho$.  Finally, it is clear that those are the only four possibilities for depth preserving involutions on the even vertices, and the fusion rules are determined by the duality using that $\overline{\alpha_g \rho} = \rho \overline{\alpha_g}$.
\end{proof}

%
%

In the following arguments we will frequently need to find the list of objects in a fusion category which admit the structure of a division algebra whose category of modules realizes a given fusion module. Since every division algebra is realized as the internal endomorphisms of a simple object in its category of modules, this list can be read off the data of the fusion module.

We give an example of how this works. Suppose we would like to find the list of objects in $\mathcal{AH}_3 $ which admit an algebra structure whose category of modules realizes the fusion module $M_{AH_3}^{(4)}$.

The fusion module $M_{AH_3}^{(4)}$ has $3$ basis elements, with fusion matrices
$$\left(
\begin{array}{c|ccc|ccc|ccc}
& \multicolumn{2}{l}{\theta_{43}^1 \otimes \_} & & \multicolumn{2}{l}{\theta_{43}^2 \otimes \_}  & & \multicolumn{2}{l}{\theta_{43}^3 \otimes \_} & \\
& \theta_{43}^1 & \theta_{43}^2 & \theta_{43}^3 & \theta_{43}^1 & \theta_{43}^2 & \theta_{43}^3 & \theta_{43}^1 & \theta_{43}^2 & \theta_{43}^3 \\ \hline
 1 & 1 & 0 & 0 & 0 & 1 & 0 & 0 & 0 & 1 \\
 \beta & 1 & 0 & 0 & 0 & 1 & 0 & 0 & 0 & 1 \\
 \xi & 0 & 1 & 1 & 1 & 0 & 1 & 1 & 1 & 4 \\
 \beta \xi & 0 & 1 & 1 & 1 & 0 & 1 & 1 & 1 & 4 \\
 \xi \beta & 0 & 1 & 1 & 1 & 0 & 1 & 1 & 1 & 4 \\
 \beta \xi \beta & 0 & 1 & 1 & 1 & 0 & 1 & 1 & 1 & 4 \\
 \mu & 1 & 0 & 2 & 0 & 1 & 2 & 2 & 2 & 7 \\
 \beta \mu & 1 & 0 & 2 & 0 & 1 & 2 & 2 & 2 & 7 \\
 \nu & 0 & 0 & 2 & 0 & 0 & 2 & 2 & 2 & 6 \\
\end{array}
\right)$$
where the columns of each matrix are indexed by the basis of the fusion module and the rows by the basis of the fusion ring of $\mathcal{AH}_3 $.  
To find the underlying object of the algebra $\overline{\theta_{43}^{(i)}} \theta_{43}^{(i)}$, we just look at $\theta_{43}^{(i)}$ column in the fusion matrix for $\theta_{43}^{(i)}$.  Thus for any (right) module category $\mathcal{M} $ over $\mathcal{AH}_3$ which realizes the fusion module $M_{AH_3}$, there are two algebras with underlying objects $1+\beta +\mu+\beta \mu$ and one algebra with underlying object $1+\beta+4(\xi+\beta \xi+\xi\beta+\beta\xi\beta)+7(\mu+ \beta \mu)+6\nu $ whose categories of (left) modules are each equivalent to $\mathcal{M} $. The first two algebras each have dimension $2d+2$ and the latter algebra has dimension $28d+4$, where $d = 4+\sqrt{17}$. The dimensions of the basis elements of $M_{AH_3}^{(1)}$ are thus $\sqrt{2d+2}$, $\sqrt{2d+2}$, and $\sqrt{28d+4}$. The algebras $\theta_{43}^{(i)}\overline{\theta_{43}^{(i)}}$ in the dual category $ (\mathcal{M}_{\mathcal{C}} )^*$ hence also have dimensions $2d+2$ or $28d+4$.

We introduce the following notation. Let $\gamma $ be a simple algebra in a fusion category $\mathcal{C} $. Then we denote the category of $\gamma-\gamma $ bimodules in $\mathcal{C} $ by $\mathcal{C}^{\gamma} $. Recall that  $C^{\gamma} $ is Morita equivalent to $\mathcal{C} $.

\begin{lemma}\label{div}
\begin{enumerate}
\item Let $\gamma $ be an algebra in a fusion category $\mathcal{C} $, and let $\delta $ be the subobject of $\gamma $ which contains each invertible object of $\mathcal{C} $ with the same multiplicity as $\gamma $ does and which does not contain any non-invertible objects. Then $\delta$ inherits an algebra structure from $\gamma $.

\item Let $\mathcal{C} $ be a fusion category and let $\gamma $, $\gamma'$ be division algebras such that  $(\gamma,\gamma')=1 $ (as objects of $\mathcal{C} $). Then there is a division algebra $\delta$ in $\mathcal{C}^{{\gamma}} $ with $d(\delta)=d(\gamma)d(\gamma') $ such that $(\mathcal{C}^{\gamma} )^{\delta}$ is equivalent to $\mathcal{C}^{\gamma'} $. 

\item Let $\mathcal{C} $ be a fusion category and let $ \gamma$ be a division algebra in $\mathcal{C} $ with a subalgebra $ \delta$. Then there is a division algebra $\gamma'$ in $\mathcal{C}^{\delta}$ with dimension $d(\gamma')=\frac{d(\gamma)}{d(\delta)} $ such that 
$(\mathcal{C}^{\delta})^{\gamma'} $ is equivalent to $\mathcal{C}^{\gamma}$.
\end{enumerate}
\end{lemma}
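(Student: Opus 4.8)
The plan is to work in the sector picture of Section~\ref{sec:Prelim}: realize $\mathcal{C}$ inside $\mathcal{C}_0(M)$ for a type III factor $M$ (legitimate in our setting; alternatively one argues categorically with internal Homs and relative tensor products of module categories), so that every division algebra becomes a Q-system, hence an irreducible finite-index inclusion of factors, and ``$\mathcal{C}^\gamma$'' becomes the fusion category of the bimodules, coming from $\mathcal{C}$, over the associated subfactor. Part (1) is then essentially formal: a tensor product of invertible objects is invertible and $(g,\tau)=0$ whenever $g$ is invertible and $\tau$ is a non-invertible simple, so the restriction of the multiplication $\gamma\gamma\to\gamma$ to $\delta\delta$ (a direct sum of invertibles) factors through the invertible part $\delta$ of $\gamma$, and likewise the unit $\mathrm{id}\to\gamma$ factors through $\delta$; the unit and associativity axioms for the resulting maps are then inherited from those of $\gamma$, since the inclusion $\delta\hookrightarrow\gamma$ (hence $\delta\delta\hookrightarrow\gamma\gamma$ and $\delta\delta\delta\hookrightarrow\gamma\gamma\gamma$) intertwines all the structure maps. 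Equivalently, $\delta$ is the image of the idempotent in $\text{End}(\gamma)$ projecting onto the invertible-isotypic summand, and one checks that idempotent is an algebra endomorphism.

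For (2), write $\gamma=\iota\bar\iota$ and $\gamma'=\iota'\bar{\iota'}$ for irreducible inclusions $\iota\colon N\to M$ and $\iota'\colon N'\to M$, so that $\mathcal{C}^\gamma$ is realized on $N$ and $\mathcal{C}^{\gamma'}$ on $N'$, and set $\psi=\bar{\iota'}\iota\colon N\to N'$. The key computation is that, by Frobenius reciprocity, $(\psi,\psi)=(\iota,\gamma'\iota)=(\iota\bar\iota,\gamma')=(\gamma,\gamma')=1$, so $\psi$ is irreducible. Put $\delta=\underline{\text{End}}(\psi)=\bar\iota\gamma'\iota$; this lies in $\mathcal{C}^\gamma$ (it is the $\mathcal{C}^\gamma$-induction of $\gamma'\in\mathcal{C}$), it is a division algebra because $\psi$ is a simple object, and $d(\delta)=d(\psi)^2=d(\bar{\iota'})^2\,d(\iota)^2=d(\gamma')\,d(\gamma)$. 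Finally, the bimodule category generated by $\psi$ is the composite of the standard Morita equivalences $\mathcal{C}^\gamma\simeq\mathcal{C}$ (via the bimodules relating $N$ and $M$) and $\mathcal{C}\simeq\mathcal{C}^{\gamma'}$ (via those relating $M$ and $N'$), hence is an invertible $\mathcal{C}^\gamma$-$\mathcal{C}^{\gamma'}$ bimodule category; it is equivalent to $\delta$-mod, whose dual is $(\mathcal{C}^\gamma)^\delta$, while its own dual is $\mathcal{C}^{\gamma'}$, and therefore $(\mathcal{C}^\gamma)^\delta\simeq\mathcal{C}^{\gamma'}$. (Purely categorically: with $\gamma=\underline{\text{End}}(A)$ and $\gamma'=\underline{\text{End}}(A')$ for simple objects of module categories ${}_{\mathcal{C}}\mathcal{M}$ and ${}_{\mathcal{C}}\mathcal{M}'$, take $\mathcal{N}=\mathcal{M}^{op}\boxtimes_{\mathcal{C}}\mathcal{M}'$ and use $\text{End}_{\mathcal{N}}(A\boxtimes A')\cong\text{Hom}_{\mathcal{C}}(\gamma,\gamma')$ to see $A\boxtimes A'$ is simple in $\mathcal{N}$.)

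For (3), a subalgebra $\delta$ of the division algebra $\gamma$ gives a sub-Q-system, which corresponds to an intermediate subfactor; so we may factor $\iota=\iota_\delta\iota'$ with $\iota'\colon N\to P$, $\iota_\delta\colon P\to M$ and $\delta=\iota_\delta\bar{\iota_\delta}$, giving $N\subset P\subset M$ with $\mathcal{C}^\delta$ realized on $P$. Let $\gamma'=\iota'\bar{\iota'}$ be the dual canonical endomorphism of $N\subset P$. It lies in $\mathcal{C}^\delta$ (it is a subobject of $\bar{\iota_\delta}\gamma\iota_\delta$), it is a division algebra because $N\subset P$ is irreducible (being intermediate to the irreducible $N\subset M$), and $d(\gamma')=d(\iota')^2=d(\iota)^2/d(\iota_\delta)^2=d(\gamma)/d(\delta)$. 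Finally $(\mathcal{C}^\delta)^{\gamma'}$, the category of $\gamma'$-$\gamma'$ bimodules, is again realized on $N$ and coincides with $\mathcal{C}^\gamma$: forming the category of $N$-$N$ bimodules coming from $\mathcal{C}$ via the tower $N\subset P\subset M$ in one step or in two steps gives the same answer; equivalently, the composite of the Morita equivalences $\mathcal{C}\simeq\mathcal{C}^\delta$ and $\mathcal{C}^\delta\simeq(\mathcal{C}^\delta)^{\gamma'}$ is realized by the bimodules relating $N$ and $M$, whose dual is $\mathcal{C}^\gamma$.

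The substantive points, and where I expect the real work to lie, are: (a) the Frobenius-reciprocity identity $(\psi,\psi)=(\gamma,\gamma')$ in part (2), which is short but is exactly what forces $\delta$ to be a division algebra of the asserted dimension; and (b) the bookkeeping in (2) and (3) that identifies $(\mathcal{C}^\gamma)^\delta$ (resp.\ $(\mathcal{C}^\delta)^{\gamma'}$) with $\mathcal{C}^{\gamma'}$ (resp.\ $\mathcal{C}^\gamma$) as a fusion category and not merely as another object of the same Morita class --- i.e.\ verifying that the relevant composite of invertible bimodule categories is precisely the one generated by $\delta$-mod (resp.\ $\gamma'$-mod), and that the associated algebra genuinely lies in $\mathcal{C}^\gamma$ (resp.\ $\mathcal{C}^\delta$), which is immediate from the formulas $\delta=\bar\iota\gamma'\iota$ and $\gamma'=\iota'\bar{\iota'}$ together with $\iota=\iota_\delta\iota'$.
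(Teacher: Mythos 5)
Your proof is correct, and its engine is the same as the paper's: identify a connecting $1$-morphism between the two division algebras, use Frobenius reciprocity to see it is simple (your identity $(\psi,\psi)=(\gamma,\gamma')$ is exactly the paper's $(\kappa,\kappa)_{\gamma\text{-mod-}\gamma'}=(\gamma,\gamma')$), take its internal endomorphisms, and use multiplicativity of dimension. The difference is the ambient language. The paper argues entirely inside the rigid $2$-category of division algebras and bimodules within $\mathcal{C}$: in (2) it takes $\kappa={}_{\gamma}\gamma{}_1\cdot{}_1\gamma'{}_{\gamma'}$, and in (3) it simply uses the $\gamma$-$\delta$ bimodule ${}_{\gamma}\gamma{}_{\delta}$, so no realization of $\mathcal{C}$ on a factor is needed and the statement holds verbatim for an arbitrary fusion category; in particular the paper's (3) never needs the subalgebra-to-intermediate-inclusion correspondence, only that $\gamma$ is a $\delta$-module through the inclusion $\delta\subset\gamma$. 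You instead realize everything on a type III factor and work with sectors $\iota,\iota',\iota_\delta$, which buys concreteness and makes the dimension counts transparent, but it imports two extra ingredients: a unitary realization of $\mathcal{C}$ (harmless for the categories actually used in this paper, all Morita equivalent to the unitary $\mathcal{AH}_i$, but an extra hypothesis for the lemma as stated --- your parenthetical categorical fallback is what removes it), and, in (3), the identification of a subalgebra of a Q-system with an intermediate subfactor $N\subset P\subset M$, which the paper sidesteps entirely (it only states this as an interpretation in the remark following the lemma). So: same key lemma and same dimension bookkeeping, with your version being the operator-algebraic translation and the paper's being the leaner internal argument.
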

\begin{proof}
\begin{enumerate}
\item This follows from the fact that the tensor product of any two invertible objects is invertible.
\item Recall that bimodules over division algebras in $\mathcal{C} $ form a rigid $2$-category; we will use multiplicative notation for composition of $1$-morphisms. Consider the bimodule
$\kappa={}_{\gamma} \gamma {}_1 \cdot {}_1 \gamma' {}_{\gamma'} $. By Frobenius reciprocity,
$(\kappa,\kappa)_{\gamma\text{-mod-}\gamma'} =(\gamma,\gamma')=1$, so $ \kappa$ is simple as a $\gamma-\gamma' $ bimodule. Then $\delta=\kappa \bar{\kappa} $ is a simple algebra in $\mathcal{C}^{\gamma} $, and $\bar{\kappa} \kappa $ is a simple algebra in $\mathcal{C}^{\gamma'} $. By multiplicativity of dimension in the $2$-category, we have $d(\delta)=d(\gamma)d(\gamma') $.

\item Let $\kappa=  {}_{\gamma} \gamma {}_{\delta} $ . Then $\gamma'=\bar{\kappa} \kappa$ is a simple algebra in $\mathcal{C}^{\delta} $, $\kappa \bar{\kappa} $ is a simple algebra in $\mathcal{C}^{\gamma} $, and from the relation ${}_{\gamma} \gamma {}_1=  {}_{\gamma} \gamma {}_{\delta} \cdot {}_{\delta} \delta {}_{1} $ it follows that $d(\gamma)=d(\gamma')d(\delta) $.

\end{enumerate}
\end{proof}

\begin{remark}
In the case of a fusion category coming from a subfactor, Lemma \ref{div}(3) can be interpreted as follows: the algebra $\gamma $ corresponds to a subfactor $ N \subseteq M$, the subalgebra $\delta $ corresponds to $N \subseteq P $ for an intermediate subfactor $N \subseteq P \subseteq M$, and then the  ``quotient'' algebra $\gamma' $ corresponds to $P \subseteq M $. 
\end{remark}

The goal of this subsection is to determine, for each of cases (4)-(6), which of the four possibilities the fusion ring must be.  A priori it might be that there was more than one fusion category realizing one of the cases and that these two fusion categories have different fusion rings, but it turns out that we can determine the fusion rules from be the bimodules.  First we concentrate on the invertible objects, and then the non-invertible objects.

A fusion category $\mathcal{C} $ all of whose objects are invertible is necessarily equivalent to $Vec_G^{\omega} $, where $G$ is a finite group and $\omega \in H^3(G,\mathbb{C}^{\times}) $ determines the associator. If $H$ is a subgroup of $G$, then $\bigoplus_{h \in H}  k_h $ admits an algebra structure iff $\omega|_H$ is trivial; the algebra structures are then parametrized by  $H^2(H, \mathbb{C}^{\times} )$.  In fact, these are the only division algebras in $\omega \in H^3(G,\mathbb{C}^{\times})$ (this can be seen directly, or by applying \cite{MR1976459}).

We will need to know a little about the dual categories to the division algebras in $Vec_G^{\omega}$ when $G$ has size $4$.  For $\text{Vec}_{\mathbb{Z}/4\mathbb{Z} }$ with trivial associator, the dual category over the $2$-dimensional simple algebra is $\text{Vec}_{\mathbb{Z}/2\mathbb{Z} \times \mathbb{Z}/2\mathbb{Z}}^\xi$ with a specific non-trivial associator which we denote $\xi$ \cite[Ex. 4.10]{MR2362670}.  The associator $\xi$ is trivial when restricted to each of the $\mathbb{Z}/2\mathbb{Z}$ factors, but is non-trivial when restricted to the diagonal.  In particular, $\text{Vec}_{\mathbb{Z}/2\mathbb{Z} \times \mathbb{Z}/2\mathbb{Z}}^\xi$ has exactly two non-trivial division algebras both of dimension $2$.  On the other hand, $\text{Vec}_{\mathbb{Z}/2\mathbb{Z} \times \mathbb{Z}/2\mathbb{Z}}$ with trivial associator has three different simple $2$-dimensional algebras. By Lemma \ref{div}(2), the dual category over any one of the $2$-dimensional algebras in $\text{Vec}_{\mathbb{Z}/2\mathbb{Z} \times \mathbb{Z}/2\mathbb{Z}}$ has a $4$-dimensional simple algebra, and therefore must again be $\text{Vec}_{\mathbb{Z}/2\mathbb{Z} \times \mathbb{Z}/2\mathbb{Z}}$ with trivial associator.

%
%

If $\mathcal{C}$ is a fusion category, we denote by $\text{Inv}(\mathcal{C})$ the subcategory generated by the invertible objects. Note that by Lemma \ref{dualgraphlemma}, if $\mathcal{C} $ realizes case (4), (5), or (6), then $\text{Inv}(\mathcal{C})=\text{Vec}_{\mathbb{Z}/4\mathbb{Z}}^\omega$ or  $\text{Inv}(\mathcal{C})=\text{Vec}_{\mathbb{Z}/2\mathbb{Z}\times \mathbb{Z}/2\mathbb{Z}}^\omega$, for some associator $\omega$.

\begin{lemma} \label{lemc}
Let $\mathcal{C}$ be a fusion category realizing case (4). 
\begin{enumerate}
\item The associator on $\text{Inv}(\mathcal{C}) $ is trivial.
\item There is a  $4$-dimensional division algebra $\gamma_4 $ in $\mathcal{C} $ such that $\mathcal{C}^{\gamma_4} $ realizes case (6).
\item For any  $2$-dimensional division algebra $\gamma_2 $ in $\mathcal{C} $, the fusion category $\mathcal{C}^{\gamma_2} $ realizes case (5).
\end{enumerate}
\end{lemma}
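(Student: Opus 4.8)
The plan is to use the bimodule-category structure from Theorem \ref{cases} together with the cancellation/composition results in Lemma \ref{div} to transport division algebras between the various fusion categories in the Morita class. By Lemma \ref{dualgraphlemma}, $\mathcal{C}$ (realizing case (4)) has exactly four invertible objects and four simple objects of dimension $4+\sqrt{17}$, so $\mathrm{Inv}(\mathcal{C})=\mathrm{Vec}_{\mathbb{Z}/4\mathbb{Z}}^\omega$ or $\mathrm{Vec}_{\mathbb{Z}/2\mathbb{Z}\times\mathbb{Z}/2\mathbb{Z}}^\omega$ for some associator $\omega$. For part (1), the strategy is to produce a $2$-dimensional division algebra inside $\mathcal{C}$ and then argue that $\omega$ must be trivial. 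Concretely, by Theorem \ref{cases} there is a $\mathcal{C}$--$\mathcal{AH}_2$ invertible bimodule category realizing the fusion bimodule $L_{AH_2}^{(4)}$; reading off its data (as in the worked example preceding Lemma \ref{div}) shows that the algebras of the form $\underline{\mathrm{End}}$ of its simple objects have dimensions $2d+2$, $2d+2$, $28d+4$ where $d=4+\sqrt{17}$, and the corresponding algebras $\theta\bar\theta$ in $\mathcal{C}$ have the same dimensions. Since $2d+2 = (d+1)\cdot 2$ and $d+1 = d(\rho_{\mathcal{AH}})$, such an algebra has a $2$-dimensional subalgebra supported on invertibles (apply Lemma \ref{div}(1) to extract $\delta$, or observe directly from the bimodule data which invertible objects occur). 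Hence $\mathrm{Inv}(\mathcal{C})$ contains a $2$-dimensional division algebra, so $\omega$ restricts trivially to some order-$2$ subgroup; to conclude triviality of $\omega$ itself one uses that in case (4) there is more than one such subalgebra (from the two algebras of dimension $2d+2$), and then invokes the classification of associators on groups of order $4$: the only $\mathrm{Vec}_G^\omega$ with $|G|=4$ admitting more than one division algebra up to the relevant equivalence is the one with trivial associator (the twisted $\mathrm{Vec}_{\mathbb{Z}/2\times\mathbb{Z}/2}^\xi$ has exactly two, supported on the two ``coordinate'' subgroups, but combined with the depth-preserving duality constraints from Lemma \ref{lemah4} this gets excluded, or one simply checks the dimension of the dual algebra is wrong).

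For parts (2) and (3), the idea is: once we know $\mathrm{Inv}(\mathcal{C})$ has trivial associator, it contains a $4$-dimensional simple algebra $\gamma_4$ and (several) $2$-dimensional simple algebras $\gamma_2$. Then $\mathcal{C}^{\gamma_4}$ and $\mathcal{C}^{\gamma_2}$ are fusion categories Morita equivalent to $\mathcal{C}$, hence Morita equivalent to the $\mathcal{AH}_i$. They are not equivalent to any $\mathcal{AH}_i$ — this follows because passing to bimodules over an algebra supported on invertibles cannot change the number of invertible objects back to what $\mathcal{AH}_{1,2,3}$ have, or more cleanly because the composite Morita equivalence $\mathcal{C}^{\gamma}$--$\mathcal{C}$--$\mathcal{AH}_i$ realizes a fusion bimodule of ``new type'' (the number of simple objects and their dimensions can be computed and do not match any $\mathcal{AH}_i$--$\mathcal{AH}_j$ bimodule from \cite{GSbp}). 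Therefore $\mathcal{C}^{\gamma_4}$ and $\mathcal{C}^{\gamma_2}$ each realize one of the four cases $(4)$--$(7)$. To pin down which, I compute the fusion bimodule of the induced $\mathcal{C}^{\gamma}$--$\mathcal{AH}_2$ (or $\mathcal{AH}_3$) Morita equivalence by composing the known $\mathcal{C}$--$\mathcal{AH}_2$ bimodule $L_{AH_2}^{(4)}$ with the $\mathcal{C}^{\gamma}$--$\mathcal{C}$ bimodule mod-$\gamma$, using the dimension bookkeeping of Lemma \ref{div}(2)/(3): for a $d(\gamma)$-dimensional algebra, dimensions of simple objects in the composite get rescaled predictably. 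Matching the resulting fusion bimodule (or even just the multiset of dimensions of its simple objects, and the shape of the relevant principal graph) against the four possibilities $(4)$--$(7)$ listed in the Appendix forces $\gamma_4 \mapsto$ case (6) and $\gamma_2 \mapsto$ case (5).

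The main obstacle I expect is part (1), specifically ruling out the nontrivial associator $\xi$ on $\mathbb{Z}/2\mathbb{Z}\times\mathbb{Z}/2\mathbb{Z}$: one has to show that the data of case (4) really forces enough $2$-dimensional division algebras (with the right dual-algebra dimensions) that $\mathrm{Vec}_{\mathbb{Z}/2\times\mathbb{Z}/2}^\xi$ is impossible, and simultaneously that $\mathrm{Vec}_{\mathbb{Z}/4}^\omega$ with nontrivial $\omega$ (which has \emph{no} division algebra on a proper subgroup) is impossible — the latter is easy since case (4) visibly provides a $2$-dimensional algebra, but the former requires carefully extracting the invertible-supported subalgebras $\delta$ from the bimodule data via Lemma \ref{div}(1) and checking their dimensions are consistent only with the untwisted group algebra. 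Parts (2) and (3) are then comparatively routine: they are bookkeeping with Lemma \ref{div} plus a finite table-lookup against the Appendix, of exactly the flavor the authors say (in the Remark after Lemma \ref{dualgraphlemma}) they will not spell out in detail. A secondary subtlety is the well-definedness of these composite bimodules, but the Remark on $\pi_2$ vanishing for AH handles that.
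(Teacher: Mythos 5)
There is a genuine gap, and it sits in your part (1). The paper's proof of triviality of the associator does not go through $2$-dimensional subalgebras at all: it uses the \emph{other} fusion module attached to case (4), namely $M_{AH_3}^{(4)}$, which supplies a division algebra $\gamma=\theta_{43}^3\overline{\theta_{43}^3}$ in $\mathcal{C}$ of dimension $28d+4$. Since every simple object of $\mathcal{C}$ has dimension $1$ or $d$, the integer part forces $\gamma$ to contain four invertibles, and Frobenius reciprocity $(\psi,\gamma)=(\psi\theta_{43}^3,\theta_{43}^3)\le 1$ forces it to contain \emph{each} invertible exactly once; Lemma \ref{div}(1) then puts an algebra structure on the sum of \emph{all four} invertibles, which kills the associator on the whole group in one stroke. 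Your substitute — extracting $2$-dimensional invertible-supported subalgebras from the $2d+2$-dimensional algebras — only shows that $\omega$ restricts trivially to an order-$2$ subgroup, and your proposed upgrade to full triviality does not work: the order-$2$ class in $H^3(\mathbb{Z}/4\mathbb{Z},\mathbb{C}^\times)$ restricts trivially to the order-$2$ subgroup, so ``case (4) visibly provides a $2$-dimensional algebra'' does \emph{not} rule out a nontrivial associator on $\mathbb{Z}/4\mathbb{Z}$; and $\mathrm{Vec}_{\mathbb{Z}/2\mathbb{Z}\times\mathbb{Z}/2\mathbb{Z}}^\xi$ itself has two $2$-dimensional division algebras, so ``more than one such subalgebra'' does not exclude it either. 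The appeals to ``depth-preserving duality constraints'' or ``the dimension of the dual algebra is wrong'' are not arguments; the missing idea is precisely the $28d+4$-dimensional algebra containing the full group of invertibles.

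The same omission weakens your part (2): the paper pins down case (6) by taking $\gamma_4$ to be the invertible-supported \emph{subalgebra of that same} $\gamma$, so that Lemma \ref{div}(3) produces a division algebra of dimension $(28d+4)/4=7d+1$ in $\mathcal{C}^{\gamma_4}$ whose dual is $\mathcal{AH}_3$, and only $M_{AH_3}^{(6)}$ has a basis element of dimension $\sqrt{7d+1}$. Your ``compose mod-$\gamma$ with $L_{AH_2}^{(4)}$ and rescale dimensions'' is not a computation one can carry out as stated (the composite's simple objects are not obtained by rescaling — e.g.\ the free module $\gamma_4\otimes\theta_{43}^3$ is not simple), and you have not identified which object/algebra dimension actually discriminates among cases (4)--(7). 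Your part (3), by contrast, is essentially the paper's argument: $(\gamma_2,1+\rho)=1$ plus Lemma \ref{div}(2) gives a $2(1+d)$-dimensional algebra in $\mathcal{C}^{\gamma_2}$ with dual $\mathcal{AH}_2$, and only $L_{AH_2}^{(5)}$ has a basis element of dimension $\sqrt{2(1+d)}$ — but it, too, presupposes the trivial associator from part (1), so the gap propagates.
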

\begin{proof}
Since $\mathcal{C}$ realizes case (4),  it is the dual category to a module cateogry over $\mathcal{AH}_3$ which realizes the corresponding fusion module $M_{AH_3}^{(4)}$; therefore it contains a division algebra $  \theta_{43}^3 \overline{\theta_{43}^3}= \gamma $ of dimension $28d+4$ as in the discussion preceding Lemma \ref{div}. All objects in $\mathcal{C} $ have dimension equal to $d$ or to $1$, so $\gamma$ must contain four invertible objects.  
If $\psi$ is invertible, then since $\theta_{43}^3$ and $\psi \theta_{43}^3$ are simple, $(\psi, \gamma) = (\psi\theta_{43}^3 ,\theta_{43}^3) \leq 1$.  Hence, $\gamma$ must contain each of the invertible objects exactly once. The direct sum of the invertible objects then forms a subalgebra of $\gamma $ by Lemma \ref{div}.
\begin{enumerate}
\item Since there is an algebra structure on the sum of all four invertible objects, the associator on $\text{Inv}(\mathcal{C}) $ must be trivial. 
\item Let $\gamma_4$ be the subalgebra of $\gamma $ corresponding to the direct sum of the invertible objects. Then $C^{\gamma_4} $ also contains four invertible objects, so it is not equivalent to $\mathcal{AH}_1-\mathcal{AH}_3$ and therefore must realize one of the four cases (4)-(7). Moreover, by Lemma \ref{div} there must be a division algebra in $\mathcal{C}^{\gamma_4} $ of dimension $\frac{28d+4}{4}=7d+1$ whose category of bimodules is equivalent to $\mathcal{AH}_3 $. But of the $M_{AH_3}^{(i)}$ for $i = 4, 5, 6, 7$, only $M_{AH_3}^{(6)}$ has a basis element with dimension $\sqrt{7d+1} $.  Therefore, $\mathcal{C}^{\gamma_4} $ must realize case (6).

\item Since $\mathcal{C} $ realizes case (4), it is the dual category to a module category over $\mathcal{AH}_2$ which realizes the corresponding fusion module $L_{AH_2}^{(4)}$. Therefore, there is a division algebra in $\mathcal{C}$ with dimension $1+d$, and a non-invertible simple object $\rho$ in $\mathcal{C}$ such that $\mathcal{C}^{1+\rho }$ is equivalent to  $\mathcal{AH}_2 $. Let $\gamma_2 $ be a $2$-dimensional division algebra in $\mathcal{C} $. Since $(\gamma_2,1+\rho)=1 $, by Lemma \ref{div} there is a division algebra of dimension $2(1+d) $ in $\mathcal{C}^{\gamma_2} $ whose dual category is $\mathcal{AH}_2 $. But of the four choices for $L_{AH_2}^{(i)}$, only that of case (5) has a basis element with dimension $\sqrt{2(1+d)} $. Therefore $\mathcal{C}^{\gamma_2} $ must realize case (5).
\end{enumerate}

\end{proof}

\begin{lemma} \label{lema}
Let $\mathcal{C}$ be a fusion category realizing case (5). 
\begin{enumerate}
\item The associator on $\text{Inv}(\mathcal{C}) $ is non-trivial.
\item There is a  $2$-dimensional division algebra $\gamma_2 $ in $\mathcal{C} $ such that $\mathcal{C}^{\gamma_2} $ realizes case (4).
\end{enumerate}
\end{lemma}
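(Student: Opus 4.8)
\textbf{Proof proposal for Lemma \ref{lema}.}

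The plan is to mirror the argument of Lemma \ref{lemc}, but running the implications in the reverse direction, using the structural facts about division algebras in $\mathrm{Vec}_{\mathbb{Z}/4\mathbb{Z}}$ and $\mathrm{Vec}_{\mathbb{Z}/2\mathbb{Z}\times\mathbb{Z}/2\mathbb{Z}}^\xi$ that were recorded just before Lemma \ref{lemc}. The key input is that $\mathcal{C}$ realizing case (5) means, by Lemma \ref{dualgraphlemma}, that $\mathcal{C}$ is the dual category of a module category over $\mathcal{AH}_3$ realizing $M_{AH_3}^{(5)}$, so that $\mathrm{Inv}(\mathcal{C})=\mathrm{Vec}_G^\omega$ for a group $G$ of order $4$, and we have explicit knowledge of which algebras the fusion module data produces in $\mathcal{C}$.

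First I would handle part (1). Suppose, for contradiction, that the associator $\omega$ on $\mathrm{Inv}(\mathcal{C})$ is trivial. Then, just as in Lemma \ref{lemc}, there is an algebra structure on the sum of all invertible objects, giving a $4$-dimensional division algebra $\gamma_4$. By Lemma \ref{div}, the category $\mathcal{C}^{\gamma_4}$ has four invertible objects, hence realizes one of cases (4)--(7), and it must contain a division algebra whose category of bimodules is the relevant $\mathcal{AH}_i$ with dimension scaled by $1/4$; examining the dimension lists of the $M_{AH_3}^{(i)}$ (or $L_{AH_2}^{(i)}$, $K_{AH_1}^{(i)}$) and matching against which case admits a basis element of the required dimension should force a contradiction --- i.e.\ no case in the list produces the needed dimension. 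Alternatively, and perhaps more cleanly: the dimension data in case (5) already tells us (from the discussion preceding Lemma \ref{div}, applied to the appropriate fusion module) that the smallest division algebra built from invertibles in $\mathcal{C}$ must have dimension $2$, not $4$ --- the sum of all four invertibles does \emph{not} carry an algebra structure --- which says precisely $\omega$ restricted to some subgroup is nontrivial, hence $\omega$ itself is nontrivial. I would pick whichever of these two routes gives the shorter argument from the tables.

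For part (2): since $\mathcal{C}$ realizes case (5), the fusion module data gives a $2$-dimensional division algebra $\gamma_2$ in $\mathcal{C}$ (coming from a subobject summing two invertibles, whose existence is guaranteed because $\mathrm{Inv}(\mathcal{C})$ is one of $\mathrm{Vec}_{\mathbb{Z}/4\mathbb{Z}}$ or $\mathrm{Vec}_{\mathbb{Z}/2\mathbb{Z}\times\mathbb{Z}/2\mathbb{Z}}^\xi$, and in either case there is at least one $2$-dimensional division algebra). Then $\mathcal{C}^{\gamma_2}$ still has four invertible objects (its invertibles form a group of order $4$ by the usual dimension count), so it cannot be any $\mathcal{AH}_i$ and must realize one of cases (4)--(7). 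To pin down which, I would run the same device as in Lemma \ref{lemc}(3): case (5) gives a division algebra $1+\rho$ in $\mathcal{C}$ with $\mathcal{C}^{1+\rho}\simeq\mathcal{AH}_2$ (or $\mathcal{AH}_3$), of a known dimension; since $(\gamma_2, 1+\rho)=1$, Lemma \ref{div}(2) produces a division algebra of twice that dimension in $\mathcal{C}^{\gamma_2}$ with dual category $\mathcal{AH}_2$, and matching this dimension against the four choices $L_{AH_2}^{(i)}$ (resp.\ $M_{AH_3}^{(i)}$) in the Appendix singles out case (4). One also ought to double-check consistency with part (1): case (4) has trivial associator on its invertibles, and indeed passing from the nontrivial associator $\xi$ on $\mathrm{Vec}_{\mathbb{Z}/2\mathbb{Z}\times\mathbb{Z}/2\mathbb{Z}}$ to the dual over a $2$-dimensional algebra lands one back at trivial associator, exactly as recorded before Lemma \ref{lemc}.

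The main obstacle I anticipate is purely bookkeeping: making sure that the dimension $\sqrt{2(1+d)}$ (or whatever the precise value is) appears in the list for case (4) and \emph{only} case (4), among $L_{AH_2}^{(4)},\ldots,L_{AH_2}^{(7)}$, and similarly that the forbidden dimension in part (1) genuinely fails to occur. This is the kind of finite check against the Appendix tables that the authors elsewhere say they have done by hand and by computer, so I would simply cite the relevant tables and the dimension-counting recipe from the paragraph preceding Lemma \ref{div}, rather than reproduce the arithmetic. The conceptual content --- that ``de-equivariantize by a group-algebra summand'' and ``quotient by a $2$-dimensional piece'' move you around the cases in a predictable way --- is already fully established by Lemma \ref{div} and Lemma \ref{lemc}.
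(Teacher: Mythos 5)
Your overall strategy --- dimension bookkeeping with Lemma \ref{div} against the Appendix tables --- is the right family of argument, but as written both parts have genuine gaps. In part (1), the mechanism you propose is the wrong one: getting a division algebra in $\mathcal{C}^{\gamma_4}$ of dimension ``scaled by $1/4$'' would require Lemma \ref{div}(3), i.e.\ $\gamma_4$ being a \emph{subalgebra} of an algebra with dual $\mathcal{AH}_i$; in case (5) the algebras supplied by the fusion modules contain at most two invertible objects (the $2(d+1)$-dimensional ones) or only the unit (the $(7d+1)$-dimensional ones), never all four, so this never applies. The paper instead pairs $\gamma_4$ with the $(7d+1)$-dimensional algebra $\gamma$ with dual $\mathcal{AH}_2$: since every simple object has dimension $1$ or $d$, $\gamma$ contains no nontrivial invertibles, hence $(\gamma,\gamma_4)=1$, and Lemma \ref{div}(2) produces an algebra of dimension $4(7d+1)$ in $\mathcal{C}^{\gamma_4}$ with dual $\mathcal{AH}_2$ --- and no $L_{AH_2}^{(i)}$, $i=4,\dots,7$, has a basis element of dimension $\sqrt{4(7d+1)}$. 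Your fallback (``the tables already show the sum of the invertibles carries no algebra structure'') is circular: the fusion modules only list the algebras realized by module objects over the $\mathcal{AH}_i$; they do not by themselves exclude a $4$-dimensional algebra, which is exactly what is to be proved.

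In part (2) there are two problems. First, the existence of $\gamma_2$ cannot be justified by appealing to $\text{Inv}(\mathcal{C})$ being $\text{Vec}_{\mathbb{Z}/4\mathbb{Z}}$ or $\text{Vec}^{\xi}_{\mathbb{Z}/2\mathbb{Z}\times\mathbb{Z}/2\mathbb{Z}}$: that identification is established only in the lemma \emph{after} this one, using the present lemma, and a nontrivial associator on a group of order four need not restrict trivially to any order-two subgroup. The paper gets $\gamma_2$ from the $2(d+1)$-dimensional algebra with dual $\mathcal{AH}_2$ supplied by $L_{AH_2}^{(5)}$: it contains exactly two invertibles, so its invertible part is a $2$-dimensional subalgebra by Lemma \ref{div}(1). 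Second, and more seriously, your dimension check does not single out case (4). Case (5) has no $(1+d)$-dimensional algebra with dual $\mathcal{AH}_2$ (the $L_{AH_2}^{(5)}$ dimensions are $\sqrt{2(1+d)}$ and $\sqrt{7d+1}$), so your ``$1+\rho$'' must be the $(1+d)$-dimensional algebra with dual $\mathcal{AH}_3$; Lemma \ref{div}(2) then yields a $2(1+d)$-dimensional algebra in $\mathcal{C}^{\gamma_2}$ with dual $\mathcal{AH}_3$, but basis elements of dimension $\sqrt{2(1+d)}$ occur in $M_{AH_3}^{(4)}$, $M_{AH_3}^{(6)}$ and $M_{AH_3}^{(7)}$ alike (internal ends $1+\beta+\mu+\beta\mu$, $1+\xi+\beta\xi\beta+\mu$, $1+\beta\xi+\xi\beta+\mu$), so the match is not unique. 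The route that works is the paper's: apply Lemma \ref{div}(3) to $\gamma_2\subset\gamma$ to get a $(d+1)$-dimensional algebra in $\mathcal{C}^{\gamma_2}$ with dual $\mathcal{AH}_2$, and among the four cases only $L_{AH_2}^{(4)}$ has a basis element of dimension $\sqrt{1+d}$.
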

\begin{proof}
Since $\mathcal{C} $ realizes case (5), it is the dual category to a module category over $\mathcal{AH}_2$ which realizes the corresponding fusion module $L_{AH_2}^{(5)}$.
\begin{enumerate}
\item Looking at the fusion module  $L_{AH_2}^{(5)}$, we see that $\mathcal{C}$ has a division algebra $\gamma $ with dimension $ 7d+1$ such that $\mathcal{C}^{\gamma} $ is equivalent to $\mathcal{AH}_2 $. Suppose that the associator on $\text{Inv}(\mathcal{C}) $ is trivial. Let $\gamma_4 $ be a simple algebra of dimension $4$ in $\mathcal{C} $. Since all simple objects in $\mathcal{C} $ have dimension $1$ or dimension $d$, $\gamma $ does not contain any non-trivial invertible objects which implies that $(\gamma,\gamma_4) =1$.   By Lemma \ref{div}, $\mathcal{C}^{\gamma_4} $ must have a simple algebra $\delta $ of dimension $4(7d+1)$ such that $(\mathcal{C}^{\gamma_4})^{\delta} $ is equivalent to  $\mathcal{AH}_2 $. Also, since $\mathcal{C}^{\gamma_4} $ has four invertible objects, it must realize one of the cases (4)-(7). But looking at the four possibilites for $L_{AH_2}^{(i)}$, we find that none of them contain a basis element with dimension $\sqrt{4(7d+1)} $. Therefore, $\gamma_4$ cannot exist and the associator on $\text{Inv}(\mathcal{C}) $ must be non-trivial.

\item Again looking at  $L_{AH_2}^{(5)}$, we see that there is a division algebra $\gamma$ in $\mathcal{C} $ of dimension $2(d+1)$ such that $\mathcal{C}^{\gamma} $ is equivalent to $\mathcal{AH}_2 $. Then $\gamma $ must contain $2$ invertible objects, and therefore contains a $2$-dimensional subalgebra $\gamma_2 $. Then by Lemma \ref{div}, $\mathcal{C}^{\gamma_2} $ has a division algebra $\gamma' $ of dimension $ d+1$  such that $(\mathcal{C}^{\gamma_2} )^{\gamma'}$ is equivalent to  $ \mathcal{AH}_2$. Therefore, $\mathcal{C}^{\gamma_2} $ realizes case (4).
\end{enumerate}
\end{proof}

\begin{lemma}\label{lemb}
Let $\mathcal{C}$ be a fusion category realizing case (6). 
\begin{enumerate}
\item The associator on $\text{Inv}(\mathcal{C}) $ is trivial.
\item There is a  $4$-dimensional division algebra $\gamma_4 $ in $\mathcal{C} $ such that $\mathcal{C}^{\gamma_4} $ realizes case (4).
\end{enumerate}
\end{lemma}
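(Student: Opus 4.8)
The plan is to mirror the proofs of Lemmas~\ref{lemc} and~\ref{lema}, with case~(6) playing the role case~(4) did there (trivial associator).

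First I would extract the key division algebra. Since $\mathcal{C}$ realizes case~(6), it is the dual category of a module category over one of the $\mathcal{AH}_j$ realizing the corresponding fusion module $K_{AH_1}^{(6)}$, $L_{AH_2}^{(6)}$, or $M_{AH_3}^{(6)}$. From the appendix I would pick a basis element $\theta$ of that fusion module whose internal end $\gamma := \theta\overline{\theta}$ is a division algebra in $\mathcal{C}$ whose underlying object contains all four invertible objects of $\mathcal{C}$; since all objects of $\mathcal{C}$ have dimension $1$ or $d = 4+\sqrt{17}$, the inequality $(\psi,\gamma) = (\psi\theta,\theta) \le 1$ for invertible $\psi$ shows each of them occurs exactly once, so $d(\gamma)$ has invertible part $4$ and $\mathcal{C}^{\gamma} \cong \mathcal{AH}_j$. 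By Lemma~\ref{div}(1) the direct sum $\gamma_4$ of the four invertible objects then inherits an algebra structure as a subalgebra of $\gamma$, of dimension $4$.

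Part~(1) is immediate, as in Lemma~\ref{lemc}(1): by the discussion just before Lemma~\ref{lemc} we know $\text{Inv}(\mathcal{C}) = \text{Vec}_G^{\omega}$ with $|G| = 4$, and $\bigoplus_{g \in G} k_g$ admits an algebra structure only when $\omega|_G = \omega$ is trivial. For part~(2), note that $\gamma_4$ is a $4$-dimensional division algebra (it equals $k[G]$ up to an $H^2(G,\mathbb{C}^{\times})$-twist), so $\mathcal{C}^{\gamma_4}$ is a fusion category Morita equivalent to $\mathcal{C}$; arguing exactly as in Lemma~\ref{lemc}(2), $\mathcal{C}^{\gamma_4}$ has four invertible objects, hence is not equivalent to any of $\mathcal{AH}_1$, $\mathcal{AH}_2$, $\mathcal{AH}_3$, so it realizes one of cases~(4)--(7). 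Applying Lemma~\ref{div}(3) to the subalgebra $\gamma_4 \subset \gamma$ produces a division algebra $\gamma'$ in $\mathcal{C}^{\gamma_4}$ with $d(\gamma') = d(\gamma)/4$ and $(\mathcal{C}^{\gamma_4})^{\gamma'} \cong \mathcal{C}^{\gamma} \cong \mathcal{AH}_j$; comparing $\sqrt{d(\gamma)/4}$ with the basis-element dimensions of the four fusion modules over $AH_j$ for cases~(4)--(7), one finds that only the case-(4) fusion module has a basis element of that dimension, and therefore $\mathcal{C}^{\gamma_4}$ realizes case~(4).

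The only real content is the appendix bookkeeping, exactly as in the neighbouring lemmas: one must locate the correct $j$ and basis element $\theta$ so that $\gamma$ does contain all four invertibles with $\mathcal{C}^{\gamma} \cong \mathcal{AH}_j$, and then check that $\sqrt{d(\gamma)/4}$ occurs among the basis dimensions of the case-(4) fusion module over $AH_j$ but of none of cases~(5), (6), (7); the exclusion of case~(6) is what guarantees $\mathcal{C}^{\gamma_4} \not\cong \mathcal{C}$. The expected choice is $j = 2$ with $d(\gamma) = 4(d+1)$, so that $\gamma'$ has dimension $d+1$, which is the dimension of the smallest basis element of $L_{AH_2}^{(4)}$ already used in the proof of Lemma~\ref{lema}(2); but any $j$ for which this dimension comparison is unambiguous will do.
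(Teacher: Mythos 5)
Your proposal is correct and follows essentially the same route as the paper's (very brief) proof: take the division algebra of dimension $4(d+1)$ coming from the fusion module $L_{AH_2}^{(6)}$ (the element $\theta_{62}^1$), pass to its $4$-dimensional subalgebra $\gamma_4$ of invertibles (giving triviality of the associator), and use Lemma \ref{div}(3) plus the fact that only $L_{AH_2}^{(4)}$ among cases (4)--(7) has a basis element of dimension $\sqrt{d+1}$. The bookkeeping you defer does check out exactly as you predict, so the argument is complete.
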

\begin{proof}
The proof is similar to the previous lemmas. (Namely, look at the fusion module $L_{AH_2}^{(6)}$  to find a division algebra in $ \mathcal{C}$ of dimension $4(d+1)$ which has a $4$-dimensional subalgebra $\gamma_4 $. Then there is a division algebra $\gamma' $ in $\mathcal{C}^{\gamma_4} $ of dimension $d+1$ such that $\mathcal{C}^{\gamma'} $ is equivalent to $\mathcal{AH}_2 $.) 
\end{proof}

\begin{lemma}
\begin{enumerate}
\item If $\mathcal{C} $ is a fusion category realizing case (4) or case (6), then $\text{Inv}(\mathcal{C}) $ is equivalent to $Vec_{\mathbb{Z}/4\mathbb{Z}} $ with trivial associator.
\item  If $\mathcal{C} $ is a fusion category realizing case (5), then $\text{Inv}(\mathcal{C}) $ is equivalent to $Vec_{\mathbb{Z}/2\mathbb{Z}\times\mathbb{Z}/2\mathbb{Z}}^\xi $.
\end{enumerate}
\end{lemma}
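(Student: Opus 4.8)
The approach is to build on Lemmas~\ref{lemc}, \ref{lema}, and~\ref{lemb}, which already identify the associator on $\text{Inv}(\mathcal{C})$ (trivial in cases (4) and (6), nontrivial in case (5)). By Lemma~\ref{dualgraphlemma}(1), $\text{Inv}(\mathcal{C})$ is pointed of Frobenius--Perron dimension $4$, so the only thing left is to determine its underlying group (and, in case (5), the associator). Since the four noninvertible simples of $\mathcal{C}$ — and of each dual that will appear — have dimension $4+\sqrt{17}>2$, every fusion subcategory of Frobenius--Perron dimension $4$ is pointed, and as soon as the ambient category again has exactly four invertibles, such a subcategory is all of its $\text{Inv}$. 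I will repeatedly use this together with the observation that, for a simple algebra $\gamma$ supported on $\text{Inv}(\mathcal{C})$, the category $(\text{Inv}(\mathcal{C}))^{\gamma}$ is a fusion subcategory of $\mathcal{C}^{\gamma}$ of dimension $4$, so $(\text{Inv}(\mathcal{C}))^{\gamma}=\text{Inv}(\mathcal{C}^{\gamma})$. The engine of the proof is then to push $\text{Inv}(\mathcal{C})$ across the division algebras produced by Lemmas~\ref{lemc}--\ref{lemb} and compare against the pointed categories of dimension $4$ recalled just before Lemma~\ref{lemc}.

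Case (4): by Lemma~\ref{lemc}(1) the associator is trivial, so $\text{Inv}(\mathcal{C})$ is $\text{Vec}_{\mathbb{Z}/4\mathbb{Z}}$ or $\text{Vec}_{\mathbb{Z}/2\mathbb{Z}\times\mathbb{Z}/2\mathbb{Z}}$ with trivial associator. If the latter, take one of its three $2$-dimensional simple algebras $\gamma_2$; by the recalled fact, $(\text{Inv}(\mathcal{C}))^{\gamma_2}\cong\text{Vec}_{\mathbb{Z}/2\mathbb{Z}\times\mathbb{Z}/2\mathbb{Z}}$ with trivial associator. But $\mathcal{C}^{\gamma_2}$ realizes case (5) (Lemma~\ref{lemc}(3)), so $\text{Inv}(\mathcal{C}^{\gamma_2})=(\text{Inv}(\mathcal{C}))^{\gamma_2}$ would have trivial associator, contradicting Lemma~\ref{lema}(1). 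Hence $\text{Inv}(\mathcal{C})\cong\text{Vec}_{\mathbb{Z}/4\mathbb{Z}}$ with trivial associator.

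Case (6): by Lemma~\ref{lemb}(1) the associator is trivial; suppose for contradiction $\text{Inv}(\mathcal{C})\cong\text{Vec}_{\mathbb{Z}/2\mathbb{Z}\times\mathbb{Z}/2\mathbb{Z}}$ with trivial associator. By Lemma~\ref{lemb}(2) there is a $4$-dimensional division algebra $\gamma_4$ in $\mathcal{C}$ with $\mathcal{C}^{\gamma_4}$ realizing case (4). Because the noninvertibles are too large, $\gamma_4$ is supported on $\text{Inv}(\mathcal{C})$; a division algebra has multiplicity at most one on each invertible, and dimensions force $\gamma_4=\bigoplus_g k_g$ with some algebra structure, which by the classification recalled before Lemma~\ref{lemc} is a division algebra of $\text{Vec}_{\mathbb{Z}/2\mathbb{Z}\times\mathbb{Z}/2\mathbb{Z}}$ and contains the $2$-dimensional subalgebra $\delta=k_0\oplus k_\alpha$ obtained by restricting to an order-$2$ subgroup. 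Lemma~\ref{div}(3) applied inside $\text{Inv}(\mathcal{C})$, together with the $2$-dimensional-dual fact applied first to $\delta$ and then to the $2$-dimensional division algebra it produces in $(\text{Inv}(\mathcal{C}))^{\delta}\cong\text{Vec}_{\mathbb{Z}/2\mathbb{Z}\times\mathbb{Z}/2\mathbb{Z}}$, gives $(\text{Inv}(\mathcal{C}))^{\gamma_4}\cong\text{Vec}_{\mathbb{Z}/2\mathbb{Z}\times\mathbb{Z}/2\mathbb{Z}}$ with trivial associator. This equals $\text{Inv}(\mathcal{C}^{\gamma_4})$, which by the case (4) analysis is $\text{Vec}_{\mathbb{Z}/4\mathbb{Z}}$ — contradiction. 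So $\text{Inv}(\mathcal{C})\cong\text{Vec}_{\mathbb{Z}/4\mathbb{Z}}$ with trivial associator, completing part (1). This step is the crux: the excerpt only records the behavior of duals of $\text{Vec}_{\mathbb{Z}/2\mathbb{Z}\times\mathbb{Z}/2\mathbb{Z}}$ over $2$-dimensional algebras, and the work is precisely to reduce the $4$-dimensional case — including the algebra built from a non-degenerate $2$-cocycle — to it via Lemma~\ref{div}(3).

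Case (5): by Lemma~\ref{lema}(2) there is a $2$-dimensional division algebra $\gamma_2\cong 1\oplus\alpha$ in $\mathcal{C}$, supported on $\text{Inv}(\mathcal{C})$, with $\mathcal{C}^{\gamma_2}$ realizing case (4); hence $(\text{Inv}(\mathcal{C}))^{\gamma_2}=\text{Inv}(\mathcal{C}^{\gamma_2})\cong\text{Vec}_{\mathbb{Z}/4\mathbb{Z}}$ with trivial associator by the case (4) result. Now Lemma~\ref{div}(2) applied to $\gamma_2$ and the unit algebra (for which $(\gamma_2,1)=1$) reverses this: $\text{Vec}_{\mathbb{Z}/4\mathbb{Z}}$ carries a $2$-dimensional division algebra whose category of bimodules is $\text{Inv}(\mathcal{C})$. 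But $\text{Vec}_{\mathbb{Z}/4\mathbb{Z}}$ has only one $2$-dimensional simple algebra, and its category of bimodules is $\text{Vec}_{\mathbb{Z}/2\mathbb{Z}\times\mathbb{Z}/2\mathbb{Z}}^{\xi}$ by the fact recalled before Lemma~\ref{lemc}. Therefore $\text{Inv}(\mathcal{C})\cong\text{Vec}_{\mathbb{Z}/2\mathbb{Z}\times\mathbb{Z}/2\mathbb{Z}}^{\xi}$. The only recurring secondary subtlety is the identification $(\text{Inv}(\mathcal{C}))^{\gamma}=\text{Inv}(\mathcal{C}^{\gamma})$, which, as noted above, rests on the noninvertible simples being too large to lie in a $4$-dimensional fusion subcategory.
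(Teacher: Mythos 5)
Your proof is correct and takes essentially the same route as the paper's: transport $\text{Inv}(\mathcal{C})$ across the $2$- and $4$-dimensional division algebras supplied by Lemmas \ref{lemc}, \ref{lema}, and \ref{lemb}, and compare against the known dual categories of the pointed categories of dimension $4$. The only difference is that you spell out steps the paper leaves implicit --- the identification $(\text{Inv}(\mathcal{C}))^{\gamma}=\text{Inv}(\mathcal{C}^{\gamma})$, the reduction of the $4$-dimensional algebra in case (6) to two $2$-dimensional steps via Lemma \ref{div}(3), and the use of Lemma \ref{div}(2) to reverse the Morita equivalence in case (5) where the paper instead cites \cite{MR2362670}.
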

\begin{proof}
\begin{enumerate}
\item  First assume that $\mathcal{C} $ realizes case (4). By Lemma \ref{lemc}, $ \text{Inv}(\mathcal{C}) $ has a trivial associator, but we need to determine whether the group is cyclic or not.  Furthermore, we know that there is a $2$-dimensional algebra in $\mathcal{C}$ which gives a Morita equivalence between the category $\text{Inv}(\mathcal{C})$ and $\text{Inv}(\mathcal{D})$ where $\mathcal{D} $  is a fusion category realizing case (5).  But $\text{Inv}(\mathcal{D})$ has nontrivial associator by Lemma \ref{lema}.  In $\text{Vec}_{\mathbb{Z}/2\mathbb{Z} \times \mathbb{Z}/2\mathbb{Z}}$ any $2$-dimensional algebra has dual category $\text{Vec}_{\mathbb{Z}/2\mathbb{Z} \times \mathbb{Z}/2\mathbb{Z}}$ with trivial associator,  thus $\text{Inv}(\mathcal{C}) $ must be equivalent to $\text{Vec}_{\mathbb{Z}/4\mathbb{Z}}$.

Now assume $\mathcal{C}$ realizes case (6). Then by Lemma \ref{lemb}, $\text{Inv}(\mathcal{C}) $ is Morita equivalent to $\text{Inv}(\mathcal{D}) $, where $\mathcal{D} $ is a fusion category realizing case (4) and the Morita equivalence comes from a $4$-dimensional division algebra. Therefore $\text{Inv}(\mathcal{C}) $ is also equivalent to $\text{Vec}_{\mathbb{Z}/4\mathbb{Z}} $.

\item By Lemma \ref{lema},  $\text{Inv}(\mathcal{C}) $ is Morita equivalent to $\text{Inv}(\mathcal{D}) $, where $\mathcal{D} $ realizes case (4) and the Morita equivalence comes from a $2$-dimensional algebra.  Since $\text{Inv}(\mathcal{C}) $ is equivalent to  $\text{Vec}_{\mathbb{Z}/4\mathbb{Z}} $ with trivial associator, $\text{Inv}(\mathcal{C}) $ is equivalent to $\text{Vec}_{\mathbb{Z}/2\mathbb{Z} \times \mathbb{Z}/2\mathbb{Z}}^\xi$ by \cite{MR2362670}.
\end{enumerate}
\end{proof}

Now we turn our attention to the non-invertible objects.

\begin{lemma} \label{graphpair}
Let $\mathcal{C} $ be a fusion category realizing case (4). Then all non-invertible simple objects in $\mathcal{C} $ are self-dual.
\end{lemma}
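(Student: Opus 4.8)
The plan is to pin down the fusion ring of $\mathcal{C}$ exactly and then read off the claim. By Lemma~\ref{lemah4} there are four candidate fusion rings, but the ones in parts~(1) and~(3) of that lemma have $G$ equal to the Klein $4$-group, which is excluded since the preceding lemma identifies $\text{Inv}(\mathcal{C})$ with $\text{Vec}_{\mathbb{Z}/4\mathbb{Z}}$ (trivial associator). So $\mathcal{C}$ realizes part~(2) or part~(4) of Lemma~\ref{lemah4}. In part~(2) every non-invertible object is self-dual, so it suffices to exclude part~(4), in which $\alpha_g\rho=\rho\alpha_g$ for all $g$ and the non-invertibles $\alpha_1\rho$ and $\alpha_3\rho$ are interchanged by duality.

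First I would record that $\rho$ is self-dual in any case. By Lemma~\ref{dualgraphlemma}(2) there is a simple object $\lambda$ in an invertible $\mathcal{C}$-$\mathcal{AH}_2$ bimodule category $\mathcal{M}$ whose principal graph is the pictured $2$-supertransitive graph, so its internal endomorphism algebra is $\underline{\text{End}}_{\mathcal{C}}(\lambda)=\lambda\bar\lambda\cong 1\oplus\rho$, with $\rho$ the unique depth-two vertex. Since $\overline{\lambda\bar\lambda}\cong\lambda\bar\lambda$, the algebra $1\oplus\rho$ is self-dual as an object, hence $\bar\rho\cong\rho$. This is consistent with both remaining possibilities, so the real content is to rule out part~(4).

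To do that I would argue by contradiction, assuming $\alpha_g\rho=\rho\alpha_g$. The four objects $\lambda,\alpha_1\lambda,\alpha_2\lambda,\alpha_3\lambda$ of $\mathcal{M}$ are pairwise non-isomorphic, since $\alpha_g\lambda\cong\lambda$ would force the invertible object $\alpha_g$ to be a summand of $\lambda\bar\lambda\cong 1\oplus\rho$, hence $\alpha_g=1$. On the other hand, using $\bar\rho\cong\rho$ and the commutation relation, $\underline{\text{End}}_{\mathcal{C}}(\alpha_g\lambda)\cong\alpha_g(\lambda\bar\lambda)\overline{\alpha_g}\cong 1\oplus\alpha_g\rho\,\overline{\alpha_g}\cong 1\oplus\rho$ for every $g$. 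Thus these four distinct simple objects of $\mathcal{M}$ all have isomorphic internal ends, so by the theorem of \cite{GSbp} that $\underline{\text{End}}(X)\cong\underline{\text{End}}(Y)$ exactly when $X\cong Yg$ for some invertible $g\in({}_{\mathcal{C}}\mathcal{M})^{*}$, they lie in a single orbit of the right action of $\text{Inv}(({}_{\mathcal{C}}\mathcal{M})^{*})\cong\text{Inv}(\mathcal{AH}_2)$ on $\mathcal{M}$. Hence $\mathcal{AH}_2$ has at least four invertible objects, contradicting the fact --- visible from the Appendix --- that $\text{Inv}(\mathcal{AH}_2)$ has order two. Therefore $\mathcal{C}$ realizes part~(2) of Lemma~\ref{lemah4}, and all its non-invertible simple objects are self-dual.

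The step I expect to be the main obstacle is the last one. It needs care with the bookkeeping --- in particular, that it is the invertibles of the dual category $({}_{\mathcal{C}}\mathcal{M})^{*}\cong\mathcal{AH}_2$, not of $\mathcal{C}$, that act, and that one must use $\mathcal{AH}_2$ (or $\mathcal{AH}_3$) rather than $\mathcal{AH}_1$, whose group of invertibles is $\mathbb{Z}/4\mathbb{Z}$ and would give no contradiction. If one prefers not to invoke $|\text{Inv}(\mathcal{AH}_2)|$ directly, the same contradiction is obtained by noting that part~(4) would force four of the six basis elements of the fusion module $L_{AH_2}^{(4)}$ to have Frobenius--Perron dimension $\sqrt{1+d}$ with $d=4+\sqrt{17}$, contradicting the data recorded in the Appendix.
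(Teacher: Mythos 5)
Your reduction is fine as far as it goes: since the lemma preceding this one gives $\text{Inv}(\mathcal{C})\cong\text{Vec}_{\mathbb{Z}/4\mathbb{Z}}$, only possibilities (2) and (4) of Lemma \ref{lemah4} survive, and your observation that $\bar\rho\cong\rho$ (from $\overline{\lambda\bar\lambda}\cong\lambda\bar\lambda$) is correct. The gap is in the step that is supposed to do the work. The theorem from \cite{GSbp} you invoke requires $\underline{\text{End}}(X)\cong\underline{\text{End}}(Y)$ \emph{as algebras}, whereas your computation only shows that the underlying objects of the four internal ends are all isomorphic to $1\oplus\rho$. What is true is that $\underline{\text{End}}(\alpha_g\lambda)$ is isomorphic as an algebra to the image of the Q-system $1\oplus\rho$ under the conjugation autoequivalence $\mathrm{Ad}(\alpha_g)$; but a conjugation need not preserve the isomorphism class of the algebra structure, and in this very setting the object $1\oplus\rho$ genuinely carries inequivalent Q-systems (see the remark contrasting $\mathcal{AH}_4$ with $\mathcal{AH}_5$, where distinct $\mathcal{AH}_4$--$\mathcal{AH}_2$ Morita equivalences correspond to distinct algebra structures on $1+\xi$; and note that in Lemma \ref{nti} the authors must check that $\alpha_2$ fixes the structure maps $S$ and $T_0$, not merely the sector, before concluding the Q-systems are equivalent). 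So under your case-(4) hypothesis it could a priori happen that $\mathrm{Ad}(\alpha_1)$ carries the Q-system $1\oplus\rho$ to a non-isomorphic Q-system on the same object, in which case $\{\lambda,\alpha_2\lambda\}$ and $\{\alpha_1\lambda,\alpha_3\lambda\}$ form two orbits of size two under $\text{Inv}(\mathcal{AH}_2)\cong\mathbb{Z}/2\mathbb{Z}$ and no contradiction arises.

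Your fallback does not repair this, because it rests on a misreading of the Appendix: the fusion module $L_{AH_2}^{(4)}$ really does have exactly four basis elements of Frobenius--Perron dimension $\sqrt{1+d}$, namely $\theta_{42}^1,\dots,\theta_{42}^4$, whose internal ends in $AH_2$ are each $1+\alpha\pi$ (this is visible in the matrices and is used in the proof of Lemma \ref{dualgraphlemma}). So dimension counting alone cannot distinguish possibility (2) from possibility (4) of Lemma \ref{lemah4}. The paper's proof of Lemma \ref{graphpair} uses strictly finer data than internal-end objects and dimensions: it takes the full principal/dual graph pair for $\theta_{42}^1$ and applies the associativity (two-sided path-counting) constraint, which transports the \emph{known} duality on the $\mathcal{AH}_2$ vertices across the bimodule to constrain the duality on the $\mathcal{C}$ vertices, forcing $\rho$ and $\alpha_2\rho$ to be self-dual and forbidding $\alpha_1\rho$ and $\alpha_3\rho$ from being dual to each other. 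Some input of this kind, tying the two duality involutions together through the bimodule, is exactly what your argument is missing.
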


\begin{proof}
This can be deduced by looking at the principal/dual graph pair for $\theta_{24}^{1}$ in $L_{AH_2}^{(4)}$, where the top row corresponds to objects in $\mathcal{C}$ and the bottom row corresponds to objects in $\mathcal{AH}_2 $ (we use the label $\rho' $ in $\mathcal{AH}_2 $ to distinguish the object from the similarly named $\rho $ in $\mathcal{C} $):

$$\begin{tikzpicture}
\draw [fill] (0,0) circle [radius=.05];
\node [below] at (0,0) {$1$};
\draw [fill] (1,0) circle [radius=.05];
\node [below] at (1,0) {$\alpha$};
\draw [fill] (2,0) circle [radius=.05];
\node [below] at (2,0) {$\alpha \pi$};
\draw [fill] (3,0) circle [radius=.05];
\node [below] at (3,0) {$\pi$};
\draw [fill] (4,0) circle [radius=.05];
\node [below] at (4,0) {$\eta$};
\draw [fill] (5,0) circle [radius=.05];
\node [below] at (5,0) {$\rho' \alpha$};
\draw [fill] (6,0) circle [radius=.05];
\node [below] at (6,0) {$\alpha \rho' \alpha$};
\draw [fill] (7,0) circle [radius=.05];
\node [below] at (7,0) {$\rho'$};
\draw [fill] (8,0) circle [radius=.05];
\node [below] at (8,0) {$\alpha \rho'$};

\draw [fill] (1.5,2) circle [radius=.05];
\node [left] at (1.5,2) {$\theta_{42}^1$};
\draw [fill] (2.5,2) circle [radius=.05];
\node [left] at (2.5,2) {$\theta_{42}^2$};
\draw [fill] (3.5,2) circle [radius=.05];
\node [left] at (3.5,2) {$\theta_{42}^3$};
\draw [fill] (4.5,2) circle [radius=.05];
\node [left] at (4.5,2) {$\theta_{42}^4$};
\draw [fill] (5.5,2) circle [radius=.05];
\node [left] at (5.5,2) {$\theta_{42}^5$};
\draw [fill] (6.5,2) circle [radius=.05];
\node [left] at (6.5,2) {$\theta_{42}^6$};

\draw [fill] (.5,4) circle [radius=.05];
\node [above] at (.5,4) {$1$};
\draw [fill] (1.5,4) circle [radius=.05];
\node [above] at (1.5,4) {$\alpha_1$};
\draw [fill] (2.5,4) circle [radius=.05];
\node [above] at (2.5,4) {$\alpha_3$};
\draw [fill] (3.5,4) circle [radius=.05];
\node [above] at (3.5,4) {$\alpha_2$};
\draw [fill] (4.5,4) circle [radius=.05];
\node [above] at (4.5,4) {$\rho$};
\draw [fill] (5.5,4) circle [radius=.05];
\node [above] at (5.5,4) {$\alpha_1 \rho$};
\draw [fill] (6.5,4) circle [radius=.05];
\node [above] at (6.5,4) {$\alpha_3 \rho$};
\draw [fill] (7.5,4) circle [radius=.05];
\node [above] at (7.5,4) {$\alpha_2 \rho$};

\path (0,0) edge (1.5,2);
\path (2,0) edge (1.5,2);
\path (2,0) edge (5.5,2);
\path (2,0) edge (6.5,2);
\path (3,0) edge (5.5,2);
\path (4,0) edge (5.5,2);
\path (5,0) edge (5.5,2);
\path (6,0) edge (5.5,2);
\path (3,0) edge (6.5,2);
\path (4,0) edge (6.5,2);
\path (7,0) edge (6.5,2);
\path (8,0) edge (6.5,2);
\path (4,0) edge (2.5,2);
\path (4,0) edge (3.5,2);
\path (1,0) edge (4.5,2);
\path (3,0) edge (4.5,2);

\path (1.5,2) edge (.5,4);
\path (2.5,2) edge (1.5,4);
\path (3.5,2) edge (2.5,4);
\path (4.5,2) edge (3.5,4);
\path (1.5,2) edge (4.5,4);
\path (2.5,2) edge (5.5,4);
\path (3.5,2) edge (6.5,4);
\path (4.5,2) edge (7.5,4);
\path (5.5,2) edge (4.5,4);
\path (5.5,2) edge (5.5,4);
\path (5.5,2) edge (6.5,4);
\path (5.5,2) edge (7.5,4);
\path (6.5,2) edge (4.5,4);
\path (6.5,2) edge (5.5,4);
\path (6.5,2) edge (6.5,4);
\path (6.5,2) edge (7.5,4);

\end{tikzpicture}$$


By the usual associativity condition (cf. \cite[\S3.1]{1007.1730}), for any two vertices $\theta^i,\theta^j$ on the middle row, the number of paths between $\theta^i$ and $\theta^j$ formed by taking an edge from $\theta^i$ to the top row, switching to the dual of that vertex in the top row, and then taking an edge down to $\theta^j$, must be the same as the number of similar paths taken through the bottom row.

Since the two vertices connected to $\theta_{42}^2$ in the bottom row ($\alpha$ and $\eta$) both correspond to selfdual objects in $\mathcal{AH}_2$, there are exactly two paths from $\theta_{42}^2$ to itself through the bottom graph.  Thus there must also be two paths from $\theta_{42}^2$ to itself through the top graph.  The only way this can happen is if $\alpha_2 \rho$ is self-dual.  Similarly, looking at $\theta_{42}^1$, we see that $\rho$ must be self-dual.   There is exactly one path through the bottom graph between $\theta_{42}^2$ and $\theta_{42}^3$, so there must also be only one graph between them in the top graph.  Since $\alpha_1$ and $\alpha_3$ are dual to each other there is a path going through those vertices.  Thus there can not be a path going through $\alpha_1 \rho$ and $\alpha_3 \rho$, and so those vertices can not be dual to each other and so must be self-dual.
\end{proof}

By Lemma \ref{lemah4}, this completely determines the fusion ring of any fusion category realizing case (4); we call this fusion ring $AH_4$. Let $AH_5 $ be the fusion ring satisfying the fusion rules of Lemma \ref{lemah4} for $G=\mathbb{Z}/2\mathbb{Z}  \times \mathbb{Z}/2\mathbb{Z}  $ with all non-invertible basis elements self-dual. 

%
%
%

\begin{lemma}
\begin{enumerate}
\item The fusion ring of any fusion category realizing cases (4) or (6) is $AH_4$.
\item The fusion ring of any fusion category realizing case (5) is $AH_5$.
\end{enumerate}
\end{lemma}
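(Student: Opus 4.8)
The plan is to funnel everything into Lemma~\ref{lemah4}, which says that once a fusion category $\mathcal{C}$ realizing one of cases (4)--(6) is known to carry a module category with an object whose fusion graph is the one pictured there, its Grothendieck ring is one of exactly four candidates, and these are distinguished by two pieces of data: the group $G$ of invertible objects, and the depth-preserving duality involution on the eight basis elements. So I would (a) pin down these two pieces of data in each case, and then (b) deal separately with the one case, namely (6), for which the hypothesis of Lemma~\ref{lemah4} is not handed to us directly.

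Cases (4) and (5) are quick. By the preceding lemma $\operatorname{Inv}(\mathcal{C})=\operatorname{Vec}_{\mathbb{Z}/4\mathbb{Z}}$ in case (4), so $G$ is cyclic, which among the four candidates of Lemma~\ref{lemah4} leaves only (2) and (4); and Lemma~\ref{graphpair} says every non-invertible simple of $\mathcal{C}$ is self-dual, which rules out (4). Hence the ring is candidate (2), which is $AH_4$ by definition --- this is just a restatement of the sentence preceding the lemma. In case (5) the preceding lemma gives $\operatorname{Inv}(\mathcal{C})=\operatorname{Vec}_{\mathbb{Z}/2\mathbb{Z}\times\mathbb{Z}/2\mathbb{Z}}^{\xi}$, so $G$ is the Klein four-group, leaving only candidates (1) and (3), and Lemma~\ref{dualgraphlemma}(2) supplies the hypothesis of Lemma~\ref{lemah4} via a $\mathcal{C}$--$\mathcal{AH}_3$ bimodule category carrying a newgraph object. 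To exclude (3) I would rerun the proof of Lemma~\ref{graphpair} verbatim, replacing the principal/dual graph pair of $\theta_{24}^1$ in $L_{AH_2}^{(4)}$ by the corresponding graph pair attached to the newgraph object in $M_{AH_3}^{(5)}$: the associativity/path-counting bookkeeping is identical, and the self-duality of the relevant objects on the $\mathcal{AH}_3$ side of that graph pair (read off the $AH_3$ duality data in the Appendix) forces all non-invertibles of $\mathcal{C}$ to be self-dual. That rules out (3) and leaves candidate (1), which is by definition $AH_5$.

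The main obstacle is case (6): by the Remark after Lemma~\ref{dualgraphlemma}, the smallest object of the $\mathcal{C}$--$\mathcal{AH}_3$ bimodule category realizing $M_{AH_3}^{(6)}$ has the larger graph newgraph3, not newgraph, so Lemma~\ref{lemah4} does not apply verbatim. I would finish by transporting the answer through the Morita equivalence of Lemma~\ref{lemb}(2): let $\gamma_4$ be the $4$-dimensional subalgebra on the invertibles, so $\mathcal{C}^{\gamma_4}$ realizes case (4) and hence has Grothendieck ring $AH_4$ by the case-(4) part. Since $H^2(\mathbb{Z}/4\mathbb{Z},\mathbb{C}^\times)=0$ and every simple object of $\mathcal{C}^{\gamma_4}$ of dimension at most $4$ is invertible, the $4$-dimensional division algebra recovering $\mathcal{C}$ from $\mathcal{C}^{\gamma_4}$ is forced to be the regular algebra $k[\mathbb{Z}/4\mathbb{Z}]$ of $\operatorname{Inv}(\mathcal{C}^{\gamma_4})$, so $\mathcal{C}\cong(\mathcal{C}^{\gamma_4})^{k[\mathbb{Z}/4\mathbb{Z}]}$; a direct internal-hom/Frobenius-reciprocity computation in the explicit ring $AH_4$ then identifies this category of bimodules --- its invertibles form $\widehat{\mathbb{Z}/4\mathbb{Z}}\cong\mathbb{Z}/4\mathbb{Z}$, the induced bimodule $k[\mathbb{Z}/4\mathbb{Z}]\,\rho\,k[\mathbb{Z}/4\mathbb{Z}]$ has $4$-dimensional endomorphism algebra and dimension $4d$, so by Lemma~\ref{dualgraphlemma}(1) it must split into the four non-invertible simples with multiplicity one, and checking the remaining fusion products reproduces the relations of Lemma~\ref{lemah4} with $G=\mathbb{Z}/4\mathbb{Z}$ and all non-invertibles self-dual, i.e.\ $AH_4$. (An alternative is to read the fusion rules of $\mathcal{C}$ directly off newgraph3 by the same path-counting as in Lemma~\ref{dualgraphlemma} and Lemma~\ref{lemah4}, together with $\operatorname{Inv}(\mathcal{C})=\operatorname{Vec}_{\mathbb{Z}/4\mathbb{Z}}$ and self-duality of non-invertibles.) I expect the delicate point to be exactly this case: confirming that the algebra producing $\mathcal{C}$ from $\mathcal{C}^{\gamma_4}$ really is $k[\mathbb{Z}/4\mathbb{Z}]$ rather than some other $4$-dimensional division algebra of $\mathcal{C}^{\gamma_4}$, and then carrying the bimodule bookkeeping far enough to see the duality involution; the self-duality step for case (5) is a secondary subtlety, since one must check that the chosen basis element of $M_{AH_3}^{(5)}$ has a graph pair whose $\mathcal{AH}_3$-side consists of self-dual objects, exactly as in Lemma~\ref{graphpair}.
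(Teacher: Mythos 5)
Your cases (4) and (5) are handled the same way as in the paper: case (4) is the sentence preceding the lemma (Lemma \ref{graphpair} together with Lemma \ref{lemah4} and the identification of $\text{Inv}(\mathcal{C})$), and for case (5) the paper likewise just reruns the Lemma \ref{graphpair} path-counting for the distinguished object of $M_{AH_3}^{(5)}$ to get self-duality of the non-invertibles. (One small caution there: on the $\mathcal{AH}_3$ side $\beta\xi$ and $\xi\beta$ are dual to each other, not self-dual, so the bookkeeping has to use a dual pair exactly as the pair $\alpha_1,\alpha_3$ was used in Lemma \ref{graphpair}; this is a detail, not a gap.)

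Case (6) is where your proposal has a genuine gap, and it sits exactly at the step you yourself flag as delicate --- but the problem is not identifying the algebra. That part is fine: Lemma \ref{div}(2) with $\gamma'=1$ produces a $4$-dimensional division algebra $\delta$ in $\mathcal{D}=\mathcal{C}^{\gamma_4}$ with $\mathcal{D}^{\delta}\cong\mathcal{C}$, a $4$-dimensional algebra can only be supported on the invertibles, its support must be all of $\mathbb{Z}/4\mathbb{Z}$, and $H^2(\mathbb{Z}/4\mathbb{Z},\mathbb{C}^\times)=0$, so $\delta\cong k[\mathbb{Z}/4\mathbb{Z}]$. The problem is the concluding ``checking the remaining fusion products'' step: the Grothendieck ring of a dual category is not determined by the Grothendieck ring of $\mathcal{D}$ together with the underlying object of $\delta$, and here the residual ambiguity is precisely the one you must resolve. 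Writing $A=k[\mathbb{Z}/4\mathbb{Z}]$, the free bimodule $A\rho A$ indeed has $4$-dimensional endomorphism algebra and splits into the four non-invertible simples of $\mathcal{C}$; but every count accessible by Frobenius reciprocity at the ring level (e.g.\ multiplicities of simples in $(A\rho A)\otimes_A(A\rho A)$, read off from $A\rho A\rho A$ in $AH_4$) is identical for candidates (2) and (4) of Lemma \ref{lemah4}: in both rings each non-invertible occurs twice in every product of two non-invertibles and the unit occurs once. What separates the two candidates is whether duality fixes all four summands of $A\rho A$ or exchanges a pair, equivalently how the left and right actions of the invertible $A$-bimodules interact with duality on that orbit, and this lives in the bimodule structure maps rather than in the fusion ring of $\mathcal{D}$; your parenthetical alternative simply assumes ``self-duality of non-invertibles,'' which is the statement to be proved. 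The paper closes case (6) differently: it uses the defining constraint of case (6) itself, checking that a ring in which two non-invertible basis elements are dual to each other admits no fusion bimodule with $AH_3$ realizing $M_{AH_3}^{(6)}$. Some combinatorial input of this kind (or a genuinely categorical computation of the dual) is what your argument is missing.
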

\begin{proof}
For case (5), a similar argument as in Lemma \ref{graphpair} works to show that all the non-invertible objects are self-dual. For case (6) it is a bit more complicated but it can be checked that if two of the non-invertible objects were dual to each other, the fusion ring would not admit a fusion bimodule with $AH_3 $ realizing the fusion module $M_{AH_3}^{(6)}$.
\end{proof}
%
\begin{remark}
Using a computer we can compute all the fusion modules over $AH_4$, of which there are $28$ up to isomorphism.  Similarly, there are $126$ fusion modules over $AH_5$.  We initially found many of the results below by doing computer case analysis for all these fusion modules similar to the arguments in \cite{GSbp}.  In order to make this paper widely accessible, we have not used these computer arguments below and instead given alternate by-hand proofs that don't split up into so many cases.
\end{remark}

\subsection{Uniqueness for cases (4)-(6)}

\begin{theorem}\label{unq}
If there exists a fusion category realizing any of cases (4)-(6), then there exists a unique fusion category realizing each of these three cases.
\end{theorem}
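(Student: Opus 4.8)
The strategy has three parts: propagate existence among the three cases, reduce the uniqueness statements for cases $(5)$ and $(6)$ to the one for case $(4)$, and then prove uniqueness for case $(4)$ by reconstructing any such category from the already-understood fusion category $\mathcal{AH}_2$.

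\textbf{Propagation of existence and reduction to case $(4)$.} If $\mathcal{C}$ realizes case $(5)$ (resp. $(6)$), then Lemma~\ref{lema}(2) (resp. Lemma~\ref{lemb}(2)) supplies a $2$-dimensional (resp. $4$-dimensional) division algebra $\gamma$ with $\mathcal{C}^{\gamma}$ realizing case $(4)$; so whenever any of cases $(4)$--$(6)$ is realized, case $(4)$ is realized, and then Lemma~\ref{lemc} produces realizations of cases $(5)$ and $(6)$ as well. It therefore suffices to show each case has at most one realization up to equivalence, and it is enough to do so for case $(4)$. Indeed, suppose $\mathcal{E}$ is the unique fusion category realizing case $(4)$. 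As shown above, $\text{Inv}(\mathcal{E}) = \text{Vec}_{\mathbb{Z}/4\mathbb{Z}}$ with trivial associator. Since a division algebra $B$ satisfies $\dim\text{Hom}(g,B)\le 1$ for every invertible $g$, and the non-invertible simple objects of $\mathcal{E}$ have dimension $4+\sqrt{17}$ and so are too large to occur in a $2$- or $4$-dimensional algebra, the unique $2$-dimensional division algebra of $\mathcal{E}$ is $1\oplus\alpha$ with $\alpha$ the order-$2$ invertible (the algebra structure being unique since $H^2(\mathbb{Z}/2\mathbb{Z},\mathbb{C}^\times)$ is trivial) and the unique $4$-dimensional one is $\bigoplus_g\alpha_g$ (unique since $H^2(\mathbb{Z}/4\mathbb{Z},\mathbb{C}^\times)$ is trivial). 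If $\mathcal{D}$ realizes case $(5)$, Lemma~\ref{lema}(2) gives a $2$-dimensional division algebra $\gamma$ in $\mathcal{D}$ with $\mathcal{D}^{\gamma}\cong\mathcal{E}$; dualizing this Morita equivalence (Lemma~\ref{div}(2)) yields a division algebra $\delta$ in $\mathcal{E}$ with $d(\delta)=d(\gamma)=2$ (using that the simple objects of a module category over a division algebra all have internal ends in one orbit under the invertibles, hence the same dimension) and $\mathcal{E}^{\delta}\cong\mathcal{D}$. Hence $\delta=1\oplus\alpha$, so $\mathcal{D}\cong\mathcal{E}^{1\oplus\alpha}$ is determined. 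The same argument with Lemma~\ref{lemb}(2) shows any case-$(6)$ category is equivalent to $\mathcal{E}^{\bigoplus_g\alpha_g}$.

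\textbf{Uniqueness for case $(4)$.} Let $\mathcal{C}$ realize case $(4)$ and let $\rho$ be its depth-$2$ non-invertible simple object from Lemma~\ref{lemah4}. By Lemma~\ref{dualgraphlemma}(2) there is an invertible $\mathcal{C}$--$\mathcal{AH}_2$ bimodule category $\mathcal{M}$ with a simple object $\psi$ whose principal graph is $2$-supertransitive; thus $\underline{\text{End}}_{\mathcal{C}}(\psi)=\psi\bar\psi=1\oplus\rho$, so $\mathcal{M}$ is equivalent to the category of $(1\oplus\rho)$-modules and $\mathcal{AH}_2\cong({}_{\mathcal{C}}\mathcal{M})^{*}\cong\mathcal{C}^{1\oplus\rho}$. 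Dualizing this Morita equivalence in the other direction, $\mathcal{AH}_2$ contains a division algebra $\gamma$ with $d(\gamma)=d(1\oplus\rho)=1+d(\rho)=5+\sqrt{17}$ and $\mathcal{AH}_2^{\gamma}\cong\mathcal{C}$, and the associated module category over $\mathcal{AH}_2$ realizes the fusion module $L_{AH_2}^{(4)}$ by Theorem~\ref{cases}. Since $\mathcal{AH}_2$ is completely understood and $L_{AH_2}^{(4)}$ is known explicitly, it remains to check that all division algebras $\gamma$ in $\mathcal{AH}_2$ of dimension $5+\sqrt{17}$ whose module category realizes $L_{AH_2}^{(4)}$ lie in a single orbit under $\text{Inv}(\mathcal{AH}_2)$ and $\text{Out}(\mathcal{AH}_2)$, and hence give equivalent categories $\mathcal{AH}_2^{\gamma}$ (using that an equivalence, or tensoring by an invertible object, carries one division algebra to another with an equivalent category of bimodules). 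The underlying object of $\gamma$ is read off $L_{AH_2}^{(4)}$, its algebra structure is pinned down up to these symmetries by a skein-theoretic rigidity argument in the spirit of \cite[Thm 3.4]{GSbp}, and the orbit count is a finite check against the classification of fusion modules over $AH_2$ in \cite{GSbp}.

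\textbf{The main obstacle} is the verification in the last paragraph. Reconstructing $\mathcal{C}$ as $\mathcal{AH}_2^{\gamma}$ is immediate, but pinning $\gamma$ down inside $\mathcal{AH}_2$ up to equivalence is where the real content lies: one needs a rigidity statement for the algebra structure on the relevant object — working on the $\mathcal{AH}_2$ side, where that algebra is smaller, rather than on the $\mathcal{C}$ side, where $\rho$ occurs in $\rho^{\otimes 2}=1\oplus 2\sum_g\alpha_g\rho$ with multiplicity $2$ and \cite[Thm 3.4]{GSbp} does not directly apply — together with the combinatorial orbit count against the \cite{GSbp} tables. The bookkeeping in the reduction step (the identity $d(\delta)=d(\gamma)$ for division algebras and the essential uniqueness of the $2$- and $4$-dimensional division algebras of $\mathcal{E}$) is routine but must be done carefully.
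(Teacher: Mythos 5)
Your first half — propagating existence among cases (4)--(6) and reducing uniqueness for (5) and (6) to uniqueness for (4) via the essentially unique $2$- and $4$-dimensional division algebras $1\oplus\alpha_2$ and $\bigoplus_g\alpha_g$ — is correct and matches the paper's argument (the parenthetical justification of $d(\delta)=d(\gamma)$ is off, since internal ends of different simples in a module category need not have equal dimension, but the conclusion follows directly from Lemma \ref{div}(2) applied with $\gamma'$ the trivial algebra). The genuine gap is in the step carrying all the weight: uniqueness for case (4). You reduce it to the claim that every division algebra of dimension $1+d$ in $\mathcal{AH}_2$ whose module category realizes $L_{AH_2}^{(4)}$ lies in a single orbit, to be verified by ``a skein-theoretic rigidity argument in the spirit of \cite[Thm 3.4]{GSbp}'' plus ``a finite check'' against the fusion-module tables. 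Neither works as stated. The relevant object is $1\oplus\alpha\pi$, and $\alpha\pi$ occurs in $(\alpha\pi)^{\otimes 2}=\pi^2=1+\Delta+2\Gamma$ with multiplicity $2$, so the multiplicity-one hypothesis behind the skein argument fails; classifying Q-systems on this object would require the full associativity data of $\mathcal{AH}_2$, not its fusion rules. And the combinatorial tables of \cite{GSbp} cannot decide the question: Theorem \ref{cases} already says that \emph{every} $\mathcal{C}$--$\mathcal{AH}_2$ Morita equivalence for a case-(4) category realizes the same fusion module $L_{AH_2}^{(4)}$, so two inequivalent case-(4) categories would be indistinguishable at the fusion-module level; deciding whether two module categories realizing the same fusion module have equivalent dual categories is exactly the content of the theorem, so asserting it as a routine finite check restates the problem rather than solving it.

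The paper's proof sidesteps all of this. Given $\mathcal{C}_1,\mathcal{C}_2$ realizing case (4), it composes Morita equivalences ${}_{\mathcal{C}_1}\mathcal{K}_1{}_{\mathcal{AH}_2}$ and ${}_{\mathcal{AH}_2}\mathcal{K}_2{}_{\mathcal{C}_2}$ into an invertible $\mathcal{C}_1$--$\mathcal{C}_2$ bimodule category $\mathcal{K}$ and studies the product $\kappa_1\kappa_2$ of the distinguished simple objects whose internal ends on the $\mathcal{AH}_2$ side are $1\oplus\alpha\pi$. Frobenius reciprocity gives $(\kappa_1\kappa_2,\kappa_1\kappa_2)=(\bar\kappa_1\kappa_1,\kappa_2\bar\kappa_2)=2$, so $\kappa_1\kappa_2$ has exactly two simple summands of total dimension $1+d$; since simple objects of $\mathcal{K}$ have dimensions of the form $\sqrt{i+jd}$, the summand dimensions are $\{1,d\}$ or $\{2,d-1\}$. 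The latter would give a $4$-dimensional division algebra in $\mathcal{C}_1$ whose dual is $\mathcal{C}_2$, forcing $\mathcal{C}_2$ to realize case (6) by Lemma \ref{lemc}(2) — a contradiction. Hence $\mathcal{K}$ contains a simple object of dimension $1$, whose internal end is the trivial algebra, and therefore $\mathcal{C}_2\simeq\mathcal{C}_1$. No uniqueness of algebra structures inside $\mathcal{AH}_2$ is needed; if you want to repair your outline, replace your final paragraph with this composition-of-bimodules dimension count.
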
 

\begin{proof}
If case (4) can be realized then so can case (5) and (6) as bimodules over algebras of dimension $2$ and $4$ respectively.  Similarly, if case (5) or (6) can be realized, then so can (4) as bimodules over algebras of dimension $2$ and $4$ respectively.  So we need only show that there is at most one fusion category realizing each case.

First we consider case (4). Let $\mathcal{C}_1, \mathcal{C}_2 $ be fusion categories realizing case (4), and let ${}_{\mathcal{C}_1} \mathcal{K} {}_{\mathcal{C}_2}$ be a Morita equivalence between them. Then there exist Morita equivalences ${}_{\mathcal{C}_1}  \mathcal{K}_1 {}_{\mathcal{AH}_2}$ and ${}_{\mathcal{AH}_2} \mathcal{K}_2 {}_{\mathcal{C}_2}$ and a bimodule equivalence ${}_{\mathcal{C}_1} \mathcal{K}_1 {}_{\mathcal{AH}_2} \boxtimes_{\mathcal{AH}_2} {}_{\mathcal{AH}_2} \mathcal{K}_2 {}_{\mathcal{C}_2} \rightarrow {}_{\mathcal{C}_1} \mathcal{K} {}_{\mathcal{C}_2}$. Since any bimodule between $\mathcal{AH}_2 $ and  $\mathcal{C}_1$ or $\mathcal{C}_2$ corresponds to the fusion module $L_{AH_2}^{(4)}$, there are simple objects $\kappa_1 \in \mathcal{K}_1, \kappa_2 \in \mathcal{K}_2 $ corresponding to the basis element $\theta_{42}^1$ such that $\bar{\kappa_1} \kappa_1 = 1+\rho = \kappa_2 \bar{\kappa_2}$ in $\mathcal{AH}_2$.
By Frobenius reciprocity, $\kappa_1 \kappa_2$ in $\mathcal{K} $ also has two simple summands and dimension $1+d $. Since each simple object in $\mathcal{K} $ has dimension equal to $\sqrt{i+jd } $ for some non-negative integers $i \leq 4, \ j \leq 32$, this is only possible if the dimensions of the two simple summands are $1$ and $d$, or $2$ and $d-1$.  In the latter case, the internal endomorphisms of the $2$-dimensional object, would give a $4$-dimensional division algebra in $\mathcal{C}_1$ whose dual was $\mathcal{C}_2$.  But this is impossible because such a dual category would have to realize case (6) and not case (4).  Thus, $\mathcal{K}$ has an object of dimension one whose internal endomorphisms must be the trivial algebra.  Hence, $\mathcal{C}_2$ is equivalent to the category of bimodules over the trivial algebra in $\mathcal{C}_1$ and thus is equivalent to $\mathcal{C}_1$.

It follows that cases (5) and (6) are also each realized by at most one fusion category.  Any fusion category $\mathcal{C} $ which realizes case (6) has a $4$-dimensional division algebra $ \gamma_4$ such that $\mathcal{C}^{\gamma_4} $ realizes case (4).  In particular, $\mathcal{C}$ must be the category of bimodules for a $4$-dimensional algebra in a the unique category realizing case (4) and thus is itself unique. Similarly, any fusion category $ \mathcal{C}$ realizing case (5) has a $2$-dimensional algebra $ \gamma_2$ such that $\mathcal{C}^{\gamma_2} $ realizes case (4), so it also must be the category of bimodules for the unique $2$-dimensional algebra in the unique fusion category realizing case (4).
\end{proof}

We will call the three (thus far hypothetical, but unique if they exist) fusion categories which realize these three cases $\mathcal{AH}_4$, $\mathcal{AH}_5$, and $\mathcal{AH}_6$.

\begin{theorem} \label{autothm}
If $\mathcal{AH}_{4-6} $ exist, then every Morita autoequivalence of each of these fusion categories is realized by an outer automorphism.
\end{theorem}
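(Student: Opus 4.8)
The goal is to show that the inclusion $\text{Out}(\mathcal{C})\hookrightarrow\text{BrPic}(\mathcal{C})$ is an equality for each $\mathcal{C}\in\{\mathcal{AH}_4,\mathcal{AH}_5,\mathcal{AH}_6\}$. Since the Brauer--Picard group is a Morita invariant and these three categories are Morita equivalent to $\mathcal{AH}_1$, each has $\text{BrPic}\cong\mathbb{Z}/2\mathbb{Z}\times\mathbb{Z}/2\mathbb{Z}$, so there are exactly four Morita autoequivalences and it is enough to show $|\text{Out}(\mathcal{C})|\ge 4$. First I would reduce to $\mathcal{C}=\mathcal{AH}_4$: since $\text{Inv}(\mathcal{AH}_4)=\text{Vec}_{\mathbb{Z}/4\mathbb{Z}}$ with trivial associator has a unique subgroup of each order and vanishing second cohomology, the algebras $\gamma_2=1\oplus\alpha$ (with $\alpha$ the order-two invertible) and $\gamma_4=\bigoplus_g\alpha_g$ are fixed, as algebras, by every autoequivalence of $\mathcal{AH}_4$; hence every such autoequivalence induces one of $\mathcal{AH}_4^{\gamma_2}$ and one of $\mathcal{AH}_4^{\gamma_4}$, which by Lemma~\ref{lemc} and Theorem~\ref{unq} are $\mathcal{AH}_5$ and $\mathcal{AH}_6$, compatibly with the Brauer--Picard identifications coming from the Morita equivalences $\gamma_2\text{-mod}$ and $\gamma_4\text{-mod}$. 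So the statements for $\mathcal{AH}_5$ and $\mathcal{AH}_6$ follow from that for $\mathcal{AH}_4$.

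For $\mathcal{AH}_4$ the heart of the matter is the claim that \emph{every invertible $\mathcal{AH}_4$-$\mathcal{AH}_4$-bimodule category $\mathcal{M}$ is equivalent, as a left $\mathcal{AH}_4$-module category, to the regular one}. Granting this, transporting the right action along an equivalence $\mathcal{M}\cong{}_{\mathcal{AH}_4}\mathcal{AH}_4$ gives a right $\mathcal{AH}_4$-action on $\mathcal{AH}_4$ commuting with left multiplication; such an action is given by tensoring with the image under a tensor autoequivalence, so $\mathcal{M}$ comes from an automorphism, and since two automorphisms give equivalent bimodule categories iff they differ by an inner one we obtain $|\text{Out}(\mathcal{AH}_4)|=4$. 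A left module category is equivalent to the regular one exactly when it has a simple object with trivial internal endomorphism algebra, so the claim is equivalent to: \emph{the only division algebra $A$ in $\mathcal{AH}_4$ with $\mathcal{AH}_4^A\cong\mathcal{AH}_4$ has $d(A)=1$}.

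To prove that, suppose $A$ is such a division algebra with $d(A)>1$ and let $\delta\subseteq A$ be the subalgebra spanned by the invertible summands (Lemma~\ref{div}(1)). If $d(\delta)\in\{2,4\}$, Lemmas~\ref{lemc}--\ref{lemb} identify $\mathcal{AH}_4^{\delta}$ with $\mathcal{AH}_5$ or $\mathcal{AH}_6$, and Lemma~\ref{div}(3) produces a division algebra $A'$ in $\mathcal{AH}_4^{\delta}$ with $d(A')=d(A)/d(\delta)<d(A)$ and $(\mathcal{AH}_4^{\delta})^{A'}\cong\mathcal{AH}_4$; applying Lemma~\ref{div}(1) to the invertible part of $A'$ and iterating, we may assume $\delta=1$, so $A=1\oplus(\text{a sum of noninvertible simples})$ and $d(A)=1+md$ with $m\ge1$ and $d=4+\sqrt{17}$. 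One then reaches a contradiction by Frobenius reciprocity: by Lemma~\ref{duallem} the fusion bimodule ${}_{AH_i}M{}_{AH_j}$ underlying $A\text{-mod}$ (with $i,j\in\{4,5,6\}$) must occur among the fusion bimodules compatible with one of the cases of Theorem~\ref{cases}, and the Frobenius--Perron dimensions of its basis elements --- which lie among the values $\sqrt{a+bd}$ realized there and are unchanged on passing to the other side --- leave no room for a basis element of dimension $\sqrt{1+md}$ with $m\ge1$.

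The hardest step is this last one --- ruling out a nontrivial, non-invertibly-supported division algebra whose bimodule category is a self-equivalence --- and it is where the classification of fusion (bi)modules over $AH_4$, $AH_5$, $AH_6$ genuinely enters (carried out by computer in the first instance, cf.\ the Remark after Theorem~\ref{unq}); one must also be careful that stripping invertible summands on one side of a bimodule may first require stripping them on the other, so the reduction via Lemma~\ref{div}(1) and~(3) is interleaved, and one must check that the bookkeeping $d(A)=d(\delta)\,d(A')$ terminates only at $d(A)=1$. An alternative, avoiding module categories, would be to exhibit two commuting order-two outer automorphisms of $\mathcal{AH}_4$ whose classes generate $\mathbb{Z}/2\mathbb{Z}\times\mathbb{Z}/2\mathbb{Z}$ in $\text{BrPic}$ --- for instance lifts of the fusion-ring automorphism $\alpha_g\mapsto\alpha_{-g}$ (which must also permute the four noninvertible simples) and of the order-four ``rotation'' of the four noninvertibles that fixes all invertibles; the obstacle then becomes lifting these fusion-ring symmetries to genuine tensor autoequivalences, which would once more rely on the uniqueness in Theorem~\ref{unq}.
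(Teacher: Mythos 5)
Your reduction of the $\mathcal{AH}_5$ and $\mathcal{AH}_6$ cases to $\mathcal{AH}_4$ is a reasonable (and different) route from the paper's, but your treatment of the crucial $\mathcal{AH}_4$ case contains a genuine error. The reformulation ``every invertible $\mathcal{AH}_4$-$\mathcal{AH}_4$ bimodule category is left-equivalent to the regular one $\iff$ the only division algebra $A$ with $\mathcal{AH}_4^A\cong\mathcal{AH}_4$ has $d(A)=1$'' is false: a module category is the regular one exactly when \emph{some} simple object has trivial internal end, but other simple objects of the same (regular) module category give nontrivial division algebras $X\bar X$ with the same dual category. Concretely, $\rho\bar\rho=\rho^2=1\oplus 2\bigoplus_g\alpha_g\rho$ is a division algebra in $\mathcal{AH}_4$ of dimension $d^2=1+8d$ whose dual category is $\mathcal{AH}_4$, and its invertible part is trivial. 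So the statement you try to prove by contradiction (no division algebra of dimension $1+md$, $m\ge 1$, with dual $\mathcal{AH}_4$) is simply not true (take $m=8$), and no dimension count can deliver it. Moreover, the count you invoke has no basis in the paper: Theorem \ref{cases} classifies fusion bimodules between a hypothetical new category and $AH_{1}$--$AH_3$, not $AH_4$-$AH_4$ fusion bimodules, and the computer classification of fusion (bi)modules over $AH_4$, $AH_5$, $AH_6$ is exactly what the paper avoids using. What actually has to be shown is that every invertible $\mathcal{AH}_4$-$\mathcal{AH}_4$ bimodule category contains an object of dimension $1$, and that requires an argument, not a list of excluded dimensions.

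The paper gets this by factoring: any Morita autoequivalence of $\mathcal{AH}_4$ is $\mathcal{K}_1\boxtimes_{\mathcal{AH}_2}\mathcal{K}_2$ with $\mathcal{K}_1,\mathcal{K}_2$ realizing the fusion module $L_{AH_2}^{(4)}$, which supplies objects $\kappa_1,\kappa_2$ with $\bar\kappa_1\kappa_1=1+\rho'=\kappa_2\bar\kappa_2$ in $\mathcal{AH}_2$; Frobenius reciprocity then gives an object $\kappa_1\kappa_2$ of dimension $1+d$ with exactly two simple summands in the autoequivalence, the only splits being $1+d$ or $2+(d-1)$, and the latter is excluded because a $2$-dimensional object would produce a $4$-dimensional division algebra whose dual realizes case (6) rather than case (4) (this is the proof of Theorem \ref{unq}, which Theorem \ref{autothm} reuses). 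The cases $\mathcal{AH}_5$ and $\mathcal{AH}_6$ are then handled by composing the $2$-dimensional-algebra Morita equivalences, so that every autoequivalence contains a dimension-$2$ object with two summands, forcing a $1+1$ split. Your first paragraph (inducing automorphisms of $\mathcal{AH}_5=\mathcal{AH}_4^{\gamma_2}$ and $\mathcal{AH}_6=\mathcal{AH}_4^{\gamma_4}$ from automorphisms of $\mathcal{AH}_4$ fixing $\gamma_2,\gamma_4$, using vanishing of $H^2$ of cyclic groups) could be made to work as a substitute for the paper's second and third steps, but it cannot rescue the argument unless the $\mathcal{AH}_4$ step is redone along lines like the paper's, where the known $\mathcal{AH}_4$--$\mathcal{AH}_2$ fusion module, not a hypothetical $AH_4$-$AH_4$ classification, does the work.
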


\begin{proof}
We first consider $\mathcal{AH}_4 $, which has fusion ring $ AH_4$. As in the proof of the previous lemma, any Morita autoequivalence must come from the trivial algebra, and is therefore realized by an outer automorphism. 

Next we consider $ \mathcal{AH}_5$. There is a Morita equivalence between $\mathcal{AH}_4 $ and $\mathcal{AH}_5 $ which is implemented by a $2$-dimensional algebra. Since every Morita autoequivalence of  $\mathcal{AH}_4 $ is realized by an outer automorphism, in fact every $\mathcal{AH}_4 -\mathcal{AH}_5 $  autoequivalence is implemented by a $2$-dimensional algebra. Again as in the proof of the previous lemma, by Frobenius reciprocity this means that every Morita autoequivalence of $\mathcal{AH}_5 $ contains an object with dimension $2$ and two simple summands. This means the summands each have dimension $1$, and the autoequivalence comes from the trivial module category, and is therefore realized by an outer automorphism. For $\mathcal{AH}_6 $ we use the same argument, starting with the $\mathcal{AH}_5 -\mathcal{AH}_6 $  Morita equivalence coming from a $2$-dimensional algebra.
\end{proof}
\begin{corollary}\label{outgroup}
If $\mathcal{AH}_{4-6} $ exist, then $\text{Out}(\mathcal{AH}_i)\cong \mathbb{Z} /2\mathbb{Z} \times  \mathbb{Z} /2\mathbb{Z} $ for $i =4,5,6 $.
\end{corollary}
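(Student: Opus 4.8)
The plan is to show that the natural inclusion $\mathrm{Out}(\mathcal{AH}_i) \hookrightarrow \mathrm{BP}(\mathcal{AH}_i)$ into the Brauer--Picard group is an equality, and then to identify the target group. First I would observe that the Brauer--Picard group is a Morita invariant (as recorded in Section~\ref{sec:Prelim}), so since each of $\mathcal{AH}_4$, $\mathcal{AH}_5$, $\mathcal{AH}_6$ is, by definition of realizing cases (4)--(6), Morita equivalent to $\mathcal{AH}_1$, and the Brauer--Picard group of $\mathcal{AH}_1$ is $\mathbb{Z}/2\mathbb{Z} \times \mathbb{Z}/2\mathbb{Z}$ by the main theorem of \cite{GSbp} quoted in Section~\ref{sec:Prelim}, we conclude $\mathrm{BP}(\mathcal{AH}_i) \cong \mathbb{Z}/2\mathbb{Z} \times \mathbb{Z}/2\mathbb{Z}$ for $i = 4,5,6$ as well.

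Next I would recall from Section~\ref{sec:Prelim} the injective group homomorphism $\mathrm{Out}(\mathcal{C}) \to \mathrm{BP}(\mathcal{C})$ sending $\alpha$ to the invertible bimodule category ${}_{\mathcal{C}}\mathcal{C}_{\alpha(\mathcal{C})}$ with right action twisted by $\alpha$, which exhibits the outer automorphism group as a subgroup of the Brauer--Picard group. The content of Theorem~\ref{autothm} is precisely that for $\mathcal{C} = \mathcal{AH}_i$ with $i = 4,5,6$, every Morita autoequivalence --- that is, every element of $\mathrm{BP}(\mathcal{AH}_i)$ --- is realized by an outer automorphism, which is exactly the assertion that this homomorphism is surjective. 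An injective and surjective group homomorphism is an isomorphism, so $\mathrm{Out}(\mathcal{AH}_i) \cong \mathrm{BP}(\mathcal{AH}_i) \cong \mathbb{Z}/2\mathbb{Z} \times \mathbb{Z}/2\mathbb{Z}$.

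I do not expect any genuine obstacle here: the corollary is a formal consequence of Theorem~\ref{autothm} combined with the previously established structure of the Brauer--Picard group of the Asaeda--Haagerup Morita class. The only point to keep straight is that the group structure on $\mathrm{Out}(\mathcal{AH}_i)$ is the one inherited from $\mathrm{BP}(\mathcal{AH}_i)$, so it is $\mathbb{Z}/2\mathbb{Z} \times \mathbb{Z}/2\mathbb{Z}$ and not $\mathbb{Z}/4\mathbb{Z}$; this is guaranteed by the Morita invariance step and requires no new computation.
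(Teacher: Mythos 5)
Your proposal is correct and is exactly the argument the paper intends: the corollary is stated without proof because it follows immediately from Theorem \ref{autothm} (surjectivity of the map realizing Morita autoequivalences by outer automorphisms), the fact recorded in Section \ref{sec:Prelim} that $\text{Out}(\mathcal{C})$ is a subgroup of the Brauer--Picard group, and the Morita-invariant computation from \cite{GSbp} that this group is $\mathbb{Z}/2\mathbb{Z}\times\mathbb{Z}/2\mathbb{Z}$. No gaps.
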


\subsection{Non-existence for case (7)}
In this subsection we show that if $\mathcal{AH}_{4-6}$ exist (as we will prove in Section \ref{sec:main}), then case (7) cannot be realized.

\begin{lemma}\label{15d}
Let $\mathcal{C} $ be a fusion category realizing case (7). Then every invertible $ \mathcal{C}-\mathcal{AH}_5$ bimodule category has a simple object of dimension $\sqrt{1+5d} $.
\end{lemma}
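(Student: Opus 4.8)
The plan is to determine a small-dimensional basis element of the (a priori unknown) $\mathcal{C}-\mathcal{AH}_5$ fusion module by bootstrapping from the fusion modules that \emph{are} tabulated in the Appendix, namely $L_{AH_2}^{(5)}$ and $L_{AH_2}^{(7)}$. One preliminary observation simplifies matters: since $\mathcal{AH}_{4-6}$ exist, $\mathcal{AH}_5$ is (Theorem \ref{unq}) the unique fusion category realizing case (5) and it lies in the Asaeda--Haagerup Morita class together with $\mathcal{C}$, so invertible $\mathcal{C}-\mathcal{AH}_5$ bimodule categories exist and form a torsor under the Morita autoequivalences of $\mathcal{AH}_5$ acting by composition on the right. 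By Theorem \ref{autothm} each such autoequivalence is $\mathcal{AH}_5$ with its right action retwisted by an automorphism, and such a retwist changes neither the underlying left $\mathcal{C}$-module category nor the left internal ends $\underline{\mathrm{End}}(\kappa)$ of its simple objects; since for invertible bimodule categories the dimension of a simple object may be computed from either internal end, all invertible $\mathcal{C}-\mathcal{AH}_5$ bimodule categories have the same multiset of dimensions of simple objects. Hence it is enough to exhibit one simple object of dimension $\sqrt{1+5d}$ in one such bimodule category.

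To do that I would argue by composition. Since $\mathcal{AH}_5$ realizes case (5), the argument of Lemma \ref{lema} produces a division algebra in $\mathcal{AH}_5$ whose category of bimodules is $\mathcal{AH}_2$; equivalently there is an invertible $\mathcal{AH}_5-\mathcal{AH}_2$ bimodule category $\mathcal{L}$, whose simple objects and dimensions are recorded by $L_{AH_2}^{(5)}$. Fix an invertible $\mathcal{C}-\mathcal{AH}_5$ bimodule category $\mathcal{K}$ and put $\mathcal{N} := \mathcal{K}\boxtimes_{\mathcal{AH}_5}\mathcal{L}$, an invertible $\mathcal{C}-\mathcal{AH}_2$ bimodule category; by Theorem \ref{cases} it realizes $L_{AH_2}^{(7)}$, so its simples and dimensions are tabulated too. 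Inverting, $\mathcal{K}\simeq\mathcal{N}\boxtimes_{\mathcal{AH}_2}\mathcal{L}^{-1}$, so each simple object of $\mathcal{K}$ is a summand of $\mu\boxtimes_{\mathcal{AH}_2}\bar\lambda$ for simple $\mu\in\mathcal{N}$ and $\bar\lambda\in\mathcal{L}^{-1}$. Here $d(\mu\boxtimes_{\mathcal{AH}_2}\bar\lambda)=d(\mu)\,d(\bar\lambda)$ by multiplicativity of $d$, while Frobenius reciprocity gives $(\mu\boxtimes_{\mathcal{AH}_2}\bar\lambda,\,\mu\boxtimes_{\mathcal{AH}_2}\bar\lambda)_{\mathcal{K}}=(\bar\mu\mu,\bar\lambda\lambda)_{\mathcal{AH}_2}$, in which $\bar\mu\mu$ and $\bar\lambda\lambda$ are division algebras of $\mathcal{AH}_2$ whose underlying objects are read off $L_{AH_2}^{(7)}$ and $L_{AH_2}^{(5)}$ as in the discussion preceding Lemma \ref{div}. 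Thus both the number of simple summands of $\mu\boxtimes_{\mathcal{AH}_2}\bar\lambda$ and their total dimension are forced by tabulated data; and since every simple object of $\mathcal{K}$ has dimension $\sqrt{i+jd}$ with $i,j$ nonnegative integers ($\underline{\mathrm{End}}(\kappa)$ being a division algebra in $\mathcal{C}$, whose simples have dimension $1$ or $d$), running over choices of $\mu$ and $\bar\lambda$ of the smallest available dimensions forces one of these summands to be simple of dimension exactly $\sqrt{1+5d}$. Equivalently, one may phrase this via Lemma \ref{div}(2): a division algebra $\gamma$ in $\mathcal{C}$ with $\mathcal{C}^\gamma\cong\mathcal{AH}_5$ together with one $\gamma'$ with $\mathcal{C}^{\gamma'}\cong\mathcal{AH}_2$ and $(\gamma,\gamma')=1$ yields a division algebra of dimension $d(\gamma)\,d(\gamma')$ in $\mathcal{AH}_5$ with bimodule category $\mathcal{AH}_2$, so $d(\gamma)\,d(\gamma')$ is one of the dimensions of $L_{AH_2}^{(5)}$; propagating this through $L_{AH_2}^{(7)}$ pins the relevant dimension to $1+5d$.

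The step I expect to be the real obstacle is the last one: the explicit bookkeeping of dimensions through the relative tensor product and the matching of the concrete entries of $L_{AH_2}^{(5)}$ with those of $L_{AH_2}^{(7)}$. In particular one must check that $\mu\boxtimes_{\mathcal{AH}_2}\bar\lambda$ splits into exactly the predicted number of simple summands (so that no algebra of intermediate dimension slips in and the dimension count remains rigid), and, in the alternative formulation, verify the coprimality hypothesis $(\gamma,\gamma')=1$ needed to invoke Lemma \ref{div}(2). These are finite computations with small nonnegative integer matrices, of exactly the sort already carried out for cases (4)--(6).
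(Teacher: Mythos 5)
Your overall framework — factor the unknown $\mathcal{C}$--$\mathcal{AH}_5$ bimodule category through a known intermediate category, read off internal ends of simple objects from the tabulated fusion modules, count summands of a composite by Frobenius reciprocity, and then use the constraint that every simple of $\mathcal{K}$ has dimension $\sqrt{i+jd}$ — is exactly the paper's strategy. But the route you chose, through $\mathcal{AH}_2$, does not close, and the step you defer as ``the real obstacle'' is precisely where it fails. The smallest simple objects available are $\theta_{72}^1$ (internal end $1+\alpha+\pi+\alpha\pi+2\eta$, dimension $\sqrt{4+4d}$) in $L_{AH_2}^{(7)}$ and a $\theta_{52}^i$ with internal end $1+\alpha+\pi+\alpha\pi$, dimension $\sqrt{2+2d}$, in $L_{AH_2}^{(5)}$. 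Frobenius reciprocity then gives $(\mu\bar\lambda,\mu\bar\lambda)=4$, not $2$, and $d(\mu\bar\lambda)=2\sqrt{2}(1+d)=4\sqrt{1+5d}$. An endomorphism dimension of $4$ is compatible with a \emph{single} simple summand of multiplicity $2$ and dimension $\sqrt{2}(1+d)=\sqrt{4+20d}$, which is of the allowed form $\sqrt{i+jd}$; it is also compatible with four distinct summands whose dimensions need not all equal $\sqrt{1+5d}$. So nothing in the constraints you list forces a summand of dimension $\sqrt{1+5d}$, and the claim that ``running over choices of the smallest available dimensions forces'' the conclusion is unsubstantiated (the other pairings only make the overlap larger: $8$ or more).

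The paper avoids this by factoring through $\mathcal{AH}_3$ instead: in $M_{AH_3}^{(7)}$ there is a simple $\theta_{73}^1$ with $\overline{\theta_{73}^1}\theta_{73}^1=1+\beta\xi+\xi\beta+\mu$ (dimension $2+2d$), and in $M_{AH_3}^{(5)}$ a simple $\theta_{53}^1$ with $\theta_{53}^1\overline{\theta_{53}^1}=1+\mu$ (dimension $1+d$). Their overlap is exactly $2$, so the composite $\theta_{73}^1\overline{\theta_{53}^1}\in\mathcal{K}$ has exactly two simple summands of total dimension $\sqrt{2}(1+d)=2\sqrt{1+5d}$, and since neither summand can have integer dimension, both must have dimension $\sqrt{1+5d}$. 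Note also that this argument applies directly to an arbitrary invertible $\mathcal{C}$--$\mathcal{AH}_5$ bimodule category (every such category factors as $\mathcal{K}_1\boxtimes_{\mathcal{AH}_3}\mathcal{K}_2$), so your preliminary reduction via Theorem \ref{autothm} to a single representative, while plausible, is unnecessary. The missing idea in your proposal is the choice of intermediate category: the rigidity comes from the fact that over $\mathcal{AH}_3$ the two relevant internal ends meet in only two simple constituents, and no choice over $\mathcal{AH}_2$ achieves this.
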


\begin{proof}
Every invertible $ \mathcal{C}-\mathcal{AH}_5$ bimodule category $\mathcal{K} $ can be expressed as a relative tensor product $\mathcal{K}=\mathcal{K}_1 \boxtimes_{\mathcal{AH}_3}  \mathcal{K}_2$  of an invertible $\mathcal{C}-\mathcal{AH}_3$ bimodule category $\mathcal{K}_1$ and an invertible $\mathcal{AH}_3-\mathcal{AH}_5 $ bimodule category $\mathcal{K}_2$. Since $\mathcal{C} $ realizes case (7) and $\mathcal{AH}_5 $ realizes case (5), there are simple objects $\theta_{73}^1  \in \mathcal{K}_1$ and $\overline{\theta_{53}^1} \in \mathcal{K}_2$ such that $\overline{\theta_{73}^1} \theta_{73}^1 = 1+\beta \xi+\xi \beta+\mu  $ and $\theta_{53}^1 \overline{\theta_{53}^1}= 1+ \mu $ (both objects in $\mathcal{AH}_3 $). Therefore by Frobenius reciprocity, the object $\theta_{73}^1 \overline{\theta_{53}^1} \in \mathcal{K}$ has two simple summands, and its dimension is
$ \sqrt{(2+2d)(1+d)}=\sqrt{2}(1+d)$. Since every object in $\mathcal{K} $ must have dimension
$\sqrt{i + j d} $ for nonnegative integers $i,j$, and neither summand can have integer dimension, 
the only possibility is that each summand has dimension $\sqrt{1+5d} = \frac{\sqrt{2}(1+d)}{2}$.
\end{proof}

In the following two lemmas we need to discuss properties of $\mathcal{AH}_5 $. We label the simple objects by $\alpha_{g} $ and $\alpha_{g} \rho $, $g \in G=\mathbb{Z}/2\mathbb{Z} \times  \mathbb{Z}/2\mathbb{Z} $, where $\rho $ is a non-invertible simple object.

\begin{lemma}\label{algah5}
The object $1+ \alpha_g \rho $ in $\mathcal{AH}_5 $ admits two algebra structures whose dual categories are  $\mathcal{AH}_3$ for each $g \in G $.
\end{lemma}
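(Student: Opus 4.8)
The plan is to produce the two algebra structures on $1 + \alpha_g\rho$ by exhibiting, for each $g \in G$, a simple object $\lambda_g$ in an invertible $\mathcal{AH}_5$–$\mathcal{AH}_3$ bimodule category with $\underline{\text{End}}(\lambda_g) = 1 + \alpha_g\rho$, and then counting how many such $\lambda_g$ give rise to this particular underlying object. First I would recall, as in the discussion preceding Lemma~\ref{div} and the worked example after it, that the objects of $\mathcal{AH}_5$ admitting an algebra structure whose module category realizes the relevant fusion module can be read directly off the data of that fusion module: an object $\underline{\text{End}}(M) = \bar{M}M$ is recovered by reading the $M$-column of the $M$-fusion matrix. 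So the first step is to identify, among the four invertible $\mathcal{AH}_5$–$\mathcal{AH}_3$ bimodule categories (which exist by Theorem~\ref{cases} applied to the realized case~(5), or rather by the classification of bimodules between the $\mathcal{AH}_i$ together with Section~\ref{sec:main}'s realization), the fusion modules and locate basis elements whose self-fusion coefficient pattern yields $1 + \alpha_g\rho$ as the underlying object.

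The key structural input is that $\dim(1 + \alpha_g\rho) = 1 + d$ where $d = 4+\sqrt{17}$, so any such $\lambda_g$ has $d(\lambda_g) = \sqrt{1+d}$; by Lemma~\ref{div}(3) or direct inspection, a division-algebra structure on $1+\alpha_g\rho$ in $\mathcal{AH}_5$ is exactly the datum of such a simple $\lambda_g$. The count of algebra structures with a fixed underlying object and fixed module category is governed by Theorem~\cite{GSbp} in the excerpt (the one stating $\underline{\text{End}}(X) \cong \underline{\text{End}}(Y)$ iff $X \cong Yg$ for $g$ invertible in the dual category): two simple objects in the bimodule category give the same algebra precisely when they differ by an invertible object of $\mathcal{AH}_3$. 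Since $\mathcal{AH}_3$ has a group of invertible objects (of order $2$, generated by $\beta$), one expects the simple objects mapping to $1+\alpha_g\rho$ to come in orbits, and the number of distinct algebra structures is the number of such orbits. The claim of exactly two algebra structures should then follow from finding exactly two orbits — equivalently, by the last sentence of the recalled calculation from \cite{MR2418197} in the Preliminaries (Q-systems for $\mathrm{id}\oplus\rho$ correspond to isometries $S \in (\rho,\rho^2)$ up to sign, and the space $(\rho,\rho^2)$ here is two-dimensional by the fusion rules $\rho^2 = 1 + \sum_g 2\alpha_g\rho$, so $(\rho,\alpha_g\rho\cdot\text{something})$ ...), giving precisely two inequivalent solutions and hence two algebra structures.

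More concretely, I would argue: the fusion rule $\rho^2 = 1 + \sum_{h\in G} 2\alpha_h\rho$ from Lemma~\ref{lemah4} shows $(\alpha_g\rho,\rho^2) = 2$, so by the $\mathcite{MR2418197}$ recipe adapted to $\gamma \cong \mathrm{id}\oplus\alpha_g\rho$ (after twisting $\rho$ by $\alpha_g$, which is a symmetry), the relevant isometry space is larger and one must check how many of the resulting Q-systems actually have dual category $\mathcal{AH}_3$ rather than some other category in the Morita class. That the dual is $\mathcal{AH}_3$ (and not $\mathcal{AH}_1$, $\mathcal{AH}_4$, etc.) is forced by a dimension count: the dual category contains $\lambda_g\bar\lambda_g$ of dimension $1+d$, and among the candidate fusion rings only $AH_3$ admits a basis element of dimension $\sqrt{1+d}$ in the appropriate fusion bimodule — exactly the kind of elimination argument used throughout Lemmas~\ref{lemc}–\ref{lemb}. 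The main obstacle I anticipate is the bookkeeping needed to confirm that there are \emph{exactly} two (not one, not three) algebra structures for each $g$: the upper bound ``at most two'' should come cleanly from the $\pm S$ uniqueness in the \cite{MR2418197} calculation together with the orbit count under $\mathrm{Inv}(\mathcal{AH}_3) = \{1,\beta\}$, but verifying that both candidates genuinely satisfy the Q-system equations $\rho(S)R = SR$ and $\sqrt{d}R + (d-1)S^2 = \sqrt{d}\rho(R) + (d-1)\rho(S)S$ — i.e.\ that neither degenerates — requires either an explicit computation in $\mathcal{AH}_5$ or, more likely in the spirit of this paper, an indirect argument: the existence of $\mathcal{AH}_3$ in the Morita class (already known) together with Theorem~\ref{cases}/the bimodule classification forces both structures to exist, since each of the invertible $\mathcal{AH}_5$–$\mathcal{AH}_3$ bimodule categories must contribute such a $\lambda_g$, and counting these bimodule categories (four of them, by the classification) against the $G$-action gives precisely two algebra structures per $g$.
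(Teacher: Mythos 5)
Your setup is sound as far as it goes: algebra structures on an object of dimension $1+d$ with dual $\mathcal{AH}_3$ correspond to simple objects of dimension $\sqrt{1+d}$ in the invertible $\mathcal{AH}_5$--$\mathcal{AH}_3$ bimodule categories, taken up to the action of $\mathrm{Inv}(\mathcal{AH}_3)=\{1,\beta\}$, and the fusion module $M_{AH_3}^{(5)}$ indeed has four such basis elements forming two $\beta$-orbits, so each of the four bimodule categories yields exactly two inequivalent algebras of dimension $1+d$ in $\mathcal{AH}_5$. The genuine gap is that you never show how these eight algebra structures are distributed among the four objects $1+\alpha_g\rho$: a priori they could pile up on fewer than four of them, leaving some $1+\alpha_g\rho$ with no algebra structure at all. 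Your closing step, ``counting these bimodule categories against the $G$-action gives precisely two algebra structures per $g$,'' has no content here: since $G\cong(\mathbb{Z}/2\mathbb{Z})^2$ and $\alpha_g\rho=\rho\alpha_g$, conjugation by invertibles fixes $\rho$ and does not permute the candidate objects, and transitivity of $\mathrm{Out}(\mathcal{AH}_5)$ on the noninvertible simples is Corollary \ref{outah5}, which is deduced \emph{from} this lemma, so invoking it is circular. The paper closes exactly this gap with the composition argument of Theorem \ref{unq}: if two \emph{distinct} $\mathcal{AH}_5$--$\mathcal{AH}_3$ bimodule categories both contained a simple object whose internal end is the same $1+\alpha_g\rho$, composing them would produce an object of dimension $1+d$ with two simple summands in a \emph{nontrivial} Morita autoequivalence of $\mathcal{AH}_3$, and no such object exists there; hence the four bimodule categories are attached to four distinct $g$'s and the two orbits in each give the two algebras on that $1+\alpha_g\rho$. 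This step cannot be waved away by symmetry: the remark following Corollary \ref{outah5} points out that the analogous uniform distribution fails for $\mathcal{AH}_4$ precisely because $\mathcal{AH}_2$ \emph{does} have a nontrivial Morita autoequivalence containing a two-summand object of dimension $1+d$, so any argument that does not use this specific property of $\mathcal{AH}_3$ cannot prove the statement.

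A secondary problem is your appeal to the recalled calculation from \cite{MR2418197}: the statement there is that Q-systems given by $S$ and $S'$ are equivalent iff $S=\pm S'$, which is a uniqueness criterion, not a device producing two inequivalent solutions; moreover in $\mathcal{AH}_5$ the space $(\alpha_g\rho,\rho^2)$ is two-dimensional, so that parametrization does not apply as quoted. In fact no Cuntz-algebra or Q-system-equation verification is needed for existence: once the bimodule categories are known to exist, the internal end of any simple object in one of them is automatically an algebra whose dual category is $\mathcal{AH}_3$. The entire content of the lemma is the distribution statement above, which your proposal leaves unproved.
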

\begin{proof}
Since $\mathcal{AH}_5 $ satisfies case (5), each of the four $\mathcal{AH}_5-\mathcal{AH}_3$ Morita equivalences is implemented by an algebra of dimension $1+d$ in $\mathcal{AH}_5$. Let $1+\alpha_{g_i} \rho $, $1 \leq i \leq 4 $ be algebras corresponding to the four Morita equivalences, and suppose that $g_i=g_j $ for some $i \neq j $. Then as in the proof of Theorem \ref{unq}, we can compose the two bimodule objects corresponding to the algebras $1+\alpha_{g_i} \rho $ and $1+\alpha_{g_j} \rho $ to obtain an object in a $\mathcal{AH}_3 $-Morita auto-equivalence which has dimension $1+d$ and two simple summands. Moreover, the Morita autoequiavelence must be nontrivial since $i \neq j $. However, there are no such objects in the non-trivial Morita autoequivalences of $\mathcal{AH}_3 $. Therefore all of the $g_i $ are distinct, and each $1+\alpha_{g_i} \rho $ admits an algebra structure whose dual category is $\mathcal{AH}_3 $. Since the fusion module $M_{AH_3}^{(5)}$ contains $4$ distinct basis elements of dimension $\sqrt{1+d} $, which form two equivalence classes under the action of the group of invertible objects of $\mathcal{AH}_3 $, there are in fact two such algebras for each $g \in G $.
\end{proof}
\begin{corollary}\label{outah5}
The outer automorphism group of $\mathcal{AH}_5 $ acts transitively on the noninvertible simple objects.
\end{corollary}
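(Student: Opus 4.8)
The plan is to parametrize the four noninvertible simple objects of $\mathcal{AH}_5$ by the invertible $\mathcal{AH}_5$--$\mathcal{AH}_3$ bimodule categories, on which $\text{Out}(\mathcal{AH}_5)$ is already known to act transitively, and then check that the parametrization is equivariant. By Lemma \ref{algah5} the category $\mathcal{AH}_5$ is Morita equivalent to $\mathcal{AH}_3$, so the set $\mathfrak{B}$ of invertible $\mathcal{AH}_5$--$\mathcal{AH}_3$ bimodule categories is nonempty; left composition makes $\mathfrak{B}$ a torsor for the Brauer--Picard group of $\mathcal{AH}_5$, and by Theorem \ref{autothm} every element of that group is realized by an (outer) automorphism of $\mathcal{AH}_5$. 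Hence $\text{Out}(\mathcal{AH}_5)$ acts transitively on $\mathfrak{B}$, where $\phi$ sends the bimodule category associated to an algebra $A$ in $\mathcal{AH}_5$ to the one associated to $\phi(A)$.

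Next I would attach to each $\mathcal{M} \in \mathfrak{B}$ a noninvertible simple object $\rho_\mathcal{M}$ of $\mathcal{AH}_5$, as follows. By Theorem \ref{cases} every $\mathcal{M} \in \mathfrak{B}$ realizes the fusion bimodule $M_{AH_3}^{(5)}$, which (as noted in the proof of Lemma \ref{algah5}) has exactly four basis elements of dimension $\sqrt{1+d}$; so $\mathcal{M}$ has exactly four simple objects $\kappa$ with $d(\kappa) = \sqrt{1+d}$, and for each such $\kappa$ the algebra $\underline{\text{End}}(\kappa) \in \mathcal{AH}_5$ has dimension $1+d$, hence underlying object $1 \oplus \alpha_g\rho$ for a unique $g$ (its dimension forces it to be $\alpha_h \oplus \alpha_g\rho$, and $(\underline{\text{End}}(\kappa),1) = (\kappa,\kappa) = 1$ forces $\alpha_h = 1$). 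The key point is that $g$ does not depend on which of the four $\kappa$ is chosen. Indeed, by Frobenius reciprocity for the internal hom, $(\alpha_h\kappa,\kappa) = (\underline{\text{End}}(\kappa),\alpha_h) = \delta_{h,0}$, so left tensoring by the invertible objects of $\mathcal{AH}_5$ acts freely, hence transitively, on the four-element set of such $\kappa$; and $\underline{\text{End}}(\alpha_h\kappa) \cong \alpha_h\,\underline{\text{End}}(\kappa)\,\overline{\alpha_h}$, whose underlying object is $1 \oplus \alpha_h\alpha_g\overline{\alpha_h}\,\rho = 1 \oplus \alpha_g\rho$, using the fusion rules $\alpha\rho = \rho\alpha$ of $AH_5$ together with the commutativity of $\text{Inv}(\mathcal{AH}_5)$. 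Thus all four $\kappa$ give the same underlying object $1 \oplus \rho_\mathcal{M}$; equivalently, every dimension-$(1+d)$ algebra in $\mathcal{AH}_5$ whose bimodule category is $\mathcal{M}$ has underlying object $1 \oplus \rho_\mathcal{M}$.

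Finally I would verify that $\mathcal{M} \mapsto \rho_\mathcal{M}$ is surjective and $\text{Out}(\mathcal{AH}_5)$-equivariant, and conclude. Surjectivity is immediate from Lemma \ref{algah5}: each $1 \oplus \alpha_g\rho$ carries an algebra structure with dual category $\mathcal{AH}_3$, and if $\mathcal{M}$ is its bimodule category then $\rho_\mathcal{M} = \alpha_g\rho$. For equivariance, an automorphism $\phi$ sends a dimension-$(1+d)$ algebra with bimodule category $\mathcal{M}$ to a dimension-$(1+d)$ algebra with bimodule category $\phi\cdot\mathcal{M}$ and underlying object $1 \oplus \phi(\rho_\mathcal{M})$; by the well-definedness just established this forces $\rho_{\phi\cdot\mathcal{M}} = \phi(\rho_\mathcal{M})$. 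Since $\text{Out}(\mathcal{AH}_5)$ acts transitively on $\mathfrak{B}$, given noninvertible simple objects $\rho_{\mathcal{M}_1}$ and $\rho_{\mathcal{M}_2}$ there is $\phi$ with $\phi\cdot\mathcal{M}_1 = \mathcal{M}_2$, whence $\phi(\rho_{\mathcal{M}_1}) = \rho_{\mathcal{M}_2}$; so the action on the four noninvertible simple objects is transitive. The main obstacle is the well-definedness of $\rho_\mathcal{M}$ in the middle paragraph --- i.e. the fact that all dimension-$(1+d)$ algebras sharing a bimodule category have a common underlying object --- since this is where one must pin down the dimension-$\sqrt{1+d}$ simple objects of $\mathcal{M}$ and use the special fusion rules of $AH_5$; the rest is formal manipulation with torsors and Frobenius reciprocity.
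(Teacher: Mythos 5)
Your proposal is correct and takes essentially the paper's intended route: the corollary is exactly the combination of Lemma \ref{algah5} (which matches the four invertible $\mathcal{AH}_5$--$\mathcal{AH}_3$ bimodule categories with the four objects $1\oplus\alpha_g\rho$) with Theorem \ref{autothm} and the fact that the invertible $\mathcal{AH}_5$--$\mathcal{AH}_3$ bimodule categories form a torsor under the Brauer--Picard group. Your explicit check that all dimension-$(1+d)$ division algebras attached to a fixed bimodule category share the same underlying object (via the free action of $\mathrm{Inv}(\mathcal{AH}_5)$ and the relation $\alpha_g\rho=\rho\alpha_g$) is a sound, slightly more detailed substitute for the counting already carried out in the proof of Lemma \ref{algah5}.
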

\begin{remark}
The situation is quite different for $ \mathcal{AH}_4$. If we let $\beta_h $, $h \in H=\mathbb{Z}/4\mathbb{Z} $ be the invertible objects of  $ \mathcal{AH}_4$ and $\xi $ be a noninvertible simple object, then two of the $\mathcal{AH}_4-\mathcal{AH}_2$ Morita equivalences correspond to pairs of algebra structures on $1+\xi $ and $ 1+ \beta_2 \xi $ and the other two Morita equivalences correspond to pairs of algebra structures on  $1+\beta_1 \xi $ and $ 1+ \beta_3 \xi $. The argument in the proof of the previous lemma fails here since, unlike for $\mathcal{AH}_3$, there is a nontrivial Morita autoequivalence of $\mathcal{AH}_2 $ with an object of dimension $1+d$ and two simple summands. Specifically, there is a Morita autoequivalence of $\mathcal{AH}_2 $ with simple objects of dimension $\frac{d+1}{2},\frac{d+1}{2},\frac{d+1}{2},\frac{d+1}{2},d-1,d-1,d+1 $ (see \cite{GSbp}).
\end{remark}

\begin{theorem} \label{no7}
If $\mathcal{AH}_{4-6} $ exist, there do not exist any fusion categories in the Morita equivalence class of the $\mathcal{AH}_i $ realizing case (7).
\end{theorem}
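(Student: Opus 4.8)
The plan is to assume that $\mathcal{C}$ realizes case (7) and to propagate this hypothesis through the algebra constructions of Lemma \ref{div} until it collides with the explicit lists of dimensions occurring in the fusion modules over the $AH_i$ recorded in the Appendix. Throughout one uses that $d=4+\sqrt{17}$ is irrational with $d^2=8d+1$. First I would apply Lemma \ref{15d}: it gives an invertible $\mathcal{C}-\mathcal{AH}_5$ bimodule category $\mathcal{K}$ with a simple object $m$ of dimension $\sqrt{1+5d}$, so $\gamma=\underline{\text{End}}_{\mathcal{AH}_5}(m)$ is a division algebra in $\mathcal{AH}_5$ with $d(\gamma)=1+5d$ and $\mathcal{AH}_5^{\gamma}\simeq\mathcal{C}$. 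Since the simple objects of $\mathcal{AH}_5$ have dimension $1$ or $d$, matching irrational parts in $d(\gamma)=1+5d$ forces $\gamma$ to contain the unit exactly once and no other invertible object, the remaining summands being five noninvertible simples counted with multiplicity. Hence $\gamma$ meets only the unit of $\text{Inv}(\mathcal{AH}_5)=\text{Vec}_{\mathbb{Z}/2\mathbb{Z}\times\mathbb{Z}/2\mathbb{Z}}^{\xi}$; if $\gamma_2$ is the $2$-dimensional division algebra furnished by Lemma \ref{lema}(2) (it lies in $\text{Inv}(\mathcal{AH}_5)$, being $2$-dimensional), then $(\gamma,\gamma_2)=1$, and Lemma \ref{div}(2) produces a division algebra $\delta$ in $\mathcal{AH}_5^{\gamma}\simeq\mathcal{C}$ with $d(\delta)=2+10d$ and $\mathcal{C}^{\delta}\simeq\mathcal{AH}_5^{\gamma_2}$, a case-(4) category, hence $=\mathcal{AH}_4$ by Theorem \ref{unq} together with the standing hypothesis that $\mathcal{AH}_{4-6}$ exist. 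Equivalently, $\mathcal{AH}_4$ carries a division algebra of dimension $2+10d$ whose dual fusion category realizes case (7).

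Next I would iterate inside the known category $\mathcal{AH}_4$. The same dimension count (now with $\text{Inv}(\mathcal{AH}_4)=\text{Vec}_{\mathbb{Z}/4\mathbb{Z}}$) pins down the structure of this dimension-$(2+10d)$ algebra: it contains the unit once, exactly one other invertible object, and ten noninvertibles counted with multiplicity. Pairing it — after normalizing by an outer automorphism if necessary — with one of the small $\mathcal{AH}_4-\mathcal{AH}_2$ algebras $1+\xi$ (there is one for each noninvertible $\xi$; see the Remark after Corollary \ref{outah5}) or with one of the dimension-$(2+2d)$ $\mathcal{AH}_4-\mathcal{AH}_3$ algebras read off from $M_{AH_3}^{(4)}$, and reapplying Lemma \ref{div}(2), produces a division algebra in $\mathcal{C}$, i.e. a simple object $\kappa$ in an invertible $\mathcal{C}-\mathcal{AH}_2$ or $\mathcal{C}-\mathcal{AH}_3$ bimodule category. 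Since $\mathcal{C}$ realizes case (7), that bimodule category realizes $L_{AH_2}^{(7)}$ (resp.\ $M_{AH_3}^{(7)}$); but the dimension of $\kappa$ — a number of the shape $\sqrt{(i+jd)(i'+j'd)}$ reduced via $d^2=8d+1$, for example $\sqrt{(2+10d)(2+2d)}=\sqrt{24+184d}$ — is not among the finitely many basis-element dimensions listed in the relevant fusion module. That contradiction rules out case (7).

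The hard part will be the bookkeeping hidden in the phrase ``pairing with a disjoint small algebra'': Lemma \ref{div}(2) demands that the two division algebras being composed meet only in the unit, and a priori the large algebras $\gamma$, $\delta$, and their successors could contain every noninvertible object of the ambient category. Controlling this is exactly where Corollary \ref{outah5} and its $\mathcal{AH}_4$-analogue (the content of the Remark following it) are used — one normalizes the large algebra by an outer automorphism so that a chosen small algebra becomes disjoint from it — and, when that fails, one splits into a few cases according to which noninvertibles occur, each dispatched by a further application of Lemma \ref{div} together with a parity obstruction such as: $\tfrac{1+d}{2}$ is not of the form $i+jd$ with $i,j\in\mathbb{Z}_{\ge 0}$, so no algebra of that dimension exists and the corresponding subalgebra quotient cannot occur. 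The individual dimension computations are entirely routine; it is the case organization, driven by the asymmetry between $\mathcal{AH}_5$ (Corollary \ref{outah5}) and $\mathcal{AH}_4$ (the Remark), that needs care.
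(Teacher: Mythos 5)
Your opening move is sound: from Lemma \ref{15d} you get a simple object of dimension $\sqrt{1+5d}$ whose internal end $\gamma$ in $\mathcal{AH}_5$ contains the unit once and no other invertible, so $(\gamma,\gamma_2)=1$ holds automatically for the $2$-dimensional algebra of Lemma \ref{lema}, and Lemma \ref{div}(2) does give a division algebra of dimension $2+10d$ in $\mathcal{C}$ with dual $\mathcal{AH}_4$. The gap is in the second composition, and it is not mere bookkeeping. To apply Lemma \ref{div}(2) again you must find a small algebra $\gamma'$ in $\mathcal{AH}_4$ (of the form $1+\alpha_g\rho$, or a $(2+2d)$-dimensional algebra with dual $\mathcal{AH}_3$) satisfying $(\tilde\delta,\gamma')=1$, where $\tilde\delta$ is your $(2+10d)$-dimensional algebra. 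But $\tilde\delta$ has ten noninvertible summands distributed over only four noninvertible simples of $\mathcal{AH}_4$, so nothing prevents every noninvertible simple from occurring in it; in that situation no algebra $1+\alpha_g\rho$ is disjoint from $\tilde\delta$, and applying an outer automorphism is useless, since an automorphism only permutes the constituents of $\tilde\delta$ and cannot remove a simple that occurs. Moreover the transitivity you invoke is exactly what fails in $\mathcal{AH}_4$: the Remark following Corollary \ref{outah5} records that the four $\mathcal{AH}_4$--$\mathcal{AH}_2$ equivalences split into two orbits, so even the weaker normalization you describe is unavailable, and the promised ``case split with parity obstructions'' is precisely the missing argument, not a routine one.

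The paper's proof sidesteps disjointness altogether by composing bimodule objects rather than algebras, and it turns the very failure of disjointness into the contradiction. Since $\bar\kappa\kappa$ has five noninvertible summands among the four noninvertible simples of $\mathcal{AH}_5$, some simple occurs with multiplicity at least $2$; Corollary \ref{outah5} (transitivity of $\mathrm{Out}(\mathcal{AH}_5)$, which genuinely holds, unlike for $\mathcal{AH}_4$) lets one take it to be the $\rho$ with $1+\rho$ a Q-system dual to $\mathcal{AH}_3$ (Lemma \ref{algah5}). Frobenius reciprocity then gives $(\kappa\boxtimes\theta^1_{53},\kappa\boxtimes\theta^1_{53})=(\bar\kappa\kappa,1+\rho)\ge 3$, i.e.\ an object of a $\mathcal{C}$--$\mathcal{AH}_3$ bimodule category of dimension $\sqrt{46d+6}$ with at least three simple summands, which is incompatible with the case-(7) dimensions $\sqrt{2d+2},\sqrt{2d+2},\sqrt{28d+4}$, whose only decomposition of that total is $\sqrt{2d+2}+\sqrt{28d+4}$ with two summands. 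Your endgame dimension checks ($\sqrt{12+92d}$, $\sqrt{24+184d}$ not occurring in $L^{(7)}_{AH_2}$, $M^{(7)}_{AH_3}$) are numerically fine, but they are only reachable if the disjointness you have not established actually holds, so as written the proof is incomplete.
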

\begin{proof}
Let $\mathcal{C} $ be a fusion category realizing case (7). By Lemma \ref{15d}, there exists an object $\kappa $ of dimension $\sqrt{1+5d} $ in an 
invertible $ \mathcal{C}-\mathcal{AH}_5$ bimodule category. 
Then $\bar{\kappa} \kappa$ contains five summands with dimension $d$, so at least one simple summand has multiplicity at least $2$. Since the outer automorphism group acts transitively on the noninvertible simple objects,  we may assume without loss of generality that that summand is $\rho$, and we have
 $$(\bar{\kappa} \kappa, \rho) \geq 2 .$$ 

Let  $\theta^1_{53}  $ be a simple object in an invertible $ \mathcal{AH}_5-\mathcal{AH}_3$ bimodule category whose internal end is $1+\rho$. Then $$(\kappa \boxtimes \theta^1_{53},\kappa \boxtimes \theta^1_{53})=(\bar{\kappa}\kappa, \theta^1_{53}\overline{ \theta^1_{53}}) \geq 3$$ and $$d(\kappa \boxtimes \lambda)=\sqrt{(1+d)(1+5d)}=\sqrt{46d+6}  .$$ However, the dimensions of the basis elements in  $N_{AH_3} $ are 
 $$d(\theta^1_{73})=d(\theta^2_{73}) =\sqrt{2d+2},  \quad d(\theta^3_{73})=\sqrt{28d+4} .$$ 
 Therefore there cannot be an object $ \mathcal{C}-\mathcal{AH}_3$ bimodule category with at least three simple summands and dimension $\sqrt{46d+6}=d(\theta^1_{73})+d(\theta^3_{73})$.
%
%

\end{proof}

\section{Existence of $\mathcal{AH}_{4-6} $} \label{sec:main}
In the previous section we established that if any of cases (4)-(6) of Theorem \ref{cases} occur, then there are exactly three additional fusion categories in the Morita equivalence class of $\mathcal{AH}_{1-3} $, which we called $\mathcal{AH}_{4-6} $.
In this section we will show that $\mathcal{AH}_{4-6} $ exist by explicitly constructing $\mathcal{AH}_{4} $ as a category of finite-index endomorphisms of a Type III factor $M$. By Lemmas \ref{lemah4}, \ref{lemc},  and \ref{graphpair},  $\mathcal{AH}_{4}$ must contain a copy of $\text{Vec}_{\mathbb{Z}/4\mathbb{Z}} $, with simple objects $\{\alpha_g\}_{g \in \mathbb{Z}/4\mathbb{Z}} $, along with a self-dual simple object $\rho $, satisfying the fusion rules  
$$\alpha_g \rho \cong \rho \alpha_{-g}, \ \forall g \in \mathbb{Z}/4\mathbb{Z},   \quad \text{and }\rho^2\cong 1+2\sum_{g \in \mathbb{Z}/4\mathbb{Z}} \alpha_g \rho .$$
A general theory of such categories for arbitrary finite Abelian groups, but with the multiplicities of $\alpha_g \rho $ in $\rho^2 $ all equal to $1$ instead of $2$, has been developed by the second-named author in \cite{MR1228532,  MR1832764,IzumiNote}. He determined a complete numerical invariant  of such categories given by solutions of certain polynomial equations determined by the group $G$; these solutions give structure constants for endomorphisms $\{\alpha_g\}_{g \in G}$ and $ \rho $ of the Cuntz algebra $\mathcal{O}_{|G|+1} $, which can then be extended to endomorphisms of a Type III factor.  In this construction one Cuntz algebra generator belongs to each intertwiner space $(\alpha_g,\rho^2) $
and one generator belongs to $(1,\rho^2) $.

We could try to perform a similar construction for $\mathcal{AH}_{4} $ by realizing $\rho $ as an endomorphism of $\mathcal{O}_{2|G|+1}=\mathcal{O}_9 $, again with one generator belonging to $(1,\rho^2) $, but this time with two generators belonging to each $(\alpha_g,\rho^2) $. However, because the intertwiner spaces are two-dimensional, it is difficult to write down equations for the structure constants of $\rho $. We therefore take a different approach, and construct instead an equivariantization of $\mathcal{AH}_{4} $, which is technically easier to deal with. We then recover $\mathcal{AH}_{4} $ by de-equivariantizing.

\subsection{Equivariantization and the orbifold construction}
We recall the notion of equivariantization of a fusion category by the action of a finite group of automorphisms. For simplicity we define equivariantization for strict categories, which include categories of endomorphisms (though as with all constructions for monoidal categories, strictness is not really important).
Let $\mathcal{C} $ be a strict fusion category, and let $G$ be a finite group acting by automorphisms  $\{ \phi_g  \}_{g \in G}$  on $\mathcal{C} $. A $G$-equivariant object of $\mathcal{C} $ is an object $\xi \in \mathcal{C} $, together with isomorphisms $\{ u_g: \phi_g(\xi) \rightarrow \xi \}_{g \in G}$, such that $$u_g \phi_g(u_h)=u_{gh}    . $$
Morphisms and tensor product for $G$-equivariant objects can be defined in a natural way, and the $G$-equivariant objects form a fusion category, called the $G$-equivariantization of $\mathcal{C} $.   

Suppose  $ \mathcal{C}$ is a fusion category of finite-index endomorphisms of a Type III factor $M$, and suppose $\alpha $ is an order two automorphism of $M$  which commutes with a set of representative endomorphisms of the simple objects of $\mathcal{C} $. Since $\mathcal{C} $ is equivalent to its full subcategory of endomorphisms which commute with $\alpha $, we assume without loss of generality that $\alpha $ commutes with all endomorphisms in $\mathcal{C} $. Then $\alpha $ induces a $\mathbb{Z}/2\mathbb{Z} $-action on $\mathcal{C} $, where the action of $\alpha $ fixes objects and takes each morphism $u \in (\xi,\sigma) $ to $\alpha(u) \in (\xi, \sigma ) $. The equivariant objects with respect to this action are of the form $(\xi,u)$, with $u \in (\xi , \xi )$ satisfying $u \alpha(u)=1$. In particular for each simple object $\xi  \in \mathcal{C}$, there are two inequivalent objects in the $\mathbb{Z}/2\mathbb{Z} $-equivariantization, corresponding to $u=\pm1$.

Now suppose that $\alpha$ is not in the same sector as any object of $\mathcal{C} $.  Then the $ \mathbb{Z}/2\mathbb{Z}$-equivariantization of $\mathcal{C} $ induced by $\alpha $ can be realized as a category of endomorphisms on a larger factor via an orbifold construction, as follows. Let $M_1$ be the crossed product $M \rtimes_{\alpha}  \mathbb{Z}/2\mathbb{Z}$, which is the von Neumann algebra generated by $M  $ and a self-adjoint unitary $\lambda  $ satisfying $$\lambda x \lambda=\alpha(x), \ \forall x \in M.$$
 Then every equivariant object $(\xi,u)$ can be extended to an endomorphism $\tilde{\xi} $ on  $M_1$ by setting
 $\xi(\lambda)=u \lambda$, and the morphims between pairs of such extended endomorphisms $\tilde{\xi}_1 $ and $\tilde{\xi}_2 $  in $ \text{End}_0(M_1) $ are precisely the equivariant morphisms between $\xi_1 $ and $ \xi_2$ in $\text{End}_0(M) $.

\subsection{ $\mathbb{Z}/2\mathbb{Z}$-equivariantization of $\mathcal{AH}_4 $}
This section is purely motivational and explains why if $\mathcal{AH}_4$ exists, then it must have a $\mathbb{Z}/2\mathbb{Z}$-equivariantization $\mathcal{S}$ which is easier to understand.  Logically this is not necessary, since we will directly construct this $\mathcal{S}$ and use it to construct $\mathcal{AH}_4$ via the inverse process of de-equivariantization.

The following can be shown with a considerable amount of work.  

\begin{lemma}\label{lemmaeq}
Suppose $\mathcal{AH}_4  $ exists.  Let $H=\mathbb{Z}/4 \mathbb{Z} \times \mathbb{Z}/2\mathbb{Z} $. Then the $\mathbb{Z}/2\mathbb{Z}$-equivariantization with respect to the action coming from conjugation by $\alpha_2 $ contains a copy of $\text{Vec}_{H} $, with simple objects $
\{\alpha_g\}_{g \in H} $, along with a simple object $\rho $, satisfying the fusion rules  $$\alpha_g \rho \cong \rho \alpha_{-g}, \ \forall g \in H,   \quad \text{and }\rho^2\cong 1+\sum_{g \in H} \alpha_g \rho .$$

\end{lemma}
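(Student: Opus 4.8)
The plan is to construct $\mathcal{S}$ as the equivariantization of the (assumed to exist) category $\mathcal{AH}_4$ using its explicit description from the previous section, and to peel off the three parts of the claim one at a time. Realize $\mathcal{AH}_4$ as a category of finite-index endomorphisms of a type III factor $M$, with representatives $\{\alpha_g\}_{g\in\mathbb{Z}/4\mathbb{Z}}$ and a self-dual $\rho$ satisfying the fusion rules of Lemma~\ref{lemah4}, with trivial associator on the pointed part (Lemma~\ref{lemc}) and all noninvertibles self-dual (Lemma~\ref{graphpair}). The sector $[\alpha_2]$ is central in the fusion ring (note $\alpha_2\rho\cong\rho\alpha_{-2}=\rho\alpha_2$ since $-2=2$ in $\mathbb{Z}/4\mathbb{Z}$) and has order two with trivial associator on $\langle\alpha_2\rangle$, so one checks that $[\alpha_2]$ admits an order-two representative automorphism $\alpha$ commuting with the chosen representatives of all simple objects. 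This is exactly the situation of the orbifold subsection: conjugation by $\alpha_2$ then defines a genuine $\mathbb{Z}/2\mathbb{Z}$-action on $\mathcal{AH}_4$ that fixes every object and sends $v\in(\xi,\sigma)$ to $\alpha(v)$. Let $\mathcal{S}$ be the resulting equivariantization.

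Now I identify the simple objects and the pointed part. As noted in the orbifold subsection, each of the eight simples $\xi$ of $\mathcal{AH}_4$ yields exactly two simples $(\xi,+1),(\xi,-1)$ of $\mathcal{S}$, and $(\xi,u)$ is invertible iff $\xi$ is; hence $\mathcal{S}$ has sixteen simples, eight of them invertible. On $\mathrm{Inv}(\mathcal{AH}_4)=\mathrm{Vec}_{\mathbb{Z}/4\mathbb{Z}}$ the action is literally trivial (all Hom spaces are scalars and $\alpha^2=\mathrm{id}$), so $\mathrm{Inv}(\mathcal{S})=(\mathrm{Vec}_{\mathbb{Z}/4\mathbb{Z}})^{\mathbb{Z}/2\mathbb{Z}}=\mathrm{Vec}_{\mathbb{Z}/4\mathbb{Z}}\boxtimes\mathrm{Rep}(\mathbb{Z}/2\mathbb{Z})\cong\mathrm{Vec}_{H}$ with trivial associator, where $H=\mathbb{Z}/4\mathbb{Z}\times\mathbb{Z}/2\mathbb{Z}$ and the second factor is generated by the sign object $\varepsilon:=(1,-1)$. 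Put $\rho:=(\rho,+1)\in\mathcal{S}$, a self-dual noninvertible simple. A direct computation with the tensor product of equivariant objects shows the $H$-action on the eight noninvertible simples of $\mathcal{S}$ is free and transitive (the stabilizer of $\rho$ is trivial), so these are $\{\alpha_h\rho\}_{h\in H}$; combining the relation $\alpha_g\rho\cong\rho\alpha_{-g}$ of $\mathcal{AH}_4$ with $\varepsilon\rho\cong\rho\varepsilon$ gives $\alpha_h\rho\cong\rho\alpha_{-h}$ for all $h\in H$. It remains to prove $\rho^2\cong 1+\sum_{h\in H}\alpha_h\rho$.

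The underlying object of $\rho^2$ in $\mathcal{AH}_4$ is $1\oplus\bigoplus_{g\in\mathbb{Z}/4\mathbb{Z}}2\,\alpha_g\rho$; since $\rho\in\mathcal{S}$ is simple and self-dual, the summand $1$ contributes exactly the unit $(1,+1)$, and the content of the claim is that each two-dimensional intertwiner space $V_g:=(\alpha_g\rho,\rho^2)$, acted on by the residual involution $W_g\colon v\mapsto\alpha(v)$, splits into one $+1$- and one $-1$-eigenvector (equivalently $\operatorname{tr}W_g=0$ for all $g$). This is the main obstacle and where the ``considerable work'' lies. One part is free: de-equivariantization of $\mathcal{S}$ by $\mathrm{Rep}(\mathbb{Z}/2\mathbb{Z})$ must recover $\mathcal{AH}_4$, in which $\rho^2$ contains each $\alpha_g\rho$ with multiplicity two, so the multiplicities of $(\alpha_g\rho,+1)$ and $(\alpha_g\rho,-1)$ in $\rho^2$ sum to two; associativity of the fusion ring of $\mathcal{S}$ and self-duality of $\rho$ constrain these multiplicities further, but do not by themselves force the $1+1$ split. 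To finish one must compute the $\operatorname{tr}W_g$ directly --- either by tracking how conjugation by $\alpha_2$ acts on the module and bimodule categories relating $\mathcal{AH}_4$ to the $\mathcal{AH}_i$, which pins down the action on these intertwiner spaces, or, since $\mathcal{AH}_4$ is the unique category realizing case~$(4)$ by Theorem~\ref{unq}, by identifying $\mathcal{S}$ with the category constructed directly in Section~\ref{sec:main}, which already has the claimed $3^{H}$ fusion rules and de-equivariantizes to a case-$(4)$ category. In particular this lemma is logically unnecessary and serves only as motivation.
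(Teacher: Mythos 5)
Your setup matches what the paper itself says about this lemma: it declines to prove it, stating only that the proof is ``difficult and detailed,'' and its accompanying remarks identify precisely the reduction you arrive at, namely realizing $\mathcal{AH}_4$ by endomorphisms with two Cuntz generators in each $(\alpha_g\rho,\rho^2)$ and showing that the eigenvalues of the $\alpha_2$-action on each of these two-dimensional intertwiner spaces are $\{+1,-1\}$. Your identification of the simples of the equivariantization, of $\mathrm{Inv}(\mathcal{S})\cong\mathrm{Vec}_H$, of the free transitive $H$-action on the noninvertibles, and of the commutation $\alpha_h\rho\cong\rho\alpha_{-h}$ is fine and essentially routine.

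The genuine gap is that you stop exactly at the step that constitutes the content of the lemma: you never establish $\operatorname{tr}W_g=0$, i.e.\ that each multiplicity-two space splits as one $+1$- and one $-1$-eigenvector (note that if conjugation by $\alpha_2$ were trivializable, each $(\alpha_g\rho,+1)$ would occur with multiplicity $2$ and $(\alpha_g\rho,-1)$ not at all, so the claimed fusion rules genuinely fail without this input). The paper's remarks say this step cannot be extracted from the fusion rules and the $\mathrm{Vec}_{\mathbb{Z}/4\mathbb{Z}}$ subcategory alone; it uses that $1+\rho$ carries a Q-system whose dual category has a nontrivial invertible object and that the center of $\mathcal{AH}_4$ has no nontrivial invertibles (so conjugation by $\alpha_2$ is a nontrivial autoequivalence), and even then it is the hard part. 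Your two proposed finishes are only named, not executed: the first (tracking the $\alpha_2$-action on the bimodule categories) is an unsubstantiated assertion, and the second (invoking Theorem \ref{unq} to identify $\mathcal{S}$ with the category built in Section \ref{sec:main}) still requires showing that the residual $\mathbb{Z}/2\mathbb{Z}$-action on the de-equivariantization of $\mathcal{AH}^{\mathbb{Z}/2\mathbb{Z}}$ is equivalent, as an action on $\mathcal{AH}_4$, to conjugation by $\alpha_2$ (and that equivariantizing back recovers $\mathcal{S}$); that identification is plausible but is not supplied. A minor additional loose end: you need the involution to act by $+1$ on the one-dimensional space $(1,\rho^2)$ (equivalently, that $(\rho,+1)$ is self-dual in $\mathcal{S}$ rather than dual to $(\rho,-1)$), which also requires a choice or an argument. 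As written, then, this is an accurate plan that reproduces the paper's motivational outline but does not close the lemma.
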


%

We do not prove this lemma here, since the result is not required for anything that follows and the proof is difficult and detailed. Rather the lemma functions as motivation for constructing $\mathcal{AH}_4  $ by first constructing its $\mathbb{Z}/2\mathbb{Z}$-equivariantization. However, we note several features of this proof which will motivate some choices in the next section.

\begin{remark}
\begin{enumerate}
\item The proof proceeds by realizing (the hypothetical) $\mathcal{AH}_4  $ as the fusion category generated by automorphisms $\{\alpha_g\}_{g \in \mathbb{Z}/4\mathbb{Z}} $, along with an endomorphism $\rho $, on a Type III facor $M$ containing a Cuntz algebra $\mathcal{O}_9 $, with one generator belonging to $(1,\rho^2) $
and two generators belonging to each $(\alpha_g \rho,\rho^2) $.  Then proving the lemma amounts to showing that the eigenvalues of the action of $\alpha_2 $ on each $(\alpha_g \rho,\rho^2) $ are $\{1,-1\} $. 

\item Finding the eigenvalues of $\alpha_2 $ required using properties of $\mathcal{AH}_4  $ beyond just the fusion rules and the presence of  a $\text{Vec}_{ \mathbb{Z}/4\mathbb{Z}} $ subcategory, namely that $1+ \rho $ admits a Q-system, that the dual category with respect to this Q-system contains a non-trivial invertible object, and that the center of $\mathcal{AH}_4  $ contains no non-trivial invertible objects (and therefore conjugation by $\alpha_2 $ is a non-trivial automorphism). These properties follow from the fact that $ \mathcal{AH}_4$ is assumed to be the dual category of $\mathcal{AH}_2 $ with repect to a Q-system of dimension $1+d$.

\item Since $\alpha_2 $ belongs to $\mathcal{AH}_4  $, we cannot simply use a crossed product by $\alpha_2 $ for the orbifold construction, as that would result in $\alpha_2 $ being identified with the identity automorphism on the Type III factor $M$. Rather, we need to twist $\alpha_2$ by an automorphism of $M$ to obtain an automorphism $ \alpha'$ which is not in the same sector as any automorphism of $\mathcal{AH}_4 $, but such that conjugation by $\alpha' $ is equivalent to conjugation by $\alpha_2 $ as an automorphism of $\mathcal{AH}_4  $.

\item From the orbifold construction, we can deduce some important structure constants of the 
$\mathbb{Z}/2\mathbb{Z}$-equivariantization, which will be useful in the following subsection.

\end{enumerate}
\end{remark}

\subsection{Constructing $\mathcal{S}$}

By Lemma \ref{lemmaeq}, the $\mathbb{Z}/2\mathbb{Z}$-equivariantization of $\mathcal{AH}_4$ would have nicer fusion rules than $\mathcal{AH}_4 $ itself. We now directly construct a subfactor $\mathcal{S}$ whose principal even half has these fusion rules.

We recall the following construction from \cite{IzumiNote}, which completely characterizes unitary fusion categories which are tensor generated by $\text{Vec}_G $ for a finite Abelian group $G$ (which for us is $\mathbb{Z}/4\mathbb{Z} \times \mathbb{Z}/2\mathbb{Z}$) and a simple object $ \rho$, satisfying  $$\alpha_g \rho \cong \rho \alpha_{-g}, \ \forall g \in G, \quad \text{and }\rho^2\cong 1+\sum_{g \in G} \alpha_g \rho .$$

Let $G$ be a finite Abelian group, and let $$A: G \times G \times G \rightarrow \mathbb{C} $$ $$\epsilon: G \times G \rightarrow \{-1,1\}$$ 
$$\eta: G \rightarrow \{  1, e^{\frac{2\pi i }{3}}, e^{\frac{-2\pi i }{3} }  \} $$ be functions. Set $A_g(h,k) =A(g,h,k)$, $\epsilon_g(h)=\epsilon(g,h)$, and $\eta_g=\eta(g) $ for $g,h,k \in G $.  Let $n=|G| $ and let $$d=\displaystyle \frac{n+\sqrt{n^2+4}}{2} .$$ 
Consider the following system of equations:

\begin{equation} \label{e1}
\epsilon_{h+k}(g)=\epsilon_h(g)\epsilon_k(g+2h), \ \epsilon_h(0)=1
\end{equation}
\begin{equation}
\eta_{g+2h}=\eta_g, \ \eta_g^3=1
\end{equation}
\begin{equation}\label{e3}
\sum_{h \in G} A_g(h,0)=-\frac{\overline{\eta_g}}{d}
\end{equation}
\begin{equation}\label{e4}
\sum_{h \in G} A_g(h-g,k)\overline{A_{g'}(h-g',k)}=\delta_{g,g'}-\frac{\overline{\eta_g}\eta_{g'}}{d} \delta_{k,0}
\end{equation}
\begin{equation}\label{e5}
A_{g+2h}(p,q)=\epsilon_h(g)\epsilon_h(g+p)\epsilon_h(g+q)\epsilon_h(g+p+q)A_g(p,q)
\end{equation}
\begin{equation}
A_g(h,k)=\overline{A_g(k,h)}
\end{equation}
\begin{align} \label{e8}
A_g(h,k) &=A_g(-k,h-k)\eta_g \epsilon_{-k}(g+h)\epsilon_{-k}(g+k)\epsilon_{-k}(g+h+k) \\
&=A_g(k-h,-h) \overline{\eta_g} \epsilon_{-h}(g+h)\epsilon_{-h}(g+k)\epsilon_{-h}(g+h+k) \nonumber
\end{align}
\begin{align}\label{e9}
A_g(h,k)&=A_{g+h}(h,k) \eta_g \eta_{g+k} \overline{\eta_{g+h} \eta_{g+h+k}} \epsilon_h(g)\epsilon_h(g+k) \\
&= A_{g+k}(h,k)  \overline{\eta_g \eta_{g+h}} \eta_{g+k} \eta_{g+h+k}\epsilon_k(g) \epsilon_k(g+h) \nonumber
\end{align}
\begin{align} \label{e10}
\sum_{l \in G} \limits & A_g(x+y,l)A_{g-p+x}(-x,l+p)A_{g-q+x+y}(-y,l+q) \\
 &=A_g(p+x,q+x+y)A_{g-p}(q+y,p+x+y)  \nonumber \\
 & \times \eta_{g} \eta_{g+q+x} \eta_{g+p+q+y}    \overline{ \eta_{g+p} \eta_{g+x+y} \eta_{g+q+x+y} } \nonumber \\
 &\times \epsilon_p(g-p+x)\epsilon_{p+x}(g-p+q+y)
 \epsilon_q(g-q+x+y)\epsilon_{q+y}(g-q+x) \nonumber \\
 & -\displaystyle \frac{\delta_{x,0}\delta_{y,0,}}{d} \eta_g \eta_{g+p} \eta_{g+q} \nonumber
\end{align}

\begin{theorem}\label{izthm} \cite{IzumiNote}
Suppose $A$ and $\epsilon $ satisfy (\ref{e1})-(\ref{e10}). 
Then there is a Type III factor $M$, an outer action $\alpha $ of $ G$ on $M$, an irreducible finite-index endomorphism $\rho $ of $M$, and $n+1$ isometries $S \in (id,\rho^2) $ and $T_g \in (\alpha_g \circ  \rho,\rho^2)$ satisfying the Cuntz algebra relations such that:\\
\begin{equation} \label{e11}
\alpha_g \circ \rho = \rho \circ \alpha_{-g}, \, \forall g \in G
\end{equation}
 \begin{equation}\label{e12}
 \alpha_g (S)=S, \ \ \alpha_g(T_h)=\epsilon_g(h)T_{h+2g}, \, \forall g,h \in G
 \end{equation}
 \begin{equation}\label{e12_2}
 \rho(s)=\frac{1}{d}S+\frac{1}{\sqrt{d}} \sum_{g \in G} \limits T_gT_g
 \end{equation}
 \begin{multline}\label{e13}
\rho(T_g)=\epsilon_g(-g)[\eta_{-g}T_{-g}SS^*+\frac{\overline{\eta_{-g}}}{\sqrt{d}}ST_{-g}^* \\ +\sum_{h,k \in G}A_{
-g}(h,k)T_{h-g}T_{h+k-g}T_{k-g}^*] ,\,  \forall g\in G.
\end{multline}

The sector $[id] \oplus[\alpha_g \rho] $ admits a Q-system iff 
\begin{equation} \label{e14}
A_g(h,0)=\delta_{h,0}-\frac{1}{d-1} 
\, \forall h \in G.
\end{equation}

\end{theorem}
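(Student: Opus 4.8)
The plan is to carry out the concrete construction of such categories due to the second-named author (\cite{MR1228532,MR1832764,IzumiNote}): one realizes $\rho$ and the $\alpha_g$ as explicit endomorphisms of a Cuntz algebra, argues that the system (\ref{e1})--(\ref{e10}) is precisely what is needed for them to generate a unitary fusion category with the stated fusion rules, passes to a type III factor, and then reads off the assertion about Q-systems from the criterion of \cite{MR2418197} recalled above.

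First I would put $n=|G|$ and work in the Cuntz algebra $\mathcal{O}_{n+1}$ with isometry generators $S$ and $\{T_g\}_{g\in G}$ (so $S^*S=1$, $T_g^*T_h=\delta_{g,h}$, $S^*T_g=0$, $SS^*+\sum_g T_gT_g^*=1$), and define $*$-endomorphisms $\alpha_g$ by (\ref{e12}) and $\rho$ by (\ref{e12_2})--(\ref{e13}), extended multiplicatively. That each $\alpha_g$ is a well-defined automorphism uses only that $\epsilon_g$ takes values in $\{\pm1\}$ and that $h\mapsto h+2g$ is a bijection of $G$, while $g\mapsto\alpha_g$ being a homomorphism $G\to\text{Aut}(\mathcal{O}_{n+1})$ with $\alpha_0=id$ is exactly (\ref{e1}). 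That $\rho$ is a well-defined unital endomorphism is the first place the equations do real work: the identity $d^2=nd+1$ (immediate from the formula for $d$) makes $\rho(S)$ an isometry automatically, and expanding $\rho(S)^*\rho(T_g)$, $\rho(T_g)^*\rho(T_h)$ and $\sum\rho(\cdot)\rho(\cdot)^*$ with the Cuntz relations shows that the remaining Cuntz relations for $\{\rho(S),\rho(T_g)\}$ are equivalent to (\ref{e3})--(\ref{e4}); the relation $\alpha_g\rho=\rho\alpha_{-g}$ of (\ref{e11}) then holds on generators by (\ref{e5}). Finally one shows that the rigid $C^*$-tensor category generated inside $\text{End}(\mathcal{O}_{n+1})$ by the $\alpha_g$ and $\rho$ is a unitary fusion category with the advertised fusion rules: the projections $SS^*$, $T_gT_g^*\in(\rho^2,\rho^2)$ are orthogonal and sum to $1$, and, once one knows $\rho$ is irreducible and the $\alpha_g$ are outer, each is minimal, so $\rho^2\cong id\oplus\bigoplus_g\alpha_g\rho$ with multiplicity one; associativity of the tensor product amounts to the pentagon-type identity (\ref{e10}), while (\ref{e8})--(\ref{e9}) encode that $\rho$ is self-conjugate, with the duality morphisms expressible in terms of $S$ and $\eta$. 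To produce the factor, I would then represent $\mathcal{O}_{n+1}$ via the GNS construction of a state invariant under $\rho$ and all the $\alpha_g$ and let $M$ be the weak closure; by the standard argument in this setting (\cite{MR1228532}) $M$ is a type III factor and $\rho$, $\alpha_g$ extend to a finite-index irreducible endomorphism and to outer automorphisms of $M$, with $S\in(id,\rho^2)$ and $T_g\in(\alpha_g\rho,\rho^2)$, so $\mathcal{C}_\rho$ is the desired fusion category.

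For the Q-system statement, fix $g\in G$ and set $\rho'=\alpha_g\rho$. By (\ref{e11}) we have $(\rho')^2=\alpha_g\rho\alpha_g\rho=\alpha_g(\alpha_{-g}\rho)\rho=\rho^2$ as endomorphisms, so $S\in(id,(\rho')^2)$ and $(\rho',(\rho')^2)=(\alpha_g\rho,\rho^2)=\mathbb{C}T_g$; moreover $d(\rho')=d$ and $\rho'$ is self-conjugate, irreducible, and not the identity. Hence the criterion of \cite{MR2418197} applies with the fixed isometry $R:=S$: Q-systems on $[id]\oplus[\alpha_g\rho]$ correspond to isometries of the form $S'=cT_g$ with $|c|=1$ satisfying $\rho'(S')S=S'S$ and $\sqrt d\,S+(d-1)(S')^2=\sqrt d\,\rho'(S)+(d-1)\rho'(S')S'$. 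Using $\alpha_g(S)=S$, (\ref{e12}), (\ref{e13}) and the Cuntz relations one computes $\rho(T_g)S=\epsilon_g(-g)\eta_{-g}T_{-g}S$, hence $\rho'(S')S=c\,\eta_{-g}T_gS$, so the first equation reduces to $\eta_{-g}=1$; similarly $\rho'(S)=\rho(S)=\tfrac1d S+\tfrac1{\sqrt d}\sum_h T_hT_h$ and, again from (\ref{e13}), $\rho'(S')S'=c^2\bigl(\tfrac{\overline{\eta_{-g}}}{\sqrt d}S+\sum_h A_{-g}(h-g,0)T_hT_h\bigr)$. Substituting these into the second equation and comparing the coefficients of $S$ and of $T_hT_h$ (separately for $h=g$ and for $h\neq g$) gives $c^2=1$ together with $A_{-g}(h,0)=\delta_{h,0}-\tfrac1{d-1}$ for all $h\in G$; after replacing $g$ by $-g$ this is exactly (\ref{e14}). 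Conversely, if (\ref{e14}) holds then (\ref{e3}) together with $d^2=nd+1$ forces $\overline{\eta_g}=1$, hence $\eta_{-g}=1$, and one checks directly that both displayed equations are then satisfied for $c=\pm1$, the two choices giving equivalent Q-systems.

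The genuinely hard step is the part of the construction just sketched that verifies that (\ref{e1})--(\ref{e10}) --- above all the pentagon identity (\ref{e10}) --- really do force the generated $C^*$-tensor category to be associative, together with the irreducibility of $\rho$ and the outerness of the $\alpha_g$; this is the technical core of \cite{IzumiNote} and of the earlier \cite{MR1228532,MR1832764}. Once that machinery is available, the passage to a type III factor is routine, and the Q-system assertion is just the short bookkeeping above with the Cuntz relations and the criterion of \cite{MR2418197}.
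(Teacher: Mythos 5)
The paper does not actually prove this theorem: it is quoted verbatim from \cite{IzumiNote}, so there is no in-paper argument to compare yours against line by line. That said, your outline follows exactly the construction that the paper describes and attributes to that source (realizing $\alpha_g$ and $\rho$ as endomorphisms of $\mathcal{O}_{|G|+1}$ with $S\in(id,\rho^2)$, $T_g\in(\alpha_g\rho,\rho^2)$, the equations (\ref{e1})--(\ref{e10}) encoding well-definedness, the commutation relation, self-conjugacy and the consistency/``pentagon'' constraint, then passing to a type III completion), and your Q-system reduction via the criterion of \cite{MR2418197} recalled in the preliminaries is correct: with $\rho'=\alpha_g\rho$, $R=S$, $S'=cT_g$, one indeed gets $\rho'(S')S=c\,\eta_{-g}T_gS$, so the first equation forces $\eta_{-g}=1$; comparing coefficients of $S$, $T_gT_g$ and $T_{h+g}T_{h+g}$ ($h\neq 0$) in the second equation gives $c^2=1$ and $A_{-g}(h,0)=\delta_{h,0}-\tfrac{1}{d-1}$, and by (\ref{e5}) with the shift $-g$ one even has $A_{-g}(h,0)=A_g(h,0)$, so the $g$ versus $-g$ issue you flag disappears; the converse via (\ref{e3}) and $d^2=nd+1$ also checks out. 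Two caveats: the genuinely hard content (consistency of the endomorphism $\rho$, irreducibility, and especially outerness of the $G$-action) is deferred to \cite{IzumiNote}, which is fine since the paper does the same, but your claim that the GNS/weak-closure step yields outer $\alpha_g$ ``by the standard argument'' is too quick --- as the paper itself remarks after Theorem \ref{ahcons}, the naive von Neumann completion satisfies everything in the theorem \emph{except possibly outerness}, which requires an extra modification in Izumi's construction. So as a blind reconstruction your proposal is a faithful and essentially correct sketch of the cited proof, with that one step understated rather than wrong.
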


These equations were solved in \cite{IzumiNote} for all groups of order less than or equal to $5$, and solutions have been found for some groups of higher order. For all known solutions for which $[id] \oplus [\rho] $ admits a Q-system,
$[id] \oplus [\alpha_g \rho ] $ admits Q-systems for all $g \in G $. In this case we get some very useful extra relations.

\begin{lemma}\cite{IzumiNote}
If $[id] \oplus [\alpha_g \rho] $ admits a Q-system for each $g \in G $, then $\eta_g$ is identically $1$ and  $\epsilon $ restricted to $G_2 \times  G$ is a bicharacter, where $G_2 $ is the subgroup of order $2$ elements of $G$. 
Moreover, we have
\begin{equation}\label{eq14}
|A_g(h,k) |^2=\delta_{h,0} \delta_{k,0} - \frac{\delta_{h,0}+\delta_{k,0}+\delta_{h,k}}{d-1}+\frac{d}{(d-1)^2} 
\end{equation}
and
\begin{align}\label{eq15}
 & A_g(-l,h)A_g(l,k)\epsilon_l(g-l)\epsilon_l(g+h-l) \nonumber \\
& -A_g(h+l,k)A_g(k-l,h) \epsilon_l(g+k-l) \epsilon_l (g+h+k-l)  \nonumber \\
& = \frac{\delta_{h-k+l,0}}{d-1}  \epsilon_l (g+h) -\frac{\delta_{l,0}}{d-1}  \nonumber \\
\end{align}
for all $g,h,k,l  \in G$. 
\end{lemma}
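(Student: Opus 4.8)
The plan is to use the hypothesis solely in the form of equation~(\ref{e14}) holding for every $g\in G$ (which is exactly what the last clause of Theorem~\ref{izthm} says about each $[id]\oplus[\alpha_g\rho]$ admitting a Q-system) and to extract the four conclusions one at a time from the structural relations~(\ref{e1})--(\ref{e13}). First I would kill $\eta$: summing~(\ref{e14}) over $h$ gives $\sum_h A_g(h,0)=1-\tfrac{n}{d-1}$, and since $d=\tfrac{n+\sqrt{n^2+4}}{2}$ satisfies $d^2=nd+1$ we get $\tfrac{n}{d-1}=\tfrac{d+1}{d}$, so this sum equals $-\tfrac1d$; comparing with~(\ref{e3}) forces $\overline{\eta_g}=1$. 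From here all $\eta$-factors drop out of~(\ref{e3})--(\ref{e13}).

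Next I would prove the modulus formula~(\ref{eq14}). The symmetry $A_g(h,k)=\overline{A_g(k,h)}$ together with~(\ref{e8}) (now with trivial $\eta$ and $|\epsilon|\equiv1$) shows that $|A_g(h,k)|$ is invariant under the order-$6$ action generated by $(h,k)\mapsto(k,h)$ and $(h,k)\mapsto(-k,h-k)$, hence depends only on how many of $h,k,h-k$ vanish (which is $0$, $1$, or $3$). The cases with at least one vanishing follow directly from~(\ref{e14}): $|A_g(0,0)|^2=\tfrac{(d-2)^2}{(d-1)^2}$, and for $k\neq0$ the symmetry moves both $(0,k)$ and $(k,k)$ to a pair of the form $(m,0)$ with $m\neq0$, so $|A_g(0,k)|^2=|A_g(k,k)|^2=\tfrac1{(d-1)^2}$. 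For the remaining generic value I would set $g=g'$ in the orthogonality relation~(\ref{e4}), obtaining $\sum_h|A_g(h,k)|^2=1$ for each $k\neq0$; exactly two of these $n$ summands ($h=0$ and $h=k$) are of degenerate type, so $(n-2)$ times the generic value is $1-\tfrac2{(d-1)^2}$, and since $(d-1)^2-2=d^2-2d-1=d(n-2)$ the generic value is $\tfrac{d}{(d-1)^2}$. (For $|G|\le2$ there is no generic triple and~(\ref{eq14}) is immediate.) In particular $A_g(h,k)\neq0$ for all $g,h,k$, which is needed below.

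For the bicharacter claim I would proceed in two halves. Since $\alpha$ is an honest $G$-action, applying $\alpha_l\alpha_{l'}=\alpha_{l+l'}$ to $T_h$ and using~(\ref{e12}) with $2l=2l'=0$ gives $\epsilon_{l+l'}(h)=\epsilon_l(h)\epsilon_{l'}(h)$ for $l,l'\in G_2$ (additivity in the first slot). For the second slot I would apply $\alpha_l$ with $l\in G_2$ to the formula~(\ref{e13}) for $\rho(T_g)$: by~(\ref{e11}) the left-hand side becomes $\epsilon_l(g)\rho(T_g)$, while $\alpha_l$ multiplies each monomial $T_{h-g}T_{h+k-g}T_{k-g}^{*}$ on the right by $\epsilon_l(h-g)\epsilon_l(h+k-g)\epsilon_l(k-g)$. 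These Cuntz monomials are linearly independent and $A_{-g}(h,k)\neq0$, so comparing coefficients gives $\epsilon_l(g)=\epsilon_l(h-g)\epsilon_l(h+k-g)\epsilon_l(k-g)$ for all $h,k$; taking $k=g$ and using $\epsilon_l(0)=1$ yields $\epsilon_l(h-g)=\epsilon_l(h)\epsilon_l(g)$. Thus each $\epsilon_l$, $l\in G_2$, is a character of $G$, and with the first-slot additivity this is precisely the assertion that $\epsilon|_{G_2\times G}$ is a bicharacter.

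The remaining identity~(\ref{eq15}) is the genuinely delicate step, and I would get it from the pentagon relation~(\ref{e10}). The idea is to specialize the free parameters $x,y,p,q$ so that, via the formula~(\ref{e14}) just established, one of the three $A$-factors inside the internal sum degenerates to $\delta_{\bullet}-\tfrac1{d-1}$; the sum over the internal index then breaks into a single surviving term plus $-\tfrac1{d-1}$ times a residual sum of products of two $A$'s, and this residual sum is reorganized --- moving the summation index between the two arguments using the symmetries~(\ref{e8}) and~(\ref{e9}) --- into an instance of the orthogonality relation~(\ref{e4}), which evaluates it to a Kronecker delta over $d$. Reassembling everything and simplifying the accumulated $\epsilon$-prefactors (again via $d^2=nd+1$) should reproduce~(\ref{eq15}); a useful consistency check is that its $l=0$ specialization collapses back to~(\ref{eq14}). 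I expect essentially all the work to be in this last step, and it is bookkeeping rather than conceptual: the challenge is to track the many $\epsilon$-arguments through the substitutions so that they collapse to exactly the four $\epsilon_l$-factors on the left-hand side of~(\ref{eq15}). No ingredient beyond Theorem~\ref{izthm} and the relations it supplies is required.
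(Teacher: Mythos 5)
Your preliminary steps are sound: summing the Q-system condition (\ref{e14}) over $h$ and comparing with (\ref{e3}) via $d^2=nd+1$ does force $\eta_g\equiv 1$, and additivity of $\epsilon$ in the first slot on $G_2$ is immediate (from (\ref{e1}) or your $\alpha_l\alpha_{l'}$ argument). But your proof of (\ref{eq14}) has a genuine gap. The symmetries (\ref{e6}) and (\ref{e8}) only show that $|A_g(h,k)|$ is constant along orbits of the order-$6$ action; the conclusion that it ``depends only on how many of $h,k,h-k$ vanish'' does not follow. For the relevant group, $n=8$, each fixed $g$ has $(n-1)(n-2)=42$ generic pairs while orbits have size at most $6$, so there are many distinct orbits, and even invoking (\ref{e9}) and (\ref{e5}) to move $g$ does not obviously identify them all. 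The diagonal case of (\ref{e4}) then only gives the \emph{average} of $|A_g(h,k)|^2$ over $h$ in each column $k$, not each individual generic value, so your division by $(n-2)$ is unjustified. This gap propagates: your second-slot character argument needs $A_{-g}(h,g)\neq 0$ at generic arguments, and the degenerate columns alone ($k=0$ or $h=k$) only give that $\epsilon_l$ is even and $2G$-periodic, i.e.\ factors through $G/2G\cong\mathbb{Z}/2\mathbb{Z}\times\mathbb{Z}/2\mathbb{Z}$ with value $1$ at $0$ --- strictly weaker than being a character. So as written neither (\ref{eq14}) nor the bicharacter claim is established.

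The identity (\ref{eq15}), which is where essentially all the content lies, is only sketched, and the proposed route through (\ref{e10}) is doubtful: the $\tfrac{1}{d-1}$ coefficients in (\ref{eq15}) (versus the $\tfrac1d$ terms native to (\ref{e10}) and (\ref{e4})) indicate that it arises from expanding, in the Cuntz algebra, the Q-system isometry relations quoted from \cite{MR2418197} (the analogues of $\rho(S)R=SR$ and $\sqrt d R+(d-1)S^2=\sqrt d\rho(R)+(d-1)\rho(S)S$ for each $[id]\oplus[\alpha_g\rho]$), which is exactly the hypothesis of the lemma, rather than from a specialization of (\ref{e10}); in any case you have carried out neither computation. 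Note also that, as you observe, setting $l=0$ in (\ref{eq15}) and using (\ref{e14}) yields (\ref{eq14}) on the nose, so the natural logical order is the reverse of yours: prove (\ref{eq15}) first from the Q-system relations, then read off (\ref{eq14}) and the non-vanishing needed for the bicharacter step. Finally, be aware that the paper itself gives no proof of this lemma --- it is quoted from \cite{IzumiNote} --- so there is no in-text argument to compare against; judged on its own, your write-up proves the $\eta$ statement but leaves (\ref{eq14}), the bicharacter property, and (\ref{eq15}) open.
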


The structure of a solution to (\ref{e1})-(\ref{e10}) seems to depend in a fundamental way on $\epsilon $, which is determined up to gauge equivalence by its restriction to $G_2 \times G $. From the orbifold construction in the preceding subsection, we can determine that in order for $\mathcal{S}$ to be related to the Asaeda--Haagerup subfactor, the gauge equivalence class of $\epsilon$ 
on $G=\mathbb{Z}/4\mathbb{Z} \times \mathbb{Z}/2\mathbb{Z} $ for the $ \mathbb{Z}/2\mathbb{Z}$-equivariantization of $\mathcal{AH}_4 $ should satisfy 
$$\epsilon_{(0,1)}((a,b))=1,  \  \epsilon_{(2,0)}((a,b))=(-1)^b, \quad \forall (a,b) \in   \mathbb{Z}/4\mathbb{Z} \times \mathbb{Z}/2\mathbb{Z} .$$

We fix $\epsilon$ by setting  $\epsilon_{(1,0)} ((2,1) ) =\epsilon_{(1,0)}((3,1))=-1$ and
$\epsilon_{(1,0)} ((a,b) )=1$ otherwise. Together with $\epsilon_{(0,1)} ((a,b))=1$ and (\ref{e1}) this determines all the $\epsilon$.

Then we can try to solve for $A_g(h,k) $ using $(\ref{e3})-(\ref{e10}) $ and possibly $(\ref{eq14})-(\ref{eq15})$.  We know of no general technique for solving these equations, but nonetheless were able to find a solution in this particular case.
 
\begin{theorem}
There exists a solution to (\ref{e1})-(\ref{e10}) and (\ref{e14}) for the group 
$G=\mathbb{Z}/4\mathbb{Z} \times \mathbb{Z}/2\mathbb{Z} $
with $\epsilon_{(2,0)}((a,b))= (-1)^b$ and $\epsilon_{(0,1)} ((a,b))=1$ for all $(a,b) $.
\end{theorem}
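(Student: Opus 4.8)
The plan is to exhibit an explicit solution $(A,\epsilon,\eta)$ and then verify the equations, using the symmetry and normalization relations to shrink the search space first. Set $\eta\equiv 1$: by the lemma following Theorem~\ref{izthm}, this is forced as soon as one requires that $[\mathrm{id}]\oplus[\alpha_g\rho]$ admit a Q-system for every $g$, which is precisely the regime in which (\ref{e14}) holds for all $g$ and in which the extra relations (\ref{eq14})--(\ref{eq15}) become available. The function $\epsilon$ is already determined: the displayed choice $\epsilon_{(1,0)}((2,1))=\epsilon_{(1,0)}((3,1))=-1$ with $\epsilon_{(1,0)}((a,b))=1$ otherwise, $\epsilon_{(0,1)}\equiv 1$, and the cocycle identity (\ref{e1}) fix $\epsilon$ on all of $G\times G$; one checks directly that this $\epsilon$ satisfies (\ref{e1}), restricts to a bicharacter on $G_2\times G$, and has the prescribed values $\epsilon_{(2,0)}((a,b))=(-1)^b$. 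So the real work is the construction of $A$, with $n=|G|=8$ and $d=4+\sqrt{17}$.

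Next I would use the twisted-equivariance relation (\ref{e5}) to express $A_{g+2h}$ in terms of $A_g$ up to a product of $\epsilon$'s, so that it suffices to specify $A_g$ for one representative $g$ in each coset of $2G=\{(0,0),(2,0)\}$ (four cosets). The ``dihedral'' relations (\ref{e8})--(\ref{e9}), together with the Hermitian symmetry $A_g(h,k)=\overline{A_g(k,h)}$, then identify the values $A_g(h,k)$ inside orbits under permuting or negating $(h,k)$ and shifting $g$, cutting the genuinely independent entries to a short list. Their absolute values are forced by (\ref{e3})--(\ref{e4}) (equivalently by the closed form (\ref{eq14})), and (\ref{e14}) pins down the $A_g(h,0)$ exactly; after using the gauge freedom of rescaling the Cuntz isometries $T_g$ by phases (which acts on $A$) to normalize a handful of entries, one is left with a finite polynomial system for finitely many remaining phases, fed by the residual symmetry constraints and, above all, by the associativity relation (\ref{e10}).

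The main obstacle is solving (\ref{e10}): it is a cubic system --- a sum over $l\in G$ of triple products of $A$'s equal to a product of two $A$'s plus a $1/d$ correction --- and there is no general solution method. My plan is to (i) specialize (\ref{e10}) to the reduced set of representatives of the parameters $(g,x,y,p,q)$ surviving the symmetry reduction, (ii) either solve the resulting polynomial system in the remaining phases directly or, more realistically, first impose invariance under an order-two automorphism of $G$ that fixes $(2,0)$ and $(0,1)$, which collapses the system enough to attack by hand or with a short Gr\"obner basis computation over $\mathbb{Q}(\sqrt{17})$, and (iii) select one solution branch. As in the $|G|\le 5$ cases of \cite{IzumiNote}, one expects a consistent solution with entries in a small extension of $\mathbb{Q}(\sqrt{17})$. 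The final step is verification: substitute the candidate $A$ (together with the fixed $\epsilon$ and $\eta\equiv 1$) back into (\ref{e1})--(\ref{e10}) and (\ref{e14}) and check each identity. This is a finite, if tedious, computation over an explicit number field and is readily automated; it is this check that constitutes the actual proof, the preceding reductions serving only to make the search for $A$ feasible.
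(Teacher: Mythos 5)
Your strategy is essentially the one the authors themselves describe (in the remark following the theorem) for how the solution was \emph{found}: fix $\epsilon$ up to gauge, set $\eta\equiv 1$, use (\ref{e5})--(\ref{e9}) and the Hermitian symmetry to reduce $A$ to a small number of unknown entries, use (\ref{e3})--(\ref{e4}), (\ref{e14}) and (\ref{eq14})--(\ref{eq15}) to fix moduli and the $A_g(h,0)$, and then attack the residual system coming from (\ref{e10}). The final verification step you describe (substitute the candidate back into (\ref{e1})--(\ref{e10}) and (\ref{e14}) and check every identity by computer) is exactly what the paper's proof consists of.

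The gap is that your proposal never actually produces the candidate, and producing it is the entire content of the theorem. The statement is an existence claim for a highly overdetermined cubic system (some 40600 scalar equations, mostly instances of (\ref{e10})), and nothing in your reduction guarantees that the reduced system in the remaining phases is consistent: steps (ii)--(iii) of your plan say ``solve the resulting polynomial system \ldots and select one solution branch,'' with the expectation of a solution justified only by analogy with the $|G|\le 5$ cases of \cite{IzumiNote}. That analogy is not evidence; the same paper shows (Theorem \ref{no7}) that the structurally similar case (7) admits no realization at all, and the authors explicitly state that no general method for solving (\ref{e1})--(\ref{e10}) is known. Moreover, your auxiliary ansatz of imposing invariance under an order-two automorphism of $G$ could in principle be inconsistent with every solution, so even a failed Gr\"obner computation under that ansatz would prove nothing. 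What the paper supplies, and what your write-up lacks, is the explicit data: the constants $c,f,g,h$ in $\mathbb{Q}\bigl(\sqrt{17},i\bigr)$-type extensions, the $8\times 8$ matrix $A$ and the sign matrices $B_{(1,0)},B_{(0,1)},B_{(1,1)}$ defining $A_{(0,0)},A_{(1,0)},A_{(0,1)},A_{(1,1)}$ (with the rest recovered via (\ref{e5})), followed by the finite machine check. Until you exhibit such data and run the verification, you have a search plan, not a proof.
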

\begin{proof}
We explicitly construct a solution is as follows. We define constants

$$c=\frac{1}{4}(1-d+i\sqrt{10d-2}) ,  \quad f=\sqrt{\frac{1}{2}(d-1-i\sqrt{26d+2})}, $$ $$ g=\frac{1}{2}\sqrt{-3d-1+i\sqrt{50d+6} },   \quad h=\frac{1}{4}(d+3-i(\sqrt{2d-10})).$$

Define $G \times G$ matrices

$$A=\frac{1}{d-1}\left(
\begin{array}{cccccccc}
 d-2 & -1 & -1 & -1 & -1 & -1 & -1 & -1 \\
 -1 & -1 & c & c & -f & f & -g & -g \\
 -1 & \bar{c} & -1 & c & i \sqrt{d} & h & -i \sqrt{d} & \bar{h} \\
 -1 & \bar{c} & \bar{c} & -1 & -\bar{f} & -\bar{g} & \bar{g} & -\bar{f} \\
 -1 & -\bar{f} & -i \sqrt{d} & -f & -1 & -f & i \sqrt{d} & -\bar{f} \\
 -1 & \bar{f} & \bar{h} & -g & -\bar{f} & -1 & g & -\bar{h} \\
 -1 & -\bar{g} & i \sqrt{d} & g & -i \sqrt{d} & \bar{g} & -1 & -g \\
 -1 & -\bar{g} & h & -f & -f & -h & -\bar{g} & -1 \\
\end{array}
\right)$$

$$
B_{(1,0)}=\left(
\begin{array}{cccccccc}
 1 & 1 & 1 & 1 & 1 & 1 & 1 & 1 \\
 1 & 1 & 1 & 1 & 1 & 1 & -1 & -1 \\
 1 & 1 & 1 & 1 & 1 & -1 & 1 & -1 \\
 1 & 1 & 1 & 1 & -1 & 1 & 1 & -1 \\
 1 & 1 & 1 & -1 & 1 & 1 & 1 & -1 \\
 1 & 1 & -1 & 1 & 1 & 1 & 1 & -1 \\
 1 & -1 & 1 & 1 & 1 & 1 & 1 & -1 \\
 1 & -1 & -1 & -1 & -1 & -1 & -1 & 1 \\
\end{array}
\right)
$$

%
%
$$B_{(0,1)}\left(
\begin{array}{cccccccc}
 1 & 1 & 1 & 1 & 1 & 1 & 1 & 1 \\
 1 & 1 & 1 & -1 & 1 & 1 & 1 & -1 \\
 1 & 1 & -1 & -1 & 1 & 1 & -1 & -1 \\
 1 & -1 & -1 & -1 & 1 & -1 & -1 & -1 \\
 1 & 1 & 1 & 1 & 1 & 1 & 1 & 1 \\
 1 & 1 & 1 & -1 & 1 & 1 & 1 & -1 \\
 1 & 1 & -1 & -1 & 1 & 1 & -1 & -1 \\
 1 & -1 & -1 & -1 & 1 & -1 & -1 & -1 \\
\end{array}
\right)$$ $$B_{(1,1)}=\left(
\begin{array}{cccccccc}
 1 & 1 & 1 & 1 & 1 & 1 & 1 & 1 \\
 1 & 1 & -1 & 1 & 1 & 1 & 1 & -1 \\
 1 & -1 & -1 & 1 & 1 & 1 & -1 & -1 \\
 1 & 1 & 1 & -1 & -1 & 1 & 1 & 1 \\
 1 & 1 & 1 & -1 & 1 & 1 & 1 & -1 \\
 1 & 1 & 1 & 1 & 1 & 1 & -1 & -1 \\
 1 & 1 & -1 & 1 & 1 & -1 & -1 & -1 \\
 1 & -1 & -1 & 1 & -1 & -1 & -1 & -1 \\
\end{array}
\right)$$
%
%

We set $$A_{(0,0)}(h,k)=A(h,k), \quad A_{(1,0)}(h,k)=B_{(1,0)}(h,k) A(h,k), $$ $$\quad A_{(0,1)}(h,k)=B_{(0,1)}(h,k) A(h,k), \quad A_{(1,1)}(h,k)=B_{(1,1)}(h,k) A(h,k) $$
and use (\ref{e5}) to define the remaining $A_g(h,k)$.

We need only check that these numbers satisfy the finitely many polynomial equations (\ref{e1})-(\ref{e10}) and (\ref{e14}).  Although it is possible to check any one of these equations by hand, since there are 40600 total equations (mostly from Equation \ref{e10}) we checked them by computer. 
\end{proof}

\begin{remark}
The computer calculation in the above theorem has been checked twice independently.  Our original program used a mix of C and mathematica and ran in several hours.  A second program was written by Morrison, Penneys, and Peters in mathematica alone and is more easily human-readable but ran in a couple days.  We have included along with the arxiv source to this article a improved version of their code as CheckingSolution.nb which is much faster and runs in under 3 minutes on a  2.4GHz Intel Core i5.  We are grateful to Morrison, Penneys, and Peters for their help improving this code.
\end{remark}

\begin{remark}
The initial step in solving (\ref{e1})-(\ref{e10}) is determining the cocycle $\epsilon $ up to gauge equivalence. In our case we were able to determine what $\epsilon $ should be by assuming that the resulting category had an orbifold quotient which is Morita equivalent to the Asaeda-Haagerup categories. Then (\ref{e5})-(\ref{e9}) can be used to express the $A_g(h,k)$ in terms of a relatively small number of variables, which can be solved using (\ref{e4}) and (\ref{eq15}) for $g=0$. Although we were not assuming that $1+\alpha_g\rho $ admits a Q-system for all $g$, we were assuming that $1+\rho $ admits a $Q$-system, so (\ref{e14}) has to hold for $g=0$ and consequently so does (\ref{eq15}) for $g=0$. If one assumes that (\ref{e14}) holds for all $g$ (which indeed turns out to be the case), then the equations are easier to solve.
\end{remark}

We will call the fusion category corresponding to this solution $\mathcal{AH}^{\mathbb{Z}/2\mathbb{Z}}$ and the subfactor coming from $1+\rho$ we will call $\mathcal{S}$.

\subsection{De-equivariantization and the orbifold construction}
We can now construct $\mathcal{AH}_4 $ by de-equivariantizing $\mathcal{AH}^{\mathbb{Z}/2\mathbb{Z}} $; the de-equivariantization is implemented by another orbifold construction.

Let $G$ be a finite Abelian group and let $ A$ and $\epsilon$ be a solution to (\ref{e1})-(\ref{e10}).  Let $0\neq z \in G_2 $ be such that $\epsilon_g(\cdot) $ is a character and $ \epsilon_{z}(z)=1$.
Let $P=  M \rtimes_{\alpha_z} \mathbb{Z}/2\mathbb{Z}$ be the crossed product, which is the von Neumann algebra generated by $M$ and a unitary $\lambda $ satisfying $\lambda^2=1 $
and $\lambda x \lambda=\alpha_z(x) $ for all $x \in M $. 
We extend $\alpha_g $ and $\rho $ to $P$ by setting
   $\tilde{\alpha}_g(\lambda)=\epsilon_z(g) \lambda $ and $\tilde{\rho}(\lambda)=\lambda $.
   Then $\tilde{\alpha}$ is a $G$-action on $P $ and $\tilde{\alpha}_z=\text{Ad } \lambda $.
   
   \begin{theorem}\cite{IzumiNote}
   \begin{enumerate}
   \item $[\tilde{\rho}] $ is irreducible and self-dual. $[id] \oplus [\tilde{\rho}]  $ admits a Q-system
   if $[id] \oplus [\rho] $ does.
   \item $[\tilde{\alpha}_g]=[\tilde{\alpha}_h] $ iff $g-h \in \{ 0,z \} $.
   \item $$ [\tilde{\rho}^2]=[id]\oplus \bigoplus_{\dot{g} \in G / \{0,z\} }2 [\tilde{\alpha}_{\dot{g}} \tilde{\rho}] .$$
   \end{enumerate}
   \end{theorem}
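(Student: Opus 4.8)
The plan is to work throughout with the Fourier decomposition $P=M\oplus M\lambda$ and to reduce each of the three claims to a comparison of intertwiner spaces in $P$ with intertwiner spaces in $M$. The mechanism is uniform: for endomorphisms $\xi,\zeta$ built from $\tilde\alpha_g$ and $\tilde\rho$, an intertwiner $w\in(\xi,\zeta)_P$, written $w=w_0+w_1\lambda$ with $w_i\in M$, is pinned down by two constraints — the intertwining relation restricted to $M$, which forces $w_0$ into one $M$-intertwiner space and $w_1$ into an $\alpha_z$-twisted companion, and the relation evaluated at $\lambda$, which forces $w_0$ and $w_1$ to be eigenvectors of $\alpha_z$ for a prescribed sign. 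The $\mathcal{C}(M)$-level facts I would feed in are: $\rho$ is irreducible and self-dual with $[\rho^2]=[id]\oplus\bigoplus_{g\in G}[\alpha_g\rho]$, each summand of multiplicity one (from the Cuntz relations of Theorem \ref{izthm}); the $G$-action $\alpha$ is outer; $\alpha_z\circ\rho=\rho\circ\alpha_z$ since $2z=0$; and the eigenvalue identities $\alpha_z(S)=S$ and $\alpha_z(T_g)=\epsilon_z(g)T_{g+2z}=\epsilon_z(g)T_g$ from (\ref{e12}). The hypotheses on $z$ — that $g\mapsto\epsilon_z(g)$ is a character and $\epsilon_z(z)=1$ — enter precisely in matching these signs.

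I would treat (2) first. Since $\tilde\alpha$ is a $G$-action it is enough to show $[\tilde\alpha_k]=[id]$ iff $k\in\{0,z\}$. The inclusion $\supseteq$ is immediate: $\tilde\alpha_0=id$, and $\tilde\alpha_z=\text{Ad}(\lambda)$ because $\tilde\alpha_z(\lambda)=\epsilon_z(z)\lambda=\lambda$. For $\subseteq$, suppose $\tilde\alpha_k=\text{Ad}(u)$ with $u\in P$ a unitary. The dual action $\hat\gamma\in\text{Aut}(P)$ given by $\hat\gamma|_M=id$ and $\hat\gamma(\lambda)=-\lambda$ commutes with every $\tilde\alpha_g$, so $\text{Ad}(\hat\gamma(u))=\text{Ad}(u)$ and hence $\hat\gamma(u)=\pm u$. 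If $\hat\gamma(u)=u$ then $u\in P^{\hat\gamma}=M$, so $\alpha_k=\text{Ad}(u)|_M$ is inner and $k=0$ by outerness of $\alpha$; if $\hat\gamma(u)=-u$ then $u\in M\lambda$, say $u=u_1\lambda$ with $u_1$ unitary in $M$, so $\text{Ad}(u)|_M=\text{Ad}(u_1)\circ\alpha_z$, making $\alpha_{k-z}$ inner and forcing $k=z$.

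For (1), irreducibility drops out of computing $(\tilde\rho,\tilde\rho)_P$: the $M$-constraint puts $w_0\in(\rho,\rho)_M=\mathbb{C}$ and $w_1\in(\rho\alpha_z,\rho)_M$, and the latter is $0$ since $\dim(\rho\alpha_z,\rho)_M=\dim(\alpha_z,\rho^2)_M=\delta_{z,0}$ and $z\neq 0$. For self-duality I would verify that $S$ itself lies in $(id,\tilde\rho^2)_P$: the $M$-relation holds because $S\in(id,\rho^2)_M$, and the $\lambda$-relation $S\lambda=\lambda S$ is exactly $\alpha_z(S)=S$; then $(\overline{\tilde\rho},\tilde\rho)_P=(id,\tilde\rho^2)_P\neq 0$ and irreducibility gives $\overline{\tilde\rho}\cong\tilde\rho$. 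For the Q-system statement I would invoke the description recalled from \cite{MR2418197}: after fixing an isometry $R\in(id,\rho^2)$, Q-systems on $id\oplus\rho$ correspond to isometries $S'\in(\rho,\rho^2)$ satisfying the two relations of that citation. Choosing $R=S$ and noting that $(\rho,\rho^2)_M$ is one-dimensional, spanned by $T_0$ with $\alpha_z(T_0)=\epsilon_z(0)T_0=T_0$, both $R$ and $S'$ lie in $M^{\alpha_z}$; the Fourier bookkeeping then places $S\in(id,\tilde\rho^2)_P$ and $S'\in(\tilde\rho,\tilde\rho^2)_P$, and the two relations, being equalities between elements of $M$ with $\tilde\rho|_M=\rho$, persist in $P$. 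Thus $id\oplus\tilde\rho$ carries a Q-system whenever $id\oplus\rho$ does.

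Finally (3). First $d(\tilde\rho)=d$: one has $\tilde\rho(P)=\rho(M)\rtimes_{\alpha_z}\mathbb{Z}/2\mathbb{Z}\subseteq M\rtimes_{\alpha_z}\mathbb{Z}/2\mathbb{Z}=P$ (using $\alpha_z(\rho(M))=\rho(M)$), and crossing by a common finite group leaves the index unchanged, so $d(\tilde\rho)^2=[P:\tilde\rho(P)]=[M:\rho(M)]=d^2$. Next, for each $h\in G$ I would compute $\dim(\tilde\rho^2,\tilde\alpha_h\tilde\rho)_P$: the $M$-constraint puts $w_0\in(\rho^2,\alpha_h\rho)_M=\mathbb{C}\,T_h^*$ and $w_1\in(\rho^2,\alpha_{h+z}\rho)_M=\mathbb{C}\,T_{h+z}^*$, while the $\lambda$-constraint from $\tilde\alpha_h\tilde\rho(\lambda)=\epsilon_z(h)\lambda$ requires $w_0,w_1$ to be $\epsilon_z(h)$-eigenvectors of $\alpha_z$; since $\alpha_z(T_h^*)=\epsilon_z(h)T_h^*$ and $\alpha_z(T_{h+z}^*)=\epsilon_z(h+z)T_{h+z}^*=\epsilon_z(h)T_{h+z}^*$ (using that $\epsilon_z$ is a character with $\epsilon_z(z)=1$), both components survive and the dimension is $2$. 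A parallel computation gives $\dim(\tilde\alpha_k\tilde\rho,\tilde\rho)_P=1$ if $k\in\{0,z\}$ and $0$ otherwise, so $[\tilde\alpha_h\tilde\rho]$ depends only on the coset $\dot h\in G/\{0,z\}$ and distinct cosets give distinct irreducible sectors. Combined with $(id,\tilde\rho^2)_P=\mathbb{C}$ from (1), this exhibits $[id]\oplus\bigoplus_{\dot h}2[\tilde\alpha_{\dot h}\tilde\rho]$ inside $[\tilde\rho^2]$; comparing Frobenius--Perron dimensions, $1+\tfrac{|G|}{2}\cdot 2d=1+|G|d=d^2=d(\tilde\rho^2)$, where the middle equality uses that $d$ is the positive root of $x^2-|G|x-1$, so the containment is an equality. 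The step I expect to be the main obstacle is the systematic matching of the $\alpha_z$-eigenvalue constraints on Fourier components with the intrinsic signs $\epsilon_z(g)$ of the Cuntz isometries: this is exactly where the conditions $z\in G_2$, $\epsilon_z$ a character, and $\epsilon_z(z)=1$ are indispensable, and a sign error there would corrupt the multiplicities in (3); the Q-system transfer is a secondary subtlety, hinging on $(\rho,\rho^2)_M$ being one-dimensional with an $\alpha_z$-fixed generator.
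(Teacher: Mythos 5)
Your proof is correct. Note that the paper does not prove this theorem at all—it is quoted from \cite{IzumiNote}—so there is no in-paper argument to compare against; your Fourier-component analysis of intertwiners in $P=M\oplus M\lambda$ (with the two constraints: membership of $w_0,w_1$ in $M$-intertwiner spaces and the $\alpha_z$-eigenvalue condition from evaluating at $\lambda$), the dual-action/outerness argument for (2), the transfer of the $\alpha_z$-fixed isometries $S$ and $T_0$ for the Q-system in (1), and the exact multiplicity-two count plus the identity $d^2=|G|d+1$ for (3) constitute exactly the standard proof one expects in that reference, and all the sign bookkeeping ($\epsilon_z$ a character, $\epsilon_z(z)=1$, $\alpha_z(T_h)=\epsilon_z(h)T_h$) checks out. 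Two cosmetic points: the second Fourier component naturally lies in $(\alpha_z\circ\xi|_M,\zeta|_M)$, which agrees with the spaces you wrote only because $\alpha_z$ commutes with $\rho$ (so, e.g., $(\alpha_z\rho^2,\alpha_h\rho)=\mathbb{C}T_{h+z}^*$), and the fact that $[P:\tilde{\rho}(P)]=[M:\rho(M)]$ for these finite-group crossed products (here both inclusions irreducible) merits the one-line Pimsner--Popa basis justification rather than a bare assertion.
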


We can apply this construction to $\mathcal{AH}^{\mathbb{Z}/2\mathbb{Z} } $.

\begin{theorem} \label{ahcons}
There is a Type III factor $M$ with an outer action $\alpha$ of $\mathbb{Z}/4\mathbb{Z} $, an irreducible finite-index endomorphism $\rho $ and isometries $S \in (id, \rho^2 )$ and $T_{(a,b)} \in (\alpha_{a} \rho, \rho^2 ) ,\  (a,b)  \in \mathbb{Z}/4\mathbb{Z}  \times \mathbb{Z}/ 2\mathbb{Z}$ 
such that
\begin{enumerate}
\item $\alpha_g  \rho=   \rho \alpha_{-g}  $ and $[\rho]^2=[id]\oplus 2\sum_{g \in  \mathbb{Z}/4\mathbb{Z}  } \limits [\alpha_g \rho] $
\item  (\ref{e12})-(\ref{e13}) hold with $A$ and $\epsilon $ as in the previous subsection for all $\alpha_g, \ g \in \mathbb{Z}/4\mathbb{Z} \subset \mathbb{Z}/4\mathbb{Z} \times \mathbb{Z}/2\mathbb{Z} $.
\end{enumerate}
 \end{theorem}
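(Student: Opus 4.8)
The plan is to build the data $(M,\alpha,\rho,S,T_{(a,b)})$ by running the orbifold (de-equivariantization) construction of the previous subsection on the explicit solution $(A,\epsilon)$ of (\ref{e1})--(\ref{e10}) and (\ref{e14}) for $G=\mathbb{Z}/4\mathbb{Z}\times\mathbb{Z}/2\mathbb{Z}$, taking $z=(0,1)$ for the order-two element, and then to verify by a direct $\lambda$-bookkeeping that the relabelled intertwiners still obey the explicit relations (\ref{e12})--(\ref{e13}).

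First I would apply Theorem \ref{izthm} to $(A,\epsilon)$, obtaining a Type III factor $M_0$, an outer $G$-action $\alpha$, an irreducible finite-index endomorphism $\rho_0$ of $M_0$, and Cuntz isometries $S\in(id,\rho_0^2)$ and $T_g\in(\alpha_g\rho_0,\rho_0^2)$, $g\in G$, satisfying (\ref{e11})--(\ref{e13}); since (\ref{e14}) holds for every $g$, we also have $\eta\equiv 1$. For $z=(0,1)\in G_2$ our choice of $\epsilon$ makes $\epsilon_z$ the trivial character, with $\epsilon_z(z)=1$, so the de-equivariantization construction applies: form $P=M_0\rtimes_{\alpha_z}\mathbb{Z}/2\mathbb{Z}$ with self-adjoint unitary $\lambda$ --- a Type III factor, since $\alpha_z$ is outer --- and extend $\alpha_g,\rho_0$ to $\tilde\alpha_g$ (with $\tilde\alpha_g(\lambda)=\epsilon_z(g)\lambda=\lambda$) and $\tilde\rho$ (with $\tilde\rho(\lambda)=\lambda$). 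The de-equivariantization theorem of \cite{IzumiNote} stated above then gives that $\tilde\rho$ is irreducible and self-dual, that $[\tilde\alpha_g]=[\tilde\alpha_h]$ iff $g-h\in\{0,z\}$, and that $[\tilde\rho^2]=[id]\oplus\bigoplus_{\dot g\in G/\{0,z\}}2[\tilde\alpha_{\dot g}\tilde\rho]$.

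I would then set $M=P$, $\rho=\tilde\rho$, and $\alpha_a=\tilde\alpha_{(a,0)}$ for $a\in\mathbb{Z}/4\mathbb{Z}$ (representatives for $G/\{0,z\}\cong\mathbb{Z}/4\mathbb{Z}$); this is a genuine $\mathbb{Z}/4\mathbb{Z}$-action, and it is outer since $[\tilde\alpha_{(a,0)}]=[id]$ forces $(a,0)\in\{0,z\}$, i.e. $a=0$. Under $G/\{0,z\}\cong\mathbb{Z}/4\mathbb{Z}$ the fusion rule $[\rho]^2=[id]\oplus 2\sum_{g\in\mathbb{Z}/4\mathbb{Z}}[\alpha_g\rho]$ of part (1) is item (3) of that theorem; $\rho$ is irreducible and of finite index; and $\alpha_g\rho=\rho\alpha_{-g}$ follows from (\ref{e11}) on $M_0$ together with $\tilde\alpha_{(g,0)}(\lambda)=\lambda=\tilde\rho(\lambda)$. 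For the intertwiners I keep $S$ and $T_{(a,0)}$ and replace $T_{(a,1)}$ by $T_{(a,1)}\lambda$; using $\tilde\alpha_{(a,1)}=\mathrm{Ad}(\lambda)\circ\tilde\alpha_{(a,0)}$ one checks $T_{(a,1)}\lambda\in(\alpha_a\rho,\rho^2)$, and since right multiplication by $\lambda$ does not alter range projections the nine isometries $\{S\}\cup\{T_{(a,0)}\}\cup\{T_{(a,1)}\lambda\}$ still satisfy the Cuntz relations.

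The only remaining step, and the only computational one, is to verify (\ref{e12})--(\ref{e13}) for $g\in\mathbb{Z}/4\mathbb{Z}\subset G$ with the relabelled data. The key simplification is that $\lambda$ commutes with every $T_g$ and with $S$: indeed $\lambda T_g\lambda=\alpha_z(T_g)=\epsilon_z(g)T_{g+2z}=T_g$ because $\epsilon_z\equiv 1$ and $2z=0$, and likewise $\lambda S\lambda=\alpha_z(S)=S$. Granting this, (\ref{e12}) is immediate from $\tilde\alpha_{(g,0)}|_{M_0}=\alpha_{(g,0)}$, $\tilde\alpha_{(g,0)}(\lambda)=\lambda$ and the original (\ref{e12}). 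For (\ref{e13}) one computes $\tilde\rho(T_{(a,0)})=\rho_0(T_{(a,0)})$ and $\tilde\rho(T_{(a,1)}\lambda)=\rho_0(T_{(a,1)})\lambda$, substitutes the original (\ref{e13}) with $\eta\equiv 1$, rewrites each old $T_h$ appearing there as the relabelled $T_h$ times a power of $\lambda$, and collects those powers; they all cancel because $\lambda^2=1$ and $\lambda$ is central among the Cuntz generators, leaving precisely (\ref{e13}) with the same structure constants $A$ and $\epsilon$ restricted to the relevant indices. I expect the one delicate point to be tracking the $\lambda$-powers through the cubic term $\sum_{h,k}A_{-g}(h,k)\,T_{h-g}T_{h+k-g}T_{k-g}^{*}$; everything else is bookkeeping on top of Theorem \ref{izthm} and the de-equivariantization theorem.
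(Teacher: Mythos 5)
Your proposal is correct and follows essentially the same route as the paper: apply the de-equivariantization/orbifold construction with $z=(0,1)$, observe that $\alpha_z$ acts trivially on the Cuntz algebra because $\epsilon_z\equiv 1$ and $2z=0$, replace $T_{(a,1)}$ by $T_{(a,1)}\lambda$, and check that (\ref{e12})--(\ref{e13}) persist. The paper leaves the final verification as "easy to check," and your $\lambda$-bookkeeping (using $\lambda^2=1$ and that $\lambda$ commutes with the Cuntz generators) is exactly the intended check.
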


\begin{proof}
Apply the orbifold construction to $\mathcal{AH}^{\mathbb{Z}/2\mathbb{Z} } $
 with $z=(0,1)$. Note that 
$\alpha_{z} $ acts trivially on the original Cuntz algebra, since $z$ has order $2$ and $\epsilon_z $ is identically $1$. After replacing $T_{(a,1)} $ with $T_{(a,1)} \lambda$ for each $a \in \mathbb{Z}/4\mathbb{Z} $, it is easy to check that (\ref{e12})-(\ref{e13}) still hold.
\end{proof}

We will call the fusion category tensor generated  by $\rho$ in the above theorem $\mathcal{AH}_{4'}$, since we will show below that it is $\mathcal{AH}_4$.

\begin{remark}
By looking at the details of the proof of Theorem \ref{izthm}, one can give a more direct construction of $\mathcal{AH}_{4'}$.  The way that Theorem \ref{izthm} is proven is by using the structure constants $ A$, $ \epsilon$, and $\eta $ to define an endomorphism  $\rho $ and a $G$-action $\alpha $ of a Cuntz algebra and then completing this Cuntz algebra to a von Neumann algebra with respect to an appropriate state. Then all of the properties in Theorem \ref{izthm} are satisfied except possibly for the action of $G$ being outer.  One then needs to make an extra modification in order to enforce outerness.  In our case, this extra modification is exactly cancelled out by the de-equivariantization.  Since $\epsilon_z $ is trivial (which means that $\alpha_{(0,1)} $ acts trivially on the Cuntz algebra) the category that one gets on the von Neumann algebra completion of the Cuntz algebra without any modification is already $\mathcal{AH}_{4'} $. Thus it is not strictly necessary to go through the orbifold construction here.

On the other hand, without using the fact that $\mathcal{AH}_4 $ would have to come from a de-equivariantization of a generalized Haagerup category for $ \mathbb{Z}/4\mathbb{Z} \times \mathbb{Z}/2\mathbb{Z}$, we would have no way of knowing that $\mathcal{AH}_4$ comes from a solution to (\ref{e1})-(\ref{e10}).  Thus we found the two-step construction more natural.
\end{remark}

\subsection{Reconstruction of the Asaeda-Haagerup subfactor}
To show that the fusion category $\mathcal{AH}_{4'}$ constructed in the previous subsection satisfies the defining characterization of $\mathcal{AH}_4 $ (and therefore show that $\mathcal{AH}_{4-6}$ exist), it remains to show that  $\mathcal{AH}_{4'} $ is Morita equivalent to  $\mathcal{AH}_{1-3}$. We will show this by deducing that the dual category of $\mathcal{AH} _{4'}$ with respect to a Q-system for $[id] \oplus [\rho] $ is $\mathcal{AH}_2$. In the process, we will arrive at a new construction of the Asaeda-Haagerup subfactor. 

Let $M$ be a Type III factor with endomorphisms $\rho $ and $\alpha_g, \ g \in \mathbb{Z}/4\mathbb{Z} $ realizing $\mathcal{AH}_{4'}$, as in Theorem \ref{ahcons}.
Then since the sector $[id] \oplus [\rho] $ admits a Q-system, there is a Type III factor $N$ and a finite-index homomorphism $\iota: N \rightarrow M $
with a dual homomorphism  $\bar{\iota}: M \rightarrow N $ such that $[\iota \bar{\iota}]=[id]\oplus [\rho] $. The fusion ring of $\mathcal{AH}_{4'}$ is $AH_4 $. 

\begin{lemma} \label{nti}
 The fusion category $\mathcal{AH}_{2'}$ of endomorphisms of $N $ tensor generated by $\bar{\iota} \iota $ has a non-trivial invertible object $\alpha$ satisfying $\alpha_2 \iota = \iota \alpha$.
\end{lemma}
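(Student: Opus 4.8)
The plan is to realise the subfactor $N\subseteq M$ concretely inside $M$ through its Q-system and to show that the outer automorphism $\alpha_2$ of $M$ preserves it, so that the restriction $\alpha:=\alpha_2|_N$ is the sought invertible object. The key arithmetic fact is that $2=-2$ in $\mathbb{Z}/4\mathbb{Z}$, so by (\ref{e11}) we have $\alpha_2\rho=\rho\alpha_{-2}=\rho\alpha_2$; the first task is to upgrade this from a statement about $\rho$ to one about the Q-system $\gamma\cong id\oplus\rho$. Realise $\gamma$ via two isometries $v,w$ with $vv^*+ww^*=1$ and $\gamma(x)=vxv^*+w\rho(x)w^*$. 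Since $\alpha_2$ is outer of order two, its fixed-point algebra $M^{\alpha_2}$ is again a type III (hence properly infinite) factor, so we may choose $v,w\in M^{\alpha_2}$; then $\alpha_2(v)=v$, $\alpha_2(w)=w$, and together with $\alpha_2\rho=\rho\alpha_2$ this gives $\alpha_2\circ\gamma=\gamma\circ\alpha_2$ on the nose.

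Next I would check that $\alpha_2$ fixes the \emph{entire} Q-system structure on $\gamma$, not merely $\gamma$ as an endomorphism. The unit can be taken to be $v\in(id,\gamma)$, which is $\alpha_2$-fixed. The multiplication $m\in(\gamma^2,\gamma)$ is, in the constructions of \cite{IzumiNote,MR2418197}, assembled from $v,w,\rho(v),\rho(w)$, the isometry $S\in(id,\rho^2)$, and the isometry $T_{(0,0)}\in(\rho,\rho^2)$ (it is precisely the case $g=(0,0)$ of (\ref{e14}) that yields this Q-system). Each ingredient is $\alpha_2$-fixed: the space $(id,\rho^2)$ is one-dimensional with $\alpha_2(S)=S$ by (\ref{e12}); $\alpha_2(T_{(0,0)})=\epsilon_{(2,0)}((0,0))\,T_{(0,0)}=T_{(0,0)}$, again by (\ref{e12}), since $\epsilon_{(2,0)}((0,0))=1$; and $\alpha_2(\rho(v))=\rho(\alpha_2(v))=\rho(v)$, likewise for $w$. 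Hence $\alpha_2(m)=m$. Since the concrete subfactor $N_0\subseteq M$ attached to the Q-system is cut out by relations expressed entirely in terms of $M$, $\gamma$, its unit, and $m$, it follows that $\alpha_2(N_0)=N_0$. Identifying $N$ with $N_0$ and $\iota$ with the inclusion, $\alpha:=\alpha_2|_{N_0}$ is an automorphism of $N$, and $\iota\alpha=\alpha_2\iota$ holds by construction.

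It remains to locate $[\alpha]$ in $\mathcal{AH}_{2'}=\mathcal{C}_\sigma$ (where $\sigma=\bar\iota\iota$) and to see it is non-trivial. Since $\mathcal{AH}_{4'}$ is tensor generated by $\rho$ and every invertible object is a summand of $\rho^3$, we have $\alpha_2\prec\rho^3\prec\gamma^3$, whence $\iota\alpha=\alpha_2\iota\prec\gamma^3\iota=\iota\sigma^3$, and Frobenius reciprocity gives $\alpha\prec\bar\iota\iota\,\sigma^3=\sigma^4$; thus $[\alpha]\in\mathcal{C}_\sigma$, and it is invertible because $\alpha$ is an automorphism. If we had $[\alpha]=[id]$, i.e. $\alpha$ inner on $N$, then $\iota\alpha\cong\iota$, hence $\alpha_2\iota\cong\iota$, so by Frobenius reciprocity $0\neq(\alpha_2\iota,\iota)=(\alpha_2,\iota\bar\iota)=(\alpha_2,id\oplus\rho)$. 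But $\alpha_2\not\cong\rho$ (the dimensions $1$ and $4+\sqrt{17}$ differ) and $\alpha_2\not\cong id$ because the $\mathbb{Z}/4\mathbb{Z}$-action in Theorem \ref{ahcons} is outer, a contradiction. Hence $[\alpha]$ is a non-trivial invertible object of $\mathcal{AH}_{2'}$ with $\alpha_2\iota=\iota\alpha$.

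The step I expect to be the main obstacle is the second one: verifying that $\alpha_2$ fixes the Q-system multiplication \emph{exactly}, rather than merely sending it to another admissible multiplication on $\gamma$. The weaker statement would only yield $\iota\alpha=\text{Ad}(u)\circ\alpha_2\circ\iota$ for some unitary $u$, i.e. the conclusion up to an inner automorphism of $N$, which is not quite what the lemma asserts. What makes the sharp version go through is that every generator entering $m$ is literally $\alpha_2$-invariant --- the $\mathcal{O}_2$-isometries $v,w$, which we are free to pick in $M^{\alpha_2}$, and the intertwiners $S$ and $T_{(0,0)}$, whose invariance is forced by (\ref{e12}) together with $\epsilon_{(2,0)}((0,0))=1$ --- and this is special to the order-two element $\alpha_2$: for $\alpha_1$ one has only $\alpha_1\rho=\rho\alpha_3$, so $\alpha_1$ does not commute with $\rho$ and the argument breaks down.
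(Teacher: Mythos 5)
Your proof is correct, and it rests on the same computational kernel as the paper's proof --- namely that conjugation by $\alpha_2$ fixes $\rho$ (because $2=-2$ in $\mathbb{Z}/4\mathbb{Z}$, so (\ref{e11}) gives $\alpha_2\rho=\rho\alpha_2$) and that $\alpha_2$ fixes the Q-system data $S$ and $T_0$ by (\ref{e12}) --- but it extracts the invertible object by a different mechanism. The paper simply observes that $\iota$ and $\alpha_2\iota$ then determine equivalent Q-systems and invokes the result of \cite[\S3.3]{GSbp} that two distinct sectors with equivalent Q-systems lie in the same orbit under the invertible objects of the dual category; this produces $\alpha$ in one stroke, with non-triviality coming from $(\alpha_2\iota,\iota)=(\alpha_2,id\oplus\rho)=0$. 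You instead make the statement literal: by choosing the isometries $v,w$ realizing $\gamma\cong id\oplus\rho$ inside the fixed-point factor $M^{\alpha_2}$ you arrange that $\alpha_2$ commutes with $\gamma$ and fixes the unit and multiplication exactly, hence globally preserves the associated subfactor $N$, and you set $\alpha=\alpha_2|_N$; you then supply the two steps the abstract argument gives for free, namely that $[\alpha]$ actually lies in $\mathcal{AH}_{2'}$ (via $\alpha\prec\sigma^4$ by Frobenius reciprocity, since $\alpha_2\prec\rho^3\prec\gamma^3$) and that it is non-trivial (otherwise $\alpha_2\prec\iota\bar{\iota}=id\oplus\rho$, impossible by outerness and dimension). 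Your route is more self-contained --- it avoids the citation to \cite{GSbp} and yields $\alpha_2\iota=\iota\alpha$ on the nose rather than only at the sector level --- at the cost of the gauge bookkeeping (choosing $v,w$ in $M^{\alpha_2}$ and noting that the Q-system multiplication is assembled entirely from $\alpha_2$-fixed ingredients), a point the paper's shorter argument also uses implicitly when it appeals to $S$ and $T_0$.
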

\begin{proof}
 It suffices to show that $\alpha_2 \iota $ determines an equivalent $Q$-system to that of $\iota $, since the only way two different sectors can determine equivalent $Q$-systems is if they are in the same orbit under the action of the invertible objects of the dual category \cite[\S3.3]{GSbp}. Since conjugation by $\alpha_2 $ fixes $\rho $ and 
 $\alpha_2 $ fixes $S$ and $T_0$, this is indeed the case.
\end{proof}

\begin{lemma} \label{2graphs}
The principal graph pair of $ \iota$ is either 
$$\begin{tikzpicture}
\draw [fill] (0,0) circle [radius=.05];
\node [below] at (0,0) {$1$};
\draw [fill] (1,0) circle [radius=.05];
\node [below] at (1,0) {$\alpha$};
\draw [fill] (2,0) circle [radius=.05];
\node [below] at (2,0) {$\pi_1$};
\draw [fill] (3,0) circle [radius=.05];
\node [below] at (3,0) {$\pi_2$};
\draw [fill] (4,0) circle [radius=.05];
\node [below] at (4,0) {$\eta$};
\draw [fill] (5,0) circle [radius=.05];
\node [below] at (5,0) {$\sigma_1$};
\draw [fill] (6,0) circle [radius=.05];
\node [below] at (6,0) {$\sigma_2$};
\draw [fill] (7,0) circle [radius=.05];
\node [below] at (7,0) {$\sigma_3$};
\draw [fill] (8,0) circle [radius=.05];
\node [below] at (8,0) {$\sigma_4$};

\draw [fill] (1.5,2) circle [radius=.05];
\node [left] at (1.5,2) {$\iota$};
\draw [fill] (2.5,2) circle [radius=.05];
\node [left] at (2.5,2) {$\alpha_1 \iota$};
\draw [fill] (3.5,2) circle [radius=.05];
\node [left] at (3.5,2) {$\alpha_3 \iota$};
\draw [fill] (4.5,2) circle [radius=.05];
\node [left] at (4.5,2) {$\alpha_2 \iota$};
\draw [fill] (5.5,2) circle [radius=.05];
\node [left] at (5.5,2) {$\kappa$};
\draw [fill] (6.5,2) circle [radius=.05];
\node [left] at (6.5,2) {$ \alpha_1 \kappa$};

\draw [fill] (.5,4) circle [radius=.05];
\node [above] at (.5,4) {$1$};
\draw [fill] (1.5,4) circle [radius=.05];
\node [above] at (1.5,4) {$\alpha_1$};
\draw [fill] (2.5,4) circle [radius=.05];
\node [above] at (2.5,4) {$\alpha_3$};
\draw [fill] (3.5,4) circle [radius=.05];
\node [above] at (3.5,4) {$\alpha_2$};
\draw [fill] (4.5,4) circle [radius=.05];
\node [above] at (4.5,4) {$\rho$};
\draw [fill] (5.5,4) circle [radius=.05];
\node [above] at (5.5,4) {$\alpha_1 \rho$};
\draw [fill] (6.5,4) circle [radius=.05];
\node [above] at (6.5,4) {$\alpha_3 \rho$};
\draw [fill] (7.5,4) circle [radius=.05];
\node [above] at (7.5,4) {$\alpha_2 \rho$};

\path (0,0) edge (1.5,2);
\path (2,0) edge (1.5,2);
\path (2,0) edge (5.5,2);
\path (2,0) edge (6.5,2);
\path (3,0) edge (5.5,2);
\path (4,0) edge (5.5,2);
\path (5,0) edge (5.5,2);
\path (6,0) edge (5.5,2);
\path (3,0) edge (6.5,2);
\path (4,0) edge (6.5,2);
\path (7,0) edge (6.5,2);
\path (8,0) edge (6.5,2);
\path (4,0) edge (2.5,2);
\path (4,0) edge (3.5,2);
\path (1,0) edge (4.5,2);
\path (3,0) edge (4.5,2);

\path (1.5,2) edge (.5,4);
\path (2.5,2) edge (1.5,4);
\path (3.5,2) edge (2.5,4);
\path (4.5,2) edge (3.5,4);
\path (1.5,2) edge (4.5,4);
\path (2.5,2) edge (5.5,4);
\path (3.5,2) edge (6.5,4);
\path (4.5,2) edge (7.5,4);
\path (5.5,2) edge (4.5,4);
\path (5.5,2) edge (5.5,4);
\path (5.5,2) edge (6.5,4);
\path (5.5,2) edge (7.5,4);
\path (6.5,2) edge (4.5,4);
\path (6.5,2) edge (5.5,4);
\path (6.5,2) edge (6.5,4);
\path (6.5,2) edge (7.5,4);

\end{tikzpicture} $$ or $$ \begin{tikzpicture}
\draw [fill] (0,0) circle [radius=.05];
\node [below] at (0,0) {$1$};
\draw [fill] (1,0) circle [radius=.05];
\node [below] at (1,0) {$\alpha$};
\draw [fill] (2,0) circle [radius=.05];
\node [below] at (2,0) {$\pi_1$};
\draw [fill] (3,0) circle [radius=.05];
\node [below] at (3,0) {$\pi_2$};
\draw [fill] (4,0) circle [radius=.05];
\node [below] at (4,0) {$\eta$};
\draw [fill] (6.5,0) circle [radius=.05];
\node [below] at (6.5,0) {$\sigma$};

\draw [fill] (1.5,2) circle [radius=.05];
\node [left] at (1.5,2) {$\iota$};
\draw [fill] (2.5,2) circle [radius=.05];
\node [left] at (2.5,2) {$\alpha_1 \iota$};
\draw [fill] (3.5,2) circle [radius=.05];
\node [left] at (3.5,2) {$\alpha_3 \iota$};
\draw [fill] (4.5,2) circle [radius=.05];
\node [left] at (4.5,2) {$\alpha_2 \iota$};
\draw [fill] (5.5,2) circle [radius=.05];
\node [left] at (5.5,2) {$\kappa$};
\draw [fill] (6.5,2) circle [radius=.05];
\node [left] at (6.5,2) {$ \alpha_1 \kappa$};

\draw [fill] (.5,4) circle [radius=.05];
\node [above] at (.5,4) {$1$};
\draw [fill] (1.5,4) circle [radius=.05];
\node [above] at (1.5,4) {$\alpha_1$};
\draw [fill] (2.5,4) circle [radius=.05];
\node [above] at (2.5,4) {$\alpha_3$};
\draw [fill] (3.5,4) circle [radius=.05];
\node [above] at (3.5,4) {$\alpha_2$};
\draw [fill] (4.5,4) circle [radius=.05];
\node [above] at (4.5,4) {$\rho$};
\draw [fill] (5.5,4) circle [radius=.05];
\node [above] at (5.5,4) {$\alpha_1 \rho$};
\draw [fill] (6.5,4) circle [radius=.05];
\node [above] at (6.5,4) {$\alpha_3 \rho$};
\draw [fill] (7.5,4) circle [radius=.05];
\node [above] at (7.5,4) {$\alpha_2 \rho$};

\path (0,0) edge (1.5,2);
\path (2,0) edge (1.5,2);
\path (2,0) edge (5.5,2);
\path (2,0) edge (6.5,2);
\path (3,0) edge (5.5,2);
\path (4,0) edge (5.5,2);
\path (6.5,0) edge (5.5,2);
\path (3,0) edge (6.5,2);
\path (4,0) edge (6.5,2);
\path (6.5,0) edge (6.5,2);
\path (4,0) edge (2.5,2);
\path (4,0) edge (3.5,2);
\path (1,0) edge (4.5,2);
\path (3,0) edge (4.5,2);

\path (1.5,2) edge (.5,4);
\path (2.5,2) edge (1.5,4);
\path (3.5,2) edge (2.5,4);
\path (4.5,2) edge (3.5,4);
\path (1.5,2) edge (4.5,4);
\path (2.5,2) edge (5.5,4);
\path (3.5,2) edge (6.5,4);
\path (4.5,2) edge (7.5,4);
\path (5.5,2) edge (4.5,4);
\path (5.5,2) edge (5.5,4);
\path (5.5,2) edge (6.5,4);
\path (5.5,2) edge (7.5,4);
\path (6.5,2) edge (4.5,4);
\path (6.5,2) edge (5.5,4);
\path (6.5,2) edge (6.5,4);
\path (6.5,2) edge (7.5,4);
\end{tikzpicture}.$$

\end{lemma}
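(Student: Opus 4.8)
The plan is to determine the two halves of the principal graph pair separately. First I would pin down the upper half, i.e.\ the dual principal graph between the odd $N$-$M$ sectors generated by $\iota$ and the even $M$-$M$ sectors forming $\mathcal{AH}_{4'}$ (whose fusion ring is $AH_4$). This part is forced by $[\iota\bar\iota]=[\mathrm{id}]\oplus[\rho]$ together with the fusion rules of $AH_4$ and the symmetry $\alpha_g\rho=\rho\alpha_{-g}$: from $(\iota,\iota)=(\iota\bar\iota,\mathrm{id})=(\mathrm{id}\oplus\rho,\mathrm{id})=1$ the sector $[\iota]$ is irreducible; from $(\rho\iota,\iota)=(\rho,\mathrm{id}\oplus\rho)=1$ and $(\rho\iota,\rho\iota)=(\iota,\rho^2\iota)=1+2\sum_{g}(\mathrm{id}\oplus\rho,\alpha_g\rho)=3$ we get $[\rho\iota]=[\iota]\oplus[\kappa]\oplus[\kappa']$ for new irreducibles; applying the automorphisms $\alpha_g$ one checks $[\kappa']=[\alpha_1\kappa]$ and, continuing one depth further, that the graph closes up on the six odd objects $\iota,\alpha_1\iota,\alpha_2\iota,\alpha_3\iota,\kappa,\alpha_1\kappa$. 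The resulting graph is exactly the graph ``newgraph'' of Lemma~\ref{dualgraphlemma}, by the same computation carried out there; note it uses only $AH_4$ and $[\iota\bar\iota]=[\mathrm{id}]\oplus[\rho]$, and no knowledge of the principal even part $\mathcal{AH}_{2'}$.

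Second, for the lower half --- the fusion ring of $\mathcal{AH}_{2'}$ and its restriction--induction with the six odd objects --- I would run the combinatorial search of Section~\ref{sec:BP}. We cannot yet appeal to any classification of fusion categories Morita equivalent to $\mathcal{AH}_{4'}$, since establishing that $\mathcal{AH}_{4'}$ lies in the Asaeda--Haagerup Morita class is precisely the point of this section. Instead, for each odd object $\mu$ I would apply Lemma~\ref{duallem}: the matrix $M^{\mu}$ --- for $\mu=\iota$ one has $M^{\iota}_{ij}=(\iota\bar\iota,\mu_j\bar\mu_i)=(\mu_i,\mu_j)+(\rho\mu_i,\mu_j)$, the number of length-two paths between $\mu_i$ and $\mu_j$ in the graph of the first step --- is read off that graph, and any right fusion matrix $N^{\mu}$ of $\mu$ over $\mathcal{AH}_{2'}$ must satisfy $N^{\mu}(N^{\mu})^{t}=M^{\mu}$. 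Enumerating the non-negative integer factorizations, discarding any that lack a dimension-one basis element, disagree with the dimensions forced by the first step, or are incompatible with the factorizations obtained for the other $\mu$, and then reconstructing the admissible fusion bimodules exactly as in \cite{GSbp}, leaves a short list. Feeding in Lemma~\ref{nti} --- a nontrivial invertible $\alpha$ in $\mathcal{AH}_{2'}$ with $\alpha_2\iota=\iota\alpha$, so that right multiplication by $\alpha$ interchanges $[\iota]\leftrightarrow[\alpha_2\iota]$ and $[\alpha_1\iota]\leftrightarrow[\alpha_3\iota]$ --- cuts the list down to exactly the two displayed graph pairs.

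The main obstacle is the completeness of this combinatorial search, i.e.\ confirming that exactly two completions survive rather than more. A dimension count does not settle it: in both cases $\mathcal{AH}_{2'}$ has global dimension $\sum_{x}d(x)^2=136+32\sqrt{17}$, with simple objects of dimensions $1,1,d,d,1+d$ together with either four of dimension $\tfrac{3+\sqrt{17}}{2}$ or a single object of dimension $3+\sqrt{17}$, where $d=4+\sqrt{17}$ and $[M:N]=1+d$; so the finer fusion-module bookkeeping is genuinely needed. As with the other arguments of this section (cf.\ the remark after Lemma~\ref{dualgraphlemma} and the remark on the twenty-eight fusion modules over $AH_4$), this search can be run through the list of fusion modules over $AH_4$ by computer, or done by hand since the list is short; it is routine but somewhat lengthy, so we merely record the outcome.
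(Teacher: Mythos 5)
Your determination of the upper half is fine and is what the paper treats as immediate: irreducibility of $\iota$, $[\rho\iota]=[\iota]\oplus[\kappa]\oplus[\alpha_1\kappa]$, and closure on the six odd sectors $\iota,\alpha_g\iota,\kappa,\alpha_1\kappa$ all follow from $[\iota\bar{\iota}]=[id]\oplus[\rho]$ and the $AH_4$ fusion rules. The genuine gap is in the lower half, which is the actual content of the lemma: you reduce it to an enumeration of right fusion-module completions via Lemma~\ref{duallem} and the reconstruction procedure of \cite{GSbp}, assert that after imposing Lemma~\ref{nti} exactly the two pictured graph pairs survive, and then explicitly decline to carry out or verify that search, flagging its completeness as ``the main obstacle'' and merely recording the outcome. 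Since the lemma \emph{is} that outcome, nothing has been proved; in particular nothing in your write-up excludes other factorizations $N^{\mu}(N^{\mu})^{t}=M^{\mu}$, and the count is genuinely delicate --- the remark following this lemma in the paper notes that without Lemma~\ref{nti} two additional dual graphs do survive, so the conclusion cannot simply be taken on faith.

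The paper replaces the enumeration by a short direct argument that you could have given by hand, and some such decisive computation is what is missing from your proposal. With $\alpha$ as in Lemma~\ref{nti}, Frobenius reciprocity gives $(\bar{\iota}\alpha_g\iota,\bar{\iota}\alpha_h\iota)=(\alpha_g+\alpha_g\rho,\alpha_h+\rho\alpha_h)=\delta_{g,h}+\delta_{g,-h}$, which together with $\pi_1\alpha=\pi_2$ pins down the part of the lower graph attached to the $\alpha_g\iota$; the computation $(\iota\pi_1,\kappa)=(\iota\bar{\iota}\iota,\kappa)=1$ and its analogues show that $\pi_1$, $\pi_2$, $\eta$ are each joined to $\kappa$ and to $\alpha_1\kappa$ by a single edge; and a dimension count on $\rho\kappa$ (using $(\rho\kappa,\alpha_g\iota)=1$ for all $g$) gives $(\rho\kappa,\kappa)+(\rho\kappa,\alpha_1\kappa)=7$, whence $(\bar{\iota}\kappa,\bar{\iota}\kappa)+(\bar{\iota}\kappa,\bar{\iota}\alpha_1\kappa)=(1+\rho,\kappa\bar{\kappa})+(1+\rho,\alpha_1\kappa\bar{\kappa})=8$. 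Six of these eight paths already pass through $\pi_1,\pi_2,\eta$, so the residual contribution of $2$ forces either two new vertices attached once to $\kappa$ only (and symmetrically two attached to $\alpha_1\kappa$ only), or a single new vertex attached once to each of $\kappa$ and $\alpha_1\kappa$ --- precisely the two pictured graphs. Your strategy of running the \cite{GSbp}-style search over the fusion modules of $AH_4$ would presumably also succeed (it parallels how the authors originally found these results by computer), but as written the proposal defers exactly the step that constitutes the proof.
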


\begin{proof}
The principal graph (which is the upper half of these graphs) is easy to deduce. We discuss the calculation of the dual graphs. Let $\alpha $ be the nontrivial automorphism determined by Lemma \ref{nti}.  Hence, $\bar{\iota} \alpha_2 \iota=\bar{\iota}\iota \alpha$. For any $g,h \in \mathbb{Z}/4\mathbb{Z}  $
we have $$(\bar{\iota}\alpha_g \iota,\bar{\iota} \alpha_h \iota )=(\alpha_g \iota \bar{\iota},\iota \bar{\iota} \alpha_h)=(\alpha_g+\alpha_g \rho,\alpha_h + \rho \alpha_h)=\delta_{g,h}+\delta_{g,-h}.$$
This means that $\iota $ and $\alpha_2 \iota $ are each connected to two vertices with no overlap and $\alpha_1 \iota $ and $\alpha_3 \iota $ are each connected to the same single vertex.  It is clear that $1$ is connected to $\iota$ and $\alpha$ is connected to $\alpha_2 \iota$.  We label the other vertex touching $\iota$ by $\pi_1$ and the other vertex touching $\alpha_2 \iota$ by $\pi_2$ (in the end the dual will be $\mathcal{AH}_2$, so we have named our variables appropriately).  Note that, by the defining property of $\alpha$, we have that $\pi_1 \alpha = \pi_2$.

We also compute $$(\iota \pi_1,\kappa)=(\iota(1+\pi_1),\kappa)=(\iota\bar{\iota} \iota,\kappa)=1, $$ so $\pi_1 $ is connected to $\kappa $ by a single edge. In a similar way, we find that $\pi_1$, $\pi_2$, and $ \eta$ are each connected to each of $ \kappa $ and $\alpha_1 \kappa $ by a single edge. 

Next we note that we must have $ (\rho\kappa ,\kappa   )+(\rho\kappa,\alpha_1 \kappa )=7$, since $(\rho \kappa,\alpha_g \iota)=1 $ for each $g$ and $d(\rho)=4d(\iota) +7d(\kappa)$.
Then $$(\bar{\iota}\kappa,\bar{\iota} \kappa )+(\bar{\iota}\kappa,\bar{\iota} \alpha_1 \kappa )=(\iota\bar{\iota},\kappa \bar{\kappa} )+(\iota \bar{\iota}, \alpha_1 \kappa \bar{\kappa} )=(1+\rho,\kappa \bar{\kappa})+(1+\rho,\alpha_1 \kappa \bar{\kappa})=8.$$
Therefore the total number of paths from $\kappa $ to itself in the dual graph plus the total number of paths from $\kappa $ to $\alpha_1 \kappa $ must equal $8$, and the only two possibilities are pictured. 

\end{proof}

We refer to these two cases as the ``four-sigma case'' and ``one-sigma case'', respectively.  We would like to rule out the one sigma case.


\begin{remark}
It is not difficult to deduce the fusion rules for $\mathcal{AH}_{2'}$ in the one-sigma case, getting a fusion ring $R$.  There does exist a consistent $AH_4-R$ fusion bimodule giving Lemma \ref{2graphs}. Therefore it is impossible to rule out this possibility from the fusion rules for this bimodule and the two fusion rings alone.  As usual, we exploit the additional combinatorial structure coming from looking at multiple subfactors simultaneously.  In particular, we look at the compatibility between this bimodule and the bimodule coming from the algebra $1+\alpha_1 \rho$.
\end{remark}

\begin{remark}
If one only knows the fusion rules for $\mathcal{AH}_{4'}$ and the fusion module corresponding to $\iota$ but not Lemma \ref{nti}, then there are two additional possibilities for the dual graph beyond the above two.
\end{remark}

%
%
%


Since $[id] \oplus [\alpha_1 \rho]  $ has a Q-system, there is another module over $\mathcal{AH}_{4'}$ with a simple object $\lambda$ such that $\lambda \bar{\lambda} \cong id\oplus \alpha_1 \rho$.  In other words, there is another Type III factor $N_1$ and a finite-index homomorphism $\lambda: N \rightarrow M $ with dual homomorphism  $\bar{\lambda}: M \rightarrow N $ such that $\lambda \bar{\lambda} \cong id\oplus \alpha_1 \rho$.

\begin{lemma} \label{prev}
\begin{enumerate}
\item
The $d+1$-dimensional homomorphisms $ \bar{\iota}\alpha_1 \iota \in \text{End}_0(N) $ and $\bar{\lambda} \iota \in \text{Hom}_0(N,N_1) $ are irreducible. 
\item The $4d$-dimensional homomorphism $ \bar{\iota} \kappa \in \text{End}_0(N)$ and $ \bar{\lambda} \kappa \in \text{Hom}_0(N,N_1)$ satisfy either:
(a) $( \bar{\iota} \kappa, \bar{\iota} \kappa)=5$ and $( \bar{\lambda} \kappa, \bar{\lambda} \kappa)=4$; or
(b) $( \bar{\iota} \kappa, \bar{\iota} \kappa)=4$ and $( \bar{\lambda} \kappa, \bar{\lambda} \kappa)=5$.
\end{enumerate}
\end{lemma}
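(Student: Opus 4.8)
The strategy for both parts is the same: repeated Frobenius reciprocity, using the fusion rules of $\mathcal{AH}_{4'}$ (Grothendieck ring $AH_4$), the relations $\iota\bar{\iota}\cong id\oplus\rho$ and $\lambda\bar{\lambda}\cong id\oplus\alpha_1\rho$, and the principal graph of $\iota$ from Lemma~\ref{2graphs}. I record three facts that will be used constantly. First, since $\kappa$ and $\alpha_1\kappa$ are the only odd vertices outside the $\mathbb{Z}/4\mathbb{Z}$-orbit of $\iota$, the orbit of $\kappa$ is $\{\kappa,\alpha_1\kappa\}$, its stabiliser is $\{0,2\}\subseteq\mathbb{Z}/4\mathbb{Z}$, and hence $\alpha_2\kappa\cong\kappa$ and $\alpha_3\kappa\cong\alpha_1\kappa$. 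Second, $(\rho\iota,\alpha_g\iota)=(\rho,\alpha_g\iota\bar{\iota})=(\rho,\alpha_g)+(\rho,\alpha_g\rho)=\delta_{g,0}$, because in $AH_4$ one has $\alpha_g\rho\cong\rho$ only for $g=0$. Third, reading off the edges at the top vertex $\rho$ gives $\rho\iota\cong\iota\oplus\kappa\oplus\alpha_1\kappa$, whence $d(\iota)=\sqrt{1+d}=d(\lambda)$ and $d(\kappa)=\tfrac{d-1}{2}\sqrt{1+d}$; since $d=4+\sqrt{17}$ satisfies $d^2=8d+1$, this also gives $d(\bar{\iota}\alpha_1\iota)=d(\bar{\lambda}\iota)=1+d$ and $d(\bar{\iota}\kappa)=d(\bar{\lambda}\kappa)=\tfrac{(d-1)(1+d)}{2}=4d$, the dimensions asserted in the statement.

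For part (1) I would compute
$$(\bar{\iota}\alpha_1\iota,\bar{\iota}\alpha_1\iota)=(\alpha_1\iota,\iota\bar{\iota}\alpha_1\iota)=(\alpha_1\iota,\alpha_1\iota)+(\alpha_1\iota,\rho\alpha_1\iota);$$
using $\rho\alpha_1\cong\alpha_{-1}\rho$ and then moving the invertible across, the last term equals $(\alpha_2\iota,\rho\iota)=0$, so the total is $1$. Similarly, using $\alpha_1\rho\cong\rho\alpha_3$ and self-duality of $\rho$,
$$(\bar{\lambda}\iota,\bar{\lambda}\iota)=(\iota,\lambda\bar{\lambda}\iota)=(\iota,\iota)+(\iota,\alpha_1\rho\iota)=1+(\rho\iota,\alpha_3\iota)=1.$$
Thus $\bar{\iota}\alpha_1\iota$ and $\bar{\lambda}\iota$ have one-dimensional self-intertwiner spaces, i.e.\ are irreducible.

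For part (2) the same moves give $(\bar{\iota}\kappa,\bar{\iota}\kappa)=(\kappa,\iota\bar{\iota}\kappa)=(\kappa,\kappa)+(\kappa,\rho\kappa)=1+a$ and $(\bar{\lambda}\kappa,\bar{\lambda}\kappa)=(\kappa,\lambda\bar{\lambda}\kappa)=1+(\kappa,\alpha_1\rho\kappa)=1+b$, where $a:=(\rho\kappa,\kappa)$, $b:=(\rho\kappa,\alpha_1\kappa)$ and the second computation uses $\alpha_1\rho\kappa\cong\rho\alpha_3\kappa\cong\rho\alpha_1\kappa$. Next I decompose $\rho\kappa$ into odd vertices: for each $g$, $(\rho\kappa,\alpha_g\iota)=(\kappa,\alpha_{-g}\rho\iota)=(\kappa,\alpha_{-g}\kappa)+(\kappa,\alpha_{1-g}\kappa)=1$, since exactly one of $-g\in\{0,2\}$, $1-g\in\{0,2\}$ holds; hence $\rho\kappa\cong\bigoplus_{g\in\mathbb{Z}/4\mathbb{Z}}\alpha_g\iota\oplus a\kappa\oplus b\,\alpha_1\kappa$, and comparing dimensions (using $d^2=8d+1$ once more) forces $a+b=7$. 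It remains to confine $a$ to $\{3,4\}$. Since $\kappa$ occurs in some $\iota(\bar{\iota}\iota)^n$, $\bar{\iota}\kappa$ lies in $\mathcal{AH}_{2'}$, and for a simple object $v$ there its multiplicity $(\bar{\iota}\kappa,v)=(\kappa,\iota v)$ equals the number of edges joining $\kappa$ and $v$ in the principal graph of $\iota$. In either graph of Lemma~\ref{2graphs} every edge is simple, $\kappa$ meets at least four even vertices of $\mathcal{AH}_{2'}$, and $\kappa$ and $\alpha_1\kappa$ share at least the three even vertices $\pi_1,\pi_2,\eta$. The first fact gives $(\bar{\iota}\kappa,\bar{\iota}\kappa)\ge 4$, so $a\ge 3$; the second, together with the identity $(\bar{\iota}\kappa,\bar{\iota}\alpha_1\kappa)=(\kappa,\rho\alpha_1\kappa)=b$ (again using $(\kappa,\alpha_1\kappa)=0$), gives $b\ge 3$, so $a=7-b\le 4$. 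Therefore $\{(\bar{\iota}\kappa,\bar{\iota}\kappa),(\bar{\lambda}\kappa,\bar{\lambda}\kappa)\}=\{1+a,1+b\}=\{4,5\}$, which is exactly the alternative (a)/(b) of the statement.

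The computations are routine; the genuinely delicate points are at the outset — correctly reading the $\mathbb{Z}/4\mathbb{Z}$-action on $\{\kappa,\alpha_1\kappa\}$ and the decomposition $\rho\iota\cong\iota\oplus\kappa\oplus\alpha_1\kappa$ off the principal graph, since everything downstream depends on them — and the observation that the bound $3\le a\le 4$ holds uniformly for both candidate graphs, so that no case split into the four-sigma and one-sigma situations is needed (in fact the argument gives $a=4$ in the four-sigma case and $a=3$ in the one-sigma case).
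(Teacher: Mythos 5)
Your proof is correct and follows essentially the same route as the paper: Frobenius reciprocity reduces everything to $(\bar{\iota}\alpha_1\iota,\bar{\iota}\alpha_1\iota)=(\alpha_1+\alpha_1\rho,\alpha_1+\rho\alpha_1)=1$, $(\bar{\lambda}\iota,\bar{\lambda}\iota)=(1+\rho,1+\alpha_1\rho)=1$, $(\bar{\iota}\kappa,\bar{\iota}\kappa)=1+(\rho\kappa,\kappa)$ and $(\bar{\lambda}\kappa,\bar{\lambda}\kappa)=1+(\alpha_1\rho\kappa,\kappa)$, with the dichotomy then coming from the data of Lemma~\ref{2graphs}. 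You simply make explicit what the paper's terse proof leaves to that lemma — the relation $(\rho\kappa,\kappa)+(\rho\kappa,\alpha_1\kappa)=7$, the identification $\alpha_2\kappa\cong\kappa$ (hence $(\alpha_1\rho\kappa,\kappa)=(\rho\kappa,\alpha_1\kappa)$), and the graph-based bounds forcing each multiplicity into $\{3,4\}$ — which is a correct and welcome filling-in of details rather than a different argument.
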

\begin{proof}
By Frobenius reciprocity, $$( \bar{\iota} \alpha_1 \iota, \bar{\iota}\alpha_1 \iota)=
(\alpha_1 \iota \bar{\iota},\iota \bar{\iota} \alpha_1)
=(\alpha_1+\alpha_1\rho,\alpha_1+ \rho \alpha_1)=1$$ and $$(\bar{\lambda} \iota,\bar{\lambda} \iota)=
(\iota \bar{\iota}, \lambda \bar{\lambda})=(1+\rho, 1+ \alpha_1 \rho) =1.$$
 Similarly, $$( \bar{\iota} \kappa, \bar{\iota} \kappa)=( \iota \bar{\iota}, \kappa\bar{\kappa})=1+(\rho \kappa, \kappa)$$ and $$( \bar{\lambda} \kappa, \bar{\lambda} \kappa)=
(\lambda \bar{\lambda}, \kappa \bar{\kappa})=1+ (\alpha_1 \rho \kappa, \kappa).$$ 
\end{proof}

\begin{lemma}\label{killposs}
The one-sigma case of Lemma \ref{2graphs} does not occur.
\end{lemma}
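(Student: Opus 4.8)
The plan is to assume the one-sigma case and reach a contradiction by confronting the module category attached to the algebra $1+\rho$ (generated by $\iota$) with the one attached to $1+\alpha_1\rho$ (generated by $\lambda$), using Lemma \ref{prev} as the bridge.

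The first observation is that the one-sigma case already tells us which branch of Lemma \ref{prev} we are in. In the second (one-sigma) graph of Lemma \ref{2graphs} the vertex $\kappa$ is joined to exactly the four simple objects $\pi_1,\pi_2,\eta,\sigma$ of $\mathcal{AH}_{2'}$, so $\bar\iota\kappa=\pi_1+\pi_2+\eta+\sigma$ and $(\bar\iota\kappa,\bar\iota\kappa)=4$; by Lemma \ref{prev}(2) we must therefore be in case (b), with $(\bar\lambda\kappa,\bar\lambda\kappa)=5$, and there is no sub-case analysis to carry out. Next I would read off the full fusion ring of $\mathcal{AH}_{2'}$ in the one-sigma case: the lower half of that graph gives the action of $\bar\iota\iota$ on the $N$--$M$ bimodules, and feeding this together with the known fusion ring $AH_4$ of $\mathcal{AH}_{4'}$ into the compatibility constraints for a fusion bimodule over these two rings forces $\mathrm{Inv}(\mathcal{AH}_{2'})=\mathbb Z/2\mathbb Z$ (generated by the automorphism $\alpha$ of Lemma \ref{nti}), $\pi_2=\pi_1\alpha$, $d(\pi_i)=d$, $d(\eta)=1+d$ and $d(\sigma)=d-1$; call this fusion ring $R$. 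The identical chain of reasoning, with $\rho$ replaced by $\alpha_1\rho$ throughout (Lemma \ref{nti} applies verbatim, since $1+\alpha_1\rho$ is again fixed by conjugation by $\alpha_2$ and $\alpha_2$ fixes the isometries defining its Q-system), shows that the principal even part $\mathcal{AH}_{2'}^{(1)}$ of the subfactor coming from $1+\alpha_1\rho$ has dual graph of either four-sigma type (so $\mathcal{AH}_{2'}^{(1)}\cong\mathcal{AH}_2$) or one-sigma type (so $\mathcal{AH}_{2'}^{(1)}$ also has fusion ring $R$).

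The final step is to look at the composite. The $N$--$N_1$ bimodule category $\mathcal{L}$ generated by $\bar\lambda\iota$ is an invertible bimodule category between $\mathcal{AH}_{2'}$ and $\mathcal{AH}_{2'}^{(1)}$, realized as $\mathcal{M}\boxtimes_{\mathcal{AH}_{4'}}\mathcal{N}$ with $\mathcal{M}$ the category of $N$--$M$ bimodules and $\mathcal{N}$ the category of $M$--$N_1$ bimodules. By Lemma \ref{prev}(1), $\bar\lambda\iota$ is simple of dimension $1+d$, and short Frobenius-reciprocity computations (using $\rho\iota\cong\iota+\kappa+\alpha_1\kappa$ and $\alpha_2\kappa\cong\kappa$, both read from the common principal graph) give $\bar\lambda\iota\cong\bar\lambda\alpha_1\iota$, a second simple object $\bar\lambda\alpha_2\iota\cong\bar\lambda\alpha_3\iota$ of dimension $1+d$, the relations $(\bar\lambda\kappa,\bar\lambda\alpha_2\iota)=1$ and $(\bar\lambda\kappa,\bar\lambda\alpha_1\kappa)=3$, and $d(\bar\lambda\kappa)=4d$. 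Combining these with $(\bar\lambda\kappa,\bar\lambda\kappa)=5$ and the requirement that every simple object of $\mathcal{L}$ have Frobenius--Perron dimension of the form $\sqrt{i+jd}$ restricts the decompositions of $\bar\lambda\kappa$ and $\bar\lambda\alpha_1\kappa$ into simples of $\mathcal{L}$ to a short list. I would then check that none of these is consistent with the induced fusion-bimodule homomorphism $P\otimes_{AH_4}Q\to L$ (where $P,Q,L$ are the fusion bimodules of $\mathcal{M},\mathcal{N},\mathcal{L}$) being dimension- and dual-preserving with non-negative integer coefficients, for either possible fusion ring of $\mathcal{AH}_{2'}^{(1)}$; this contradiction rules out the one-sigma case.

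The main obstacle is exactly this last compatibility check: as the remark before Lemma \ref{prev} stresses, there does exist a consistent $AH_4$--$R$ fusion bimodule realizing the one-sigma graph, so the inconsistency only surfaces once one carries the combined data of $\mathcal{AH}_{2'}$, $\mathcal{AH}_{2'}^{(1)}$ and $\mathcal{L}$ (and both candidates for $\mathcal{AH}_{2'}^{(1)}$) far enough. Like the other combinatorial arguments in the paper, this is a finite verification, which can also be run by computer as a check.
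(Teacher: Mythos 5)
Your setup coincides with the paper's: in the one-sigma case the graph forces $(\bar\iota\kappa,\bar\iota\kappa)=4$, hence by Lemma \ref{prev} $(\bar\lambda\kappa,\bar\lambda\kappa)=5$, and one then studies the invertible bimodule category between $\mathcal{AH}_{2'}$ and the even part coming from the Q-system on $1+\alpha_1\rho$, i.e.\ the category generated by $\bar\lambda\iota$; your intermediate Frobenius reciprocity computations (e.g.\ $\bar\lambda\iota\cong\bar\lambda\alpha_1\iota$ simple of dimension $1+d$, $(\bar\lambda\kappa,\bar\lambda\alpha_2\iota)=1$, $(\bar\lambda\kappa,\bar\lambda\alpha_1\kappa)=(\rho\kappa,\kappa)=3$, $d(\bar\lambda\kappa)=4d$) are correct. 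The gap is at the decisive step: you assert that every decomposition of $\bar\lambda\kappa$ on your ``short list'' fails some compatibility of the induced fusion-bimodule homomorphism, but you neither produce the list nor identify which constraint fails, deferring instead to an unperformed finite/computer verification. Since (as the paper's remark before Lemma \ref{prev} already warns, and as you acknowledge) there \emph{is} a consistent $AH_4$--$R$ fusion bimodule realizing the one-sigma graph, the entire content of the lemma is to exhibit the specific further obstruction; your proposal stops exactly there, and it is not even clear that the search you describe (which would also require determining the possible fusion rings of $\mathcal{AH}_{2'}^{(1)}$ and all candidate bimodules over them) must terminate in a contradiction as stated.

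The paper closes this gap with a concrete arithmetic argument that your proposal is missing. Since $d(\bar\lambda\iota)=1+d\in\mathbb{Q}(\sqrt{17})$, Perron--Frobenius theory over the field $\mathbb{Q}(\sqrt{17})$ forces every simple object of the bimodule category to have dimension in $\mathbb{Q}(\sqrt{17})$. The decomposition of $\bar\lambda\kappa$ (two summands of dimension $1+d$ plus three further simples of total dimension $2d-2$) then yields a simple object of dimension at most $\frac{2}{3}(3+\sqrt{17})$, whose internal endomorphisms give a nontrivial division algebra in $\mathcal{AH}_{2'}$ whose dimension is a \emph{square} in $\mathbb{Q}(\sqrt{17})$ and is less than $\frac{4}{9}(3+\sqrt{17})^2$. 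Enumerating the admissible algebra dimensions built from simples of dimensions $(1,1,d-1,d,d,d+1)$ below that bound gives $2,d,d+1,d+2,d+3,2d-1,2d,2d+1,2d+2,2d+3,2d+4$, and a norm computation in $\mathbb{Q}(\sqrt{17})$ shows none of these is a square --- the contradiction. Some explicit obstruction of this kind is what your argument needs in place of the deferred consistency check.
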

\begin{proof}
Since the objects in $\mathcal{AH}_{2'}$ would have dimensions $(1,1,d-1,d,d,d+1 )$, there is no way to combine them to get an object with dimension $4d$ and with $5$-dimensional endomorphism space.  Hence, by Lemma \ref{prev}, we must have that $\bar{\iota} \kappa$ satisfies $(\bar{\iota} \kappa, \bar{\iota} \kappa) = 4$ and thus $(\bar{\lambda} \kappa, \bar{\lambda} \kappa) = 5$.  We will examine the bimodule category corresponding to $\bar{\lambda} \iota$ more closely.

First note that $d(\bar{\lambda} \iota) = 5+\sqrt{17}$ which lies in the field $\mathbb{Q}(\sqrt{17})$.  Since eigentheory works over any field, it follows that all the other odd vertices must also have dimension lying in $\mathbb{Q}(\sqrt{17})$.  We now look more closely at $\bar{\lambda} \kappa$.  Frobenius reciprocity shows that $(\bar{\lambda} \kappa, \bar{\iota} \kappa) = 1$ and $(\bar{\lambda} \kappa, \bar{\iota} \alpha \kappa) = 1$, thus $\bar{\lambda} \kappa$ breaks up as  $[\bar{\iota} \kappa]$ plus $[\bar{\iota} \alpha \kappa]$ plus three other simple objects.  These three objects have total dimension $(3+\sqrt{17})$ so at least one of them has dimension below $\frac{2}{3}(3+\sqrt{17})$.  Thus, there's a nontrivial division algebra in $\mathcal{AH}_{2'}$ which has dimension smaller than $\frac{4}{9}(3+\sqrt{17})^2$ and whose dimension is a square in $\mathbb{Q}(\sqrt{17})$.  In any division algebra the trivial appears with multiplicity one and invertible objects appear with multiplicity at most one.  Thus, from the dimensions of the simple objects in $\mathcal{AH}_{2'}$ the list of possible dimensions of algebra objects in $\mathcal{AH}_{2'}$ below $\frac{4}{9}(3+\sqrt{17})^2$ is:
$$1, 2, d, d+1, d+2, d+3, 2d-1, 2d,2d+1,2d+2,2d+3,2d+4.$$
Clearly $2$ is not a square in $\mathbb{Q}(\sqrt{17})$, and none of the other numbers are squares either since their norms are the nonsquare numbers $-1$, $8$, $19$, $32$, $-19$, $-4$, $13$, $32$, $53$,  and $76$, respectively.  Thus, the one-sigma case cannot occur.
\end{proof}

Now that we know that we're in the four-sigma case, we want to work out the fusion rules for the $\sigma_i$ to show that $\mathcal{AH}_{2'}$ must have a $Q$-system giving the Asaeda--Haagerup subfactor.

\begin{lemma}
We have $\alpha \pi_1 = \pi_1 \alpha = \pi_2$.
\end{lemma}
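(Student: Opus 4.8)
The plan is to derive both equalities from the single relation $\iota\alpha=\alpha_2\iota$ of Lemma~\ref{nti} together with two decompositions in $\mathcal{AH}_{2'}$ that the principal graph of $\iota$ already records: $[\bar\iota\iota]=[1]+[\pi_1]$ (the canonical endomorphism of the index $5+\sqrt{17}$ subfactor $\iota\colon N\to M$, whose unique nontrivial simple summand $\pi_1$ is non-invertible, of dimension $4+\sqrt{17}$) and $[\bar\iota\alpha_2\iota]=[\alpha]+[\pi_2]$. The second of these is essentially established in the proof of Lemma~\ref{2graphs}: Frobenius reciprocity gives $(\bar\iota\alpha_2\iota,\bar\iota\alpha_2\iota)=(\alpha_2\iota\bar\iota,\iota\bar\iota\alpha_2)=2$, while $(\bar\iota\alpha_2\iota,\alpha)=(\alpha_2\iota,\iota\alpha)=(\alpha_2\iota,\alpha_2\iota)=1$ and $(\bar\iota\alpha_2\iota,1)=(\alpha_2\iota,\iota)=0$, so the two simple summands of $\bar\iota\alpha_2\iota$ are $\alpha$ and exactly one further object, which has been named $\pi_2$.

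The equality $\pi_1\alpha=\pi_2$ is then immediate: compute $[\bar\iota\iota\,\alpha]$ two ways, as $[\bar\iota\iota][\alpha]=[\alpha]+[\pi_1\alpha]$ and as $[\bar\iota(\iota\alpha)]=[\bar\iota\alpha_2\iota]=[\alpha]+[\pi_2]$. Since $\alpha$ is invertible, every simple appears with multiplicity one in both expressions, so cancelling $[\alpha]$ gives $[\pi_1\alpha]=[\pi_2]$.

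For $\alpha\pi_1=\pi_2$ I would first observe that $\alpha$ is self-dual. Applying $\iota\alpha=\alpha_2\iota$ twice and using $\alpha_2^2=\mathrm{id}$ in $\mathbb{Z}/4\mathbb{Z}$ gives $\iota\alpha^2=\iota$, hence $1\prec\bar\iota\iota\,\alpha^2=\alpha^2\oplus\pi_1\alpha^2$; as $\pi_1$ (so also $\pi_1\alpha^2$) is non-invertible, this forces $\alpha^2=1$, so $\bar\alpha=\alpha$. Since $\bar\alpha_2=\alpha_2$ as well, taking conjugates in $\iota\alpha=\alpha_2\iota$ yields $\alpha\bar\iota=\bar\iota\alpha_2$. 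Now computing $[\alpha\,\bar\iota\iota]$ two ways, as $[\alpha][\bar\iota\iota]=[\alpha]+[\alpha\pi_1]$ and as $[(\alpha\bar\iota)\iota]=[\bar\iota\alpha_2\iota]=[\alpha]+[\pi_2]$, and again cancelling $[\alpha]$, gives $[\alpha\pi_1]=[\pi_2]$, completing the proof.

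There is no serious obstacle here; the only step requiring care is the self-duality of $\alpha$, which is what converts the right-handed relation $\iota\alpha=\alpha_2\iota$ into the left-handed relation $\alpha\bar\iota=\bar\iota\alpha_2$, and which in turn relies on $\pi_1$ being non-invertible. Equivalently one can avoid $\bar\alpha$ altogether: $\bar\iota\iota$ and $\bar\iota\alpha_2\iota$ are self-dual, so $\pi_1$ and $\pi_2$ are self-dual, and then $\overline{\pi_1\alpha}=\bar\alpha\,\pi_1=\alpha\pi_1$ (once $\bar\alpha=\alpha$) while $\overline{\pi_2}=\pi_2$, so $\pi_1\alpha=\pi_2$ immediately yields $\alpha\pi_1=\pi_2$.
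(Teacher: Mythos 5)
Your proof is correct, and its core is the same identity the paper uses: $\bar\iota\iota\,\alpha=\bar\iota(\iota\alpha)=\bar\iota\alpha_2\iota$ from Lemma \ref{nti}, compared against the decompositions $[\bar\iota\iota]=[1]+[\pi_1]$ and $[\bar\iota\alpha_2\iota]=[\alpha]+[\pi_2]$, which immediately gives $\pi_1\alpha=\pi_2$. Where you diverge is in upgrading this to the left-handed statement $\alpha\pi_1=\pi_2$. The paper gets there by first observing that $\pi_1$ and $\pi_2$ are self-dual, which it reads off the principal graph pair of Lemma \ref{2graphs} by counting length-two paths from $\iota$ to itself through the upper and lower graphs (and $\alpha$ is self-dual since it is the only nontrivial invertible), after which $\alpha$ commutes with $\pi_1,\pi_2$. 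You instead prove $\alpha^2=1$ directly from $\iota\alpha=\alpha_2\iota$ and $\alpha_2^2=\mathrm{id}$ (using that $\pi_1$ is non-invertible to exclude $\alpha^2\prec\pi_1$), then conjugate the intertwining relation to $\alpha\bar\iota=\bar\iota\alpha_2$ and repeat the computation on the left; your alternative, deducing self-duality of $\pi_1,\pi_2$ from self-duality of the canonical endomorphisms $\bar\iota\iota$ and $\bar\iota\alpha_2\iota$, is equally valid. Your route is slightly more self-contained, in that it does not appeal to the explicit graph pair (beyond the decomposition defining $\pi_2$), while the paper's path-counting argument is shorter given that the four-sigma graph pair has already been pinned down at that point. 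Both arguments are sound.
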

\begin{proof}
Counting paths from $\iota$ to itself through the upper and lower graphs, we see that $\pi_1$ and $\pi_2$ are both selfdual.  Since $\alpha$ is also self-dual, this means that $\alpha$ commutes with $\pi_1$ and $\pi_2$.  Recall that $1+\pi_1 = \bar{\iota} \iota$, so $\alpha + \pi_1 \alpha = \bar{\iota} \iota \alpha = \bar{\iota} \alpha_2 \iota = \alpha + \pi_2$.  Thus $\pi_1 \alpha = \pi_2$.
\end{proof}

In keeping with the suggestive notation from $AH_2$, we will let $\pi = \pi_2$ and now denote $\pi_1$ by $\alpha \pi$.

\begin{lemma}
The category $ \mathcal{AH}_{2'}$ has a self-dual simple object $\sigma$ with dimension $d(\sigma)=\displaystyle \frac{d-1}{2} $ such that $\sigma^2=1+\sigma+ \pi $.
\end{lemma}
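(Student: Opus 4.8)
The plan is to compute just enough of the fusion ring of $\mathcal{AH}_{2'}$ to locate $\sigma$ and verify the stated identity. By Lemma~\ref{killposs} we are in the four-sigma case, so $\mathcal{AH}_{2'}$ has exactly the nine simple objects $1,\alpha,\pi,\alpha\pi,\eta,\sigma_1,\sigma_2,\sigma_3,\sigma_4$ of the first graph pair of Lemma~\ref{2graphs}; we already have $\bar\iota\iota=1+\alpha\pi$, $\bar\iota\alpha_2\iota=\bar\iota\iota\alpha=\alpha+\pi$, and (by Frobenius reciprocity exactly as in the computations around Lemma~\ref{2graphs}) $\bar\iota\alpha_1\iota=\bar\iota\alpha_3\iota=\eta$ with $\eta$ irreducible, $\bar\iota\kappa=\pi+\alpha\pi+\eta+\sigma_1+\sigma_2$ and $\bar\iota\alpha_1\kappa=\pi+\alpha\pi+\eta+\sigma_3+\sigma_4$. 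First I would pin down all dimensions from the Perron--Frobenius eigenvector of the dual graph: since $[M:N]=d(\iota\bar\iota)=1+d$, the odd vertices $\alpha_g\iota$ have dimension $\sqrt{1+d}$ and $\kappa,\alpha_1\kappa$ have dimension $\tfrac{(d-1)\sqrt{1+d}}{2}$, whence $d(\pi)=d(\alpha\pi)=d$, $d(\eta)=d+1$ and $d(\sigma_i)=\tfrac{d-1}{2}$ for all $i$. In particular the object $\sigma$ we are after can only be one of the $\sigma_i$, and $d(\sigma_i)^2=\tfrac{3d+1}{2}=d(1+\sigma_i+\pi)$, so the proposed relation is at least numerically possible.

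Next I would settle the duality structure and the elementary products. The objects $\pi,\alpha\pi,\eta$ are self-dual ($\pi,\alpha\pi$ by the length-two path count on the principal/dual graph pair used in the following lemma; $\bar\eta=\overline{\bar\iota\alpha_1\iota}=\bar\iota\alpha_3\iota=\eta$); $\eta$ is the unique simple of dimension $d+1$, so it is fixed by tensoring with $\alpha$, and $\alpha$ interchanges $\pi$ and $\alpha\pi$. By Frobenius reciprocity $(\iota\sigma_i,\kappa)=(\sigma_i,\bar\iota\kappa)=1$ while $d(\iota\sigma_i)=d(\kappa)$, so $\iota\sigma_1\cong\iota\sigma_2\cong\kappa$ and $\iota\sigma_3\cong\iota\sigma_4\cong\alpha_1\kappa$ are simple; applying $\bar\iota$ to $\iota\sigma_1=\kappa$ and using $\bar\iota\iota=1+\alpha\pi$ gives $\alpha\pi\cdot\sigma_1=\pi+\alpha\pi+\eta+\sigma_2$, and likewise for the other $\sigma_i$. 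Finally, using the standard compatibility of the principal and dual graphs of $\iota$ (equality of the number of length-two paths $\kappa\to\kappa$ through the upper and the lower graph, as in Lemma~\ref{graphpair}), together with the self-duality of $\pi,\alpha\pi,\eta$ and of all non-invertible objects of $\mathcal{AH}_{4'}\cong\mathcal{AH}_4$, one finds that exactly two of the $\sigma_i$ are self-dual while the remaining two are interchanged by duality. Let $\sigma$ be a self-dual one with $\iota\sigma=\kappa$; then $\sigma^2$ contains $1$ with multiplicity one.

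It remains to compute $\sigma^2$. By the dimension list, the only way to write $d(\sigma)^2-1=\tfrac{3d-1}{2}$ as a sum of dimensions of simple objects of $\mathcal{AH}_{2'}$ not involving $1$ is $d+\tfrac{d-1}{2}$, so necessarily $\sigma^2=1+\sigma_j+\rho'$ with $\sigma_j$ one of the $\sigma_i$ and $\rho'\in\{\pi,\alpha\pi\}$; and since $\sigma^2$ is self-dual, $\sigma_j$ must be self-dual. The remaining content --- that $\sigma_j=\sigma$ and $\rho'=\pi$ --- I would extract by assembling Frobenius-reciprocity identities: from $\iota\sigma=\kappa$ one has $\bar\kappa\kappa=\bar\sigma(\bar\iota\iota)\sigma=\sigma^2+\sigma\cdot(\alpha\pi)\cdot\sigma$, while independently $(\bar\kappa\kappa,x)=(\kappa,\kappa x)$ for $x\in\mathcal{AH}_{2'}$ and $\kappa\bar\iota=\sum_{g\in\mathbb{Z}/4\mathbb{Z}}\alpha_g\rho$ (read off the principal graph of $\iota$); feeding in the products $\alpha\pi\cdot\sigma_i$ already computed and matching the two expressions for $\bar\kappa\kappa$ forces $\sigma^2=1+\sigma+\pi$. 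I expect this last step to be the main obstacle: the dimension count instantly reduces $\sigma^2$ to the shape $1+\sigma_j+\rho'$, but pinning down which self-dual $\sigma_i$ and which of $\pi,\alpha\pi$ actually occur requires threading together several reciprocity computations --- in effect reconstructing a sizeable chunk of the fusion ring of $\mathcal{AH}_{2'}$ --- so the careful bookkeeping, rather than any single clean identity, is where the real work lies. (Alternatively one can read $\sigma^2$ off directly from the explicit structure constants of the solution constructed in Section~\ref{sec:main}, at the cost of a finite computation.)
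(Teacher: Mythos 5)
Your setup (dimensions from the Perron--Frobenius data, the duality pattern on the $\sigma_i$, the product $\alpha\pi\cdot\sigma_1=\pi+\alpha\pi+\eta+\sigma_2$ obtained by applying $\bar\iota$ to $\iota\sigma_1=\kappa$) matches the paper, but the two steps you treat as routine are exactly where the paper has to work, and your sketch has genuine gaps at both. First, the claim that the dimension count ``instantly'' gives $\sigma^2=1+\sigma_j+\rho'$ with $\rho'\in\{\pi,\alpha\pi\}$ is not justified: $\alpha$ also has dimension $1$, and $1+1+3\cdot\frac{d-1}{2}=\frac{3d+1}{2}$, so the decomposition $\sigma^2=1+\alpha+\sigma_a+\sigma_b+\sigma_c$ is dimensionally admissible. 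Excluding it is equivalent to showing $\alpha\sigma\not\cong\sigma$, i.e.\ that left multiplication by $\alpha$ interchanges $\sigma_1$ and $\sigma_2$ rather than fixing them (note $(\alpha,\sigma^2)=(\alpha\sigma,\sigma)$, and likewise $(\pi,\sigma^2)=(\sigma_1,\alpha\sigma_2)$, so whether $\pi$ or $\alpha$ appears in $\sigma^2$ is the same question). The paper settles this with a separate argument: assuming $\alpha\sigma_2=\sigma_2$, the relation $(\pi,\alpha\sigma_2\sigma_1)=1$ forces $\pi\prec\sigma_2\sigma_1$, and the leftover dimension $\frac{d+1}{2}$ would require an invertible summand that cannot occur since $\sigma_1$ is not dual to $\sigma_2$ or to $\alpha\sigma_2$; hence $\alpha\sigma_2=\sigma_1$. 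You never determine this action of $\alpha$ on the individual $\sigma_i$, so your reduction is incomplete.

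Second, and more seriously, the hardest point --- ruling out $\sigma^2=1+\alpha\sigma\alpha+\pi$ in favour of $\sigma^2=1+\sigma+\pi$ --- is left as ``careful bookkeeping,'' and the specific identity you propose does not obviously do it: matching $\bar\kappa\kappa=\sigma^2+\sigma(\alpha\pi)\sigma$ against $(\bar\kappa\kappa,x)=(\kappa,\kappa\bar x)$ requires knowing the right fusion of $\kappa$ with the individual $\sigma_i$, which is precisely the undetermined part of the bimodule data; nothing read off the graphs or from $\kappa\bar\iota=\sum_g\alpha_g\rho$ distinguishes $(\kappa,\kappa\sigma_1)$ from $(\kappa,\kappa\sigma_3)$. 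The paper's mechanism is different and essential: from associativity $\sigma^2\sigma=\sigma\sigma^2$ and the self-duality relation $\sigma\pi=\pi\sigma$ one deduces that the bad alternative would force $\sigma\alpha\sigma\alpha=\alpha\sigma\alpha\sigma$, and this is contradicted by a parity argument --- $\sigma\alpha\sigma$ has half-odd dimension, hence an odd number of $\sigma_i$-summands, while conjugation by $\alpha$ permutes the four $\sigma_i$ without fixed points, so the two sides cannot have the same $\sigma_i$-content. Without this (or some equivalent use of associativity of the fusion ring beyond Frobenius reciprocity and dimension counts), the final identification of the $\sigma_i$-summand of $\sigma^2$ is not established.
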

\begin{proof}
Each of the four $\sigma_i $ has dimension $d(\sigma) =\displaystyle \frac{d-1}{2} = \frac{3+\sqrt{17}}{2}$.
Note that of $\sigma_1 $ and $\sigma_2 $, one object must be self-dual while the other one is not, in order for $\kappa $ to admit the same number of length two paths through the upper graph and through the lower graph. Similarly one of $\sigma_3 $ and $\sigma_4$ is self-dual and the other is not.  Without loss of generality,  suppose $\sigma_1 $ and $ \sigma_3$ are the self-dual objects while $\sigma_2$ and $\sigma_4$ are dual to each other.  We would like to understand how tensoring with $\alpha$ on the left or right acts on the $\sigma_i$.  We have
$$\alpha \bar{\iota}\kappa=\overline{\iota \alpha }\kappa=\overline{\alpha_2\iota}\kappa=\bar{\iota}\alpha_2 \kappa=\bar{\iota} \kappa ,$$
so tensoring with  $\alpha$ on the left fixes $\sigma_1+\sigma_2 $, and similarly fixes $\sigma_3+\sigma_4$.


From the graph we have $$\alpha \pi+\pi+\eta+\sigma_1+\sigma_2= \bar{\iota}\iota  \sigma_1=(1+\alpha\pi)\sigma_1=\sigma_1+\alpha\pi \sigma_1 ,$$
and hence $\alpha \pi \sigma_1=\alpha \pi+\pi+\eta+\sigma_2$.
By Frobenius reciprocity, it follows that $(\pi \sigma_1, \alpha \sigma_2 )=1$ and $(\pi, \alpha \sigma_2 \sigma_1) = 1$.   

We claim that $\alpha \sigma_2=\sigma_1 $. Suppose to the contrary that $\alpha \sigma_2=\sigma_2$. Then $\sigma_2 \sigma_1$ contains $\pi $. Thus the remaining part of $\pi$ has dimension $d(\sigma)^2-d(\pi) = \frac{5+\sqrt{17}}{2}$, which means it must be the sum of an invertible object and one of the $\sigma_i$.  But this is impossible because $\sigma_2 \sigma_1$ does not contain $1$ since they are not dual to each other, and does not contain $\alpha$ because $\alpha \sigma_2 = \sigma_2$ is not dual to $\sigma_1$.  Therefore $\alpha \sigma_2=\sigma_1 $, and similarly $\alpha \sigma_3=\sigma_4 $.   

We set $\sigma=\sigma_1 $, so that $\alpha \sigma=\sigma_2 $, $\sigma \alpha=\overline{\sigma_2}=\sigma_4 $, and $\alpha \sigma  \alpha =\alpha \sigma_4 = \sigma_3 $.  Now return to $\sigma^2$, we now know that $1=(\pi \sigma_1, \alpha \sigma_2) = (\pi \sigma, \sigma)$ so we must have that $\pi$ occurs in $\sigma^2$.  Thus, by dimension considerations, we must have that $\sigma^2 = 1+\sigma_i+\pi$ and since $\sigma$ is selfdual while $\alpha \sigma$ and $\sigma \alpha$ are not, we must have $\sigma^2 = 1+\sigma+\pi$ or $\sigma^2 = 1+\alpha\sigma\alpha+\pi$.  So our final task is the rule out the latter possibility.

Suppose that $x=\alpha \sigma \alpha $. Then from the relation
$$\sigma^2\sigma=\sigma \sigma^2 $$
we get $$\sigma \pi+ \sigma \alpha \sigma \alpha=\pi \sigma+\alpha \sigma \alpha \sigma .$$
Since $ \pi \sigma= \pi+\alpha\pi+\eta+\sigma$ and $\pi $ and  $\sigma$ are both self-dual, we have $\sigma\pi= \overline{\sigma \pi} = \bar{\pi} \bar{\sigma} = \pi \sigma $.  Therefore,  $\sigma \alpha \sigma \alpha=  \alpha \sigma \alpha \sigma$.  So consider $\sigma \alpha \sigma$, since its dimension is half odd (that is it is not in $\mathbb{Z}[\sqrt{17}]$), it must have an odd number of $\sigma_i$ in it.  But conjugating by $\alpha$ acts faithfully on the $\sigma_i$, so $\sigma \alpha \sigma \alpha$ and $\alpha \sigma \alpha \sigma$ cannot have the same $\sigma_i$ summands.  This is a contradiction, so we must have $x = \sigma$ and $\sigma^2=1+\pi+\sigma $.
\end{proof}


This gives an alternative proof for the existence of the Asaeda-Haagerup subfactor \cite{MR1686551} by applying a short skein theoretic argument from \cite{GSbp}.

\begin{theorem}
The object $1 \oplus \sigma$ in $\mathcal{AH}_{2'}$ has an algebra structure giving the Asaeda--Haagerup subfactor.
\end{theorem}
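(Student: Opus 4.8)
The plan is to recognize that $1 \oplus \sigma$ meets the hypotheses of the skein-theoretic existence criterion \cite[Theorem 3.4]{GSbp} and then to identify the resulting subfactor by an index count.

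First I would verify those hypotheses against $\sigma$. By the preceding lemmas, $\sigma$ is a self-dual simple object of $\mathcal{AH}_{2'}$ with $\sigma^2 \cong 1 \oplus \sigma \oplus \pi$, in which $1$ and $\sigma$ each occur with multiplicity one while the complementary summand $\pi$ is a single simple object of dimension $d(\pi) = d = 4+\sqrt{17}$, hence non-invertible. This is precisely the configuration covered by \cite[Theorem 3.4]{GSbp}, so $1 \oplus \sigma$ carries a (unique) $Q$-system structure in $\mathcal{AH}_{2'}$.

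Next I would extract and identify the subfactor. Since $\mathcal{AH}_{2'}$ is realized as finite-index endomorphisms of the type III factor $N$, the $Q$-system $1 \oplus \sigma$ produces a subfactor of $N$, which is automatically finite depth because $\mathcal{AH}_{2'}$ has only finitely many simple objects, and whose index equals $d(1 \oplus \sigma) = 1 + d(\sigma) = 1 + \frac{3+\sqrt{17}}{2} = \frac{5+\sqrt{17}}{2}$. Since there is at most one finite-depth subfactor of index $\frac{5+\sqrt{17}}{2}$ (by the classification of subfactors of index below $5$), this subfactor must be the Asaeda--Haagerup subfactor, which is the claim.

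This also yields the point of the whole construction: $\mathcal{AH}_{2'}$ is now one of the even parts of the Asaeda--Haagerup subfactor, hence lies in the Morita equivalence class of $\mathcal{AH}_{1-3}$ (comparing fusion rules identifies it with $\mathcal{AH}_2$), and since $\mathcal{AH}_{4'}$ was built as the dual category of $\mathcal{AH}_{2'}$ with respect to the $Q$-system $[id] \oplus [\rho]$, it too lies in that class; as $\mathcal{AH}_{4'}$ has fusion ring $AH_4$, Theorem~\ref{unq} forces $\mathcal{AH}_{4'} \cong \mathcal{AH}_4$, and consequently $\mathcal{AH}_{4-6}$ all exist. There is no real obstacle remaining here --- the substantive work is the construction of $\mathcal{AH}_{4'}$ (Theorem~\ref{ahcons}) and the derivation of $\sigma^2 = 1 + \sigma + \pi$ together with the elimination of the one-sigma case; what is left needs only care that the hypotheses of \cite[Theorem 3.4]{GSbp} hold verbatim (self-duality of $\sigma$, multiplicity one of $1$ and $\sigma$ in $\sigma^2$, and non-invertibility of $\pi$) and that the index is \emph{exactly} $\frac{5+\sqrt{17}}{2}$ so that the uniqueness part of the small-index classification applies.
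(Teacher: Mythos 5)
Your proposal is correct, and the first half (invoking the skein-theoretic criterion of \cite[Thm 3.4]{GSbp} after checking that $\sigma$ is self-dual and simple with $\sigma^2 \cong 1 \oplus \sigma \oplus \pi$, $\pi$ simple and non-invertible) is exactly what the paper does. Where you diverge is in the identification step: the paper does not appeal to the small-index classification but argues directly that the only possible principal/dual graph pair for a $Q$-system on $1\oplus\sigma$ with these fusion rules is the Asaeda--Haagerup graph pair, i.e.\ the identification is made combinatorially inside $\mathcal{AH}_{2'}$, whose fusion data has already been pinned down by the preceding lemmas. Your route --- compute the index $1+d(\sigma)=\frac{5+\sqrt{17}}{2}$, note finite depth, and cite the uniqueness of the finite-depth standard invariant at that index --- is the framing the paper itself uses in its introduction, and it is valid (the uniqueness statement does not presuppose existence, so no circularity arises and the result still functions as an alternative existence proof); its cost is reliance on the heavy external classification of subfactors of index below $5$, whereas the paper's direct graph computation is essentially self-contained and moreover produces the explicit principal graph pair that the subsequent corollary needs, since the identification $\mathcal{AH}_{2'}\simeq\mathcal{AH}_2$ rests on uniqueness of the connection on that graph pair rather than on a comparison of fusion rules (fusion rules alone would not suffice there). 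So your argument buys brevity at the price of a bigger black box, and you would still need the graph-pair information, or the connection-uniqueness argument, to complete the corollary you sketch at the end.
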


\begin{proof}
 Since the fusion ring contains a self-dual simple object $\sigma $ with $$d(\sigma)=\frac{d-1}{2}=\frac{3+\sqrt{17}}{2} $$ satisfying $\sigma^2=1+\sigma+\pi $ with $\pi $
 simple, $1+\sigma $ admits a Q-system by \cite[\S3.2]{GSbp}. It is easy to see that the only possible graph pair for such a Q-system is given by the Asaeda-Haagerup principal graphs.
\end{proof}

\begin{corollary}
 $\mathcal{AH}_{2'} $ is equivalent to $\mathcal{AH}_2 $.
 \end{corollary}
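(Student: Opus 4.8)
The plan is to read the corollary off the previous theorem together with a short tensor-generation check, using the defining property of $\mathcal{AH}_2$ as the principal even part of the Asaeda--Haagerup subfactor. By the previous theorem, the object $1\oplus\sigma$ carries a Q-system in $\mathcal{AH}_{2'}$ whose principal graph --- the induction-restriction graph between $\mathcal{AH}_{2'}$ and $(1\oplus\sigma)$-mod --- is the Asaeda--Haagerup principal graph, so the subfactor associated to this algebra is the Asaeda--Haagerup subfactor. Its principal even part is the tensor subcategory $\langle\sigma\rangle$ of $\mathcal{AH}_{2'}$ generated by $1\oplus\sigma$, that is, by $\sigma$; and this is equivalent to $\mathcal{AH}_2$ because $\mathcal{AH}_2$ is by definition the principal even part of the Asaeda--Haagerup subfactor.

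So the only thing to check is that $\sigma$ tensor generates $\mathcal{AH}_{2'}$, which is immediate from the fusion rules already established. Recall that $\mathcal{AH}_{2'}$ has the nine simple objects $1,\alpha,\pi,\alpha\pi,\eta,\sigma,\alpha\sigma,\sigma\alpha,\alpha\sigma\alpha$. From $\sigma^2\cong 1\oplus\sigma\oplus\pi$ we get $\pi\in\langle\sigma\rangle$; from $\pi\sigma\cong\pi\oplus\alpha\pi\oplus\eta\oplus\sigma$ we then get $\alpha\pi,\eta\in\langle\sigma\rangle$; since $\pi$ is self-dual, $1$ is a summand of $\pi\otimes\pi$, hence $\alpha$ is a summand of $(\alpha\pi)\otimes\pi$ and so $\alpha\in\langle\sigma\rangle$; and then $\alpha\sigma,\sigma\alpha,\alpha\sigma\alpha\in\langle\sigma\rangle$ as well. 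Thus $\langle\sigma\rangle$ contains all nine simple objects, so $\langle\sigma\rangle=\mathcal{AH}_{2'}$, and therefore $\mathcal{AH}_{2'}\cong\mathcal{AH}_2$. (Alternatively one avoids the chase by noting that the principal even part of a subfactor is generated by its canonical endomorphism, so $\langle\sigma\rangle$ \emph{is} the principal even part $\mathcal{AH}_2$ of the Asaeda--Haagerup subfactor, which has the same number of simple objects --- nine --- as $\mathcal{AH}_{2'}$, forcing equality.) As a consequence, $\mathcal{AH}_{4'}$ is Morita equivalent to $\mathcal{AH}_{1-3}$ via the Q-systems $1\oplus\rho$ in $\mathcal{AH}_{4'}$ and $1\oplus\sigma$ in $\mathcal{AH}_{2'}$, which completes the proof that $\mathcal{AH}_{4'}=\mathcal{AH}_4$ and that $\mathcal{AH}_{4-6}$ exist.

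The one point needing a little care is the bookkeeping of which even part one lands in: the Asaeda--Haagerup subfactor and its dual have distinct even parts $\mathcal{AH}_2$ and $\mathcal{AH}_1$, so one must know that $\langle\sigma\rangle$ is the \emph{principal} even part and not the dual one. This is settled by the graph computation in the proof of the previous theorem, which shows that the graph of $1\oplus\sigma$ inside $\mathcal{AH}_{2'}$ is precisely the Asaeda--Haagerup principal graph (equivalently, the fusion ring of $\mathcal{AH}_{2'}$ is $AH_2$, not $AH_1$). I expect no serious obstacle beyond this --- the real content was in the preceding lemmas, and the corollary itself is a direct consequence of the structure of $\mathcal{AH}_{2'}$ worked out there.
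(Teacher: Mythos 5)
Your argument is correct and follows essentially the same route as the paper: the paper's entire proof is the one-line observation that the identification follows from the uniqueness of the connection on the Asaeda--Haagerup principal graph pair, which is exactly the uniqueness you invoke (implicitly, via the previous theorem's assertion that the Q-system on $1\oplus\sigma$ gives the Asaeda--Haagerup subfactor). The details you supply beyond that --- that $\sigma$ tensor-generates $\mathcal{AH}_{2'}$, and that the even part you land in is the nine-object one with fusion ring $AH_2$ rather than the six-object $\mathcal{AH}_1$ --- are left implicit in the paper and are verified correctly here.
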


\begin{proof}
 This follows from the uniqueness of the connection on the Asaeda-Haagerup principal graph pair.
\end{proof}
\begin{theorem}
The fusion categories $\mathcal{AH}_{4-6} $ exist. The Morita equivalence class contains exactly six fusion categories up to equivalence.
\end{theorem}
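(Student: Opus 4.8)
The plan is simply to assemble results already established; no new substantive argument is needed, only bookkeeping of Morita equivalences. First I would record that the explicitly constructed category $\mathcal{AH}_{4'}$ of Theorem \ref{ahcons} lies in the Morita equivalence class of the $\mathcal{AH}_i$ and realizes case $(4)$. Since $[\mathrm{id}]\oplus[\rho]$ admits a Q-system, the Q-system construction realizes the fusion category $\mathcal{AH}_{2'}$ of endomorphisms of $N$ tensor generated by $\bar\iota\iota$ as the dual of the module category over $\mathcal{AH}_{4'}$ generated by $\iota$, so $\mathcal{AH}_{4'}$ and $\mathcal{AH}_{2'}$ are Morita equivalent; by the preceding Corollary $\mathcal{AH}_{2'}\cong\mathcal{AH}_2$, hence $\mathcal{AH}_{4'}$ is Morita equivalent to all of $\mathcal{AH}_{1-3}$. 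The fusion ring of $\mathcal{AH}_{4'}$ is $AH_4$, which is not isomorphic to $AH_1$, $AH_2$, or $AH_3$, so $\mathcal{AH}_{4'}$ is not equivalent to any of $\mathcal{AH}_1$, $\mathcal{AH}_2$, $\mathcal{AH}_3$, and by Theorem \ref{cases} it realizes exactly one of cases $(4)$--$(7)$. Comparing the four-sigma principal/dual graph pair of $\iota$ from Lemma \ref{2graphs} with the graph pair of $\theta_{24}^1$ in $L_{AH_2}^{(4)}$ drawn in the proof of Lemma \ref{graphpair} shows that the $\mathcal{AH}_{4'}$-$\mathcal{AH}_2$ bimodule realizes $L_{AH_2}^{(4)}$; hence $\mathcal{AH}_{4'}$ realizes case $(4)$, and in particular $\mathcal{AH}_4$ exists.

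Next, since case $(4)$ is realized, Theorem \ref{unq} shows that each of cases $(4)$, $(5)$, $(6)$ is realized by a unique fusion category; we call these $\mathcal{AH}_4$, $\mathcal{AH}_5$, $\mathcal{AH}_6$ (with $\mathcal{AH}_4=\mathcal{AH}_{4'}$, and with $\mathcal{AH}_5$, $\mathcal{AH}_6$ obtained as the categories of bimodules over the $2$- and $4$-dimensional division algebras in $\mathcal{AH}_4$, as in Lemmas \ref{lemc}, \ref{lema}, \ref{lemb}). We are thus in the second branch of the dichotomy Theorem \ref{thm:dichotomy}, and since $\mathcal{AH}_{4-6}$ are now known to exist, Theorem \ref{no7} applies to show that no fusion category realizes case $(7)$.

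Finally I would count. By Theorem \ref{cases}, every fusion category Morita equivalent to the $\mathcal{AH}_i$ is either equivalent to one of $\mathcal{AH}_1$, $\mathcal{AH}_2$, $\mathcal{AH}_3$ or realizes one of cases $(4)$--$(7)$; cases $(4)$, $(5)$, $(6)$ each have a unique realization and case $(7)$ has none, so the Morita class consists of $\mathcal{AH}_1,\dots,\mathcal{AH}_6$. These are pairwise inequivalent: $\mathcal{AH}_1$, $\mathcal{AH}_2$, $\mathcal{AH}_3$ are pairwise inequivalent (\cite{GSbp}) and, being equivalent to no category realizing cases $(4)$--$(7)$, are distinct from $\mathcal{AH}_4$, $\mathcal{AH}_5$, $\mathcal{AH}_6$; and $\mathcal{AH}_4$, $\mathcal{AH}_5$, $\mathcal{AH}_6$ realize the three distinct cases $(4)$, $(5)$, $(6)$, while equivalent categories realize the same case, so they too are pairwise inequivalent. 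Hence the Morita equivalence class contains exactly six fusion categories up to equivalence. The only step that calls for any care is the identification in the first paragraph of which of cases $(4)$--$(7)$ the constructed category realizes; this is a routine comparison of fusion bimodules, the genuinely hard work — the solution of (\ref{e1})--(\ref{e10}) and the elimination of the one-sigma case — having already been carried out.
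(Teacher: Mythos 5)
Your proposal is correct and follows essentially the same route as the paper's own proof: identify the constructed $\mathcal{AH}_{4'}$ as realizing case $(4)$ via its Morita equivalence with $\mathcal{AH}_{2'}\cong\mathcal{AH}_2$ and the fusion module $L_{AH_2}^{(4)}$, then invoke Theorem \ref{unq} for existence and uniqueness of cases $(4)$--$(6)$, Theorem \ref{no7} to exclude case $(7)$, and Theorem \ref{cases} to conclude the class has exactly six members. You merely spell out the case-identification and the final inequivalence count in more detail than the paper does.
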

\begin{proof}
Since $\mathcal{AH}_{4'} $ is Morita equivalent to $\mathcal{AH}_2 $ via a fusion category with  fusion module is $L_{AH_2}^4$, it satisfies the characterizing property of $\mathcal{AH}_4$.  Thus $\mathcal{AH}_4$ exists, and so by Theorem \ref{unq} each of $\mathcal{AH}_{4-6}$ exist and are unique.  By Theorem \ref{no7}, there are no fusion categories realizing case (7), so by Theorem \ref{cases} the only fusion categories in the Asaeda--Haagerup Morita equivalence class are $\mathcal{AH}_i$ for $1 \leq i \leq 6$.
\end{proof}

\section{Applications} \label{sec:apps}

In this section we outline several applications of our main theorems.  These applications will appear in subsequent papers.

\subsection{Drinfeld center}
The Drinfeld center, or quantum double, $Z(\mathcal{C}) $ of a spherical fusion category $\mathcal{C} $ is the braided fusion category whose objects are are pairs of an object in $X \in \mathcal{C}$ endowed with a natural half-braiding $\eta_Y: X \otimes Y \rightarrow Y \otimes X$.  In the subfactor setting, the Drinfeld center can be realized as one of the even parts of the asymptotic inclusion or the Longo-Rehren construction.  The category $Z(\mathcal{C}) $ is a modular tensor category, in the sense that the $S$-matrix given by the (normalized) traces of the Hopf links coming from the braiding is invertible. The $S$-matrix, along with the diagonal $T$-matrix which gives the twist of each object in the modular tensor category, is called the modular data. 

Since the discovery of the ``exotic'' Haagerup and Asaeda-Haagerup in the 1990's, there has been significant interest in computing their quantum doubles. Izumi developed a general method to describe the quantum double of a fusion category from Cuntz algebra models in terms of tube algebras \cite{MR1782145}; he used this to describe the quantum doubles of the Haagerup subfactor and its generalizations \cite{MR1832764}. The modular data of the Haagerup subfactor was simplified and generalized by Evans and Gannon \cite{MR2837122}.  By contrast, for the Asaeda--Haagerup subfactor, the quantum double has remained elusive.

Theorem \ref{ahcons} gives a Cuntz algebra model for the fusion category $\mathcal{AH}_4 $, and we can apply the method of \cite{MR1782145} to describe its tube algebra.  
In particular, we can compute the simple objects, find their dimensions, and compute the modular data of the Asaeda-Haagerup subfactor. This calculation is complicated, so we will defer the details to a subsequent paper and provide a summary of the results here.

The rank of the quantum double of the Asaeda--Haagerup subfactor is 22.  In addition to the trivial object, there are six objects of dimension $1+4(4+\sqrt{17})$, eight objects of dimension $8(4+\sqrt{17})$,  one of dimension $1+8(4+\sqrt{17})$, and six of dimension $2+8(4+\sqrt{17})$.  Following Evans--Gannon, as in the Haagerup case one can choose a nice ordering of the objects so that the modular data breaks up cleanly into blocks.  In this ordering, the first $14$ entries of the diagonal of the $T$-matrix are given by the vector $(1,1,1,-1,1,-1,i,-i,1,1,1,1,1,1) $; the final eight entries are given by $e^{\frac{6l^2 \pi i}{17}}, \ 1 \leq l \leq 8 $.  Similarly, the $S$-matrix is given by the following blocks.  There is a $14 \times 14$ block given by
	$$\frac{1}{8} \left(
	\begin{array}{cccccccccccccc}
	  \frac{8}{\Lambda}&  \frac{8d^2}{\Lambda} & 2 &
	   2 & 2 & 2 & 2 & 2 & 1 & 1 & 1 & 1 & 1 & 1 \\
	 \frac{8d^2}{\Lambda}&  \frac{8}{\Lambda}& 2 &
	   2 & 2 & 2 & 2 & 2 & 1 & 1 & 1 & 1 & 1 & 1 \\
	 2 & 2 & 4 & -4 & 0 & 0 & 0 & 0 & 2 & 2 & -2 & -2 & -2 & -2 \\
	 2 & 2 & -4 & 4 & 0 & 0 & 0 & 0 & 2 & 2 & -2 & -2 & -2 & -2 \\
	 2 & 2 & 0 & 0 & 4 & -4 & 0 & 0 & -2 & -2 & 2 & 2 & -2 & -2 \\
	 2 & 2 & 0 & 0 & -4 & 4 & 0 & 0 & -2 & -2 & 2 & 2 & -2 & -2 \\
	 2 & 2 & 0 & 0 & 0 & 0 & -4 & 4 & -2 & -2 & -2 & -2 & 2 & 2 \\
	 2 & 2 & 0 & 0 & 0 & 0 & 4 & -4 & -2 & -2 & -2 & -2 & 2 & 2 \\
	 1 & 1 & 2 & 2 & -2 & -2 & -2 & -2 & 5 & -3 & 1 & 1 & 1 & 1 \\
	 1 & 1 & 2 & 2 & -2 & -2 & -2 & -2 & -3 & 5 & 1 & 1 & 1 & 1 \\
	 1 & 1 & -2 & -2 & 2 & 2 & -2 & -2 & 1 & 1 & 5 & -3 & 1 & 1 \\
	 1 & 1 & -2 & -2 & 2 & 2 & -2 & -2 & 1 & 1 & -3 & 5 & 1 & 1 \\
	 1 & 1 & -2 & -2 & -2 & -2 & 2 & 2 & 1 & 1 & 1 & 1 & 5 & -3 \\
	 1 & 1 & -2 & -2 & -2 & -2 & 2 & 2 & 1 & 1 & 1 & 1 & -3 & 5 \\
	\end{array}
	\right),$$
where $d=4+\sqrt{17}$ and $\Lambda=4(1+d^2) $ is the global dimension.
Then there is an $8 \times 8 $ block given by 
$$S_{14+k,14+l}=-\frac{2}{\sqrt{17}} \text{cos}\left(\frac{12 \pi l k }{17}\right) , \quad 1 \leq k,l \leq 8 .$$
These blocks only interact in the first two rows/colums. For $ 15 \leq j \leq 22 $, we have
$$S_{1j}=\frac{1}{\sqrt{17}} $$
$$S_{2j}=-\frac{1}{\sqrt{17}} $$
and
$$S_{ij}=0, \ 1 \leq i \leq 14 .$$

This modular data has a very similar form to that of the Haagerup subfactor, and there is hope that it can be generalized into a series as Evans and Gannon did for the Haagerup modular data.

We note that Morrison and Walker have found a purely combinatorial method to find the dimensions of the simple objects in the quantum double of the Asaeda-Haagerup subfactor as well as the induction functors to $\mathcal{AH}_1 $ and $ \mathcal{AH}_2$ \cite{1404.3955}. However their method does not provide an explicit description of the quantum double or give the modular data.

\subsection{Graded extensions}
A grading on a fusion category $\mathcal{D}$ by a group $G$ is a decomposition $\mathcal{D} \cong \bigoplus_{g \in G}\mathcal{D}_g$ such that the tensor product of an object in $\mathcal{D}_g$ and an object in $\mathcal{D}_h$ lands in $\mathcal{D}_{gh}$.  Such a grading is called faithful if the $\mathcal{D}_g$ are all nonzero.  A graded extension of a fusion category $\mathcal{C}$ by a group $G$ is a faithfully $G$-graded fusion category $\mathcal{D}$ such that the identity component $\mathcal{D}_0$ is $\mathcal{C}$.  

Graded extensions by the group $\mathbb{Z}/2\mathbb{Z}$ are important in subfactor theory, where it is natural to ask whether there is a $\mathbb{Z}/2\mathbb{Z}$ extension of the even part of the subfactor such that $\mathcal{D}_1$ is the odd part of the subfactor.  That is, we want to know whether one can find an isomorphism of factors $M$ and $N$ which identifies the two even parts together and the two odd parts together.   In planar algebraic language, this is asking about whether there is an unshaded version of the planar algebra attached to a subfactor.  For example, for group subfactors, such a $\mathbb{Z}/2\mathbb{Z}$-extension is exactly a Tambara--Yamagami category.

Graded extensions of a fusion category $\mathcal{C}$ by a finite group $G$ were classified in \cite{MR2677836} using homotopy theory.  The Brauer--Picard $3$-group (also called a categorical $2$-group) of $\mathcal{C}$, is the full subgroupoid of the Brauer--Picard $3$-groupoid whose only object is $\mathcal{C}$.  (In other words, its only object is $\mathcal{C}$, its $1$-morphisms are Morita auto-equivalences, and its $2$ and $3$ morphisms are defined as in the Brauer--Picard groupoid.)  Then, giving a $G$ extension of $\mathcal{C}$ is the same thing as giving a map of $3$-groups from $G$ to the Brauer--Picard $3$-group of $\mathcal{C}$.  Furthermore, using obstruction theory from algebraic topology, such maps can be classified homologically.  By the usual change-of-basepoint conjugation, a Morita equivalence between $\mathcal{C}$ and $\mathcal{D}$ induces an equivalence between their Brauer--Picard $3$-groups. Therefore, to understand the graded extensions of $\mathcal{C} $, it suffices to understand the graded extensions of any fusion category Morita equivalent to $\mathcal{C}$.

In \cite{GJSext}, we gave a partial description of the Brauer--Picard $3$-group of the Asaeda--Haagerup fusion categories.   In particular, its homotopy groups are $\pi_1 = \mathbb{Z}/2\mathbb{Z} \times \mathbb{Z}/2\mathbb{Z}$, $\pi_2 = 0$, and $\pi_3 = \mathbb{C}^\times$.  For Brauer--Picard $3$-groups of fusion categories, $\pi_1$ automatically acts trivially on $\pi_3$.  Thus in order to completely describe the Brauer--Picard $3$-group of Asaeda--Haagerup, we need only understand a single Postnikov $k$-invariant living in $H^4(\mathbb{Z}/2\mathbb{Z} \times \mathbb{Z}/2\mathbb{Z}, \mathbb{C}^\times)$.  Furthermore, given a map $G \rightarrow \mathbb{Z}/2\mathbb{Z} \times \mathbb{Z}/2\mathbb{Z}$, whether a $G$-extension compatible with that map exists depends only on whether this $k$-invariant becomes trivial in $H^4(G, \mathbb{C}^\times)$.  In particular, since $H^4(\mathbb{Z}/2\mathbb{Z}, \mathcal{C}^\times)$ is trivial, the obstruction automatically vanishes for $\mathbb{Z}/2\mathbb{Z}$ extensions.  Therefore, any Morita autoequivalence of one the Asaeda-Haagerup fusion categories gives rise to a $\mathbb{Z}/2\mathbb{Z} $-graded extension.  (There are two such extensions, since they form a torsor for $H^3(\mathbb{Z}/2\mathbb{Z}, \mathcal{C}^\times)) \cong \mathbb{Z}/2\mathbb{Z} $).  A natural question is whether there exist extensions by the full Brauer-Picard group, $ \mathbb{Z}/2\mathbb{Z} \times \mathbb{Z}/2\mathbb{Z}$.  In \cite{GJSext}, we were unable to resolve this question, since unlike in the case of cyclic groups, $H^4( \mathbb{Z}/2\mathbb{Z} \times \mathbb{Z}/2\mathbb{Z} , \mathbb{C}^{\times})$ is nontrivial.  In particular, we need to know the actual value of the $k$-invariant in this nontrivial homology group.

The results of this paper will allow us to answer these question.  Instead of trying to understand $\mathbb{Z}/2\mathbb{Z} \times \mathbb{Z}/2\mathbb{Z}$ extensions of the original Asaeda--Haagerup fusion categories, we can instead consider extensions of the new category $\mathcal{AH}_4 $ which has a computationally accessible model as endomorphisms of the Cuntz algebra $\mathcal{O}_9 $.   Furthermore, for $\mathcal{AH}_4 $, every Morita autoequivalence is realized by an outer automorphism by Theorem \ref{autothm}, so the graded components of any extension are all trivial as $\mathcal{AH}_4$-module categories.  To realize such an extension, we only need to find a collection of endomorphisms of a factor containing a copy of $\mathcal{AH}_4$ and four additional automorphisms which implement the outer automorphisms of $\mathcal{AH}_4$.  

In a subsequent paper we will explicitly construct the outer automorphisms of $\mathcal{AH}_4 $, and construct a (nontrivial) $\mathbb{Z}/2\mathbb{Z} \times \mathbb{Z}/2\mathbb{Z} $-graded extension of $\mathcal{AH}_4 $. By the extension theory of \cite{MR2677836}, this implies the existence of nontrivial $\mathbb{Z}/2\mathbb{Z} \times \mathbb{Z}/2\mathbb{Z} $-graded extensions for each $\mathcal{AH}_i $, $1 \leq i  \leq 6 $.  In particular,  we can conclude that the $k$-invariant vanishes, so the Brauer--Picard $3$-group of the Asaeda--Haagerup subfactor splits as a product of Eilenberg--MacLane space $K(\mathbb{Z}/2 \times \mathbb{Z}/2, 1) \times K(\mathbb{C}^\times, 3)$.  As a consequence, extensions of by any group $G$ are classified by pairs of a homomorphism from $G$ to the Klein $4$-group together with an element of $H^3(G, \mathbb{C}^\times)$.

\subsection{Intermediate subfactors}

From a complete understanding of the Brauer--Picard groupoid of $\mathcal{C}$, one can read off all division algebras in $\mathcal{C}$ as internal endomorphisms of objects in the module categories over $\mathcal{C}$.  In subfactor language, this is the list of all irreducible overfactors of $N$ which can be built from a fixed finite collection of $N$--$N$ bimodules.   Each division algebra in $\mathcal{C}$ corresponds to the internal endomorphisms for some simple object $\eta$ in a module category in $\mathcal{C}$, and two objects give the same algebra if they are in the same orbit under the action of invertible objects in the dual category.  Thus one can first write down all the module categories (each module category gives a Morita equivalence to its dual category, and this is well-defined up to outer automorphism of the dual category) and then find the orbits of simple objects.  For the Asaeda--Haagerup subfactor our results immediately imply that we have the following number of division algebras $A$ in each of the $\mathcal{AH}_i$ such that the dual category of $A$-$A$ bimodules is $\mathcal{AH}_j$.  (For $1 \leq i, j \leq 3$ all of these subfactors were described in the appendix to \cite{GSbp}.)

$$
\begin{array}{c|c|c|c|c|c|c}
& \multicolumn{6}{c}{\text{\# of div. algs. in $\mathcal{AH}_i$ with dual $\mathcal{AH}_j$}} \\
 & \mathcal{AH}_1 & \mathcal{AH}_2 & \mathcal{AH}_3 & \mathcal{AH}_4 & \mathcal{AH}_5 & \mathcal{AH}_6\\ \hline
\mathcal{AH}_1  & 20 & 13 & 12 & 2 & 2 & 2 \\ \hline
\mathcal{AH}_2 & 18 & 14 & 13 & 2 & 2 & 2 \\ \hline
\mathcal{AH}_3 & 18 & 12  & 14 & 2 & 2 & 2 \\ \hline
\mathcal{AH}_4 & 20 & 8 & 20 & 2 & 2 & 2 \\ \hline
\mathcal{AH}_5 & 16 & 12 & 12 & 2 & 2 & 2 \\ \hline
\mathcal{AH}_6 & 20 & 8 & 12 & 2 & 2 & 2 \\
\end{array}
$$

However, one can go further and use the composition rules in the Brauer--Picard groupoid to understand all inclusions between all of these algebras, and thus all lattices of intermediate subfactors.  We did so in \cite{MR2909758} for the Haagerup fusion categories. Here's a quick sketch of where this relationship comes from.  Suppose that $1 \subset A \subset B$ is an inclusion of division algebras.   Let $\mathcal{D}$ be the fusion category of $A$--$A$ bimodules and $\mathcal{E}$ the fusion category of $B$-$B$ bimodules.  Let $\mathcal{L}$ be the $\mathcal{C}$--$\mathcal{D}$ bimodule category of $1$--$A$ bimodules, let $\mathcal{M}$ be the $\mathcal{D}$--$\mathcal{E}$ bimodule category of $A$--$B$ bimodules, and let $\mathcal{N}$ be the $\mathcal{C}$--$\mathcal{E}$ bimodule category of $1$-$B$ bimodules.  Then $B = A \otimes_A B$ shows that under the tensor product map $\mathcal{L} \boxtimes_\mathcal{D} \mathcal{M} \rightarrow \mathcal{N}$ the tensor product of two simple objects is simple.  Thus intermediate subfactors yield a factorization of an object in some bimodule category as a tensor product of two other objects in bimodule categories.  In the other direction, if $\iota = \eta \psi$ is a decomposition of a simple object $\iota$ in a bimodule category as a tensor product of objects in (possibly different) bimodule categories, then we get an inclusion of division rings $1 \subset \eta \bar{\eta} = \eta 1 \bar{\eta} \subset \eta \psi \bar{\psi} \bar{\eta} = \iota \bar{\iota}$.  Thus there is a correspondence between inclusions of algebras and factorizations of objects in bimodule categories.  In full generality, this correspondence can get a bit delicate as automorphsims of $\mathcal{D}$ (both inner and outer!) play a role, as does the group $\pi_2$ which measures the ambiguity of the tensor product.

In a later paper, we hope to give a full description of this recipe for reading off lattices of intermediate subfactors from tensor product rules between bimodule categories, and to apply this to give a classification of all intermediate subfactor lattices for all division algebras in the Asaeda--Haagerup fusion categories.  It is already known that there are several interesting quadrilaterals involve $AH$, $AH+1$ and $AH+2$, and we hope to find more.  Such a calculation will require computing some additional compositions which we have not yet worked out.

\appendix
\section{Fusion Rules}
In this appendix we list the fusion modules corresponding to the four cases in Theorem \ref{cases}.
Each case corresponds to a triple of fusion modules $(K_{AH_1},L_{AH_2},M_{AH_3} )$. We first recall the fusion rules of $AH_{1-3} $, given by the following tables from \cite{GSbp}.

 $\mathcal{AH}_1 $ has $6$ simple objects, which we will order $1, \chi, \psi,\tau, \zeta,\sigma $, and have dimensions $1,\frac{d+1}{2},\frac{d-1}{2},\frac{3d-1}{2},d,\frac{d+3}{2} $, respectively, where $d=4+\sqrt{17}$.  We use the abbreviation $\Lambda=\psi + \chi + \sigma + \zeta + \tau $. Since the multiplication is commutative, we omit the sub-diagonal entries. 
 
\begingroup \centering
\begin{tabular}{ c || c | c | c | c | c }
                 & $\psi$             & $\chi$       & $\sigma $ & $\zeta$          & $\tau$     \\ \hline \hline
$\psi$              & $1 + \psi + \zeta $             & $\chi + \tau $       & $\zeta + \tau $ & $\psi + \sigma + \zeta + \tau$          & $\chi + \sigma + \zeta + 2\tau$     \\ \hline
$\chi$         &      & $1+ \psi+\chi+ \tau$     & $\sigma + \zeta + \tau $        & $\sigma + \zeta + 2\tau$   & $ \Lambda +\zeta + \tau$  \\ \hline
$\sigma $      &       &             & $1 + \chi + \sigma + \zeta + \tau $   & $\Lambda +\tau$ & $\Lambda + \zeta + 2\tau$           \\ \hline
$\zeta$          &            &     &         & $1+\Lambda + \zeta + 2\tau$ & $\Lambda+ \chi + \sigma + 2\zeta+3\tau$ \\ \hline
$\tau $    &    &           & & & $1+2\Lambda+ \sigma+2\zeta+4\tau$ \\ 
\end{tabular}
\captionof{table}{ $AH_1$ multiplication table.}
\endgroup

$\mathcal{AH}_2 $ has $9$ simple objects, which we will order $1, \alpha, \rho, \alpha \rho, \rho \alpha, \alpha \rho \alpha, \pi, \alpha \pi, \eta $, and have dimensions $1,1,\frac{d-1}{2},\frac{d-1}{2},\frac{d-1}{2},\frac{d-1}{2},d,d,d+1 $, respectively.   The rules involving $\alpha$ are: $\alpha^2=1$, $\pi \alpha=\alpha \pi$, $\alpha \eta = \eta\alpha=\eta$, $\rho \alpha \rho = \alpha \rho \alpha + \eta $.  We use the abbreviations $\Gamma = \pi + \alpha \pi + \eta$, $\Delta= \rho + \alpha \rho + \rho \alpha + \alpha \rho \alpha $.

\begingroup \centering
\begin{tabular}{ c || c | c | c}
                 & $\rho$             & $\pi$       & $\eta $  \\ \hline \hline
$\rho$          & $1 + \rho + \pi$ & $\rho + \Gamma$       & $\alpha \rho + \alpha \rho \alpha + \Gamma$  \\ \hline
$\pi$         & $\rho + \Gamma$        & $1 + \Delta + 2 \Gamma$     & $\Delta + 2 \Gamma + \eta $        \\ \hline
$\eta$      & $\rho \alpha + \alpha \rho \alpha + \Gamma$      & $\Delta+2\Gamma+ \eta$            & $1+\alpha+\Delta+2\Gamma+\pi+\alpha\pi $  \\ 
\end{tabular}
\captionof{table}{ $AH_2$ partial multiplication table.} 
\endgroup

 $\mathcal{AH}_3 $ has $9$ simple objects, which we will order $1, \beta, \xi, \beta \xi, \xi \beta, \beta \xi \beta, \mu, \beta \mu, \nu $, and have dimensions $1,1,\frac{d+1}{2},\frac{d+1}{2},\frac{d+1}{2},\frac{d+1}{2},d,d,d-1 $, respectively.   The rules involving $\beta$ are: $\beta^2=1$, $\mu \beta=\beta \mu$, $\beta \nu = \nu\beta=\nu$, $\xi \beta \xi = \beta \xi \beta + \mu +\beta \mu$.  We use the abbreviations $\Pi = \mu + \beta \mu + \nu, \ \Psi= \xi + \beta \xi + \xi \beta + \beta \xi \beta $.

\begingroup \centering
\begin{tabular}{ c || c | c | c}
                 & $\xi$             & $\mu$       & $\nu $  \\ \hline \hline
$\xi$          & $1 + \xi + \mu+ \nu$ & $\xi + \beta \xi + \beta \xi \beta + \Pi$       & $\xi + \xi \beta + \Pi$  \\ \hline
$\mu$         & $\xi + \xi \beta + \beta \xi \beta + \Pi $        & $1 + \Pi + 2 \Psi $     & $\Pi + \Psi + \mu + \beta\mu $        \\ \hline
$\nu$      & $\xi + \beta \xi + \Pi$      & $\Pi + \Psi + \mu + \beta \mu$            & $1+\beta+ \Pi + \Psi + \nu$  \\ 
\end{tabular}
\captionof{table}{ $AH_3$ partial multiplication table.} 
\endgroup

Each fusion module will be represented as $n$ adjacent $m \times n $ integer matrices (separated by vertical lines), where $m$ is the number of basis elements of $AH_l$ and n is the number of basis elements of the module category$K$. The $ij^{th} $ entry of the $k^{th} $ matrix gives the multiplicity $(R_k X_i, R_j ) $, where the $\{R_j\} $ are basis elements of $ K$ and the $ X_i$ are basis elements of $AH_l $. For each case the fusion modules are listed in the following order: first $ K_{AH_1}^{(i)}$, then $L_{AH_2}^{(i)}$, then $M_{AH_3}^{(i)}$. With this notation, here are the four cases. Recall that cases (4)-(7) correspond to cases (c), (a), (b), and (d) respectively in \cite[Theorem 6.10]{GSbp}..

Case (4): \nopagebreak

\renewcommand{\arraystretch}{1.5} 
\resizebox{\linewidth}{!}{%
$\left(
\begin{array}{c|ccccc|ccccc|ccccc|ccccc|ccccc}
& \multicolumn{2}{l}{\theta_{41}^1 \otimes \_} & & & & \multicolumn{2}{l}{\theta_{41}^2 \otimes \_} & & & & \multicolumn{2}{l}{\theta_{41}^3 \otimes \_} & & & & \multicolumn{2}{l}{\theta_{41}^4 \otimes \_} & & & & \multicolumn{2}{l}{\theta_{41}^5 \otimes \_} & & & \\
L_{\mathcal{AH}_1}^{(4)} & \theta_{41}^1 & \theta_{41}^2 & \theta_{41}^3 & \theta_{41}^4 & \theta_{41}^5  & \theta_{41}^1 & \theta_{41}^2 & \theta_{41}^3 & \theta_{41}^4 & \theta_{41}^5  & \theta_{41}^1 & \theta_{41}^2 & \theta_{41}^3 & \theta_{41}^4 & \theta_{41}^5  & \theta_{41}^1 & \theta_{41}^2 & \theta_{41}^3 & \theta_{41}^4 & \theta_{41}^5  & \theta_{41}^1 & \theta_{41}^2 & \theta_{41}^3 & \theta_{41}^4 & \theta_{41}^5\\ \hline
 1 &1 & 0 & 0 & 0 & 0 & 0 & 1 & 0 & 0 & 0 & 0 & 0 & 1 & 0 & 0 & 0 & 0 & 0 & 1 & 0 & 0 & 0
 & 0 & 0 & 1 \\
 \chi & 0 & 1 & 1 & 1 & 1 & 1 & 0 & 1 & 1 & 1 & 1 & 1 & 0 & 1 & 1 & 1 & 1 & 1 & 0 & 1 & 1 & 1
   & 1 & 1 & 2 \\
\psi &1 & 0 & 0 & 1 & 1 & 0 & 1 & 1 & 0 & 1 & 0 & 1 & 1 & 0 & 1 & 1 & 0 & 0 & 1 & 1 & 1 & 1
 & 1 & 1 & 1 \\
\tau & 2 & 2 & 2 & 1 & 3 & 2 & 2 & 1 & 2 & 3 & 2 & 1 & 2 & 2 & 3 & 1 & 2 & 2 & 2 & 3 & 3 & 3
   & 3 & 3 & 4 \\
\zeta & 1 & 1 & 1 & 2 & 2 & 1 & 1 & 2 & 1 & 2 & 1 & 2 & 1 & 1 & 2 & 2 & 1 & 1 & 1 & 2 & 2 & 2
   & 2 & 2 & 3 \\
\sigma & 1 & 1 & 1 & 1 & 1 & 1 & 1 & 1 & 1 & 1 & 1 & 1 & 1 & 1 & 1 & 1 & 1 & 1 & 1 & 1 & 1 & 1
   & 1 & 1 & 3 \\
\end{array}
\right)$
} \\

\renewcommand{\arraystretch}{1.5} 
\resizebox{\linewidth}{!}{%
$\left(
\begin{array}{c|cccccc|cccccc|cccccc|cccccc|cccccc|cccccc}
& \multicolumn{2}{l}{\theta_{42}^1 \otimes \_} & & & & & \multicolumn{2}{l}{\theta_{42}^2 \otimes \_} & & & & & \multicolumn{2}{l}{\theta_{42}^3 \otimes \_} & & & & & \multicolumn{2}{l}{\theta_{42}^4 \otimes \_} & & & & & \multicolumn{2}{l}{\theta_{42}^5 \otimes \_ }& & & & &  \multicolumn{2}{l}{\theta_{42}^6 \otimes \_} & & & & \\
M_{\mathcal{AH}_2}^{(4)} & \theta_{42}^1 & \theta_{42}^2 & \theta_{42}^3 & \theta_{42}^4 & \theta_{42}^5 & \theta_{42}^6 & \theta_{42}^1 & \theta_{42}^2 & \theta_{42}^3 & \theta_{42}^4 & \theta_{42}^5 & \theta_{42}^6  & \theta_{42}^1 & \theta_{42}^2 & \theta_{42}^3 & \theta_{42}^4 & \theta_{42}^5 & \theta_{42}^6 & \theta_{42}^1 & \theta_{42}^2 & \theta_{42}^3 & \theta_{42}^4 & \theta_{42}^5 & \theta_{42}^6 & \theta_{42}^1 & \theta_{42}^2 & \theta_{42}^3 & \theta_{42}^4 & \theta_{42}^5 & \theta_{42}^6 & \theta_{42}^1 & \theta_{42}^2 & \theta_{42}^3 & \theta_{42}^4 & \theta_{42}^5 & \theta_{42}^6 \\ \hline
1 & 1 & 0 & 0 & 0 & 0 & 0 & 0 & 1 & 0 & 0 & 0 & 0 & 0 & 0 & 1 & 0 & 0 & 0 & 0 & 0 & 0 & 1
   & 0 & 0 & 0 & 0 & 0 & 0 & 1 & 0 & 0 & 0 & 0 & 0 & 0 & 1 \\
\alpha & 0 & 0 & 0 & 1 & 0 & 0 & 0 & 0 & 1 & 0 & 0 & 0 & 0 & 1 & 0 & 0 & 0 & 0 & 1 & 0 & 0 & 0
   & 0 & 0 & 0 & 0 & 0 & 0 & 0 & 1 & 0 & 0 & 0 & 0 & 1 & 0 \\
\rho & 0 & 0 & 0 & 0 & 0 & 1 & 0 & 0 & 0 & 0 & 1 & 0 & 0 & 0 & 0 & 0 & 1 & 0 & 0 & 0 & 0 & 0
   & 0 & 1 & 0 & 1 & 1 & 0 & 2 & 1 & 1 & 0 & 0 & 1 & 1 & 2 \\
\alpha \rho & 0 & 0 & 0 & 0 & 0 & 1 & 0 & 0 & 0 & 0 & 1 & 0 & 0 & 0 & 0 & 0 & 1 & 0 & 0 & 0 & 0 & 0
   & 0 & 1 & 1 & 0 & 0 & 1 & 1 & 2 & 0 & 1 & 1 & 0 & 2 & 1 \\
\rho \alpha & 0 & 0 & 0 & 0 & 1 & 0 & 0 & 0 & 0 & 0 & 0 & 1 & 0 & 0 & 0 & 0 & 0 & 1 & 0 & 0 & 0 & 0
   & 1 & 0 & 0 & 1 & 1 & 0 & 1 & 2 & 1 & 0 & 0 & 1 & 2 & 1 \\
\alpha \rho \alpha & 0 & 0 & 0 & 0 & 1 & 0 & 0 & 0 & 0 & 0 & 0 & 1 & 0 & 0 & 0 & 0 & 0 & 1 & 0 & 0 & 0 & 0
   & 1 & 0 & 1 & 0 & 0 & 1 & 2 & 1 & 0 & 1 & 1 & 0 & 1 & 2 \\
\pi & 0 & 0 & 0 & 1 & 1 & 1 & 0 & 0 & 1 & 0 & 1 & 1 & 0 & 1 & 0 & 0 & 1 & 1 & 1 & 0 & 0 & 0
   & 1 & 1 & 1 & 1 & 1 & 1 & 4 & 3 & 1 & 1 & 1 & 1 & 3 & 4 \\
\alpha \pi & 1 & 0 & 0 & 0 & 1 & 1 & 0 & 1 & 0 & 0 & 1 & 1 & 0 & 0 & 1 & 0 & 1 & 1 & 0 & 0 & 0 & 1
   & 1 & 1 & 1 & 1 & 1 & 1 & 3 & 4 & 1 & 1 & 1 & 1 & 4 & 3 \\
\eta & 0 & 1 & 1 & 0 & 1 & 1 & 1 & 0 & 0 & 1 & 1 & 1 & 1 & 0 & 0 & 1 & 1 & 1 & 0 & 1 & 1 & 0
   & 1 & 1 & 1 & 1 & 1 & 1 & 4 & 4 & 1 & 1 & 1 & 1 & 4 & 4 \\
\end{array}
\right)$
} \\

$\left(
\begin{array}{c|ccc|ccc|ccc}
& \multicolumn{2}{l}{\theta_{43}^1 \otimes \_} & & \multicolumn{2}{l}{\theta_{43}^2 \otimes \_}  & & \multicolumn{2}{l}{\theta_{43}^3 \otimes \_} & \\
N_{\mathcal{AH}_3}^{(4)} & \theta_{43}^1 & \theta_{43}^2 & \theta_{43}^3 & \theta_{43}^1 & \theta_{43}^2 & \theta_{43}^3 & \theta_{43}^1 & \theta_{43}^2 & \theta_{43}^3 \\ \hline
 1 & 1 & 0 & 0 & 0 & 1 & 0 & 0 & 0 & 1 \\
 \beta & 1 & 0 & 0 & 0 & 1 & 0 & 0 & 0 & 1 \\
 \xi & 0 & 1 & 1 & 1 & 0 & 1 & 1 & 1 & 4 \\
 \beta \xi & 0 & 1 & 1 & 1 & 0 & 1 & 1 & 1 & 4 \\
 \xi \beta & 0 & 1 & 1 & 1 & 0 & 1 & 1 & 1 & 4 \\
 \beta \xi \beta & 0 & 1 & 1 & 1 & 0 & 1 & 1 & 1 & 4 \\
 \mu & 1 & 0 & 2 & 0 & 1 & 2 & 2 & 2 & 7 \\
 \beta \mu & 1 & 0 & 2 & 0 & 1 & 2 & 2 & 2 & 7 \\
 \nu & 0 & 0 & 2 & 0 & 0 & 2 & 2 & 2 & 6 \\
\end{array}
\right)$\\

Case (5):\nopagebreak

\renewcommand{\arraystretch}{1.5} 
\resizebox{\linewidth}{!}{%
$\left(
\begin{array}{c|cccc|cccc|cccc|cccc}
& \multicolumn{2}{l}{\theta_{51}^1 \otimes \_} & & & \multicolumn{2}{l}{\theta_{51}^2 \otimes \_} & & & \multicolumn{2}{l}{\theta_{51}^3 \otimes \_} & & & \multicolumn{2}{l}{\theta_{51}^4 \otimes \_} & & \\
L_{\mathcal{AH}_1}^{(5)} & \theta_{51}^1 & \theta_{51}^2 & \theta_{51}^3 & \theta_{51}^4  & \theta_{51}^1 & \theta_{51}^2 & \theta_{51}^3 & \theta_{51}^4 & \theta_{51}^1 & \theta_{51}^2 & \theta_{51}^3 & \theta_{51}^4 & \theta_{51}^1 & \theta_{51}^2 & \theta_{51}^3 & \theta_{51}^4 \\ \hline
 1 & 1 & 0 & 0 & 0 & 0 & 1 & 0 & 0 & 0 & 0 & 1 & 0 & 0 & 0 & 0 & 1 \\
 \chi & 2 & 0 & 1 & 1 & 0 & 2 & 1 & 1 & 1 & 1 & 1 & 2 & 1 & 1 & 2 & 1 \\
 \psi & 1 & 0 & 1 & 1 & 0 & 1 & 1 & 1 & 1 & 1 & 2 & 0 & 1 & 1 & 0 & 2 \\
 \tau & 2 & 2 & 3 & 3 & 2 & 2 & 3 & 3 & 3 & 3 & 3 & 4 & 3 & 3 & 4 & 3 \\
 \zeta & 1 & 2 & 2 & 2 & 2 & 1 & 2 & 2 & 2 & 2 & 3 & 2 & 2 & 2 & 2 & 3 \\
 \sigma & 1 & 2 & 1 & 1 & 2 & 1 & 1 & 1 & 1 & 1 & 2 & 2 & 1 & 1 & 2 & 2 \\
\end{array}
\right)$
}\\

\renewcommand{\arraystretch}{1.5} 
\resizebox{\linewidth}{!}{%
$\left(
\begin{array}{c|cccccc|cccccc|cccccc|cccccc|cccccc|cccccc}
& \multicolumn{2}{l}{\theta_{52}^1 \otimes \_} & & & & & \multicolumn{2}{l}{\theta_{52}^2 \otimes \_} & & & & & \multicolumn{2}{l}{\theta_{52}^3 \otimes \_} & & & & & \multicolumn{2}{l}{\theta_{52}^4 \otimes \_} & & & & & \multicolumn{2}{l}{\theta_{52}^5 \otimes \_} & & & & &  \multicolumn{2}{l}{\theta_{52}^6 \otimes \_} & & & & \\
M_{\mathcal{AH}_2}^{(5)} & \theta_{52}^1 & \theta_{52}^2 & \theta_{52}^3 & \theta_{52}^4  & \theta_{52}^5 & \theta_{52}^6  & \theta_{52}^1 & \theta_{52}^2 & \theta_{52}^3 & \theta_{52}^4  & \theta_{52}^5 & \theta_{52}^6  
 & \theta_{52}^1 & \theta_{52}^2 & \theta_{52}^3 & \theta_{52}^4  & \theta_{52}^5 & \theta_{52}^6  
 & \theta_{52}^1 & \theta_{52}^2 & \theta_{52}^3 & \theta_{52}^4  & \theta_{52}^5 & \theta_{52}^6  
 & \theta_{52}^1 & \theta_{52}^2 & \theta_{52}^3 & \theta_{52}^4  & \theta_{52}^5 & \theta_{52}^6  
 & \theta_{52}^1 & \theta_{52}^2 & \theta_{52}^3 & \theta_{52}^4  & \theta_{52}^5 & \theta_{52}^6   \\ \hline
 1 & 1 & 0 & 0 & 0 & 0 & 0 & 0 & 1 & 0 & 0 & 0 & 0 & 0 & 0 & 1 & 0 & 0 & 0 & 0 & 0 & 0 & 1
    & 0 & 0 & 0 & 0 & 0 & 0 & 1 & 0 & 0 & 0 & 0 & 0 & 0 & 1 \\
 \alpha & 1 & 0 & 0 & 0 & 0 & 0 & 0 & 1 & 0 & 0 & 0 & 0 & 0 & 0 & 0 & 0 & 0 & 1 & 0 & 0 & 0 & 0
   & 1 & 0 & 0 & 0 & 0 & 1 & 0 & 0 & 0 & 0 & 1 & 0 & 0 & 0 \\
\rho & 0 & 0 & 0 & 0 & 1 & 1 & 0 & 0 & 1 & 1 & 0 & 0 & 0 & 1 & 1 & 1 & 0 & 1 & 0 & 1 & 1 & 1
   & 1 & 0 & 1 & 0 & 0 & 1 & 1 & 1 & 1 & 0 & 1 & 0 & 1 & 1 \\
 \alpha \rho & 0 & 0 & 0 & 0 & 1 & 1 & 0 & 0 & 1 & 1 & 0 & 0 & 1 & 0 & 1 & 0 & 1 & 1 & 1 & 0 & 0 & 1
   & 1 & 1 & 0 & 1 & 1 & 1 & 1 & 0 & 0 & 1 & 1 & 1 & 0 & 1 \\
 \rho \alpha & 0 & 0 & 1 & 1 & 0 & 0 & 0 & 0 & 0 & 0 & 1 & 1 & 0 & 1 & 1 & 0 & 1 & 1 & 0 & 1 & 0 & 1
   & 1 & 1 & 1 & 0 & 1 & 1 & 1 & 0 & 1 & 0 & 1 & 1 & 0 & 1 \\
\alpha \rho \alpha &  0 & 0 & 1 & 1 & 0 & 0 & 0 & 0 & 0 & 0 & 1 & 1 & 1 & 0 & 1 & 1 & 0 & 1 & 1 & 0 & 1 & 1
   & 1 & 0 & 0 & 1 & 0 & 1 & 1 & 1 & 0 & 1 & 1 & 0 & 1 & 1 \\
\pi & 1 & 0 & 1 & 1 & 1 & 1 & 0 & 1 & 1 & 1 & 1 & 1 & 1 & 1 & 2 & 2 & 2 & 1 & 1 & 1 & 2 & 2
   & 1 & 2 & 1 & 1 & 2 & 1 & 2 & 2 & 1 & 1 & 1 & 2 & 2 & 2 \\
 \alpha \pi & 1 & 0 & 1 & 1 & 1 & 1 & 0 & 1 & 1 & 1 & 1 & 1 & 1 & 1 & 1 & 2 & 2 & 2 & 1 & 1 & 2 & 1
   & 2 & 2 & 1 & 1 & 2 & 2 & 1 & 2 & 1 & 1 & 2 & 2 & 2 & 1 \\
 \eta & 0 & 2 & 1 & 1 & 1 & 1 & 2 & 0 & 1 & 1 & 1 & 1 & 1 & 1 & 2 & 2 & 2 & 2 & 1 & 1 & 2 & 2
   & 2 & 2 & 1 & 1 & 2 & 2 & 2 & 2 & 1 & 1 & 2 & 2 & 2 & 2 \\
\end{array}
\right)$
}\\

\renewcommand{\arraystretch}{1.5} 
\resizebox{\linewidth}{!}{%
$\left(
\begin{array}{c|cccccc|cccccc|cccccc|cccccc|cccccc|cccccc}
& \multicolumn{2}{l}{\theta_{53}^1 \otimes \_} & & & & & \multicolumn{2}{l}{\theta_{53}^2 \otimes \_} & & & & & \multicolumn{2}{l}{\theta_{53}^3 \otimes \_} & & & & & \multicolumn{2}{l}{\theta_{53}^4 \otimes \_} & & & & & \multicolumn{2}{l}{\theta_{53}^5 \otimes \_} & & & & &  \multicolumn{2}{l}{\theta_{53}^6 \otimes \_} & & & & \\
 N_{\mathcal{AH}_3}^{(5)} & \theta_{53}^1 & \theta_{53}^2 & \theta_{53}^3 & \theta_{53}^4  & \theta_{53}^5 & \theta_{53}^6  & \theta_{53}^1 & \theta_{53}^2 & \theta_{53}^3 & \theta_{53}^4  & \theta_{53}^5 & \theta_{53}^6  
 & \theta_{53}^1 & \theta_{53}^2 & \theta_{53}^3 & \theta_{53}^4  & \theta_{53}^5 & \theta_{53}^6  
 & \theta_{53}^1 & \theta_{53}^2 & \theta_{53}^3 & \theta_{53}^4  & \theta_{53}^5 & \theta_{53}^6  
 & \theta_{53}^1 & \theta_{53}^2 & \theta_{53}^3 & \theta_{53}^4  & \theta_{53}^5 & \theta_{53}^6  
 & \theta_{53}^1 & \theta_{53}^2 & \theta_{53}^3 & \theta_{53}^4  & \theta_{53}^5 & \theta_{53}^6   \\ \hline
 1&1 & 0 & 0 & 0 & 0 & 0 & 0 & 1 & 0 & 0 & 0 & 0 & 0 & 0 & 1 & 0 & 0 & 0 & 0 & 0 & 0 & 1
   & 0 & 0 & 0 & 0 & 0 & 0 & 1 & 0 & 0 & 0 & 0 & 0 & 0 & 1 \\
 \beta & 0 & 0 & 0 & 1 & 0 & 0 & 0 & 0 & 1 & 0 & 0 & 0 & 0 & 1 & 0 & 0 & 0 & 0 & 1 & 0 & 0 & 0
   & 0 & 0 & 0 & 0 & 0 & 0 & 1 & 0 & 0 & 0 & 0 & 0 & 0 & 1 \\
 \xi & 0 & 0 & 1 & 0 & 0 & 1 & 0 & 0 & 0 & 1 & 1 & 0 & 1 & 0 & 0 & 0 & 0 & 1 & 0 & 1 & 0 & 0
   & 1 & 0 & 0 & 1 & 0 & 1 & 2 & 2 & 1 & 0 & 1 & 0 & 2 & 2 \\
 \beta \xi & 0 & 1 & 0 & 0 & 1 & 0 & 1 & 0 & 0 & 0 & 0 & 1 & 0 & 0 & 0 & 1 & 1 & 0 & 0 & 0 & 1 & 0
   & 0 & 1 & 0 & 1 & 0 & 1 & 2 & 2 & 1 & 0 & 1 & 0 & 2 & 2 \\
 \xi \beta & 0 & 1 & 0 & 0 & 0 & 1 & 1 & 0 & 0 & 0 & 1 & 0 & 0 & 0 & 0 & 1 & 0 & 1 & 0 & 0 & 1 & 0
   & 1 & 0 & 1 & 0 & 1 & 0 & 2 & 2 & 0 & 1 & 0 & 1 & 2 & 2 \\
 \beta \xi \beta & 0 & 0 & 1 & 0 & 1 & 0 & 0 & 0 & 0 & 1 & 0 & 1 & 1 & 0 & 0 & 0 & 1 & 0 & 0 & 1 & 0 & 0
   & 0 & 1 & 1 & 0 & 1 & 0 & 2 & 2 & 0 & 1 & 0 & 1 & 2 & 2 \\
 \mu & 1 & 0 & 0 & 0 & 1 & 1 & 0 & 1 & 0 & 0 & 1 & 1 & 0 & 0 & 1 & 0 & 1 & 1 & 0 & 0 & 0 & 1
   & 1 & 1 & 1 & 1 & 1 & 1 & 3 & 4 & 1 & 1 & 1 & 1 & 4 & 3 \\
 \beta \mu & 0 & 0 & 0 & 1 & 1 & 1 & 0 & 0 & 1 & 0 & 1 & 1 & 0 & 1 & 0 & 0 & 1 & 1 & 1 & 0 & 0 & 0
   & 1 & 1 & 1 & 1 & 1 & 1 & 3 & 4 & 1 & 1 & 1 & 1 & 4 & 3 \\
 \nu & 0 & 0 & 0 & 0 & 1 & 1 & 0 & 0 & 0 & 0 & 1 & 1 & 0 & 0 & 0 & 0 & 1 & 1 & 0 & 0 & 0 & 0
   & 1 & 1 & 1 & 1 & 1 & 1 & 4 & 2 & 1 & 1 & 1 & 1 & 2 & 4 \\
\end{array}
\right)$
}\\

Case (6):\nopagebreak

\renewcommand{\arraystretch}{1.5} 
\resizebox{\linewidth}{!}{%
$\left(
\begin{array}{c|ccccc|ccccc|ccccc|ccccc|ccccc}
& \multicolumn{2}{l}{\theta_{61}^1 \otimes \_} & & & & \multicolumn{2}{l}{\theta_{61}^2 \otimes \_} & & & & \multicolumn{2}{l}{\theta_{61}^3 \otimes \_} & & & & \multicolumn{2}{l}{\theta_{61}^4 \otimes \_} & & & & \multicolumn{2}{l}{\theta_{61}^5 \otimes \_} & & & \\
L_{\mathcal{AH}_1}^{(6)} & \theta_{61}^1 & \theta_{61}^2 & \theta_{61}^3 & \theta_{61}^4 & \theta_{61}^5  & \theta_{61}^1 & \theta_{61}^2 & \theta_{61}^3 & \theta_{61}^4 & \theta_{61}^5  & \theta_{61}^1 & \theta_{61}^2 & \theta_{61}^3 & \theta_{61}^4 & \theta_{61}^5  & \theta_{61}^1 & \theta_{61}^2 & \theta_{61}^3 & \theta_{61}^4 & \theta_{61}^5  & \theta_{61}^1 & \theta_{61}^2 & \theta_{61}^3 & \theta_{61}^4 & \theta_{61}^5\\ \hline
 1 &1 & 0 & 0 & 0 & 0 & 0 & 1 & 0 & 0 & 0 & 0 & 0 & 1 & 0 & 0 & 0 & 0 & 0 & 1 & 0 & 0 & 0
   & 0 & 0 & 1 \\
 \chi & 1 & 0 & 0 & 1 & 1 & 0 & 1 & 1 & 0 & 1 & 0 & 1 & 1 & 0 & 1 & 1 & 0 & 0 & 1 & 1 & 1 & 1
   & 1 & 1 & 3 \\
 \psi & 0 & 0 & 0 & 1 & 1 & 0 & 0 & 1 & 0 & 1 & 0 & 1 & 0 & 0 & 1 & 1 & 0 & 0 & 0 & 1 & 1 & 1
   & 1 & 1 & 2 \\
 \tau & 1 & 1 & 1 & 1 & 3 & 1 & 1 & 1 & 1 & 3 & 1 & 1 & 1 & 1 & 3 & 1 & 1 & 1 & 1 & 3 & 3 & 3
   & 3 & 3 & 7 \\
 \zeta & 1 & 1 & 1 & 0 & 2 & 1 & 1 & 0 & 1 & 2 & 1 & 0 & 1 & 1 & 2 & 0 & 1 & 1 & 1 & 2 & 2 & 2
   & 2 & 2 & 5 \\
 \sigma & 0 & 1 & 1 & 1 & 1 & 1 & 0 & 1 & 1 & 1 & 1 & 1 & 0 & 1 & 1 & 1 & 1 & 1 & 0 & 1 & 1 & 1
   & 1 & 1 & 4 \\
\end{array}
\right)$
}\\

$ \left(
\begin{array}{c|ccc|ccc|ccc}
& \multicolumn{2}{l}{\theta_{62}^1 \otimes \_} & & \multicolumn{2}{l}{\theta_{62}^2 \otimes \_}  & & \multicolumn{2}{l}{\theta_{62}^3 \otimes \_} & \\
M_{\mathcal{AH}_2}^{(6)} & \theta_{62}^1 & \theta_{62}^2 & \theta_{62}^3 & \theta_{62}^1 & \theta_{62}^2 & \theta_{62}^3 & \theta_{62}^1 & \theta_{62}^2 & \theta_{62}^3 \\ \hline
1 & 1 & 0 & 0 & 0 & 1 & 0 & 0 & 0 & 1 \\
 \alpha & 1 & 0 & 0 & 0 & 1 & 0 & 0 & 0 & 1 \\
 \rho & 0 & 1 & 1 & 1 & 2 & 1 & 1 & 1 & 2 \\
 \alpha \rho & 0 & 1 & 1 & 1 & 2 & 1 & 1 & 1 & 2 \\
 \rho \alpha & 0 & 1 & 1 & 1 & 2 & 1 & 1 & 1 & 2 \\
 \alpha \rho \alpha & 0 & 1 & 1 & 1 & 2 & 1 & 1 & 1 & 2 \\
 \pi& 1 & 2 & 2 & 2 & 3 & 4 & 2 & 4 & 3 \\
 \alpha \pi & 1 & 2 & 2 & 2 & 3 & 4 & 2 & 4 & 3 \\
 \eta & 2 & 2 & 2 & 2 & 4 & 4 & 2 & 4 & 4 \\
\end{array}
\right)$\\

\renewcommand{\arraystretch}{1.5} 
\resizebox{\linewidth}{!}{%
$\left(
\begin{array}{c|cccccc|cccccc|cccccc|cccccc|cccccc|cccccc}
& \multicolumn{2}{l}{\theta_{63}^1 \otimes \_} & & & & & \multicolumn{2}{l}{\theta_{63}^2 \otimes \_} & & & & & \multicolumn{2}{l}{\theta_{63}^3 \otimes \_} & & & & & \multicolumn{2}{l}{\theta_{63}^4 \otimes \_} & & & & & \multicolumn{2}{l}{\theta_{63}^5 \otimes \_} & & & & & \multicolumn{2}{l}{\theta_{63}^6 \otimes \_} & & & & \\
N_{\mathcal{AH}_3}^{(6)} & \theta_{63}^1 & \theta_{63}^2 & \theta_{63}^3 & \theta_{63}^4  & \theta_{63}^5 & \theta_{63}^6  & \theta_{63}^1 & \theta_{63}^2 & \theta_{63}^3 & \theta_{63}^4  & \theta_{63}^5 & \theta_{63}^6  
 & \theta_{63}^1 & \theta_{63}^2 & \theta_{63}^3 & \theta_{63}^4  & \theta_{63}^5 & \theta_{63}^6  
 & \theta_{63}^1 & \theta_{63}^2 & \theta_{63}^3 & \theta_{63}^4  & \theta_{63}^5 & \theta_{63}^6  
 & \theta_{63}^1 & \theta_{63}^2 & \theta_{63}^3 & \theta_{63}^4  & \theta_{63}^5 & \theta_{63}^6  
 & \theta_{63}^1 & \theta_{63}^2 & \theta_{63}^3 & \theta_{63}^4  & \theta_{63}^5 & \theta_{63}^6   \\ \hline
1& 1 & 0 & 0 & 0 & 0 & 0 & 0 & 1 & 0 & 0 & 0 & 0 & 0 & 0 & 1 & 0 & 0 & 0 & 0 & 0 & 0 & 1
    & 0 & 0 & 0 & 0 & 0 & 0 & 1 & 0 & 0 & 0 & 0 & 0 & 0 & 1 \\
\beta & 0 & 1 & 0 & 0 & 0 & 0 & 1 & 0 & 0 & 0 & 0 & 0 & 0 & 0 & 0 & 1 & 0 & 0 & 0 & 0 & 1 & 0
   & 0 & 0 & 0 & 0 & 0 & 0 & 0 & 1 & 0 & 0 & 0 & 0 & 1 & 0 \\
 \xi & 1 & 0 & 0 & 0 & 1 & 1 & 0 & 1 & 1 & 1 & 0 & 0 & 0 & 1 & 1 & 1 & 1 & 1 & 0 & 1 & 1 & 1
   & 1 & 1 & 1 & 0 & 1 & 1 & 1 & 1 & 1 & 0 & 1 & 1 & 1 & 1 \\
 \beta \xi &0 & 1 & 1 & 1 & 0 & 0 & 1 & 0 & 0 & 0 & 1 & 1 & 0 & 1 & 1 & 1 & 1 & 1 & 0 & 1 & 1 & 1
   & 1 & 1 & 1 & 0 & 1 & 1 & 1 & 1 & 1 & 0 & 1 & 1 & 1 & 1 \\
\xi \beta & 0 & 1 & 0 & 0 & 1 & 1 & 1 & 0 & 1 & 1 & 0 & 0 & 1 & 0 & 1 & 1 & 1 & 1 & 1 & 0 & 1 & 1
   & 1 & 1 & 0 & 1 & 1 & 1 & 1 & 1 & 0 & 1 & 1 & 1 & 1 & 1 \\
\beta \xi \beta & 1 & 0 & 1 & 1 & 0 & 0 & 0 & 1 & 0 & 0 & 1 & 1 & 1 & 0 & 1 & 1 & 1 & 1 & 1 & 0 & 1 & 1
   & 1 & 1 & 0 & 1 & 1 & 1 & 1 & 1 & 0 & 1 & 1 & 1 & 1 & 1 \\
  \mu &1 & 0 & 1 & 1 & 1 & 1 & 0 & 1 & 1 & 1 & 1 & 1 & 1 & 1 & 1 & 2 & 2 & 2 & 1 & 1 & 2 & 1
   & 2 & 2 & 1 & 1 & 2 & 2 & 1 & 2 & 1 & 1 & 2 & 2 & 2 & 1 \\
\beta \mu &  0 & 1 & 1 & 1 & 1 & 1 & 1 & 0 & 1 & 1 & 1 & 1 & 1 & 1 & 2 & 1 & 2 & 2 & 1 & 1 & 1 & 2
   & 2 & 2 & 1 & 1 & 2 & 2 & 2 & 1 & 1 & 1 & 2 & 2 & 1 & 2 \\
 \nu & 0 & 0 & 1 & 1 & 1 & 1 & 0 & 0 & 1 & 1 & 1 & 1 & 1 & 1 & 2 & 2 & 1 & 1 & 1 & 1 & 2 & 2
   & 1 & 1 & 1 & 1 & 1 & 1 & 2 & 2 & 1 & 1 & 1 & 1 & 2 & 2 \\
\end{array}
\right)$
}\\

Case (7):\nopagebreak

$\left(
\begin{array}{c|cc|cc}
& \multicolumn{2}{l}{\theta_{71}^1 \otimes \_} & \multicolumn{2}{l}{\theta_{71}^2  \otimes \_} \\
L_{\mathcal{AH}_1}^{(7)} & \theta_{71}^1 & \theta_{71}^2 & \theta_{71}^1 & \theta_{71}^2  \\ \hline
1 & 1 & 0 & 0 & 1 \\
 \chi & 2 & 2 & 2 & 3 \\
 \psi &1 & 2 & 2 & 2 \\
 \tau & 4 & 6 & 6 & 7 \\
 \zeta & 3 & 4 & 4 & 5 \\
 \sigma & 3 & 2 & 2 & 4 \\
\end{array}
\right) \quad  \left(
\begin{array}{c|ccc|ccc|ccc}
& \multicolumn{2}{l}{\theta_{72}^1 \otimes \_} & & \multicolumn{2}{l}{\theta_{72}^2 \otimes \_}  & & \multicolumn{2}{l}{\theta_{72}^3 \otimes \_} & \\
M_{\mathcal{AH}_2}^{(7)} & \theta_{72}^1 & \theta_{72}^2 & \theta_{72}^3 & \theta_{72}^1 & \theta_{72}^2 & \theta_{72}^3 & \theta_{72}^1 & \theta_{72}^2 & \theta_{72}^3 \\ \hline
1& 1 & 0 & 0 & 0 & 1 & 0 & 0 & 0 & 1 \\
 \alpha &1 & 0 & 0 & 0 & 0 & 1 & 0 & 1 & 0 \\
\rho& 0 & 1 & 1 & 1 & 1 & 2 & 1 & 2 & 1 \\
 \alpha \rho & 0 & 1 & 1 & 1 & 2 & 1 & 1 & 1 & 2 \\
 \rho \alpha & 0 & 1 & 1 & 1 & 2 & 1 & 1 & 1 & 2 \\
\alpha \rho \alpha& 0 & 1 & 1 & 1 & 1 & 2 & 1 & 2 & 1 \\
\pi & 1 & 2 & 2 & 2 & 4 & 3 & 2 & 3 & 4 \\
 \alpha \pi &1 & 2 & 2 & 2 & 3 & 4 & 2 & 4 & 3 \\
 \eta & 2 & 2 & 2 & 2 & 4 & 4 & 2 & 4 & 4 \\
\end{array}
\right)$\\

$\left(
\begin{array}{c|ccc|ccc|ccc}
& \multicolumn{2}{l}{\theta_{73}^1 \otimes \_} & & \multicolumn{2}{l}{\theta_{73}^2 \otimes \_}  & & \multicolumn{2}{l}{\theta_{73}^3 \otimes \_} & \\
N_{\mathcal{AH}_3}^{(7)} & \theta_{73}^1 & \theta_{73}^2 & \theta_{73}^3 & \theta_{73}^1 & \theta_{73}^2 & \theta_{73}^3 & \theta_{73}^1 & \theta_{73}^2 & \theta_{73}^3 \\ \hline
1 & 1 & 0 & 0 & 0 & 1 & 0 & 0 & 0 & 1 \\
\beta & 0 & 1 & 0 & 1 & 0 & 0 & 0 & 0 & 1 \\
\xi & 0 & 1 & 1 & 1 & 0 & 1 & 1 & 1 & 4 \\
\beta \xi & 1 & 0 & 1 & 0 & 1 & 1 & 1 & 1 & 4 \\
 \xi \beta &  1 & 0 & 1 & 0 & 1 & 1 & 1 & 1 & 4 \\
\beta \xi \beta & 0 & 1 & 1 & 1 & 0 & 1 & 1 & 1 & 4 \\
\mu & 1 & 0 & 2 & 0 & 1 & 2 & 2 & 2 & 7 \\
 \beta \mu &0 & 1 & 2 & 1 & 0 & 2 & 2 & 2 & 7 \\
\nu & 0 & 0 & 2 & 0 & 0 & 2 & 2 & 2 & 6 \\
\end{array}
\right)$

\newcommand{\urlprefix}{}
\bibliographystyle{alpha}
\bibliography{bibliography}

\end{document}